\let\mathcal\mathscr
\numberwithin{subsection}{section}
\DeclareRobustCommand{\@pointrait}{\unskip\@addpunct{.}%
\mbox{}\ignorespaces}
\def\th@plain{%
\let\thm@indent\noindent
\thm@headfont{\fontfamily{ptm}\bfseries\itshape}%
\thm@notefont{\fontfamily{ptm}\bfseries\upshape}%
\thm@preskip.5\linespacing \@plus.5\linespacing
\thm@postskip\thm@preskip
\thm@headpunct{\bfseries\itshape\pointrait}
\itshape }
\newcounter{toto}
\def\thetoto{\arabic{toto}}
\let\oldmarginpar\marginpar
\def\marginpar#1{\refstepcounter{toto}\textsuperscript{\textup{[\thetoto]}}\oldmarginpar{\footnotesize\textsuperscript{[\thetoto]}\,#1}}
\newenvironment{enumeratei}
{\bgroup\begin{enumerate}}
{\end{enumerate}\egroup}
\newenvironment{enumeratea}
{\bgroup\begin{enumerate}}
{\end{enumerate}\egroup}
\let\oldtheequation\theequation
\def\numstareq{\let\oldtheequation\theequation\renewcommand{\theequation}{\oldtheequation$\,*$}}
\def\numstarstareq{\let\oldtheequation\theequation\renewcommand{\theequation}{\oldtheequation$\,**$}}
\newenvironment{starequation}{\numstareq\begin{equation*}\tag{\theequation}}{\end{equation*}\let\theequation\oldtheequation\ignorespaces}
\def\sha{\mathcal{A}}
\def\shc{\mathcal{C}}
\def\shd{\mathcal{D}}\let\cD\shd
\def\shf{\mathcal{F}}\let\cF F
\def\shg{\mathcal{G}}\let\cG G
\def\shh{\mathcal{H}}
\let\cI\shi
\def\shh{\mathcal{H}}
\def\shk{\mathcal{K}}\let\shk\shk
\def\shl{\mathcal{L}}
\def\shm{\mathcal{M}}
\def\shn{\mathcal{N}}\let\shn\shn
\def\sho{\mathcal{O}}\let\cO\sho
\def\shr{\mathcal{R}}
\def\sht{\mathcal{T}}
\newcommand{\C}{\mathbb{C}}\let\CC\C
\newcommand{\DD}{\mathbb{D}}
\newcommand{\N}{\mathbb{N}}\let\NN\N
\newcommand{\R}{\mathbb{R}}\let\RR\R
\let\QQ\Q
\newcommand{\Z}{\mathbb{Z}}\let\ZZ\Z
\newcommand{\bD}{\boldsymbol{D}}
\newcommand{\bK}{\boldsymbol{K}}
\newcommand{\bR}{\boldsymbol{R}}
\newcommand{\Rhom}{R\shhom}
\newcommand{\shhom}{\mathcal{H}\!\mathit{om}}\let\ho\shhom
\DeclareMathOperator{\rh}{\mathit{R}\shhom}
\DeclareMathOperator{\tho}{\mathit{T}\shhom}
\DeclareMathOperator{\Rh}{\mathrm{RHom}}
\DeclareMathOperator{\RH}{RH}
\let\TH\THH
\newcommand{\rb}{\mathrm{b}}
\newcommand{\rd}{\mathrm{d}}
\newcommand{\gr}{\mathrm{gr}}
\newcommand{\coh}{\mathrm{coh}}
\newcommand{\hol}{\mathrm{hol}}
\newcommand{\rhol}{\mathrm{rhol}}
\newcommand{\Mod}{\mathrm{Mod}}
\newcommand{\imin}[1]{#1^{-1}}
\newcommand{\nb}{\mathrm{nb}}
\newcommand{\op}{\mathrm{op}}
\newcommand{\red}{\mathrm{red}}
\newcommand{\sa}{\mathrm{sa}}
\newcommand{\tors}{\mathrm{tors}}
\newcommand{\lf}{\mathrm{lf}}
\newcommand{\rhog}{\boldsymbol{\rho}}
\renewcommand{\thetag}{\boldsymbol{\theta}}
\newcommand{\bg}{\boldsymbol{g}}
\newcommand{\cc}{{\C\textup{-c}}}
\newcommand{\rc}{{\R\textup{-c}}}
\newcommand{\XS}{X\times S}
\newcommand{\XpS}{X'\times S}
\newcommand{\XsS}{X^*\times S}
\newcommand{\YS}{Y\times S}
\newcommand{\ZS}{Z\times S}
\newcommand{\US}{U\times S}
\newcommand{\DXS}{\shd_{\XS/S}}
\newcommand{\DXpS}{\shd_{\XpS/S}}
\newcommand{\DXSp}{\shd_{\XS'/S'}}
\newcommand{\DXSR}{\shd_{\XS_\RR/S}}
\newcommand{\DYS}{\shd_{Y\times S/S}}
\newcommand{\DZS}{\shd_{Z\times S/S}}
\newcommand{\DUS}{\shd_{U\times S/S}}
\newcommand{\lind}[1]{\underset{#1}{\varinjlim}}
\newcommand{\lpro}[1]{\underset{#1}{\varprojlim}}
\DeclareMathOperator{\Char}{Char}
\DeclareMathOperator{\codim}{codim}
\DeclareMathOperator{\coker}{coker}
\DeclareMathOperator{\pD}{{}^\mathrm{p}\mathsf{D}}
\DeclareMathOperator{\rD}{\mathsf{D}}
\DeclareMathOperator{\rK}{\mathsf{K}}
\DeclareMathOperator{\DR}{DR}
\DeclareMathOperator{\Db}{\mathfrak{Db}}
\DeclareMathOperator{\pDR}{{}^\mathrm{p}DR}
\DeclareMathOperator{\GL}{GL}
\DeclareMathOperator{\Hom}{Hom}
\DeclareMathOperator{\id}{Id}\let\Id\id
\let\im\imm
\DeclareMathOperator{\MTM}{MTM}
\DeclareMathOperator{\Op}{Op}
\DeclareMathOperator{\Rep}{Rep}
\DeclareMathOperator{\reel}{Re}
\DeclareMathOperator{\Sol}{Sol}
\DeclareMathOperator{\pSol}{{}^\mathrm{p}Sol}
\DeclareMathOperator{\supp}{Supp}
\let\tilde\widetilde
\let\wt\widetilde
\let\wh\widehat
\let\ov\overline
\let\epsilon\varepsilon
\let\setminus\smallsetminus
\let\leq\leqslant
\let\geq\geqslant
\def\loccit{loc.\kern3pt cit.{}\xspace}
\def\cf{cf.\kern.3em}
\def\eg{e.g.\kern.3em}
\def\ie{i.e.,\ }
\def\resp{\text{resp.}\kern.3em}
\let\moins\smallsetminus
\def\wtj{\wt\jmath}
\newcommand{\Df}{{}_{\scriptscriptstyle\mathrm{D}}f}
\newcommand{\Di}{{}_{\scriptscriptstyle\mathrm{D}}i}
\newcommand{\Dpi}{{}_{\scriptscriptstyle\mathrm{D}}\pi}
\newcommand{\cbbullet}{{\raisebox{1pt}{$\sbullet$}}}
\newcommand{\sbullet}{{\scriptscriptstyle\bullet}}
\newcommand{\pOS}{p^{-1}\sho_S}
\DeclareMathOperator{\Aut}{Aut}
\DeclareMathOperator{\End}{End}
\numberwithin{equation}{section}
\theoremstyle{plain}
\newtheorem{theorem}[equation]{Theorem}
\newtheorem{proposition}[equation]{Proposition}
\newtheorem{lemma}[equation]{Lemma}
\newtheorem{corollary}[equation]{Corollary}
\newtheorem{theoremintro}{Theorem}
\newtheorem{propositionintro}[theoremintro]{Proposition}
\newtheorem{corollaryintro}[theoremintro]{Corollary}
\theoremstyle{definition}
\newtheorem{assumption}[equation]{Assumption}
\newtheorem{definition}[equation]{Definition}
\newtheorem{example}[equation]{Example}
\newtheorem{remark}[equation]{Remark}
\newtheorem*{claim*}{Claim}
\newtheorem{remintro}[theoremintro]{Remark}
\newcommand{\RedefinitSymbole}[1]{%
\expandafter\let\csname old\string#1\endcsname=#1
\let#1=\relax
\newcommand{#1}{\csname old\string#1\endcsname\,}%
}
\let\ra\rightarrow
\def\to{\mathchoice{\longrightarrow}{\rightarrow}{\rightarrow}{\rightarrow}}
\def\mto{\mathchoice{\longmapsto}{\mapsto}{\mapsto}{\mapsto}}
\def\hto{\mathrel{\lhook\joinrel\to}}
\def\from{\mathchoice{\longleftarrow}{\leftarrow}{\leftarrow}{\leftarrow}}
\def\implique{\mathchoice{\Longrightarrow}{\Rightarrow}{\Rightarrow}{\Rightarrow}}
\def\To#1{\mathchoice{\xrightarrow{\textstyle\kern4pt#1\kern3pt}}{\stackrel{#1}{\longrightarrow}}{}{}}
\def\isom{\stackrel{\sim}{\longrightarrow}}
\let\oldbigoplus\bigoplus
\renewcommand{\bigoplus}{\mathop{\textstyle\oldbigoplus}\displaylimits}
\let\oldbigwedge\bigwedge
\renewcommand{\bigwedge}{\mathop{\textstyle\oldbigwedge}\displaylimits}
\let\oldbigcap\bigcap
\renewcommand{\bigcap}{\mathop{\textstyle\oldbigcap}\displaylimits}
\let\oldprod\prod
\renewcommand{\prod}{\mathop{\textstyle\oldprod}\displaylimits}
\begin{document}
\frontmatter

\title{Riemann-Hilbert correspondence for mixed~twistor $\mathcal D$-Modules}

\author[T. Monteiro Fernandes]{Teresa Monteiro Fernandes}
\address[T. Monteiro Fernandes]{Centro de Matemática e Aplicações Fundamentais -- Centro de investigação Operacional e Departamento de Matemática da FCUL, Edifício C 6, Piso 2, Campo Grande, 1700, Lisboa, Portugal}
\email{mtfernandes@fc.ul.pt}

\author[C.~Sabbah]{Claude Sabbah}
\address[C.~Sabbah]{CMLS, École polytechnique, CNRS, Université Paris-Saclay\\
F--91128 Palaiseau cedex\\
France}
\email{Claude.Sabbah@polytechnique.edu}
\urladdr{http://www.math.polytechnique.fr/perso/sabbah}

\thanks{The research of TMF was supported by Fundação para a Ciência e Tecnologia, PEst OE/MAT/\allowbreak UI0209/2011. The research of CS was supported by the grant ANR-13-IS01-0001-01 of the Agence nationale de la recherche.}

\subjclass{14F10, 32C38, 32S40, 32S60, 35Nxx, 58J10}

\keywords{Holonomic relative $D$-module, regularity, relative constructible sheaf, relative perverse sheaf, mixed twistor $D$-module}

\begin{abstract}
We introduce the notion of regularity for a relative holonomic $\mathcal D$-module in the sense of \cite{MF-S12}. We prove that the solution functor from the bounded derived category of regular relative holonomic modules to that of relative constructible complexes is essentially surjective by constructing a right quasi-inverse functor. When restricted to relative $\mathcal D$-modules underlying a regular mixed twistor $\mathcal D$-module, this functor satisfies the left quasi-inverse property.
\end{abstract}
\maketitle

\tableofcontents
\mainmatter

\section*{Introduction}
Let $X$ and $S$ be complex manifolds and let $p$ be the projection $\XS\to S$. We will set $d_X=\dim_\CC X$, $d_S=\dim_\CC S$. In~\cite{MF-S12} (see Section \ref{S:1} for a reminder), we have considered a restricted notion of holomorphic family parametrized by $S$ of holonomic $\cD_X$-modules. These are coherent modules on the sheaf $\cD_{\XS/S}$ of relative differential operators whose characteristic variety, in the product $(T^*X)\times S$, is contained in $\Lambda\times S$ for some Lagrangean conic closed subset $\Lambda$ of $T^*X$. This notion is restrictive in the sense that $\Lambda$ does not vary with respect to $S$. We have also introduced the derived category of sheaves of $\pOS$-modules with $\C$\nobreakdash-constructible cohomology, also called $S$-$\C$-constructible complexes, together with the corresponding notion of perversity, and we have proved that the de\,Rham functor $\DR$ and its adjoint by duality, the solution functor $\Sol$, on the bounded derived category of $\cD_{\XS/S}$-modules with holonomic cohomology take values in the derived category of $S$-$\C$-constructible complexes. Denoting by $\pSol(\shm)$ \resp $\pDR(\shm)$ the complex $\Sol(\shm)[d_X]$ \resp $\DR(\shm)[d_X]$, these $S$-$\CC$-constructible complexes are related by duality: $\bD\pSol(\shm)=\pDR(\shm)$.

Many properties in the relative setting can be obtained from those in the ``absolute case'' (\ie when $S$ is reduced to a point), by specializing the parameter and by considering analogous properties for the restricted objects by the functors $Li_{s_o}^*$ when~$s_o$ varies in $S$. As a consequence, \emph{strictness}, that is, $\pOS$-flatness (or absence of $\pOS$-torsion if $\dim S=1$), plays an important role at various places. On the other hand, for an $S$-$\C$-constructible perverse complex $F$, the dual $S$-$\C$-constructible complex $\bD F$ needs not be perverse, and the subcategory of $S$-$\C$-constructible complexes $F$ such that $F$ and $\bD F$ are perverse is specially interesting. Both notions (strictness and perversity of $F$ and $\bD F$) are related.\enlargethispage{1.5\baselineskip}%

\begin{propositionintro}\label{prop:FDFperverse}
Assume that that $F$ and $\bD F$ are perverse. Let $(X)_{\alpha}$ be a stratification of $X$ adapted to $F$. Then, for any open strata $X_{\alpha}$, $\shh^{-d_X}i^{-1}_{\alpha}F$ is a locally free $\pOS$-module of finite rank ($d_X:=\dim X$).

Conversely, let $Y$ be a hypersurface of $X$ and let $F$ be a locally free $\pOS$-module of finite rank on $(X\setminus Y)\times S$. Then $j_!F[d_X]$ and its dual $Rj_*F[d_X]$ are perverse ($d_X=\dim_\CC X$, $j:(X\setminus Y)\times S\hto\XS$).
\end{propositionintro}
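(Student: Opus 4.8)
The plan is to establish the two assertions separately, each by a computation with the relative duality functor $\bD$ and with $\Rhom_{\pOS}(\cbbullet,\pOS)$.
\emph{Direct implication.} Let $X_{\alpha}$ be an open stratum, so that $i_{\alpha}\colon X_{\alpha}\times S\hto\XS$ is an open immersion and $i_{\alpha}^{-1}=i_{\alpha}^{*}=i_{\alpha}^{!}$. Since $F$ is perverse and $\dim X_{\alpha}=d_X$, the support and cosupport conditions at $X_{\alpha}$ give $\shh^{k}(i_{\alpha}^{-1}F)=0$ for $k\neq -d_X$, so $i_{\alpha}^{-1}F\simeq\shl[d_X]$ with $\shl:=\shh^{-d_X}(i_{\alpha}^{-1}F)$ a $\pOS$-coherent, locally constant sheaf on $X_{\alpha}\times S$. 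From $i_{\alpha}^{!}\bD\simeq\bD\,i_{\alpha}^{*}$ we get $i_{\alpha}^{-1}\bD F\simeq\bD_{X_{\alpha}\times S/S}(\shl[d_X])$; imposing the support and cosupport conditions on the perverse complex $\bD F$ at $X_{\alpha}$ forces this complex to be concentrated in degree $-d_X$, which --- by the explicit (Poincaré--Verdier) form of the relative duality functor on the smooth locus --- means exactly that $\Rhom_{\pOS}(\shl,\pOS)$ is concentrated in degree $0$. On a polydisk chart $V\times W$ with $V\subset X_{\alpha}$ one has $\shl|_{V\times W}\simeq p_{W}^{-1}\shm$ for a coherent $\sho_W$-module $\shm$, and $\Rhom_{\pOS}(\shl,\pOS)|_{V\times W}\simeq p_{W}^{-1}\Rhom_{\sho_W}(\shm,\sho_W)$; hence $\shm$ has no higher $\mathrm{Ext}$ into $\sho_W$, and since $\sho_W$ is regular, minimal free resolutions give $\mathrm{pd}_{\sho_W}\shm=0$, \ie $\shm$, hence $\shl$, is locally free of finite rank. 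As this is a local statement, $\shh^{-d_X}i_{\alpha}^{-1}F$ is locally free of finite rank on $X_{\alpha}\times S$.

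\emph{Converse.} Put $U=X\setminus Y$, $\US=U\times S$, and fix a Whitney stratification of $X$ refining $\{U,Y\}$. First, $F[d_X]$ is perverse on $\US$ (on the single stratum $U$ it is a locally free $\pOS$-module placed in degree $-d_X=-\dim U$), and its relative dual is $\bD(F[d_X])\simeq F^{\vee}[d_X]$ with $F^{\vee}=\shhom_{\pOS}(F,\pOS)$ again locally free of finite rank. Two of the four perversity conditions for $j_{!}F[d_X]$ and $Rj_{*}F[d_X]$ are immediate: $j_{!}F[d_X]$ satisfies the support condition ($i_{\beta}^{-1}j_{!}F=0$ for every stratum $X_{\beta}\subset Y$), and $Rj_{*}F[d_X]$ satisfies the cosupport condition (since $i_{Y}^{!}Rj_{*}=0$ for the closed immersion $i_{Y}\colon Y\hto X$). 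For the remaining two, $\bD\,j_{!}\simeq Rj_{*}\,\bD$ gives $\bD(j_{!}F[d_X])\simeq Rj_{*}(F^{\vee}[d_X])$, and the support condition for any $S$-$\C$-constructible complex $H$ implies the cosupport condition for $\bD H$ (this direction always holds, being just that $\Rhom_{\pOS}(\cbbullet,\pOS)$ sends $D^{\leq a}$ into $D^{\geq -a}$; it is the converse --- which would make $\bD$ preserve perversity --- that fails when $\dim S>0$, cf.\ the introduction). Therefore it suffices to prove, for an arbitrary locally free $\pOS$-module $G$ of finite rank on $\US$, the \emph{support condition for $Rj_{*}G[d_X]$}: taken with $G=F$ it yields the perversity of $Rj_{*}F[d_X]$, and taken with $G=F^{\vee}$ it yields --- via the displayed isomorphism and implication --- the missing cosupport condition, hence the perversity of $j_{!}F[d_X]$; the isomorphism $\bD(j_{!}F[d_X])\simeq Rj_{*}(F^{\vee}[d_X])$ then exhibits the two complexes as exchanged by $\bD$.

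\emph{The key estimate.} We must show $\shh^{m}(i_{\beta}^{-1}Rj_{*}G)=0$ for $m>\codim_{X}X_{\beta}$ at each stratum $X_{\beta}\subset Y$. Fix $x\in X_{\beta}$ and put $c=\codim_{X}X_{\beta}\geq 1$. By local topological triviality of the Whitney stratification along $X_{\beta}$ (Thom--Mather), a small polydisk neighbourhood $B$ of $x$ can be chosen with $B\cap U\simeq\Delta^{d_X-c}\times(\Delta^{c}\setminus Z)$ for a hypersurface $Z\subset\Delta^{c}$; shrinking a polydisk $W\ni s$, the stalk $(i_{\beta}^{-1}Rj_{*}G)_{(x,s)}$ equals $R\Gamma((B\cap U)\times W,G)$, which --- $G$ being locally constant --- is computed by the monodromy-twisted cellular cochain complex of a finite CW model of $\Delta^{c}\setminus Z$, with coefficients in $\sho_W(W)$. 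Since $\Delta^{c}\setminus Z$ is the complement of a hypersurface in the Stein manifold $\Delta^{c}$, it is Stein of complex dimension $c$, hence homotopy equivalent to a CW complex of real dimension $\leq c$ (Artin--Andreotti--Frankel); thus that cochain complex lives in degrees $[0,c]$, giving the claimed vanishing. I expect this estimate --- making the Artin vanishing for hypersurface complements, together with the local product structure of $Y$ along its strata, interact cleanly with the $\pOS$-linear formalism --- to be the main obstacle; it is the relative analogue of the classical perverse $t$-exactness of $j_{!}$ and $Rj_{*}$ for the inclusion of the complement of a hypersurface, and once it is available the rest of the argument is formal.
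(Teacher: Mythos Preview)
Your argument is correct, but it follows a genuinely different route from the paper's own proof. The paper handles both directions by reducing to the absolute case via the functors $Li^*_{s_o}$ and the criterion of Lemma~\ref{perv}: $F$ and $\bD F$ are perverse if and only if $Li^*_{s_o}F$ is perverse in $\rD^\rb_\cc(\CC_X)$ for every $s_o\in S$. For the first assertion, the paper observes that perversity of each $Li^*_{s_o}F$ forces $Li^*_{s_o}$ of the coherent stalk $\shh^{-d_X}i_\alpha^{-1}F$ to be concentrated in degree zero, which is the Tor-criterion for local freeness. For the converse, the paper simply notes that $Li^*_{s_o}$ commutes with $j_!$, so $Li^*_{s_o}(j_!F[d_X])$ is the absolute $j_!$ of a $\CC$-local system shifted by $d_X$, which is classically perverse; Lemma~\ref{perv} then gives the result, and the identification of the dual is the same as yours.

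Your approach avoids specialization to points of $S$ entirely. For the first part you use the Ext-criterion over a regular local ring (vanishing of higher $\mathrm{Ext}(\shl,\sho)$ forces projective dimension zero), which is a clean substitute for the Tor-criterion. For the converse you reprove, in the relative $\pOS$-linear setting, the Artin vanishing that underlies perverse $t$-exactness of $j_!$ and $Rj_*$ for hypersurface complements, using Thom--Mather local triviality and the Andreotti--Frankel bound on the homotopy dimension of a Stein normal slice. This is self-contained but considerably longer; the paper's method is the one it uses systematically throughout (fibrewise reduction plus Proposition~\ref{P:1}), and buys the result in two lines once Lemma~\ref{perv} is available. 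Your direct argument has the virtue of making the topological input explicit and not relying on the $Li^*_{s_o}$ machinery.
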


Moreover, when $F$ is the de~Rham complex or the solution complex of a holonomic $\DXS$-module, we have the following improvement of \cite[Th.\,1.2]{MF-S12}.

\begin{propositionintro}\label{P:3.3}
Let $\shm$ belong to $\rD^\rb_{\hol}(\DXS)$. Then the following conditions are equivalent.
\begin{enumerate}
\item
$\shm$ is concentrated in degree $0$ and $\shh^0(\shm)$ is strict.

\item
$\bD \shm$ is concentrated in degree $0$ and $\shh^0(\bD \shm)$ is strict.
\item
$\pSol(\shm)$ and $\pDR(\shm)=\bD\pSol(\shm)$ are perverse.
\end{enumerate}
\end{propositionintro}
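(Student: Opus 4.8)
The plan is to prove $(1)\Rightarrow(3)$ and $(3)\Rightarrow(1)$, and then to obtain $(2)\Leftrightarrow(3)$ from these by duality. For the last point, recall from \cite{MF-S12} the biduality isomorphism $\bD\bD\shm\isom\shm$ in $\rD^\rb_\hol(\DXS)$ together with $\pSol(\bD\shm)=\pDR(\shm)$ and $\pDR(\bD\shm)=\pSol(\shm)$; hence condition $(3)$ is left unchanged when $\shm$ is replaced by $\bD\shm$, so the equivalence $(1)\Leftrightarrow(3)$ applied to $\bD\shm$ is precisely $(2)\Leftrightarrow(3)$ for $\shm$. Two remarks are used throughout: all the conditions are local on $\XS$, and, because $\Sol$ and $\DR$ commute with the restriction functors $Li_{s_o}^*$ ($s_o\in S$), condition $(1)$ amounts to $Li_{s_o}^*\shm$ being a holonomic $\cD_X$-module concentrated in degree $0$ for every $s_o\in S$.

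\emph{Implication $(1)\Rightarrow(3)$.} One argues by induction on $\dim\supp\shm$. Pick a stratification of $X$ adapted to $\shm$ and a hypersurface $Y\subset X$ such that $U:=X\setminus Y$ is a union of top strata and $\shm|_{\US}$ is $\sho_{\US}$-coherent, \ie a relative integrable connection; write $j:\US\hto\XS$. Strictness of $\shm$ implies that the $\pOS$-local system $F$ of horizontal sections of $\shm|_{\US}$ is locally $\pOS$-free of finite rank, so by the converse part of Proposition \ref{prop:FDFperverse}, $Rj_*F[d_X]$ and $j_!F^\vee[d_X]$ are perverse and mutually dual. Applying the exact functor $\pDR$ and its dual $\pSol=\bD\pDR$ to the distinguished triangle attached to $U$ and $Y$ (and using that $\pDR$ commutes with $j_!$, $Rj_*$ and with restriction) yields two triangles: one in which $\pDR(\shm)$ lies between $\pDR$ of a holonomic complex supported on $Y$ and $Rj_*F[d_X]$, and a dual one in which $\pSol(\shm)$ lies between $j_!F^\vee[d_X]$ and $\pSol$ of that same complex on $Y$. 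Via the relative form of Kashiwara's equivalence, the complex supported on $Y$ corresponds to a strict holonomic module concentrated in degree $0$ on a manifold of smaller dimension, to which the induction applies; since the perverse $S$-$\CC$-constructible complexes form the heart of a $t$-structure, the two triangles give that $\pDR(\shm)$ and $\pSol(\shm)$ are perverse.

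\emph{Implication $(3)\Rightarrow(1)$.} Assume $\pSol(\shm)$ and $\pDR(\shm)$ perverse. On each open stratum $X_\alpha$ of a stratification adapted to $\shm$, Proposition \ref{prop:FDFperverse} gives that $\shh^{-d_X}i_\alpha^{-1}\pDR(\shm)$ is locally $\pOS$-free of finite rank; unwinding the relative de~Rham complex, this says that over $X_\alpha\times S$ the module $\shm$ is concentrated in degree $0$ with strict underlying module. To reach the conclusion on all of $\XS$, restrict to an arbitrary fiber $s_o\in S$: the hypothesis that \emph{both} $\pSol(\shm)$ and $\pDR(\shm)$ are perverse (hence locally free on open strata) ensures that $Li_{s_o}^*$ introduces no spurious $\pOS$-Tor and that $Li_{s_o}^*\pSol(\shm)$ and $Li_{s_o}^*\pDR(\shm)$ are perverse on $X$; the absolute Riemann--Hilbert correspondence then forces $Li_{s_o}^*\shm$ to be a holonomic $\cD_X$-module concentrated in degree $0$, which by the remark above is condition $(1)$.

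The delicate point, shared by both implications, is the dévissage: one must check that the relative version of Kashiwara's equivalence respects strictness and the perversity $t$-structure, and --- in $(3)\Rightarrow(1)$ --- that the property ``perverse together with its dual'' descends along the restrictions $Li_{s_o}^*$, so that perversity can be tested fiberwise. This is exactly where the interaction between the relative duality functor $\bD$ and the non-exact functors $Li_{s_o}^*$ must be handled carefully, and it is the reason strictness cannot be dispensed with.
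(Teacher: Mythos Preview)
Your overall strategy for $(2)\Leftrightarrow(3)$ via biduality is fine and matches the paper. But both remaining implications have gaps, and the paper's route is quite different and much shorter.

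\textbf{On $(1)\Rightarrow(3)$.} Your d\'evissage does not close. First, in the triangle you build along the hypersurface $Y$, the piece supported on $Y$ is \emph{not} known a priori to be a strict holonomic module concentrated in degree $0$: neither $R\Gamma_{[Y]}\shm$ nor the cone of $\shm\to Rj_*j^{-1}\shm$ has this property in general, so the induction hypothesis cannot be applied to it. Second, even granting perversity of two terms of a distinguished triangle, the third is not forced to be perverse: the heart of a $t$-structure is abelian but not closed under cones, so ``two out of three'' only pins the third term to perverse amplitude $[-1,0]$ or $[0,1]$. The paper avoids all of this: it proves $(1)\Rightarrow(3)$ in one line by reducing to the absolute case fiberwise. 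Since $\shm$ is strict, $Li^*_{s_o}\shm$ is a holonomic $\cD_X$-module in degree $0$ for every $s_o$, hence $\pDR(Li^*_{s_o}\shm)\simeq Li^*_{s_o}\pDR(\shm)$ is perverse for every $s_o$; by the fiberwise characterization of ``$F$ and $\bD F$ perverse'' (Lemma~\ref{perv}), this gives $(3)$.

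\textbf{On $(3)\Rightarrow(1)$.} You land at the right statement, that $Li^*_{s_o}\pSol(\shm)$ and $Li^*_{s_o}\pDR(\shm)$ are perverse, but your justification (``locally free on open strata, hence no spurious $\pOS$-Tor'') only treats the open strata and says nothing about what happens on lower strata, which is exactly where the Tor terms could appear. The needed fact is precisely Lemma~\ref{perv}: $F$ and $\bD F$ perverse $\Leftrightarrow$ $Li^*_{s_o}F$ perverse for all $s_o$. Once you have this, the absolute case gives $Li^*_{s_o}\shm$ concentrated in degree $0$ for every $s_o$; then Corollary~\ref{cor:Hjnul} forces $\shm$ to be concentrated in degree $0$, and Lemma~\ref{lem:flatness} gives strictness of $\shh^0\shm$. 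This is exactly the paper's argument, and it bypasses any d\'evissage on $X$.

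In short, replace the geometric induction on $\dim\supp\shm$ by the fiberwise criterion of Lemma~\ref{perv}; that single lemma carries both implications.
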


Going further, it is natural to define the subcategory of \emph{regular} relative holonomic $\shd$-modules by imposing the regularity condition to each $Li_{s_o}^*\shm$ (\cf Section \ref{subsec:regularity}).

Our main objective in this article is to approach the problem of constructing a quasi-inverse functor to $\Sol$ restricted to the category of regular holonomic $\cD_{\XS/S}$-modules. In analogy with the method of Kashiwara \cite{Kashiwara84}, we introduce the functor $\RH^S$, from the derived category $\rD^\rb_\cc(\pOS)$ of $S$-$\C$-constructible complexes to the bounded derived category $\rD^\rb_\rhol(\cD_{\XS/S})$ of $\cD_{\XS/S}$-modules with regular holonomic cohomology. Roughly speaking, it is a relative version of the functor $\tho(\cdot,\sho)$ of Kashiwara \cite{Kashiwara84} using the language of ind-sheaves or sheaves on a subanalytic site (\cite{K-S01}, \cite{K-Sch06}, \cite{Prelli08}). In the locally constant case it coincides with the construction due to Deligne \cite{Deligne70}.

However, contrary to the absolute case, the behaviour by pull-back is not always controlled, due to the lack of an existence theorem of a Bernstein-Sato polynomial, so it remains conjectural that the derived category $\rD^\rb_\rhol(\cD_{\XS/S})$ is stable by inverse images. This constitutes a major obstacle to obtain an equivalence of categories as in the absolute case. Our first main result concerns essential surjectivity of $\Sol:\rD^\rb_\rhol(\cD_{\XS/S})\mto\rD^\rb_\cc(\pOS)$, when $S$ is a curve. This restriction to $\dim S=1$ is needed in order to find bases of open coverings of the subanalytic site $S_{\sa}$ formed by $\sho_S$-acyclic open subsets.

\begin{theoremintro}\label{T:11}
Assume that $\dim S=1$ and let $F\in\rD^\rb_\cc(\pOS)$. Then $\RH^S(F)\in \rD^\rb_\rhol(\DXS)$ and we have a functorial isomorphism $\pSol(\RH^S(F))\simeq F$ in $\rD^\rb_\cc(\pOS)$.
\end{theoremintro}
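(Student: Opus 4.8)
The plan is to transpose Kashiwara's proof of the Riemann--Hilbert correspondence \cite{Kashiwara84} to the relative setting. Recall that $\RH^S$ is the relative avatar of the functor $\tho(\bD(\cdot),\sho)$: for $F\in\rD^\rb_\cc(\pOS)$ one sets $\RH^S(F)=\tho^S(\bD F,\sho^t_{\XS/S})$ (up to a shift by $d_X$), where $\sho^t_{\XS/S}$ is the complex of relative temperate holomorphic functions, regarded as an object on the subanalytic site $(\XS)_\sa$ or as an ind-sheaf. Two statements must be proved: (a) $\RH^S(F)\in\rD^\rb_\rhol(\DXS)$, which breaks into $\DXS$-coherence of the cohomology sheaves, the inclusion of the characteristic variety in some $\Lambda\times S$, and regularity of each $Li_{s_o}^*\RH^S(F)$ on $X$; and (b) the functorial isomorphism $\pSol(\RH^S(F))\simeq F$. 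Both are established by dévissage, using an adapted stratification $(X_\alpha)$ of $X$, the truncation triangles, the triangles $i_{Y*}i_Y^!F\to F\to Rj_*j^{-1}F\xrightarrow{+1}$ attached to a union $Y$ of strata and its open complement, and the functoriality of $\RH^S$ and $\pSol$ with respect to these triangles; a base change compatibility $Li_{s_o}^*\RH^S(F)\simeq\RH(Li_{s_o}^*F)$ (modulo strictness) lets one feed in the absolute statements of \loccit stratum by stratum.

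The induction rests on two building blocks, both borrowed from the locally constant case. \emph{Relative local systems and their meromorphic extensions.} If $Y\subset X$ is a hypersurface, $j\colon(X\setminus Y)\times S\hto\XS$, and $\shl$ is a relative local system on $(X\setminus Y)\times S$ (a locally free $\pOS$-module of finite rank with locally constant transition matrices), then $\RH^S(Rj_*\shl[d_X])$ is the relative Deligne meromorphic extension with regular singularities along $Y$: a $\DXS(*Y)$-module which is $\sho_{\XS/S}(*Y)$-locally free of finite rank, hence $\DXS$-coherent; its fibres $Li_{s_o}^*$ are the classical Deligne extensions, so it is regular holonomic, and one computes $\pSol$ of it to be $Rj_*\shl[d_X]$ (the case $Y=\varnothing$ giving the integrable connection $(\she,\nabla)$ attached to $\shl$ with $\pSol(\she)\simeq\shl$). \emph{Proper pushforward.} If $i_Y\colon Y\times S\hto\XS$ is a closed immersion and $F=i_{Y*}G$ with $G\in\rD^\rb_\cc(p_Y^{-1}\sho_S)$, then $\RH^S(F)\simeq\int_{i_Y}\RH^S_Y(G)$, because $\tho^S$ commutes with proper direct images; since the relative direct image $\int_{i_Y}$ preserves $\rD^\rb_\rhol$ and $\pSol\circ\int_{i_Y}\simeq Ri_{Y*}\circ\pSol$, the assertions for $F$ follow from those for $G$ on $Y\times S$, available by induction on $\dim X$.

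The general case is then assembled in the usual way: choose an open stratum $X_{\alpha_0}$, put $Y=X\setminus X_{\alpha_0}$, and splice $i_{Y*}i_Y^!F\to F\to Rj_*j^{-1}F\xrightarrow{+1}$; the term $j^{-1}F$ has locally constant cohomology on $X_{\alpha_0}\times S$, so after truncation it is handled by the first building block (with a secondary dévissage on $\codim Y$ to reach the case where the singular locus is a hypersurface), while $i_Y^!F$ is supported on the strictly lower-dimensional $Y$ and is handled by the second building block together with the induction hypothesis applied on $Y\times S$. Applying $\RH^S$ and then $\pSol$ to the triangle and using functoriality and the five lemma in $\rD^\rb_\rhol(\DXS)$ yields simultaneously $\RH^S(F)\in\rD^\rb_\rhol(\DXS)$ and $\pSol(\RH^S(F))\simeq F$. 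The hypothesis $\dim S=1$ is used in the first building block and, more pervasively, in verifying $\DXS$-coherence: it is what guarantees that $S_\sa$ has a basis of $\sho_S$-acyclic open subsets (complements of locally finite sets of points), making the relative temperate complexes amenable to explicit computation.

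The main difficulty I anticipate is exactly this pair of points: the $\DXS$-coherence of $\RH^S(F)$ and the reconstruction isomorphism $\pSol\circ\RH^S\simeq\id$. In contrast with the absolute case, neither can be obtained merely by specialization at the points $s_o\in S$: the solution functor is not exact and does not commute with $Li_{s_o}^*$ in the absence of strictness, and — crucially — there is no Bernstein--Sato polynomial available to control localization along $Y$, so the finiteness underlying coherence has to be produced directly. One therefore proves coherence and the reconstruction isomorphism \emph{by hand} on the two building blocks, where explicit $\DXS$- (or $\DXS(*Y)$-) presentations and explicit temperate Dolbeault resolutions are available, and propagates them through the dévissage.
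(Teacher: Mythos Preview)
Your broad strategy matches the paper's, but there is a genuine gap in the dévissage and a missed simplification for the reconstruction isomorphism.

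The gap: your second building block proposes to handle $i_{Y*}i_Y^!F$ by ``the induction hypothesis applied on $Y\times S$'' via pushforward by the closed immersion~$i_Y$. But $Y=X\setminus X_{\alpha_0}$ is in general singular, so there is no $\DYS$, no Kashiwara equivalence, and no way to set up the induction on~$Y$ as an ambient manifold. The paper bypasses this entirely: it uses the other triangle $j_!j^{-1}F\to F\to i_*i^{-1}F\xrightarrow{+1}$, runs the induction on the \emph{dimension of the support} inside the fixed ambient~$X$, and for the $j_!$ term replaces the possibly singular boundary by a normal crossing divisor via a \emph{projective} resolution $\pi\colon X'\to X$. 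One then writes $F\simeq R\pi_*\pi^{-1}F$, applies the commutation $\Dpi_*\RH^S\simeq\RH^S R\pi_*$ (Theorem~\ref{L:A4}), and invokes the NCD computation (Lemmas~\ref{RHV} and~\ref{L:211}). Resolution of singularities is the missing ingredient in your outline; neither the ``secondary dévissage on~$\codim Y$'' nor the closed-immersion pushforward substitutes for it. (Incidentally, with the paper's normalization $\RH^S(F)=\rho'^{-1}\Rhom(\rho'_*F,\sho^{t,S,\sharp}_{\XS})[d_X]$ there is no built-in duality, so it is $\RH^S(j_!\,\cbbullet)$, not $\RH^S(Rj_*\,\cbbullet)$, that produces the Deligne extension.)

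The simplification: once regular holonomicity is known, the isomorphism $\pSol(\RH^S(F))\simeq F$ is \emph{not} obtained by a second dévissage and the five lemma. A direct adjunction computation (Lemma~\ref{L:comp}, resting on the identification $\rh_{\rho'_!\DXS}(\rho'_!\cO_{\XS},\cO_{\XS}^{t,S,\sharp})\simeq\rho'_*\pOS$ of Lemma~\ref{L.201}) gives $\pDR(\RH^S(F))\simeq\bD F$ for every $F\in\rD^\rb_\rc(\pOS)$; then biduality in $\rD^\rb_\cc(\pOS)$ yields $\pSol(\RH^S(F))\simeq F$. This sidesteps all the strictness and exactness issues you anticipate.
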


As a consequence, we obtain:

\begin{corollaryintro} \label{P:fin1}
Assume that $\dim S=1$ and let $F\in\rD^\rb_\cc(\pOS)$ be such that $F$ and~$\bD F$ are perverse. Set $\shm:=\RH^S(F)\in\rD^\rb_{\rhol}(\DXS)$. Then $\shm$ is concentrated in degree~$0$ and $\shh^0\shm$ is strict.
\end{corollaryintro}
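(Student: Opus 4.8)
The plan is to obtain the statement as a direct synthesis of Theorem~\ref{T:11} and Proposition~\ref{P:3.3}: the former identifies $\pSol(\RH^S(F))$ with $F$ and guarantees regular holonomicity, while the latter converts perversity of the solution and de~Rham complexes into strictness and concentration in degree~$0$ of the module. No new construction is needed; the corollary is essentially bookkeeping on top of these two results.

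Concretely, I would argue as follows. By Theorem~\ref{T:11}, $\shm=\RH^S(F)$ lies in $\rD^\rb_\rhol(\DXS)$, hence in $\rD^\rb_\hol(\DXS)$, so Proposition~\ref{P:3.3} applies to $\shm$; moreover Theorem~\ref{T:11} furnishes a functorial isomorphism $\pSol(\shm)\simeq F$ in $\rD^\rb_\cc(\pOS)$. Applying the duality functor $\bD$ to this isomorphism and using the identity $\pDR(\shm)=\bD\pSol(\shm)$ recalled in the introduction, one gets $\pDR(\shm)\simeq\bD F$ in $\rD^\rb_\cc(\pOS)$. Thus the hypothesis ``$F$ and $\bD F$ are perverse'' says precisely that $\pSol(\shm)$ and $\pDR(\shm)=\bD\pSol(\shm)$ are perverse, which is condition~(3) of Proposition~\ref{P:3.3}. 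The implication (3)$\Rightarrow$(1) of that proposition then yields that $\shm$ is concentrated in degree~$0$ and that $\shh^0\shm$ is strict, as asserted.

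The only point requiring attention is that the isomorphism $\pSol(\shm)\simeq F$ provided by Theorem~\ref{T:11} be compatible with duality, so that $\pDR(\shm)\simeq\bD F$; this is immediate from the functoriality in that theorem, since $\bD$ takes isomorphisms to isomorphisms on $\rD^\rb_\cc(\pOS)$. Beyond this there is no genuine difficulty: all the substance has already been absorbed into Theorem~\ref{T:11} — whose proof is where the hypothesis $\dim S=1$ enters, to produce $\sho_S$-acyclic bases on $S_\sa$ — and into the characterization of strictness by perversity in Proposition~\ref{P:3.3}. As a consistency check one may also note that the conclusion fits with Proposition~\ref{prop:FDFperverse}: applied to $F=\pSol(\shm)$, the latter predicts local freeness of $\shh^{-d_X}i^{-1}_\alpha F$ over the open strata, which is compatible with the strictness of $\shh^0\shm$ just obtained.
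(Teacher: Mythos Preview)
Your proof is correct and follows essentially the same approach as the paper. The only cosmetic difference is that the paper obtains $\pDR\shm\simeq\bD F$ by citing Lemma~\ref{L:comp} directly, whereas you deduce it from $\pSol\shm\simeq F$ together with the general identity $\pDR=\bD\pSol$; since the proof of $\pSol(\RH^S(F))\simeq F$ in Theorem~\ref{T:11} itself passes through Lemma~\ref{L:comp} and biduality, the two routes are interchangeable.
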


Let $\sha$ be a $\QQ$-vector subspace of $\RR\times\CC$. The category $\MTM(X)_\sha$ of \emph{$\sha$-mixed twistor $\cD$\nobreakdash-modules} with KMS exponents in $\sha$ on the complex manifold $X$, together with the corresponding functors (pushforward by a projective map, duality, localization, etc.) has been introduced by T.\,Mochizuki in \cite{Mochizuki11}. Roughly speaking  (\cf\cite[\S7.1.3]{Mochizuki11} for details), for pure objects of $\MTM(X):=\MTM(X)_{\RR\times\CC}$, the set $\sha$ bounds the possible asymptotic behaviour of the norms of sections with respect to the corresponding harmonic metric, as well as the possible monodromies on nearby cycles along functions of the associated holonomic $\cD_X$\nobreakdash-modules (formal monodromies in the wild case), via the two functions
\[
\sha\times S\ni(a,\alpha,s)\mto a+2\reel(\ov\alpha s)\in\RR,\quad (a,\alpha,s)\mto\alpha-as-\ov\alpha s^2\in\CC.
\]
Let us recall the main properties we use. A more detailed reminder is given in Section \ref{subsec:MTMreminder}. An object of $\MTM(X)_\sha$ is a $W$-filtered triple whose first two components are $W$-filtered sheaves on $X\times\CC$ and the third component is a sesquilinear pairing between their restriction to $X\times S^1$ satisfying a number of properties. For our purpose, we set $S=\CC^*$ and we restrict the first two components to $\XS$, which consist then of $W$-filtered $\DXS$\nobreakdash-modules (\cf \cite{Bibi01c,Mochizuki07,Mochizuki11}). For the sake of simplicity, we shall say that a $\DXS$-module \emph{$\shm$ underlies an $\sha$-mixed twistor $\cD$-module} if it is the second $\DXS$-module of the pair. This defines a subcategory of $\Mod_\rhol(\DXS)$ (morphisms are similarly induced by morphisms in $\MTM(X)$), which is not full however, but is endowed with relative proper direct image functor, a duality functor and a localization functor (\cf \cite{Mochizuki11}). These properties are essential to prove our main application.

\begin{theoremintro}\label{C:14}
Let us fix $\sha=\RR\times\{0\}\subset\RR\times\CC$. Assume that $\shm\in\Mod_\rhol(\DXS)$ underlies an $\sha$-mixed twistor $\cD$-module. Then there exists a canonical isomorphism
\[\tag{$*$}
\shm\simeq \RH^S(\pSol(\shm))
\]
which is functorial with respect to morphisms in $\Mod(\DXS)$ between objects $\shm,\shn$ of $\Mod_\rhol(\DXS)$ underlying $\sha$-mixed twistor $\cD$-modules. Moreover, we have a natural isomorphism
\[\tag{$**$}
\Hom_{\DXS}(\shm,\shn)\simeq\Hom_{\pOS}(\pSol\shn,\pSol\shm).
\]
\end{theoremintro}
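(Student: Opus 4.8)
The plan is to reduce $(**)$ formally to $(*)$ together with Theorem~\ref{T:11}, so that the whole burden lies on the canonical isomorphism $(*)$. First, let me record the favourable properties at our disposal. A $\DXS$-module underlying an $\sha$-mixed twistor $\cD$-module is strict, and so is the one underlying its dual, since duality preserves $\MTM(X)_\sha$ for $\sha=\RR\times\{0\}$ (\cf\cite{Mochizuki11}); hence, by Proposition~\ref{P:3.3}, both $\pSol(\shm)$ and $\pDR(\shm)=\bD\pSol(\shm)$ are perverse objects of $\rD^\rb_\cc(\pOS)$, and since $S=\CC^*$ is a curve, Corollary~\ref{P:fin1} applies and shows that $\RH^S(\pSol\shm)$ is concentrated in degree~$0$ with strict $\shh^0$. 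Granting $(*)$: combining $\pSol\circ\RH^S\simeq\id$ (Theorem~\ref{T:11}) with $\RH^S\circ\pSol\simeq\id$ on the subcategory of $\DXS$-modules underlying an $\sha$-mixed twistor $\cD$-module, both functorially, shows that $\pSol$ and $\RH^S$ restrict there to mutually quasi-inverse contravariant functors between that subcategory and the essential image of $\pSol$; in particular $\pSol$ is fully faithful on that subcategory, which is exactly $(**)$.

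It remains to establish $(*)$. I would first produce the \emph{canonical morphism} $\shm\to\RH^S(\pSol\shm)$ directly from the construction of $\RH^S$: expressing $\RH^S$ by means of the relative tempered holomorphic complex $\ot$, the tautological pairing between $\shm$ and $\pSol(\shm)$ refines to one with values in the tempered object and yields, by adjunction, a morphism $\shm\to\RH^S(\pSol\shm)$ which is manifestly functorial with respect to \emph{all} morphisms of $\Mod(\DXS)$ --- not merely those induced by $\MTM(X)$ --- which gives the asserted functoriality for free. The content of $(*)$ is that this morphism is an isomorphism whenever $\shm$ underlies an $\sha$-mixed twistor $\cD$-module.

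To prove that, I would argue by dévissage along Mochizuki's structure of $\MTM(X)_\sha$. The property ``the canonical morphism $\shm\to\RH^S(\pSol\shm)$ is an isomorphism'' is local on $X$, and I claim it is stable under the operations through which every object of $\MTM(X)_\sha$ is built, locally on $X$, from smooth ones: first, passage to a strict sub- or quotient-object (using the $W$-filtration, the exactness of $\pSol$ and of $\RH^S$ on the relevant strict regular holonomic objects --- guaranteed by Proposition~\ref{P:3.3} and Corollary~\ref{P:fin1} --- and the five lemma); second, proper (projective) direct image (using that $\pSol$ commutes with proper direct images, and that $\RH^S$ does so as well, the relative counterpart of Kashiwara's commutation theorem, so that $\RH^S\circ\pSol$ commutes with proper direct image); third, localization along a hypersurface $Y\subset X$ and minimal extension (using the compatibility of both $\pSol$ and $\RH^S$ with $j_!$, $Rj_*$ and the associated triangles, $j:(X\setminus Y)\times S\hto\XS$). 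The base case is that of a smooth object, whose underlying $\DXS$-module is the relative flat bundle of a tame harmonic bundle: then $\pSol(\shm)$ is a shifted $\pOS$-local system and $\RH^S$ applied to it returns that flat bundle, the statement being in this case the locally constant one, for which $\RH^S$ coincides with Deligne's canonical regular extension \cite{Deligne70}. Since, by Mochizuki's theorem \cite{Mochizuki11}, every $\shm$ underlying an object of $\MTM(X)_\sha$ is, Zariski-locally on $X$, obtained from smooth objects by finitely many such operations, stability then yields $(*)$.

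I expect the main obstacle to be the second and third steps of the dévissage: establishing the commutation of $\RH^S$ with proper direct images and its compatibility with the localization functors in the relative setting, while keeping everything inside $\rD^\rb_\rhol(\DXS)$. This is delicate precisely because, as recalled in the introduction, inverse images are not known to preserve regular relative holonomicity, so the dévissage must be organized so as to invoke only the functors that are under control --- proper direct image, duality, localization. A secondary point needing care is the exactness of $\RH^S$ on modules underlying $\sha$-mixed twistor $\cD$-modules used in the first step, which rests on Corollary~\ref{P:fin1} together with the perversity of $\pSol$ on such objects.
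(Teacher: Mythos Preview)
Your overall dévissage strategy has the right shape, but there is a genuine gap at the very first step: the construction of the canonical morphism $\shm\to\RH^S(\pSol\shm)$. You claim that ``the tautological pairing between $\shm$ and $\pSol(\shm)$ refines to one with values in the tempered object''; however, the only natural morphism available a priori is \eqref{E:t}, which goes from $\sho^{t,S,\sharp}_{\XS}$ to $R\rho'_*\cO_{\XS}$, i.e., in the \emph{opposite} direction. Lifting the evaluation $\shm\otimes^L\Sol\shm\to\cO_{\XS}$ to the tempered sheaf would amount to knowing that sections of $\shm$ pair with solutions to give tempered functions along the singular locus, which is precisely the regularity statement one is trying to prove; it is not an input. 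Without this morphism your five-lemma dévissage has nothing to act on. (A second, smaller issue: your exactness claim for $\RH^S$ on $\Mod_{\rhol,\MTM_\sha}$ needs more than Corollary~\ref{P:fin1}, which only controls each term separately.)

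The paper circumvents this by never constructing such a morphism. Instead it uses \eqref{E:Theta2} to produce, for free, the bi-functorial comparison morphism~\eqref{eq:comparaison}
\[
\Rhom_{\DXS}(\shm,\RH^S(\pSol\shn))\to\Rhom_{\pOS}(\Sol\shn,\Sol\shm),
\]
and proves \emph{this} is an isomorphism; the iso $(*)$ is then \emph{defined} as the preimage of $\id_{\Sol\shm}$, and $(**)$ follows immediately. To prove \eqref{eq:comparaison} is an isomorphism, the paper shows both sides are $S$-$\C$-constructible (the left-hand side via Proposition~\ref{P:Theta5}, which exploits that $\bD'\shm\boxtimes\shn$ stays in $\MTM_\sha$ so that pullback to the diagonal remains holonomic, Lemma~\ref{lem:pullbackrholMTM}), and then applies $Li^*_{s_o}$ to reduce, via Proposition~\ref{P:Lis} and \cite[Prop.\,2.1]{MF-S12}, to Kashiwara's absolute result \cite[Cor.\,8.6]{Kashiwara84}. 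The induction (Lemma~\ref{lem:abc}) is on $\dim\supp\shm$, not along the $W$-filtration: one localizes along a hypersurface~$Y$ (here $\sha=\RR\times\{0\}$ is used so that localization in $\MTM_\sha$ is the naive $\shm(*Y)$, Lemma~\ref{lem:locrholMTM}), resolves by a projective $\pi:X'\to X$ with $\pi^{-1}(Y)$ normal crossing so that the pullback is of D-type and Lemma~\ref{RHV} applies, and pushes back down via Theorems~\ref{L:A4} and~\ref{T:hol}\eqref{T:hol3}.
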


\begin{remintro}\label{rem:intro}
More generally, if $\sha\subset\RR\times\CC$ is finite-dimensional over $\QQ$, the statement of Theorem \ref{C:14} holds for all $\shm$ underlying an $\sha$-mixed twistor $\cD$-module away from the subset $S_0\subset S$ defined by the equations $\alpha-as-\ov\alpha s^2\in\ZZ$ for $(a,\alpha)\in\sha$, $\alpha\neq0$. For a given $\shm$ and locally on~$X$, only a finite number of such equations are needed to define the corresponding $S_0$, which is thus discrete in $S$ (\cf Remark \ref{rem:proviso} below).
\end{remintro}

We do not know how to characterize the essential image of the category of regular holonomic $\cD_{\XS/S}$-modules underlying an $\sha$-mixed twistor $\shd$-module by the functor $\pSol$, although we know that, for such a module $\shm$, $\pSol(\shm)$ and its dual $\pDR(\shm)$ are perverse in $\rD^\rb_\cc(\pOS)$.

This paper is organized as follows. In Section \ref{S:1}, we prove the complements to~\cite{MF-S12}, that is, Propositions \ref{prop:FDFperverse} and \ref{P:3.3}. We establish in Theorem \ref{th:Deligneext} and Corollary \ref{cor:Deligneext} the generalization of Deligne's results on the extension of a relative holomorphic connexion on the complementary of a normal crossing divisor. The construction of $\RH^S$, explained in Section \ref{sec:3}, is based on the notion of relative tempered distributions and holomorphic functions introduced in \cite{MF-P14}, which form subanalytic sheaves in the relative subanalytic site. Note that the constraint to the case $\dim S=1$ is not inconvenient for the application since mixed twistor $\shd$-modules satisfy this condition. In the locally free case, the above extension is also obtained using the functor $\RH^S$ as proved in Lemma~\ref{RHV}. Moreover, in this case, it provides an equivalence of categories (Theorem~\ref{T:D-T}). We obtain Theorem \ref{T:11} as a consequence of Lemma \ref{RHV}, and Theorem \ref{C:14} is proved in Section~\ref{subsec:C14} by reducing to \cite[Cor.\,8.6]{Kashiwara84}. In the appendix we collect various results which are essential for the remaining part of the paper.

\subsubsection*{Acknowledgements}
This work has benefited from discussions with Andrea d'Agnolo, Masaki Kashiwara, Yves Laurent and Luca Prelli, whom we warmly thank. We also thank Daniel Barlet for having kindly provided us with a proof of Lemma \ref{fibers}. We finally thank the referee for useful suggestions leading to an improvement of the presentation of the article.

\section{Some complementary results to \texorpdfstring{\cite{MF-S12}}{MFS12}}\label{S:1}

\subsection{Notation and preliminary results}
Throughout this work $X$ and $S$, unless specified, will denote complex manifolds and $p_X:\XS\to S$ will denote the projection. We will set $d_X:=\dim X$, $d_S:=\dim S$, and for any complex space $Z$, we will set similarly $d_Z=\dim Z$. We will often write $p$ instead of $p_X$ when there is no risk of ambiguity. We say that a $\pOS$-module is \emph{strict} if it is $\pOS$-flat. Given $s_o\in S$, we denote by $Li^*_{s_o} (\cbbullet)$ the derived functor on $\rD^\rb(\pOS)$ of
$$F\to F\otimes_{\pOS}p^{-1}(\sho_S/\mathfrak{m}_{s_o}),$$ where $\mathfrak{m}_{s_o}$ denotes the maximal ideal of holomorphic functions on $S$ vanishing at~$s_o$. The following results are straightforward.

\begin{lemma}\label{L:s}
Let $N\in\rD^{\geq0} (\pOS)$ and let $s_o\in S$. Then
$Li^*_{s_o}(N)\in \rD^{\geq-d_S}(X)$. If moreover, for any $k$, $\shh^k(N)$ is strict then
$Li^*_{s_o}(N)\in \rD^{\geq 0}(X)$.
\end{lemma}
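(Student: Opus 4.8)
The plan is to analyze the two assertions separately, both by means of a local free resolution. First I would fix $s_o\in S$ and choose local coordinates $(s_1,\dots,s_{d_S})$ on $S$ centered at $s_o$, so that $\mathfrak{m}_{s_o}$ is generated by $s_1,\dots,s_{d_S}$ and the Koszul complex $K_\cbbullet$ on $(s_1,\dots,s_{d_S})$ furnishes a free resolution of $p^{-1}(\sho_S/\mathfrak{m}_{s_o})$ over $\pOS$, concentrated in degrees $-d_S,\dots,0$. For a complex $N\in\rD^{\geq0}(\pOS)$, I may replace $N$ by a complex of $\pOS$-modules sitting in degrees $\geq 0$ (taking a bounded-below $\pOS$-flat, or just any, representative and truncating), and then $Li^*_{s_o}(N)$ is computed by the total complex of $N\otimes_{\pOS} K_\cbbullet$ (or, equivalently, by $\mathrm{Tot}$ of $N^p\otimes K_q$). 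Since $K_\cbbullet$ lives in degrees $[-d_S,0]$ and $N$ in degrees $\geq0$, the total complex lives in degrees $\geq -d_S$, which gives $Li^*_{s_o}(N)\in\rD^{\geq -d_S}(X)$. This is the first assertion and it is essentially bookkeeping on the amplitude of the Koszul resolution.

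For the second assertion I would argue that strictness of every $\shh^k(N)$ forces the negative cohomology of $Li^*_{s_o}(N)$ to vanish. The cleanest route is to reduce to the case where $N$ itself is a single strict $\pOS$-module placed in one degree $k\geq0$, by the hypercohomology spectral sequence $E_2^{p,q}=\shh^p\bigl(Li^*_{s_o}(\shh^q(N))\bigr)\Rightarrow \shh^{p+q}(Li^*_{s_o}(N))$; here I use that each $\shh^q(N)$ lives in degree $q\geq0$, so a contribution to a total degree $<0$ would require some $\shh^p(Li^*_{s_o}(\shh^q(N)))$ with $p<-q\leq 0$, i.e.\ with $p<0$. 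Thus it suffices to show: if $\she$ is a strict $\pOS$-module, then $Li^*_{s_o}(\she)\in\rD^{\geq0}(X)$, i.e.\ $\mathcal{T}\!or^{\pOS}_{j}(\she,p^{-1}(\sho_S/\mathfrak{m}_{s_o}))=0$ for $j>0$. But $\mathcal{T}\!or$ over $\pOS=p^{-1}\sho_S$ is computed fibrewise over $S$, and $\sho_{S,s_o}$ is a regular local ring in which $(s_1,\dots,s_{d_S})$ is a regular sequence, so these $\mathcal{T}\!or$'s are the homology of the Koszul complex $K_\cbbullet\otimes_{\pOS}\she$; flatness of $\she$ over $\pOS$ makes $K_\cbbullet\otimes_{\pOS}\she$ acyclic in negative degrees, i.e.\ the higher $\mathcal{T}\!or$'s vanish. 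Running the spectral sequence back, $\shh^m(Li^*_{s_o}(N))=0$ for $m<0$, which is the claim.

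The main (though modest) obstacle is handling the passage from an arbitrary object $N\in\rD^{\geq0}(\pOS)$ to a concrete representative on which the Koszul tensor product computes $Li^*_{s_o}$: one must make sure the chosen representative is adapted to the functor $\cbbullet\otimes_{\pOS}p^{-1}(\sho_S/\mathfrak{m}_{s_o})$, which is why I would invoke the flatness of the Koszul complex on the other side and compute $Li^*_{s_o}(N)$ as $\mathrm{Tot}(N\otimes_{\pOS}K_\cbbullet)$ directly, bypassing the need for a flat resolution of $N$. Everything else — the amplitude estimate and the vanishing of higher $\mathcal{T}\!or$ against a flat module — is routine once the local Koszul picture is in place, and the spectral sequence argument is the standard device for promoting the single-module statement to the derived-category statement. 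Hence the label ``straightforward'' in the text is justified: the content is entirely the finite amplitude of the Koszul resolution of $\sho_S/\mathfrak m_{s_o}$ together with $\pOS$-flatness.
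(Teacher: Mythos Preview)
Your proposal is correct and follows essentially the same route as the paper, which merely records the results as ``straightforward'': the first assertion comes from the fact that $\sho_S/\mathfrak{m}_{s_o}$ admits locally a flat (Koszul) resolution of length $d_S$, and the second from the observation that strictness of each $\shh^k(N)$ forces $\shh^k(Li^*_{s_o}N)\simeq i^*_{s_o}\shh^k(N)$, which is exactly the degeneration of your hypercohomology spectral sequence. You have simply spelled out the mechanics (Koszul complex, spectral sequence, vanishing of higher $\mathcal{T}\!or$ against a flat module) that the paper leaves implicit.
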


\begin{lemma}\label{RGamma}
For any locally closed subset $Z$ of $\XS$, for any $F\in \rD^\rb(\pOS)$ and for any $s_o\in S$, we have $R\Gamma_Z(Li^*_{s_o}(F))\simeq Li^*_{s_o}(R\Gamma_Z(F))$.
\end{lemma}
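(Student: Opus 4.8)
The plan is to reduce the statement to the fact that $R\Gamma_Z$, being essentially a sheaf operation over the constant sheaf $\CC$, commutes with $\pOS$-linear operations of finite homological type, and to observe that $Li^*_{s_o}$ is such an operation. Recall that $R\Gamma_Z(\cbbullet)\simeq R\shhom(\CC_Z,\cbbullet)$, where $\CC_Z$ denotes the extension by zero of the constant sheaf on $Z$; in particular $R\Gamma_Z$ is a triangulated functor, and for $F,G\in\rD^\rb(\pOS)$ there is a canonical morphism
\[
\eta_G:R\Gamma_Z(F)\otimes^{L}_{\pOS}G\longrightarrow R\Gamma_Z\bigl(F\otimes^{L}_{\pOS}G\bigr),
\]
functorial in $F$ and $G$: one sends a local section $h$ of $\Gamma_Z(F)$, whose support is contained in $Z$, together with a local section $g$ of $G$, to $h\otimes g$, which again has support in $Z$, and then derives. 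For $G=\pOS$ both sides equal $R\Gamma_Z(F)$ and $\eta_{\pOS}$ is an isomorphism.

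Next I would use that, since $S$ is smooth, $\sho_S/\mathfrak m_{s_o}$ is a perfect $\sho_S$-module: locally near $s_o$, the Koszul complex of a coordinate system $(t_1,\dots,t_{d_S})$ vanishing at $s_o$ is a finite free resolution of it. Applying the exact functor $p^{-1}$, we get that $p^{-1}(\sho_S/\mathfrak m_{s_o})$ admits, locally on $S$, a finite resolution $L^{\sbullet}$ by free $\pOS$-modules of finite rank. Since $Li^*_{s_o}(F)=F\otimes^{L}_{\pOS}p^{-1}(\sho_S/\mathfrak m_{s_o})$, it remains to prove that $\eta_{p^{-1}(\sho_S/\mathfrak m_{s_o})}$ is an isomorphism. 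This is local on $S$, so we may replace $p^{-1}(\sho_S/\mathfrak m_{s_o})$ by $L^{\sbullet}$. A straightforward induction on the length of $L^{\sbullet}$, using the stupid-truncation distinguished triangles, the compatibility of $\eta$ with these triangles, the fact that $R\Gamma_Z$ commutes with shifts and finite direct sums, and the five lemma in the triangulated setting, then shows that $\eta_{L^{\sbullet}}$ is an isomorphism. Because the morphisms $\eta_G$ are functorial, these local isomorphisms are the restrictions of the global morphism $\eta_{p^{-1}(\sho_S/\mathfrak m_{s_o})}$, which is therefore a global isomorphism; this is precisely the asserted isomorphism $R\Gamma_Z(Li^*_{s_o}(F))\simeq Li^*_{s_o}(R\Gamma_Z(F))$.

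I do not anticipate a real obstacle: the argument is formal once $R\Gamma_Z$ is written as $R\shhom(\CC_Z,\cbbullet)$. The only points needing a little attention are the construction of the natural transformation $\eta_G$ at the derived level (so that the five-lemma step and the gluing of the local isomorphisms are legitimate) and the homological finiteness of $p^{-1}(\sho_S/\mathfrak m_{s_o})$ over $\pOS$, which is where the smoothness of $S$ enters. One may alternatively bypass the discussion of $\eta_G$ by arguing entirely locally on $S$, where $Li^*_{s_o}(F)=\mathrm{Tot}(F\otimes_{\pOS}L^{\sbullet})$ and the same dévissage along the stupid filtration of $L^{\sbullet}$ identifies $R\Gamma_Z$ of this total complex with $\mathrm{Tot}(R\Gamma_Z(F)\otimes_{\pOS}L^{\sbullet})=Li^*_{s_o}(R\Gamma_Z(F))$.
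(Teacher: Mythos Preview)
Your argument is correct. Both your proof and the paper's rest on the same underlying fact, namely that $Li^*_{s_o}$ is computed by tensoring with a bounded complex of finite free $\pOS$-modules, and that $R\Gamma_Z$ commutes with cones and finite direct sums; the difference is only in execution. The paper takes an injective resolution $F^{\sbullet}$ of $F$ and, after reducing by induction to $\dim S=1$, identifies both sides directly with the two-term complex $\Gamma_Z(F^{\sbullet})\To{s}\Gamma_Z(F^{\sbullet})$. You instead resolve $p^{-1}(\sho_S/\mathfrak m_{s_o})$ by the Koszul complex and run a d\'evissage via the natural transformation $\eta_G$. Your route has the mild advantage of handling arbitrary $\dim S$ in one stroke and of making explicit the functorial comparison morphism (so the resulting isomorphism is manifestly canonical), while the paper's route is shorter once one is willing to work coordinate by coordinate.
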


We shall also need the following result which is contained in the proof of \cite[Prop.\,2.2]{MF-S12}:

\begin{proposition}\label{P:1}
Let $F$ belong to $\rD^\rb(\pOS)$ and assume that for every $(x_o,s_o)\in \XS$ and for every $j$, $\shh^j(F)_{(x_o,s_o)}$ is finitely generated over ${\sho_S}_{s_o}$. Assume that, for a given~$j$, $\shh^j(Li^*_{s_o} (F))=0$ for any $s_o\in S$. Then $\shh^j(F)=0$. In particular, if, for a given integer $k$ and for every $s_o$, $Li^*_{s_o} (F)\in \rD^{\geq k}(X)$ (respectively $Li^*_{s_o} (F)\in \rD^{\leq k}(X)$), then $F\in \rD^{\geq k}(\XS)$ (respectively $F\in \rD^{\leq k}(\XS)$).
\end{proposition}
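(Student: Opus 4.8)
The statement being local on $\XS$, I fix a point $(x_o,s_o)$ and reduce to showing that the stalk $\shh^j(F)_{(x_o,s_o)}$ vanishes; by hypothesis it is a finitely generated module over the regular local ring $R\defin\sho_{S,s_o}$. The plan is a dévissage on $d_S=\dim S$. If $d_S=0$ there is nothing to prove, since then $\sho_S$ is locally $\CC$, $\mathfrak m_{s_o}=0$, and $Li^*_{s_o}(F)\simeq F$ near $X\times\{s_o\}$, so the conclusion is the assumption. The point worth stressing is that one should \emph{not} try to recover $\shh^j(F)$ from the single complex $Li^*_{s_o}(F)$ by one spectral sequence: the $R$-module $\sho_S/\mathfrak m_{s_o}$ has homological dimension $d_S$, so the relevant $\mathrm{Tor}$ contributions occupy up to $d_S$ degrees and do not isolate degree $j$; instead one removes one coordinate direction at a time.

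For the inductive step, assume $d_S\geq1$ and the proposition known over base manifolds of dimension $<d_S$. After shrinking $S$ around $s_o$, choose a holomorphic coordinate $s_1$ on $S$ vanishing at $s_o$ and set $S'=\{s_1=0\}$, a complex manifold of dimension $d_S-1$, with closed inclusion $\iota\colon X\times S'\hto\XS$ and projection $q\colon X\times S'\to S'$. As $s_1$ is a non-zero-divisor in $\sho_S$, the derived restriction $L\iota^*F$ of $F$ to $X\times S'$ is represented by the cone of $F\To{s_1}F$; in particular $L\iota^*F\in\rD^\rb(q^{-1}\sho_{S'})$ and there is a distinguished triangle $F\To{s_1}F\to L\iota^*F\to F[1]$. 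Its long exact cohomology sequence produces short exact sequences
\[
0 \to \shh^k(F)/s_1\shh^k(F) \to \shh^k(L\iota^*F) \to \ker\bigl(s_1\mid\shh^{k+1}(F)\bigr) \to 0.
\]
The outer terms are annihilated by $s_1$ and are, respectively, a quotient of $\shh^k(F)$ and a submodule of $\shh^{k+1}(F)$; hence their stalks are finitely generated over $\sho_S$, therefore over $\sho_{S'}$, and $L\iota^*F$ satisfies the assumptions of the proposition on $X\times S'$. Moreover, transitivity of the derived tensor product, together with the identification $\sho_S/\mathfrak m_{s'}\simeq\sho_{S'}/\mathfrak m_{s'}$ for $s'\in S'$, gives $Li^*_{s'}(L\iota^*F)\simeq Li^*_{s'}(F)$, so $\shh^j\bigl(Li^*_{s'}(L\iota^*F)\bigr)=0$ for every $s'\in S'$; the induction hypothesis then yields $\shh^j(L\iota^*F)=0$.

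Taking $k=j$ in the displayed sequence now gives $\shh^j(F)_{(x_o,s_o)} = s_1\,\shh^j(F)_{(x_o,s_o)}$. As this module is finitely generated over the local ring $R$ and $s_1\in\mathfrak m_{s_o}$, Nakayama's lemma forces $\shh^j(F)_{(x_o,s_o)}=0$; since $(x_o,s_o)$ was arbitrary, $\shh^j(F)=0$. The two ``in particular'' statements follow at once: if $Li^*_{s_o}(F)\in\rD^{\geq k}(X)$ (resp.\ $\rD^{\leq k}(X)$) for all $s_o$, then $\shh^j(Li^*_{s_o}F)=0$ for all $s_o$ and all $j<k$ (resp.\ $j>k$), whence $\shh^j(F)=0$ for such $j$, i.e.\ $F\in\rD^{\geq k}(\XS)$ (resp.\ $\rD^{\leq k}(\XS)$). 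The only genuinely delicate steps of this plan are the two compatibilities invoked in the inductive step --- that $L\iota^*F$ again satisfies the finite-generation hypothesis, and that $L\iota^*$ followed by restriction to a point of $S'$ recovers the functor $Li^*_{s'}$ --- and both are routine; everything else is formal dévissage combined with Nakayama's lemma.
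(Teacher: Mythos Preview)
Your proof is correct: the induction on $d_S$, slicing by a coordinate hyperplane $S'=\{s_1=0\}$, checking that $L\iota^*F$ inherits the finite-generation and fibrewise-vanishing hypotheses, and finishing with Nakayama, is exactly the dévissage the paper invokes (it cites the proof of \cite[Prop.\,2.2]{MF-S12} rather than reproducing it, but the same inductive scheme is used verbatim in the proofs of the corollaries immediately following Proposition~\ref{P:1}). Nothing is missing; the only point worth a word of care is that Noetherianity of $\sho_{S,s'}$ is what makes $\ker(s_1\mid\shh^{k+1}(F))$ finitely generated, which you implicitly use.
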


\subsection{$S$-$\C$-constructibility and perversity}\label{subsec:Sconstructible}

We refer to the appendix for the notion of \emph{$S$-locally constant} sheaf on $\XS$. We have defined in \cite{MF-S12} the categories of $S$-$\R$-constructible sheaves (\resp $S$-$\C$-constructible sheaves) and the corresponding derived categories $\rD^\rb_\rc(\pOS)$ (\resp $\rD^\rb_\cc(\pOS)$). For an object $F$ of $\rD^\rb(\pOS)$, the condition that it is an object of $\rD^\rb_\rc(\pOS)$ is a local property on $X$, since it is characterized by the property that the microsupport of $F$ is contained in $\Lambda\times(T^*S)$ for some closed $\RR_+^*$-conic Lagrangean subanalytic subset $\Lambda$ of $T^*X$. Similarly, the condition that it is an object of $\rD^\rb_\cc(\pOS)$ is local, since it consists in adding that the microsupport is $\CC^\times$-conic (\cf \cite[Prop.\,2.5 \& Def.\,2.19]{MF-S12}).

The category $\rD^\rb_\cc(\pOS)$ is endowed with a natural t-structure, which however is not preserved by duality, as seen by considering a skyscraper $\pOS$-module on $\XS$. In this article, the adjective ``perverse'' refers to this t-structure. Recall (\cf \cite[Lem.\,2.5]{MF-S12}) that the category $\pD^{\leq0}_\cc(\pOS)$ \resp $\pD^{\geq0}_\cc(\pOS)$ can be defined as the full subcategory of $\rD^\rb_\cc(\pOS)$ whose objects are the $S$-$\C$-constructible bounded complexes~$F$ such that, for some adapted $\mu$-stratification $(X_\alpha)$, denoting by \hbox{$i_\alpha:X_\alpha\hto X$} the inclusion,
\begin{align*}
\forall\alpha\text{ and }\forall j>-\dim(X_\alpha),\quad\shh^j(i^{-1}_\alpha F)&=0,\\
\tag*{\resp}
\forall \alpha\text{ and }\forall j<-\dim(X_\alpha),\quad\shh^j(i^{!}_\alpha F)&=0.
\end{align*}
As usual, an object $F$ of $\rD^\rb_\cc(\pOS)$ is called \emph{perverse} if it is an object of both $\pD^{\leq0}_\cc(\pOS)$ and $\pD^{\geq0}_\cc(\pOS)$. There is a natural duality functor on $\rD^\rb_\rc(\pOS)$ and on $\rD^\rb_\cc(\pOS)$ (\cf\cite[Prop.\,2.23]{MF-S12}), but in general it does not exchange $\pD^{\leq0}_\cc(\pOS)$ and $\pD^{\geq0}_\cc(\pOS)$ and therefore does not preserve the heart of the t-structure.

\begin{lemma}\label{perv}
For a given object $F\in \pD^\rb_\cc(\pOS)$,
$F$ and $\bD F$ are perverse if and only if for all $s_o\in S$, $Li^*_{s_o}(F)$ is perverse regarded as an object of $\rD^\rb_\cc(\CC_X)$.
\end{lemma}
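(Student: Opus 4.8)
The plan is to deduce Lemma~\ref{perv} from the characterization of the t-structure recalled just above, by commuting the functors $i_\alpha^{-1}$, $i_\alpha^!$ with $Li_{s_o}^*$ and invoking Proposition~\ref{P:1}. First I would fix an adapted $\mu$-stratification $(X_\alpha)$ of $X$ for $F$; since $Li_{s_o}^*$ is induced by pull-back along $X\simeq X\times\{s_o\}\hookrightarrow\XS$, the same stratification (pulled back to $X\times\{s_o\}$) is adapted to $Li_{s_o}^*(F)$ regarded as an object of $\rD^\rb_\cc(\CC_X)$. The key compatibilities are $i_\alpha^{-1}Li_{s_o}^*(F)\simeq Li_{s_o}^*(i_\alpha^{-1}F)$, which is immediate since $i_\alpha^{-1}$ is exact and commutes with the derived tensor product $\otimes_{\pOS}^{\bL}p^{-1}(\sho_S/\mathfrak m_{s_o})$, and the dual compatibility $i_\alpha^!Li_{s_o}^*(F)\simeq Li_{s_o}^*(i_\alpha^!F)$; the latter follows from Lemma~\ref{RGamma} applied to the locally closed subsets $X_\alpha\times S$, together with the fact that $i_\alpha^!=i_\alpha^{-1}R\Gamma_{X_\alpha\times S}$ up to the appropriate shift coming from the (real) codimension—here one uses that the normal directions are the same for $X_\alpha\times S$ in $\XS$ as for $X_\alpha$ in $X$, so no extra shift in the $S$-directions appears.

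Granting these two isomorphisms, the proof of the "only if" direction is formal: if $F,\bD F$ are perverse then, by the displayed characterization, for every $\alpha$ we have $\shh^j(i_\alpha^{-1}F)=0$ for $j>-\dim X_\alpha$ and $\shh^j(i_\alpha^!F)=0$ for $j<-\dim X_\alpha$ (the latter using $\bD F$ perverse); applying $Li_{s_o}^*$ does not create cohomology in degrees $>$ those of the source only if we have the strictness needed, but here we only need to \emph{lose} nothing below, which is automatic: $Li_{s_o}^*$ of a complex concentrated in degrees $\leq -\dim X_\alpha$ stays in degrees $\leq -\dim X_\alpha$ since $\sho_S/\mathfrak m_{s_o}$ has tor-amplitude in $[-d_S,0]$ and we work with the left-derived functor—wait, this would only give degrees $\leq -\dim X_\alpha$, which is exactly the bound $\shh^j(i_\alpha^{-1}Li_{s_o}^*F)=0$ for $j>-\dim X_\alpha$. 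Symmetrically, for the co-support condition one uses that $Li_{s_o}^*$ of a complex in degrees $\geq -\dim X_\alpha$ lands in degrees $\geq -\dim X_\alpha - d_S$ in general, but the precise vanishing $\shh^j(i_\alpha^!Li_{s_o}^*F)=0$ for $j<-\dim X_\alpha$ will follow provided we also know the dual statement for $\bD(Li_{s_o}^*F)$; and here the crucial input is that duality commutes with $Li_{s_o}^*$ on $\rD^\rb_\cc$—this is where one must be careful, and I would either quote the relative duality results of \cite{MF-S12} or argue that $\bD Li_{s_o}^* \simeq Li_{s_o}^* \bD[\text{shift}]$ via the standard biduality and the explicit form of the relative dualizing complex $p^{-1}\omega_S$. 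Running the argument for $\bD F$ in place of $F$ then yields that $\bD(Li_{s_o}^*F)\simeq Li_{s_o}^*(\bD F)$ is perverse too, so $Li_{s_o}^*F$ is perverse in $\rD^\rb_\cc(\CC_X)$ for every $s_o$.

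For the converse, suppose $Li_{s_o}^*(F)$ is perverse for all $s_o\in S$. Using again $i_\alpha^{-1}Li_{s_o}^*F\simeq Li_{s_o}^*(i_\alpha^{-1}F)$, perversity of $Li_{s_o}^*F$ gives $\shh^j\bigl(Li_{s_o}^*(i_\alpha^{-1}F)\bigr)=0$ for all $j>-\dim X_\alpha$ and all $s_o$. The sheaf $G:=i_\alpha^{-1}F\in\rD^\rb_\cc(\pOS\!\restriction_{X_\alpha\times S})$ has stalks finitely generated over $\sho_{S,s_o}$ (this is part of $S$-$\CC$-constructibility), so Proposition~\ref{P:1} applies and forces $\shh^j(i_\alpha^{-1}F)=0$ for $j>-\dim X_\alpha$, i.e.\ $F\in\pD^{\leq0}_\cc(\pOS)$. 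Symmetrically, applying the same argument to $i_\alpha^!F$ (using $i_\alpha^!Li_{s_o}^*\simeq Li_{s_o}^*i_\alpha^!$ and the perversity of $Li_{s_o}^*F$ in the co-support form) together with Proposition~\ref{P:1} gives $\shh^j(i_\alpha^!F)=0$ for $j<-\dim X_\alpha$, i.e.\ $F\in\pD^{\geq0}_\cc(\pOS)$; hence $F$ is perverse, and the same reasoning applied to $\bD F$—whose pull-backs $Li_{s_o}^*(\bD F)\simeq\bD(Li_{s_o}^*F)$ are again perverse—shows $\bD F$ is perverse.

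I expect the main obstacle to be the commutation of $i_\alpha^!$ (equivalently $R\Gamma_{X_\alpha\times S}$) with $Li_{s_o}^*$ \emph{without introducing spurious shifts from the $S$-directions}, and the parallel commutation of the relative duality functor $\bD$ with $Li_{s_o}^*$; both are "well-known" but need to be pinned down precisely in the relative setting, and Lemma~\ref{RGamma} together with the explicit description of the relative dualizing complex is exactly what makes this go through. Once those compatibilities are in place, the rest is a formal shuffling of the $t$-structure inequalities, with Proposition~\ref{P:1} doing the descent from "true after every $Li_{s_o}^*$" to "true on $\XS$".
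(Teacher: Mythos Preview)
Your argument is correct and rests on the same lever as the paper---Proposition~\ref{P:1} for the passage from ``true after every $Li^*_{s_o}$'' to ``true on $\XS$''---but you do more work than the paper. The paper's proof is two lines: it only establishes the equivalence $F\in\pD^{\leq0}_\cc(\pOS)\iff Li^*_{s_o}(F)\in\pD^{\leq0}_\cc(\CC_X)$ for all $s_o$ (forward direction by right $t$-exactness of $Li^*_{s_o}$, converse by Proposition~\ref{P:1}), and then invokes \cite[Prop.\,2.28]{MF-S12}, which is exactly the compatibility $\bD\circ Li^*_{s_o}\simeq Li^*_{s_o}\circ\bD$ you yourself flag as ``the crucial input''. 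Since in the absolute case $G$ is perverse iff both $G$ and $\bD G$ lie in $\pD^{\leq0}$, the full lemma follows from two applications of the $\pD^{\leq0}$-equivalence (one to $F$, one to $\bD F$).

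Your direct treatment of the cosupport condition via $i_\alpha^!Li^*_{s_o}\simeq Li^*_{s_o}i_\alpha^!$ (from Lemma~\ref{RGamma}) is a valid alternative for showing $F\in\pD^{\geq0}$, but it is redundant: you still need the duality compatibility to handle $\bD F$, and once you have it the $i_\alpha^!$ route is superfluous. Your ``only if'' paragraph actually illustrates this---you begin the cosupport bound directly, notice it loses $d_S$ degrees, and then pivot to the duality argument, which is precisely what the paper does from the outset.
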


\begin{proof}
If $F\in \pD^{\leq 0}_\cc(\pOS)$ then $Li^*_{s_o}(F)\in \pD^{\leq 0}_\cc(\CC_X)$ and the converse holds by Proposition \ref{P:1}. The assertion then follows by \cite[Prop.\,2.28]{MF-S12}.
\end{proof}

\begin{proof}[Proof of Proposition \ref{prop:FDFperverse}]
For the first statement we note that, according to the assumption and the definition of t-structure, when $X_\alpha$ is an open stratum, $i^{-1}_{\alpha} F$ is concentrated in degree $-d_X$ and $i^{-1}_{\alpha}\shh^{-d_X}F$ is a $\pOS$-coherent module. On the other hand, according to Lemma~\ref{perv}, for any $s_o\in S$, $Li^*_{s_o} F$ is perverse, hence it is concentrated is degrees $\geq -d_X$. Recall that a coherent $\sho_S$-module $F_{x_o}$ is locally free if and only if $Li^*_{s_o} F_{x_o}$ is concentrated in degree zero for every $s_o\in S$. It follows that $i^{-1}_{\alpha}\shh^{-d_X} F$ is locally free.

Conversely, since $Li^*_{s_o}$ commutes with $j_!$, Lemma \ref{perv} implies that $j_!F[d_X]$ and its dual are perverse. On the other hand, we have
$$\bD(j_!F[d_X])\simeq R j_{\ast} R\shhom_{\pOS}(F,\pOS)[d_X].$$
Since $F$ is locally free, $\bD'(F):=R\shhom_{\pOS}(F,\pOS)$ is concentrated in degree zero and $\shh^0 \bD'(F)$ is locally free. Thus the statement follows by biduality (\cf \cite[Prop.\,2.23]{MF-S12}).
\end{proof}

\subsection{Coherent \texorpdfstring{$\DXS$}{DXS/S}-modules}
We will use a notation similar to that of \cite{Kashiwara03} for the functors on $\cD$-modules, namely $\Df_*$ denotes the pushforward by a map $f$, $\Df_!$ the ``proper pushforward'' and $\Df^*$ the pull-back (they are denoted respectively by $\DD f_*$, $\DD f_!$ and $\DD f^*$ in \cite{Kashiwara03}, but we try to avoid confusion with the duality functor).

Let $i:Z\hto X$ be the inclusion of a closed submanifold in $X$. The following adaptation of Kashiwara's result (\cf\eg \cite[\S4.8]{Kashiwara03}) is straightforward.

\begin{theorem}[Kashiwara's equivalence]\label{th:kashiwaraequivalence}
The pushforward functor $\Di_*$ induces an equivalence between the category of coherent $\DZS$-modules and that of coherent $\DXS$-modules supported on $Z\times S$. A quasi-inverse functor is $\shh^{-\codim Z}\Di^*$, and $\shh^j \Di^*\!=\!0$ for $j\neq-\codim Z$ on objects of the latter category.
\end{theorem}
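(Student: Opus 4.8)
The plan is to reduce to a local normal form and then transpose Kashiwara's classical argument. Since the assertion --- that $\Di_*$ be fully faithful and essentially surjective, that $\shh^{-\codim Z}\Di^*$ be a quasi-inverse, and that $\shh^j\Di^*=0$ for $j\neq-\codim Z$ on modules supported on $Z\times S$ --- is local on $X$ and on $S$, I would first pass to submanifold coordinates in which $X=Z\times\CC^r$, $Z=\{t=0\}$ with $t=(t_1,\dots,t_r)$ and $r=\codim Z$. The point that makes the relative case no harder than the absolute one is that $\DXS$ contains no $S$-derivation, so every $\cD$-module operation along $X$ is $\pOS$-linear; the proof of \cite[\S4.8]{Kashiwara03} then applies almost verbatim with $\pOS$ in the place of $\CC$, \emph{provided} the finiteness properties of $\DXS$ it uses --- coherence of the sheaf of rings, local noetherianity of $\gr\DXS\simeq\sho_{\XS}[\xi_1,\dots,\xi_{d_X}]$, existence of good filtrations --- are available, which they are by \cite{MF-S12}. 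Factoring $i$ through intermediate hypersurfaces, an induction on $r$ reduces everything to $r=1$, \ie to a smooth hypersurface $Z=\{t=0\}$ with $\DXS$ generated over $\DZS$ by $t$ and $\partial_t$ subject to $[\partial_t,t]=1$; and since $i$ is a closed immersion, $\Di_*=\Di_!$ and $\Di_*\shn$ is concentrated in degree $0$.

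In this model, for a coherent $\DZS$-module $\shn$ I would use the decomposition $\Di_*\shn\simeq\bigoplus_{k\geq0}\shn\,\partial_t^{\,k}$ in which $\DZS$ acts componentwise, $\partial_t$ raises the degree, and $t\cdot(n\,\partial_t^{\,k})=-k\,n\,\partial_t^{\,k-1}$. This module is $\DXS$-coherent and supported on $Z\times S$: it is generated over $\DXS$ by $\shn\otimes1$, and the filtration $F_p=\bigoplus_k(F_{p-k}\shn)\partial_t^{\,k}$ is good, its graded being a polynomial extension of $\gr^F\shn$, coherent over $\gr\DXS$. The inverse image by the hypersurface $Z\times S\hto X\times S$ is computed by a length-one Koszul complex, so $\Di^*\shm$ is represented by $[\imin i\shm\xrightarrow{\;\cdot\,t\;}\imin i\shm]$ in degrees $-1,0$; for $\shm=\Di_*\shn$, multiplication by $t$ is surjective (its image contains every $n\,\partial_t^{\,j}$, the integers $-k$ being units in $\sho_S$) with kernel $\shn\,\partial_t^{\,0}\simeq\shn$, whence $\shh^{-1}\Di^*\Di_*\shn\simeq\shn$ and $\shh^j\Di^*\Di_*\shn=0$ for $j\neq-1$. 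This already yields full faithfulness of $\Di_*$, the isomorphism $\shh^{-\codim Z}\Di^*\Di_*\simeq\Id$, and, on the image of $\Di_*$, the vanishing of $\shh^j\Di^*$ off degree $-\codim Z$.

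For essential surjectivity, let $\shm$ be $\DXS$-coherent and supported on $Z\times S$, and put $\shn:=\ker(t\colon\shm\to\shm)$, a $\DZS$-submodule of $\shm$. Choosing a good filtration locally, I would first note that some power of $t$ annihilates $\gr^F\shm$ --- a coherent $\gr\DXS$-module supported on $\{t=0\}$ --- and deduce that every local section of $\shm$ has finite $t$-torsion order. An induction on that order, each section being corrected by a suitable $\partial_t$-primitive of strictly lower order (this is where $[\partial_t,t]=1$ is used), shows that the tautological map $\bigoplus_{k\geq0}\partial_t^{\,k}\shn\to\shm$ is surjective; applying powers of $t$ to a relation $\sum_{k\leq K}\partial_t^{\,k}n_k=0$ and using $t^{m}\partial_t^{\,k}n=0$ for $m>k$ together with $t^{K}\partial_t^{\,K}n_K=(-1)^KK!\,n_K$ shows that the sum is direct; hence this map is an isomorphism $\Di_*\shn\simeq\shm$. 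It then remains to check that $\shn$ is $\DZS$-coherent, which I would obtain by reflecting a local finite presentation of $\shm$ through the identification $\Di_*\DZS\simeq\DXS/\DXS t$: expanding finitely many $\DXS$-generators of $\shm$ in the above decomposition produces finitely many $\DZS$-generators of $\shn$, and the same argument applied to the module of relations shows $\shn$ is finitely presented over the coherent ring $\DZS$, hence coherent. Combined with the previous paragraph, this gives the equivalence, with quasi-inverse $\shh^{-\codim Z}\Di^*$, and the vanishing of $\shh^j\Di^*$ off degree $-\codim Z$.

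The only step that requires genuine thought rather than transcription is the migration of these finiteness arguments to the non-field base $\pOS$: the $\DXS$-coherence of $\Di_*\shn$, the $\DZS$-coherence of $\shn=\ker t$, the compatibility of good filtrations with $\Di_*$ and $\Di^*$, and the (harmless) remark that the coefficients $-k$ and $K!$ occurring above are units because nonzero integers lie in $\CC^\times\subset\sho_S^\times$. Since the structural results on $\DXS$ that underlie all of this are established in \cite{MF-S12}, the theorem is indeed a straightforward adaptation of the absolute case.
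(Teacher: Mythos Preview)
Your proposal is correct and follows essentially the same route as the paper, which simply defers to \cite[\S4.8]{Kashiwara03} with the remark that the adaptation is straightforward; the detailed argument you sketch (reduction to codimension one, the decomposition $\Di_*\shn=\bigoplus_k\shn\,\partial_t^{\,k}$, the induction on $t$-torsion order for surjectivity, the identity $t^K\partial_t^{\,K}n_K=(-1)^KK!\,n_K$ for injectivity, and the reflection of coherence through $\Di_*$) is precisely that classical proof transported verbatim over the base $\pOS$.
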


The behaviour of coherence by pushforward with respect to the parameter space is obtained in the following proposition. Let $\pi:S\to S'$ be a morphism of complex manifolds. Let $\shm$ be a coherent $\DXS$-module which is $\pi$-good, that is, by definition, such that for any point $(x,s')\in\XS'$ there exists a neighborhood $U\times V'$ of $(x,s')$ such that $\shm_{U\times\pi^{-1}(V')}$ has a good filtration $F_\sbullet\shm$. The proof of the following proposition is similar to that given in \cite[\S4.7]{Kashiwara03}.

\begin{proposition}\label{prop:pushforward}
Assume that $\pi$ is proper and that $\shm$ is $\pi$-good. Then $\bR\pi_*\shm\in\rD^\rb_\coh(\DXSp)$. Moreover, if $\Char\shm\subset\Lambda\times S$ with $\Lambda\subset T^*X$, then for each $k\in\NN$, $\Char R^k\pi_*\shm\subset\Lambda\times S'$. In particular, if moreover $\shm$ is holonomic, then $\bR\pi_*\shm\in\rD^\rb_\hol(\DXSp)$.
\end{proposition}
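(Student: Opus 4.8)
The plan is to follow, in the present relative framework, Kashiwara's proof of the coherence of $\cD$-module direct images (\cite[\S4.7]{Kashiwara03}); the only novelty is that here the proper map acts on the parameter space and leaves $X$ untouched, so I denote also by $\pi$ the proper map $\id_X\times\pi:\XS\to X\times S'$, through which $\bR\pi_*\shm$ becomes a complex of $\DXSp$-modules via the ring morphism $\DXSp\to\pi_*\DXS$. First I would reduce to a statement local on $X$ and on $S'$: fixing $(x_o,s'_o)\in X\times S'$ and using the $\pi$-goodness hypothesis, I would pick a neighbourhood $U\times V'$ on which $\shm$ carries a good filtration $F_\sbullet\shm$ by $\sho_{\XS}$-coherent submodules, which one may arrange (after an index shift) so that $F_k\shm=0$ for $k<0$ and $F_p\DXS\cdot F_k\shm=F_{p+k}\shm$ for all $p\geq0$ and all $k\geq k_0$. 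Then $\gr^F\shm$ is a coherent graded module over $\gr^F\DXS$, the latter being locally the polynomial algebra over $\sho_{\XS}$ generated by the symbols of a local basis of $\Theta_X$; equivalently $\gr^F\shm$ is an $\sho$-coherent sheaf on $(T^*X)\times S$ with support $\Char\shm\subset\Lambda\times S$.

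Next I would apply Grauert's coherence theorem to the proper map $(T^*X)\times S\to(T^*X)\times S'$ induced by $\pi$ (or, after projectivizing the cotangent fibres, to $(P^*X)\times S\to(P^*X)\times S'$): this yields that each $R^j\pi_*\bigl(F_k\shm/F_{k-1}\shm\bigr)$ is $\sho_{X\times S'}$-coherent with support in $\Lambda\times S'$, and, more importantly, that the graded object $\bigoplus_k R^j\pi_*\bigl(F_k\shm/F_{k-1}\shm\bigr)$ is a coherent graded $\gr^F\DXSp$-module. The latter finiteness comes from the observation that $\gr^F\shm$ is $\CC^\times$-equivariant for the scaling on the cotangent fibres, that $R^j\pi_*$ preserves this equivariance, and from Serre-type finiteness on the projectivized relative characteristic variety; it is precisely the parametrized analogue of the compatibility of $\cD$-module direct images with good filtrations.

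Then I would transport this information back to $\bR\pi_*\shm$ itself. Since $\shm=\varinjlim_k F_k\shm$ with $\sho$-coherent steps and $\pi$ is proper, $\bR\pi_*$ commutes with this filtered colimit; setting $F_k\bigl(R^j\pi_*\shm\bigr):=\im\bigl(R^j\pi_*F_k\shm\to R^j\pi_*\shm\bigr)$ defines an exhaustive filtration of $R^j\pi_*\shm$ by $\sho_{X\times S'}$-coherent submodules, stable under $F_p\DXSp$. Feeding the long exact sequences of $0\to F_{k-1}\shm\to F_k\shm\to F_k\shm/F_{k-1}\shm\to0$ into the graded finiteness just obtained, the standard Noetherian argument shows that this filtration is good; hence $R^j\pi_*\shm$ is $\DXSp$-coherent and, its graded being a subquotient of $\bigoplus_k R^j\pi_*\bigl(F_k\shm/F_{k-1}\shm\bigr)$, we get $\Char R^j\pi_*\shm\subset\Lambda\times S'$. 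Boundedness, namely $R^j\pi_*\shm=0$ for $j>d_S$, is Grauert once more, so $\bR\pi_*\shm\in\rD^\rb_\coh(\DXSp)$; and if $\shm$ is holonomic then $\Lambda$ is Lagrangean, so each $\Char R^j\pi_*\shm$ lies in a Lagrangean conic closed subset, giving $\bR\pi_*\shm\in\rD^\rb_\hol(\DXSp)$.

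The main obstacle I anticipate is exactly the graded finiteness of $\bigoplus_k R^j\pi_*\bigl(F_k\shm/F_{k-1}\shm\bigr)$ over $\gr^F\DXSp$, i.e.\ the goodness of the induced filtration on the direct images: in the absolute case this rests on Serre's finiteness theorem over projective space applied to the projectivized characteristic variety, and here one must rerun that argument over the base $S'$, checking that coherence and $\CC^\times$-equivariance are preserved uniformly by the higher direct images; this is the content of the reference to \cite[\S4.7]{Kashiwara03}.
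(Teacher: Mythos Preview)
Your proposal is correct and follows essentially the same route as the paper, which simply refers to \cite[\S4.7]{Kashiwara03}: reduce locally to the case of a good filtration, push it forward by setting $F_kR^j\pi_*\shm=\im(R^j\pi_*F_k\shm\to R^j\pi_*\shm)$, and use Grauert's theorem on $(T^*X)\times S\to(T^*X)\times S'$ together with the spectral sequence argument to see that $\gr^FR^j\pi_*\shm$ is a $\gr^F\DXSp$-coherent subquotient of $R^j\pi_*\gr^F\shm$. Your identification of the delicate point---goodness of the induced filtration via the graded finiteness over the base $S'$---is exactly what the reference to \cite[\S4.7]{Kashiwara03} is meant to cover.
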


There is a duality functor $\bD:\rD^\rb(\DXS)\mto\rD^\rb(\DXS)$ defined by
\[
\bD\shm=\Rhom_{\DXS}(\shm,\DXS\otimes\Omega^{\otimes-1}_{\XS/S})[d_X],
\]
and we set $\bD'(\cbbullet)=\bD(\cbbullet)[-d_X]$.

\begin{proposition}[{\cite[Prop.\,5.10, Th.\,5.15]{Sch-Sch94}}]\label{P:rel}
Let $f:X\to Y$ be a proper morphism of complex manifolds. Then there exists a morphism $\Df_!\bD(\shm)\to \bD(\Df_!\shm)$ in $\rD^\rb(\DYS)^{\op}$, which is functorial with respect to $\shm\in\rD^\rb(\DXS)$. It is an isomorphism for $\shm$ in $\rD^\rb_{f\textup{-good}}(\DXS)$.
\end{proposition}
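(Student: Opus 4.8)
The plan is to transpose to the relative setting the classical proof of the compatibility of $\cD$-module duality with proper pushforward (\cf \cite[\S4.4]{Kashiwara03}), the parameter space $S$ playing a purely inert role: $\DXS$ is the extension of $\shd_X$ to the ring $\sho_{\XS}$ of relative holomorphic functions, so that the transfer bimodules, the trace morphism along the fibres of $f$ and the relative de~Rham complex all have relative versions over $S$ with the same formal properties as the absolute ones, and the only substantial relative input is Proposition~\ref{prop:pushforward}, which supplies the coherence needed for biduality. Concretely, writing $\Df_!\shm = Rf_!\bigl(\shd_{\YS\leftarrow\XS/S}\otimes^{\bL}_{\DXS}\shm\bigr)$ with $\shd_{\YS\leftarrow\XS/S}$ the relative transfer bimodule, the morphism $\Df_!\bD\shm\to\bD\Df_!\shm$ is obtained, by the usual adjunction and projection-formula manipulations, from the relative trace morphism along the fibres of $f$,
\[
\textstyle\int_f\colon\; Rf_!\bigl(\shd_{\YS\leftarrow\XS/S}\otimes^{\bL}_{\DXS}\shd_{\XS\to\YS/S}\bigr)\longrightarrow\DYS,
\]
which exists because $f$ is proper. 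This construction lives in the derived category of sheaves of $\DYS$-modules and uses only the bimodule structures and $\int_f$; it is therefore purely formal, hence automatically functorial in $\shm\in\rD^\rb(\DXS)$, compatibly with the contravariance of $\bD$ (whence the formulation ``in $\rD^\rb(\DYS)^{\op}$''). When $\shm\in\rD^\rb_{f\textup{-good}}(\DXS)$, Proposition~\ref{prop:pushforward} gives $\Df_!\shm\in\rD^\rb_\coh(\DYS)$, so that $\bD\Df_!\shm$ is again coherent and bidual to $\Df_!\shm$; this is what legitimates the final biduality step.

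To prove that the morphism is an isomorphism for $f$-good $\shm$, the assertion being local on $Y$ I would factor $f$ as the graph embedding $X\hto X\times Y$ followed by the projection $q\colon X\times Y\to Y$, and reduce to two cases. In case~(a), $f$ is a closed immersion: then $\Df_!=\Di_*$ is fully faithful by the relative Kashiwara equivalence (Theorem~\ref{th:kashiwaraequivalence}), and one checks $\bD\Di_*\simeq\Di_*\bD$ directly, by reduction to the embedding of a coordinate hypersurface. In case~(b), $f$ is a submersion whose restriction to $\supp\shm$ is proper (the situation arising from the graph factorization); then, $f$ being smooth, $\Df_!\shm=Rf_!\DR_{\XS/\YS}(\shm)$, and the isomorphism follows from Poincaré--Verdier duality for $\cD$-modules along the (compact) fibres of $f$, together with the self-duality, up to a shift absorbed into $\bD$, of the relative de~Rham complex $\DR_{\XS/\YS}$. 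Combining (a) and (b) along the factorization gives the claim.

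The hard part is to know that the relative trace morphism is a \emph{perfect} pairing along the fibres---equivalently, that the morphism above is an isomorphism in cases (a) and (b). In the relative setting the delicate issue is that the $\sho_S$-module structure might obstruct the base-change and finiteness arguments underlying relative Poincaré/Serre duality, and this is precisely what the $f$-goodness hypothesis controls: locally on $\YS$ one chooses a global good filtration $F_\sbullet\shm$ and reduces the coherence of $\Df_!\shm$ and the duality isomorphism to the corresponding statements for coherent sheaves on a closed conic subset of $(T^*X)\times S$, which are of a purely geometric nature (a form of coherent duality relative to $S$ for the pushforward along the cotangent correspondence attached to $f$); one then lifts back using the degeneration of the relevant spectral sequences, exactly as in the proof of Proposition~\ref{prop:pushforward}. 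The only remaining technical point---pushing forward, under the non-proper submersion $q$, a complex whose support is proper over $Y$---is handled by the standard device of restricting to a relatively compact open of $Y$ over which $Rf_!$ has the expected duality behaviour, and causes no essential difficulty. Biduality, legitimate since $\Df_!\shm$ is coherent, then closes the argument.
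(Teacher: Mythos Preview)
The paper does not supply its own proof of this proposition: it is stated with a bracketed attribution to \cite[Prop.\,5.10, Th.\,5.15]{Sch-Sch94} and no argument follows. So there is no proof in the paper to compare your proposal against; the authors simply import the result from Schapira--Schneiders.

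That said, a brief remark on your sketch versus the cited source. Your outline follows the classical $\cD$-module route (trace morphism, graph factorization, Kashiwara's equivalence for the closed embedding, fibrewise Serre/Poincar\'e duality for the projection), which is the approach of \cite[\S4.4]{Kashiwara03} transported to the relative setting. The argument in \cite{Sch-Sch94} is organized differently: it is cast in the general framework of \emph{elliptic pairs} $(\shm,F)$, and the duality morphism and its isomorphism property (Th.\,5.15 there) are obtained from a relative Serre duality statement for $\sho$-modules combined with finiteness for good $\shd$-modules, rather than by an explicit graph factorization. Both routes ultimately rest on the same ingredients (a trace/residue morphism and coherence under proper pushforward), so your proposal is a legitimate alternative proof strategy; but it is not what the paper invokes.

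One technical caution in your sketch: in the factorization $X\hto X\times Y\to Y$, the projection $q$ is not proper (unless $X$ is compact), so the phrase ``Poincar\'e--Verdier duality along the (compact) fibres of $f$'' is slightly off---the fibres of $q$ are copies of $X$. What saves the argument is that the support of $\Di_*\shm$ is the graph, hence proper over $Y$; you acknowledge this at the end, but it deserves to be said where you invoke case~(b), not as an afterthought.
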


As a consequence, using the projection formula for sheaves and replacing $\shm$ by $\bD'(\shm)$ we recover the relative version of \cite[Th.\,4.33]{Kashiwara03}.

\begin{corollary}[Adjunction formula]\label{C:rel3}
Let $f:X\to Y$ be a proper morphism of complex manifolds. For $\shm\in\rD^\rb_{f\textup{-good}}(\DXS)$ and $\shn\in \rD^\rb(\DYS)$, there exists a canonical morphism $Rf_*\Rhom_{\DXS}(\shm,\Df^*\shn)[d_X]\to \Rhom_{\DYS}(\Df_!\shm, \shn)[d_Y]$ which is an isomorphism.
\end{corollary}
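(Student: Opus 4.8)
The statement is the relative (over $S$) analogue of Kashiwara's adjunction formula \cite[Th.\,4.33]{Kashiwara03}, and the plan is to deduce it from Proposition~\ref{P:rel} along the lines indicated just above its statement: first produce the canonical morphism by transfer-bimodule calculus together with the projection formula for sheaves, and then check that it is an isomorphism by substituting $\bD'\shm$ for $\shm$, which turns its source and target into the two members of the isomorphism of Proposition~\ref{P:rel}.

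For the construction I would work with the transfer bimodule $\cB_{X\to Y}:=\sho_{\XS}\otimes_{f^{-1}\sho_{\YS}}f^{-1}\DYS$, a $(\DXS,f^{-1}\DYS)$-bimodule, and its side-changed companion $\cB_{Y\leftarrow X}:=\Omega_{\XS/S}\otimes_{\sho_{\XS}}\cB_{X\to Y}\otimes_{f^{-1}\sho_{\YS}}f^{-1}\Omega^{\otimes-1}_{\YS/S}$, so that $\Df^*\shn\simeq\cB_{X\to Y}\otimes^L_{f^{-1}\DYS}f^{-1}\shn$ and, $f$ being proper (hence $Rf_!=Rf_*$), $\Df_!\shm\simeq Rf_*(\cB_{Y\leftarrow X}\otimes^L_{\DXS}\shm)$. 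Since $\DXS$ is coherent and of finite global dimension, an object of $\rD^\rb_{f\textup{-good}}(\DXS)$ is locally perfect over $\DXS$, so $f^{-1}\shn$ can be extracted from the inner $\Rhom$; applying $Rf_*$ and then the projection formula for sheaves over the ring $\DYS$ (legitimate because $f$ is proper) then yields a functorial morphism
\[
\beta_\shm\colon Rf_*\Rhom_{\DXS}(\shm,\Df^*\shn)[d_X]\longrightarrow\Rhom_{\DYS}(\Df_!\shm,\shn)[d_Y],
\]
the only point to watch being the precise shifts $d_X$, $d_Y$ and the side-changing twists by $\Omega_{\XS/S}$ and $\Omega_{\YS/S}$.

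To see that $\beta_\shm$ is an isomorphism for $\shm\in\rD^\rb_{f\textup{-good}}(\DXS)$, I would use that $\bD'$ is an involution on this category (a good filtration of $\shm$ yields one on $\bD'\shm$, and biduality $\bD'\bD'\shm\simeq\shm$ holds for coherent complexes), so that it suffices to treat $\beta_{\bD'\shm}$. On the source, the identity $\Rhom_{\DXS}(\bD'\shm,\mathcal P)\simeq\shm\otimes^L_{\DXS}\mathcal P$ — valid, up to the side-changing twist, for $\shm$ locally perfect, by biduality — combined with the same projection-formula computation as above, identifies $Rf_*\Rhom_{\DXS}(\bD'\shm,\Df^*\shn)[d_X]$ with $(\Df_!\shm)\otimes^L_{\DYS}\shn$ up to shift. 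On the target, Proposition~\ref{P:rel} applied to $\bD'\shm$, which is again $f$-good, gives $\Df_!\bD'\shm\simeq\bD'\Df_!\shm$ after the shift $\bD=\bD'[d_X]$; since $\Df_!\shm$ is $\DYS$-coherent (the relative form of the coherence theorem for proper pushforward, \cite{Sch-Sch94}; compare Proposition~\ref{prop:pushforward} for the parameter direction), hence locally perfect, the same identity identifies $\Rhom_{\DYS}(\Df_!\bD'\shm,\shn)[d_Y]$ with $(\Df_!\shm)\otimes^L_{\DYS}\shn$ up to the matching shift. Under these identifications $\beta_{\bD'\shm}$ becomes the identity, hence is an isomorphism; therefore so is $\beta_\shm$ for every $f$-good $\shm$.

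I do not expect a genuine obstacle here: the sole non-formal ingredient is Proposition~\ref{P:rel}, while the projection formula, biduality and the coherence of $\Df_!\shm$ hold over $\pOS$ exactly as in the absolute case. The two things that need care are (i) checking that $\bD'$, $\Df^*$, $\Df_!$ and the inner $\Rhom$ all preserve boundedness, coherence and $f$-goodness, so that Proposition~\ref{P:rel}, relative biduality and the coherence theorem are all applicable, and (ii) the bookkeeping of the shifts $[d_X]$, $[d_Y]$ and of the side-changing bimodules $\Omega_{\XS/S}$, $\Omega_{\YS/S}$, which here take the place of the absolute $\Omega_X$, $\Omega_Y$.
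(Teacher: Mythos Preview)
Your proposal is correct and follows exactly the approach the paper indicates in the sentence preceding the corollary: deduce it from Proposition~\ref{P:rel} by using the projection formula for sheaves and replacing $\shm$ by $\bD'\shm$. Your write-up simply unpacks, with appropriate care about transfer bimodules, shifts, and the needed finiteness properties, what the paper leaves as a one-line remark.
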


\subsection{Holonomic \texorpdfstring{$\DXS$}{DXS}-modules}

The notion of holonomic $\DXS$-module has been recalled in the introduction. We refer to \cite{MF-S12} for details on some of their properties. Recall (\cf Introduction) that, for such a $\DXS$-module, we set $\pDR\shm:=\DR\shm[d_X]$ and $\pSol\shm=\Sol\shm[d_X]$, and that $\pDR\shm\simeq\bD\pSol\shm$.

\begin{proposition}\label{prop:holLi*0}
Let $\shm$ be a holonomic $\DXS$-module and let $\shm_{(x_o,s_o)}$ be its germ at $(x_o,s_o)\in \XS$.
\begin{enumerate}
\item\label{prop:holLi*01}
Assume that $L i^*_{s_o}\shm=0$ for each $s_o\in S$. Then $\shm=0$.
\item\label{prop:holLi*02}
Let $\cI_{s_o}$ be an ideal of $\cO_{S,s_o}$ contained in the maximal ideal $\mathfrak{m}_{s_o}$. Assume that $\cI_{s_o}\shm_{(x_o,s_o)}=\shm_{(x_o,s_o)}$. Then $\shm_{(x_o,s_o)}=\nobreak0$.
\end{enumerate}
\end{proposition}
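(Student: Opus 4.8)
**Proof proposal for Proposition \ref{prop:holLi*0}.**

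The plan is to reduce statement \eqref{prop:holLi*01} to a vanishing property for the solution (or de~Rham) complex under specialization, and to deduce statement \eqref{prop:holLi*02} from \eqref{prop:holLi*01} together with Nakayama-type arguments on the parameter space. First I would treat \eqref{prop:holLi*01}. Since $\shm$ is holonomic, it is in particular $\DXS$-coherent, hence $\shm_{(x_o,s_o)}$ is finitely generated over $\sho_{S,s_o}$ in the sense used in Proposition \ref{P:1} (this finiteness over $\pOS$ is part of the holonomicity package recalled in the introduction; it follows from the existence of a good filtration and the hypothesis $\Char\shm\subset\Lambda\times S$). The hypothesis $Li^*_{s_o}\shm=0$ for all $s_o$ means that, for every $j$, $\shh^j(Li^*_{s_o}\shm)=0$ for all $s_o$; applying Proposition \ref{P:1} with this $F=\shm$ (concentrated in degree $0$) gives immediately $\shm=0$. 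So the real content here is just checking that the finiteness hypothesis of Proposition \ref{P:1} is satisfied, which is where holonomicity enters.

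For \eqref{prop:holLi*02}, fix $(x_o,s_o)$ and suppose $\cI_{s_o}\shm_{(x_o,s_o)}=\shm_{(x_o,s_o)}$ with $\cI_{s_o}\subset\mathfrak m_{s_o}$. The idea is to compare this with the vanishing of $Li^*_{s_o}$ at nearby points. First I would observe that one may shrink $X$ and $S$ so that the equality $\cI\shm=\shm$ holds on a whole neighborhood $U\times V$ of $(x_o,s_o)$, where $\cI\subset\sho_S$ is a coherent ideal with $\cI_{s_o}\subset\mathfrak m_{s_o}$ (cover the finitely many generators of the stalk by sections on a neighborhood and use coherence of $\shm$ over $\DXS$, which is Noetherian locally). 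Then for every $s'\in V$ lying in the zero locus $Z(\cI)$ of $\cI$, we have $\mathfrak m_{s'}\shm=\shm$ near the fiber, hence $i^*_{s'}\shm=0$, and by holonomicity (using that $\shh^{-k}(Li^*_{s'}\shm)=\operatorname{Tor}_k^{\sho_S}(\shm,\sho_S/\mathfrak m_{s'})$ is again supported suitably — invoke the $\pOS$-finiteness once more, or restrict $\shm$ to a curve through $s'$ as in Lemma \ref{RGamma}) one gets $Li^*_{s'}\shm=0$ for $s'\in Z(\cI)$. For $s'\notin Z(\cI)$, i.e.\ $s'$ outside the vanishing locus, there is nothing forced. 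This does not yet give $\shm=0$, only that $\shm$ is supported, as a $\pOS$-module, over $Z(\cI)\times\{\text{stuff}\}$... so I would instead argue more directly: the equality $\cI_{s_o}\shm_{(x_o,s_o)}=\shm_{(x_o,s_o)}$ with $\cI_{s_o}\subset\mathfrak m_{s_o}$ together with $\pOS$-finiteness of the stalk $\shm_{(x_o,s_o)}$ over the \emph{local ring} $\sho_{S,s_o}$ allows a Nakayama argument at the level of the parameter ring: $\shm_{(x_o,s_o)}/\mathfrak m_{s_o}\shm_{(x_o,s_o)} = 0$ since $\mathfrak m_{s_o}\supset\cI_{s_o}$, so by Nakayama (applicable because the stalk is finitely generated over $\sho_{S,s_o}$) we get $\shm_{(x_o,s_o)}=0$.

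The cleanest route, which I would adopt, is therefore: \textbf{(i)} establish that for holonomic $\shm$ the stalk $\shm_{(x_o,s_o)}$ is finitely generated over $\sho_{S,s_o}$ near any point — this is the key lemma and the place where the non-variation of $\Lambda$ and the good-filtration hypothesis do the work, presumably via a finite-determinacy/coherence argument analogous to \cite{MF-S12}; \textbf{(ii)} for \eqref{prop:holLi*02}, apply topological Nakayama over $\sho_{S,s_o}$ directly to kill the stalk, since $\cI_{s_o}\subset\mathfrak m_{s_o}$ forces $\shm_{(x_o,s_o)} = \mathfrak m_{s_o}\shm_{(x_o,s_o)}$; \textbf{(iii)} for \eqref{prop:holLi*01}, apply Proposition \ref{P:1} using the finiteness from (i). The main obstacle is step (i): one must know that relative holonomicity (with fixed $\Lambda$) forces coherence of the stalk over $\sho_{S,s_o}$, not merely over $\DXS$; I expect this to follow from Proposition \ref{prop:pushforward}-style arguments or from the proof of \cite[Prop.\,2.2]{MF-S12} cited just above, applied after a generic projection reducing $X$ to a point, but making the finiteness uniform in a neighborhood (so that Nakayama and Proposition \ref{P:1} both apply) is the delicate point. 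Once (i) is in hand, (ii) and (iii) are short.
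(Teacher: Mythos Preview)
Your approach has a genuine gap at step (i): the claim that the stalk $\shm_{(x_o,s_o)}$ of a holonomic $\DXS$-module is finitely generated over $\sho_{S,s_o}$ is \emph{false} in general. Take $\shm=\cO_{\XS}$ with $X=\CC$: this is holonomic (its characteristic variety is the zero section $T^*_XX\times S$), but its stalk at $(0,s_o)$ is the ring of convergent power series $\CC\{x,s\}$, which is not finitely generated over $\sho_{S,s_o}=\CC\{s\}$. So neither Proposition~\ref{P:1} nor Nakayama over $\sho_{S,s_o}$ applies directly to $\shm$. The good filtration gives coherence of $\gr^F\shm$ over $\gr^F\DXS$, supported on $\Lambda\times S$, but this does not descend to $\pOS$-finiteness of the stalks of $\shm$ itself.

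The paper's proof avoids this obstacle by a stratification-and-induction argument. One takes a $\mu$-stratification $(X_\alpha)$ of $X$ compatible with $\Lambda$ and argues by induction on the maximal dimension of strata meeting the projection of $\Lambda$. On an \emph{open} stratum $X_\beta$, the characteristic variety is contained in the zero section, so $\shm$ is $\cO_{\XS}$-coherent there; now Nakayama \emph{does} apply, but over the local ring $\cO_{\XS,(x_o,s_o)}$, whose maximal ideal contains the ideal generated by $\mathfrak m_{s_o}$ (resp.\ by $\cI_{s_o}$). This handles both \eqref{prop:holLi*01} and \eqref{prop:holLi*02} simultaneously on open strata. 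When no stratum is open, one picks a maximal stratum $X_\beta$ and applies Kashiwara's equivalence (Theorem~\ref{th:kashiwaraequivalence}), which commutes with the $\sho_S$-action, to reduce to a lower-dimensional $X$ where the stratum becomes open; the induction then closes. The finiteness you need is thus $\cO_{\XS}$-coherence on a dense open, not $\pOS$-finiteness everywhere.
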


\begin{proof}
Assume $\shm\neq0$ and let $\Lambda\subset T^*X$ be a closed conic complex Lagrangian variety such that $\Char\shm\subset\Lambda\times S$. Let $(X_\alpha)_\alpha$ be a $\mu$-stratification of $X$ compatible with~$\Lambda$. We will argue by induction on $\max_\beta\dim X_\beta$, where $X_\beta$ runs among the strata included in the projection of $\Lambda$ in $X$.

Assume first that some stratum $X_\beta$ is open in $X$ and let $x_o\in X_\beta$, so that $\Char\shm\subset(T^*_XX)\times S$ in the neighborhood of $x_o$. It follows that $\shm$ is $\cO_{\XS}$-coherent in the neighborhood of $x_o\times S$, and Assumption \ref{prop:holLi*0}\eqref{prop:holLi*01} \resp \eqref{prop:holLi*02} implies, according to Nakayama, that $\shm_{|x_o\times S}=0$ \resp $\shm_{(x_o,s_o)}=0$.

We are thus reduced to the case where no stratum $X_\beta$ is open. Choose then a maximal stratum $X_\beta$. By applying Kashiwara's equivalence \ref{th:kashiwaraequivalence}, which commutes with the $\sho_S$-action, in the neighborhood of any point of $X_\beta$ we are reduced to the previous case. By induction on the dimension of the maximal strata, we conclude that $\Lambda$ can be chosen empty, hence $\shm=0$.
\end{proof}

\begin{corollary}
Let $\shm$ be an object of $\rD^\rb_\hol(\DXS)$. Assume $L i^*_{s_o}\shm=0$ for each $s_o\in S$. Then $\shm=0$.
\end{corollary}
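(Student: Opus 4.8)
The plan is to reduce the assertion to the case of a single holonomic $\DXS$-module, which is Proposition \ref{prop:holLi*0}, by peeling off the top cohomology sheaf of $\shm$. So I would argue by contradiction: assume $\shm\neq0$; since $\shm$ is bounded there is a largest integer $k$ with $\shh^k(\shm)\neq0$, and $\shh^k(\shm)$ is then a nonzero holonomic $\DXS$-module (all cohomology sheaves of an object of $\rD^\rb_\hol(\DXS)$ are holonomic).

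The key step is to apply $Li^*_{s_o}$ to the truncation triangle $\tau^{<k}\shm\to\shm\to\shh^k(\shm)[-k]\xrightarrow{+1}$. Because $Li^*_{s_o}$ is the left derived functor of the right exact functor $F\mapsto F\otimes_{\pOS}p^{-1}(\sho_S/\mathfrak{m}_{s_o})$, it has cohomological amplitude $\leq0$ (its non-negative amplitude being controlled by Lemma \ref{L:s}); hence $Li^*_{s_o}(\tau^{<k}\shm)$ is concentrated in degrees $\leq k-1$. Combined with the hypothesis $Li^*_{s_o}\shm=0$, the long exact cohomology sequence of the triangle shows that $\shh^k\bigl(Li^*_{s_o}(\shh^k(\shm)[-k])\bigr)=0$ for every $s_o\in S$. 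Now $\shh^k\bigl(Li^*_{s_o}(\shh^k(\shm)[-k])\bigr)=\shh^0\bigl(Li^*_{s_o}\shh^k(\shm)\bigr)$ is just the non-derived pullback $\shh^k(\shm)\otimes_{\pOS}p^{-1}(\sho_S/\mathfrak{m}_{s_o})$, so we conclude that this sheaf vanishes for every $s_o$.

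Finally I would invoke Proposition \ref{prop:holLi*0}\eqref{prop:holLi*02} with $\cI_{s_o}=\mathfrak{m}_{s_o}$: the vanishing just obtained says precisely that $\mathfrak{m}_{s_o}\shh^k(\shm)_{(x_o,s_o)}=\shh^k(\shm)_{(x_o,s_o)}$ for every $(x_o,s_o)\in\XS$, whence $\shh^k(\shm)_{(x_o,s_o)}=0$ for every such point, i.e.\ $\shh^k(\shm)=0$. This contradicts the maximality of $k$, so $\shm=0$. I do not expect a genuine obstacle here: the only points needing care are the bookkeeping with the amplitude of $Li^*_{s_o}$ and the identification of its top cohomology with the underived inverse image; one could equally well run a downward induction on the amplitude of $\shm$, but the single extraction above already produces the contradiction.
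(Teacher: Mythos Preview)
Your argument is correct. You peel off the top cohomology sheaf of $\shm$ via the truncation triangle, use the right-exactness of $Li^*_{s_o}$ to force $\shh^0\bigl(Li^*_{s_o}\shh^k(\shm)\bigr)=0$ for all $s_o$, and then invoke Proposition~\ref{prop:holLi*0}\eqref{prop:holLi*02} with $\cI_{s_o}=\mathfrak{m}_{s_o}$ to kill $\shh^k(\shm)$ germwise. All the bookkeeping you flag (the amplitude of $Li^*_{s_o}$, the identification of the top cohomology with the underived tensor product) is routine and correct as stated.

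The paper takes a different route: it argues by induction on $d_S$. It first shows that if $Li^*_\Sigma\shm=0$ for \emph{every} germ of smooth hypersurface $\Sigma\subset S$, then $\shm=0$; the long exact sequence for the cone of multiplication by a local equation $\sigma$ of $\Sigma$ makes $\sigma$ act surjectively on each $\shh^j\shm$, and Proposition~\ref{prop:holLi*0}\eqref{prop:holLi*02} (applied with $\cI_{s_o}=(\sigma)$ as $\Sigma$ varies through $s_o$) then kills $\shh^j\shm$. The inductive step reduces to this by observing $Li^*_{s_o}Li^*_\Sigma\shm\simeq Li^*_{s_o}\shm=0$. Your approach is more direct: it avoids the induction on $d_S$ entirely and goes straight from the pointwise hypothesis to the Nakayama-type conclusion in a single step. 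The paper's slicing argument, on the other hand, isolates a statement about hypersurface restrictions that may be of independent use.
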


\begin{proof}
We first prove that if $L i^*_\Sigma\shm=0$ for every codimension-one germ of submanifold $\Sigma$, then $\shm=0$. Let $\sigma$ be a local equation of $\Sigma$. Then the assumption is that the cone $C_\Sigma(\shm)$ of $\sigma:\shm\to\shm$ is isomorphic to zero. From the long exact sequence
\[
\cdots\to\shh^j\shm\To{\sigma}\shh^j\shm\to\shh^jC_\Sigma(\shm)=0\to\cdots
\]
we conclude as in Proposition \ref{prop:holLi*0} that $\shh^j\shm=0$. We argue now by induction on~$d_S$. In the case $d_S=1$, every point has codimension one, so there is nothing more to prove. In general, for every germ of hypersurface $\Sigma$ and every $s_o\in \Sigma$, we have $Li^*_{s_o}Li_\Sigma^*\shm\simeq Li^*_{s_o}\shm=0$, so $Li_\Sigma^*\shm=0$ by induction, and the first part of the proof gives the desired assertion.
\end{proof}

\begin{corollary}\label{cor:Hjnul}
Let $\shm$ be an object of $\rD^\rb_\hol(\DXS)$. Assume that $\shh^j L i_{s_o}^*\shm=0$ for all $j\neq0$ and all $s_o\in S$. Then $\shh^j\shm=0$ for all $j\neq0$ and $L i^*_{s_o}\shh^0\shm=\shh^0 L i^*_{s_o}\shm$ for all $s_o\in S$.
\end{corollary}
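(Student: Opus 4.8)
The plan is to imitate the proof of the preceding corollary: first reduce to codimension-one restrictions of the parameter, then invoke the Nakayama-type statement of Proposition~\ref{prop:holLi*0}\eqref{prop:holLi*02}. I begin by proving that $\shh^j\shm=0$ for all $j\neq0$, arguing by induction on $d_S$; the case $d_S=0$ is the hypothesis itself. For $d_S\geq1$, given $s_o\in S$, choose a germ of smooth hypersurface $\Sigma\subset S$ through $s_o$ and a local equation $\sigma$ of $\Sigma$ (so $\Sigma=\{s_o\}$ when $d_S=1$). As in the previous proof, $Li_\Sigma^*\shm$ belongs to $\rD^\rb_\hol(\shd_{X\times\Sigma/\Sigma})$. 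For every $s_o'\in\Sigma$ one has $Li^*_{s_o'}Li_\Sigma^*\shm\simeq Li^*_{s_o'}\shm$ by transitivity of the derived pull-back, so the hypothesis gives $\shh^jLi^*_{s_o'}Li_\Sigma^*\shm=0$ for all $j\neq0$; hence, applying the induction hypothesis over $X\times\Sigma$ when $d_S\geq2$ (and trivially when $d_S=1$, since then $Li_\Sigma^*\shm=Li^*_{s_o}\shm$), I obtain $\shh^jLi_\Sigma^*\shm=0$ for all $j\neq0$.

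Next, from the Koszul resolution $0\to\sho_S\To{\sigma}\sho_S\to\sho_\Sigma\to0$ one gets that $Li_\Sigma^*\shm$ is quasi-isomorphic to the cone of $\sigma\colon\shm\to\shm$, whence short exact sequences
\[
0\to\coker\bigl(\sigma\colon\shh^j\shm\to\shh^j\shm\bigr)\to\shh^jLi_\Sigma^*\shm\to\ker\bigl(\sigma\colon\shh^{j+1}\shm\to\shh^{j+1}\shm\bigr)\to0 .
\]
The vanishing of $\shh^jLi_\Sigma^*\shm$ for $j\neq0$ forces $\sigma\colon\shh^j\shm\to\shh^j\shm$ to be onto for every $j\neq0$, \ie $(\sigma)\,(\shh^j\shm)_{(x_o,s_o)}=(\shh^j\shm)_{(x_o,s_o)}$ for every $x_o$. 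Since $\shh^j\shm$ is a holonomic $\DXS$-module and $(\sigma)\subset\mathfrak{m}_{s_o}$, Proposition~\ref{prop:holLi*0}\eqref{prop:holLi*02} gives $(\shh^j\shm)_{(x_o,s_o)}=0$; letting $x_o$ and $s_o$ (hence $\Sigma$) vary, $\shh^j\shm=0$ for all $j\neq0$.

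The commutation statement then follows formally: $\shm$ being concentrated in degree~$0$, we have $Li^*_{s_o}\shh^0\shm\simeq Li^*_{s_o}\shm$, and the hypothesis says the latter has no cohomology outside degree~$0$; hence $Li^*_{s_o}\shh^0\shm$ is concentrated in degree~$0$ and coincides with $\shh^0Li^*_{s_o}\shm$. The one point I expect to require care --- rather than being a genuine obstacle --- is the claim that $Li_\Sigma^*\shm$ is again a bounded complex with holonomic cohomology on $X\times\Sigma$; this is the same ingredient already used, without detailed justification, in the preceding corollary, and it reflects the fact that the relative characteristic variety is a product $\Lambda\times S$, so restriction to a submanifold of $S$ is non-characteristic in the base direction. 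Everything else is routine bookkeeping with Koszul complexes and the Nakayama lemma of Proposition~\ref{prop:holLi*0}.
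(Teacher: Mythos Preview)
Your proof is correct and follows essentially the same route as the paper: induction on $d_S$, restriction to a codimension-one submanifold $\Sigma=\{\sigma=0\}$ of $S$, the cone/long exact sequence for multiplication by $\sigma$ to deduce surjectivity of $\sigma$ on $\shh^j\shm$ for $j\neq0$, and then the Nakayama-type Proposition~\ref{prop:holLi*0}\eqref{prop:holLi*02} to conclude. Your final caveat about $Li_\Sigma^*\shm\in\rD^\rb_\hol(\shd_{X\times\Sigma/\Sigma})$ is indeed the one point the paper also takes for granted; it follows because the cohomology sheaves of the cone of $\sigma$ are extensions of kernels and cokernels of $\sigma$ on the holonomic sheaves $\shh^j\shm$, hence are $\shd_{X\times\Sigma/\Sigma}$-coherent with characteristic variety contained in $\Lambda\times\Sigma$.
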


\begin{proof}
We prove it by induction on $d_S$. Let $s$ be part of a local coordinate system centered at some $s_o\in S$ and denote by $i_{S'}:S'=\{s=0\}\hto S$ the inclusion. Then $ L i_{S'}^*\shm$ is an object of $\rD^\rb_\hol(\DXSp)$ and by induction it is concentrated in degree zero. Considering the long exact sequence
\[
\cdots\shh^j\shm\To s\shh^j\shm\to\shh^j L i_{S'}^*\shm\to\cdots
\]
one obtains that $s:\shh^j\shm\to\shh^j\shm$ is onto for $j\neq0$. According Proposition \ref{prop:holLi*0}\eqref{prop:holLi*02}, we have $\shh^j\shm=0$ for $j\neq0$. The remaining statement is clear.
\end{proof}

\begin{corollary}\label{cor:holstrict}
Let $\shm$ be a strict holonomic $\DXS$-module. Then $\shh^j\bD\shm=0$ for $j\neq0$ and $\shh^0\bD\shm$ is a strict holonomic $\DXS$-module.
\end{corollary}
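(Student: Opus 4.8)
The strategy is to reduce the statement about $\bD\shm$ to the corresponding statement about the restrictions $Li^*_{s_o}\bD\shm$, exploiting the fact that duality for relative $\shd$-modules is compatible with specialization of the parameter, and then to invoke the absolute holonomic duality theorem together with Corollary \ref{cor:Hjnul}. First I would recall that for a holonomic $\DXS$-module $\shm$, the object $Li^*_{s_o}\shm$ lies in $\rD^\rb_\hol(\shd_X)$ for every $s_o\in S$; since $\shm$ is assumed strict ($\pOS$-flat), Lemma \ref{L:s} gives $Li^*_{s_o}\shm = i^*_{s_o}\shm$, concentrated in degree $0$, and $i^*_{s_o}\shm$ is then a (nonzero, where $\shm\neq0$) holonomic $\shd_X$-module in the classical sense.

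The key step is the base-change formula
\[
Li^*_{s_o}\bD_{\XS/S}\shm \simeq \bD_X\bigl(Li^*_{s_o}\shm\bigr),
\]
which holds because $\bD$ is computed by $\Rhom_{\DXS}(\shm,\DXS\otimes\Omega^{\otimes-1}_{\XS/S})[d_X]$, the relative canonical sheaf restricts to the absolute one, and the side-changing/$\Rhom$ operations commute with the (flat, up to a shift) base change $\pOS\to p^{-1}(\sho_S/\mathfrak{m}_{s_o})$ after resolving $\shm$ locally by a finite free $\DXS$-complex — here one uses that $\shm$ has, locally on $\XS$, a finite free resolution as a $\DXS$-module, a standard consequence of coherence plus the finite homological dimension of $\DXS$. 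Granting this, the classical holonomic duality theorem over a point (Kashiwara) applied to the holonomic module $i^*_{s_o}\shm$ yields that $\bD_X(Li^*_{s_o}\shm)$ is concentrated in degree $0$ and that $\shh^0$ of it is holonomic; hence $\shh^j Li^*_{s_o}\bD\shm = 0$ for all $j\neq0$ and all $s_o\in S$.

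Now apply Corollary \ref{cor:Hjnul} to the object $\bD\shm\in\rD^\rb_\hol(\DXS)$ (it is holonomic by the relative duality results, \cf\ the discussion around Proposition \ref{P:rel} specialised to $f=\id$): we obtain $\shh^j\bD\shm=0$ for $j\neq0$ and $Li^*_{s_o}\shh^0\bD\shm = \shh^0Li^*_{s_o}\bD\shm$ for every $s_o$. The latter is concentrated in degree $0$ for all $s_o$, which by the criterion recalled in the proof of Proposition \ref{prop:FDFperverse} (a coherent module whose every $Li^*_{s_o}$ sits in degree $0$ is $\pOS$-flat) forces $\shh^0\bD\shm$ to be strict; it is holonomic since it is a cohomology sheaf of $\bD\shm$. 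Finally, $\shh^0Li^*_{s_o}\bD\shm = \bD_X(i^*_{s_o}\shm)$ is a genuine holonomic $\shd_X$-module, confirming holonomicity is preserved.

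The main obstacle I anticipate is establishing the base-change isomorphism $Li^*_{s_o}\bD\simeq\bD\,Li^*_{s_o}$ cleanly: one must check that the natural morphism induced by functoriality of $\Rhom$ is an isomorphism, which requires either a local finite free resolution argument for $\shm$ over $\DXS$ (and care that strictness is what guarantees $Li^*_{s_o}\shm$ stays in degree $0$ so that $i^*_{s_o}\shm$ is honestly holonomic, not just a holonomic complex) or, alternatively, a direct appeal to a compatibility-with-non-characteristic-restriction statement for relative duality. Everything else is then a formal consequence of the absolute theory plus Corollary \ref{cor:Hjnul}.
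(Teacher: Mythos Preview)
Your approach is essentially the same as the paper's: reduce to the absolute case via the base-change isomorphism $Li^*_{s_o}\bD\shm\simeq\bD Li^*_{s_o}\shm$, use classical holonomic duality to see that each $Li^*_{s_o}\bD\shm$ is concentrated in degree zero, then apply Corollary~\ref{cor:Hjnul} and a flatness criterion. Two small remarks on references: the base-change isomorphism you flag as the ``main obstacle'' is already \cite[Prop.\,3.1]{MF-S12}, so no new argument is needed; and the flatness criterion you invoke from the proof of Proposition~\ref{prop:FDFperverse} is stated there for $\sho_S$-coherent modules, whereas $\shh^0\bD\shm$ is only $\DXS$-coherent---the correct statement is Lemma~\ref{lem:flatness}, which is the version for coherent $\DXS$-modules (the paper also cites \cite[Prop.\,2.5]{Sch-Sch94} for holonomicity of the cohomology of $\bD\shm$, rather than Proposition~\ref{P:rel}).
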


\begin{proof}
By the strictness property, $\shh^jL i_{s_o}^*\shm=0$ for $j\neq0$, hence $L i_{s_o}^*\shm=\shh^0 L i_{s_o}^*\shm$ is a holonomic $\cD_X$-module, so $\shh^j\bD L i_{s_o}^*\shm=0$ for $j\neq0$ and $\shh^0\bD L i_{s_o}^*\shm$ is $\cD_X$-holonomic. Recall also (\cf \cite[Prop.\,3.1]{MF-S12}) that $L i_{s_o}^*\bD\shm\simeq \bD L i_{s_o}^*\shm$ for any $s_o\in S$. Therefore, $\shh^jL i_{s_o}^*\bD\shm=0$ for $j\neq0$. According to \hbox{\cite[Prop.\,2.5]{Sch-Sch94}}, $\bD\shm$ has holonomic cohomology. As a consequence, according to Corollary~\ref{cor:Hjnul}, $\shh^j\bD\shm=0$ for $j\neq0$, $\shh^0\bD\shm$ is $\DXS$-holonomic, and $L i^*_{s_o}\shh^0\bD\shm$ has cohomology in degree zero at most for any $s_o\in S$, since
\[
L i^*_{s_o}\shh^0\bD\shm\simeq \shh^0 L i^*_{s_o}\bD\shm\simeq\shh^0\bD L i^*_{s_o}\shm
\]
and $L i^*_{s_o}\shm$ is a holonomic $\cD_X$-module. The conclusion follows from Lemma \ref{lem:flatness} below.
\end{proof}

\begin{lemma}\label{lem:flatness}
Let $\shm$ be a coherent $\DXS$-module. Then $\shm$ is strict if and only if $\shh^j L i^*_{s_o}\shm=0$ for each $s_o\in S$ and each $j\neq0$.
\end{lemma}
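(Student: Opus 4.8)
The implication ``$\shm$ strict $\Rightarrow$ $\shh^j Li^*_{s_o}\shm=0$ for $j\neq0$'' is immediate, since for a $\pOS$-flat module $\shm$ one has $Li^*_{s_o}\shm\simeq\shm\otimes_{\pOS}p^{-1}(\sho_S/\mathfrak m_{s_o})$, which is concentrated in degree~$0$.

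For the converse, $\pOS$-flatness being a local property on $\XS$, I proceed by induction on $d_S=\dim S$ and prove that each stalk $\shm_{(x_o,s_o)}$ is flat over the regular local ring $R:=\sho_{S,s_o}$. If $d_S=1$, then $R$ is a discrete valuation ring, hence $\shm_{(x_o,s_o)}$ is $R$-flat if and only if it is $R$-torsion-free, that is, if and only if multiplication by a local coordinate $s$ of $S$ at $s_o$ is injective on it; as this kernel is $\shh^{-1}Li^*_{s_o}\shm$ at $(x_o,s_o)$, the hypothesis gives exactly the assertion (no coherence of $\shm$ is needed for $d_S=1$). Assume now $d_S\geq2$ and the lemma known in dimension $d_S-1$. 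Fix $(x_o,s_o)$, choose a local coordinate $t$ of $S$ at $s_o$ and set $S'=\{t=0\}$; then $\shm':=Li_{S'}^*\shm$ lies in degrees $-1,0$, with $\shh^{-1}\shm'=\ker(t\colon\shm\to\shm)$ and $\shh^0\shm'=\shm/t\shm$. Applying $Li^*_{s'}$ ($s'\in S'$) to the triangle $\shh^{-1}\shm'[1]\to\shm'\to\shh^0\shm'\to\shh^{-1}\shm'[2]$ and using the transitivity isomorphism $Li^*_{s'}\circ Li_{S'}^*\simeq Li^*_{s'}$, the hypothesis, and the fact that $Li^*_{s'}$ on $\shd_{X\times S'/S'}$-modules has cohomological amplitude in $[-(d_S-1),0]$, a diagram chase shows that $\ker(t\colon\shm\to\shm)=0$ near $(x_o,s_o)$ and that $\shm/t\shm$ again satisfies the hypothesis on $X\times S'$; by induction, $\shm/t\shm$ is flat over $p^{-1}\sho_{S'}$.

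There remains the passage from ``$t$ is a non-zero-divisor on $\shm$ and $\shm/t\shm$ is flat over $p^{-1}\sho_{S'}$'' to the $\pOS$-flatness of $\shm$ near $(x_o,s_o)$, and this is where the coherence of $\shm$ over $\DXS$ enters decisively. I would argue as follows: pick a local good filtration $F_\sbullet\shm$ of the coherent $\DXS$-module $\shm$ whose terms $F_k\shm$ are $\sho_{\XS}$-coherent; since $\shm=\lind k F_k\shm$ and a filtered union of $\pOS$-flat modules is $\pOS$-flat, it suffices to prove that each $F_k\shm$ is $\pOS$-flat, and for $\sho_{\XS}$-coherent modules this amounts, by the local criterion of flatness applied to the local homomorphism $\sho_{S,s_o}\to\sho_{\XS,(x_o,s_o)}$, to the vanishing of $\shh^{-1}Li^*_{s_o}(F_k\shm)$ at each point. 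This vanishing is obtained from the non-zero-divisor property of $t$ on $\shm$ — which, after refining the filtration so that $\gr^F\shm$ is $t$-torsion-free, gives $F_k\shm\cap t\shm=tF_k\shm$ and hence an inclusion $F_k\shm/tF_k\shm\hookrightarrow\shm/t\shm$ — together with the flatness of $\shm/t\shm$ obtained above (controlling the $F_k$ along $t=0$) and the hypothesis on the open set $\{t\neq0\}$ (controlling them away from it). The step I expect to be the main obstacle is precisely this transfer of flatness between $\shm$, its restriction $\shm/t\shm$, and the $\sho_{\XS}$-coherent pieces $F_k\shm$ of a good filtration: without the coherence of $\shm$ over $\DXS$ there is no such good filtration, and the statement of the lemma fails in general, as one sees by taking for $\shm$ the direct image on $\XS$ of $\sho_{X\times S'}[1/t]$ for a hypersurface $S'\subsetneq S$.
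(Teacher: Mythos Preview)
Your inductive step has a real gap when $d_S\geq 3$. With $A=\ker(t)$, $C=\shm/t\shm$ and $H^j(-)=\shh^j Li^*_{s'}(-)$, the long exact sequence of the triangle $A[1]\to Li^*_{S'}\shm\to C\To{+1}$ together with the hypothesis $H^j(Li^*_{S'}\shm)=0$ ($j\neq0$) gives $H^{-1}(C)=0$ and $H^{-k}(C)\simeq H^{2-k}(A)$ for every $k\geq 2$; feeding in the amplitude bound for $Li^*_{s'}$ on $\shd_{X\times S'/S'}$-modules yields only $H^{-(d_S-1)}(A)=H^{-(d_S-2)}(A)=0$. For $d_S=2$ this does kill $Li^*_{s'}A$ entirely, and then $A=0$ by Nakayama (the generators of $\mathfrak m_{s'}$ are central in $\shd_{X\times S'/S'}$, so the determinant trick applies to the finitely generated stalk of the coherent module $A$), after which $H^j(C)=0$ for $j\neq0$ follows. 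For $d_S\geq 3$, however, $H^0(A)=i^*_{s'}(\ker t)$ remains entirely unconstrained by the chase, so it establishes \emph{neither} $\ker t=0$ \emph{nor} the inductive hypothesis for $\shm/t\shm$. Your third paragraph then builds on these unproved statements; the further detour through a good filtration is in any case unnecessary (and the step ``refine so that $\gr^F\shm$ is $t$-torsion-free'' is not justified), since granting $\ker t=0$ and the $p^{-1}\sho_{S'}$-flatness of $\shm/t\shm$ one may apply the local criterion of flatness directly to the map $\sho_{S,s_o}\to(\DXS)_{(x_o,s_o)}$ of Noetherian local rings and the finitely generated module $\shm_{(x_o,s_o)}$.

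The paper bypasses the induction altogether. Arguing by contraposition, it quotes a curve-selection criterion for flatness \cite[Cor.\,A.0.2]{Bibi86I}: if $\shm$ is not $\pOS$-flat there exists a morphism $\pi\colon S_1\to S$ from a smooth curve such that $\pi^*\shm$ has $\sho_{S_1}$-torsion. If $s_1$ is a local coordinate on $S_1$ with $\shk:=\ker(s_1\colon\pi^*\shm\to\pi^*\shm)\neq0$ and $s_o\in S$ is the image of $\{s_1=0\}$, the long exact sequence for the cone of $s_1$ acting on $L\pi^*\shm$ exhibits a surjection $\shh^{-1}Li^*_{s_o}\shm\twoheadrightarrow\shk$, contradicting the hypothesis. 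This reduces the general case to $d_S=1$ in one stroke, which is exactly the step your hyperplane induction fails to achieve.
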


\begin{proof}
The ``only if'' part is clear. The ``if'' part is clear if $d_S=1$, since strictness is then equivalent to the absence of $\sho_S$-torsion. In general, assume $\shm$ is not strict. Recall (\cf \cite[Cor.\,A.0.2]{Bibi86I}) that there exists then a morphism $\pi:S_1\to S$ from a smooth curve $S_1$ to $S$ such that $\pi^*\shm$ has $\cO_{S_1}$-torsion. Let $s_1$ be a local coordinate on $S_1$ such that $\shk:=\ker[s_1:\pi^*\shm\to \pi^*\shm]\neq0$. Let $i$ denote the composition $\{s_1=0\}\hto S^1\hto S$. The exact sequence
\[
\cdots\to\shh^{-1}L i^*\shm\to\shh^0 L \pi^*\shm\To{s_1}\shh^0L \pi^*\shm\to\shh^0L i^*\shm\to0
\]
show that $\shh^{-1}L i^*\shm$ surjects onto $\shk$, hence is nonzero.
\end{proof}

\begin{corollary}\label{cor-perv}
Let $\shm$ be a strict holonomic $\DXS$-module and set $\cF=\pDR\shm$ or $\pSol\shm$. Then $\cF$ and $\bD\cF$ are $S$-perverse.
\end{corollary}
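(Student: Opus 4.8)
The plan is to deduce Corollary \ref{cor-perv} from Proposition \ref{P:3.3} and Corollary \ref{cor:holstrict}, reducing everything to the absolute case via the functors $Li^*_{s_o}$ and the characterization in Lemma \ref{perv}. First I would observe that by hypothesis $\shm$ is a strict holonomic $\DXS$-module, hence in particular an object of $\rD^\rb_\hol(\DXS)$ concentrated in degree $0$ with strict $\shh^0$, so condition (1) of Proposition \ref{P:3.3} holds. Therefore, by the equivalence $(1)\Leftrightarrow(3)$ in that proposition, $\pSol(\shm)$ and $\pDR(\shm)=\bD\pSol(\shm)$ are perverse; since $\pDR(\shm)=\bD\pSol(\shm)$ and, by biduality on $\rD^\rb_\cc(\pOS)$ (\cf \cite[Prop.\,2.23]{MF-S12}), $\bD\pDR(\shm)\simeq\pSol(\shm)$, we get that each of $\cF=\pSol\shm$ and $\cF=\pDR\shm$ has the property that both $\cF$ and $\bD\cF$ are perverse. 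This already gives the statement directly, and is the cleanest route.

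Alternatively, and perhaps more in the spirit of the surrounding lemmas, I would argue by restriction to parameters: by Lemma \ref{perv}, it suffices to check that $Li^*_{s_o}(\cF)$ and $Li^*_{s_o}(\bD\cF)$ are perverse in $\rD^\rb_\cc(\CC_X)$ for every $s_o\in S$. Since $\shm$ is strict, Lemma \ref{lem:flatness} gives $\shh^j Li^*_{s_o}\shm=0$ for $j\neq0$, so $Li^*_{s_o}\shm=\shh^0 Li^*_{s_o}\shm$ is a genuine holonomic $\cD_X$-module. Now I would use the compatibility of $\pSol$ and $\pDR$ with $Li^*_{s_o}$ (established in \cite{MF-S12}, and used already in the proof of Corollary \ref{cor:holstrict}), namely $Li^*_{s_o}\pSol\shm\simeq\pSol(Li^*_{s_o}\shm)$ and $Li^*_{s_o}\pDR\shm\simeq\pDR(Li^*_{s_o}\shm)$, together with the compatibility $Li^*_{s_o}\bD\simeq\bD Li^*_{s_o}$. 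Then the classical Riemann--Hilbert fact that, for a holonomic (equivalently, by regularity being irrelevant here, for any holonomic) $\cD_X$-module $N$, both $\pSol N$ and $\pDR N=\bD\pSol N$ are perverse $\CC$-constructible complexes, applied to $N=Li^*_{s_o}\shm$, yields that $Li^*_{s_o}\cF$ and $Li^*_{s_o}\bD\cF=\bD Li^*_{s_o}\cF$ are perverse for each $s_o$. Lemma \ref{perv} then concludes.

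I expect the main (minor) obstacle to be purely bookkeeping: making sure the compatibility isomorphisms $Li^*_{s_o}\pSol\shm\simeq\pSol Li^*_{s_o}\shm$ and $Li^*_{s_o}\pDR\shm\simeq\pDR Li^*_{s_o}\shm$ are cited correctly from \cite{MF-S12}, and that the degree shifts in $\pSol$ versus $\pDR$ and in the two duality conventions $\bD$ and $\bD'$ on $\DXS$-modules match up with those for the duality on $\rD^\rb_\cc(\pOS)$. There is no real difficulty, since the statement is essentially a strictness-fueled upgrade of the absolute Riemann--Hilbert perversity statement, and both of the routes above only invoke results already available in the excerpt (Proposition \ref{P:3.3}, Lemma \ref{perv}, Lemma \ref{lem:flatness}, and the biduality of \cite[Prop.\,2.23]{MF-S12}). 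I would present the first route as the proof, with a one-line remark that it also follows from Lemma \ref{perv} applied fibrewise.
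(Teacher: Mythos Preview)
Your first route is circular as the paper is organized: the implication $(1)\Rightarrow(3)$ of Proposition~\ref{P:3.3} is proved in the paper \emph{by invoking Corollary~\ref{cor-perv}} (see the line ``$(1)\Rightarrow(3)$ follows from Corollary~\ref{cor-perv}'' in the proof of Proposition~\ref{P:3.3}). So you cannot cite Proposition~\ref{P:3.3} to establish Corollary~\ref{cor-perv} without either reorganizing the logical order or supplying an independent proof of $(1)\Rightarrow(3)$ --- which is precisely what your second route does.

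Your second route is correct and matches the paper's argument in substance. The paper's one-line proof first appeals to Corollary~\ref{cor:holstrict} (so that $\bD\shm$ is again a strict holonomic $\DXS$-module), and then cites Th.~1.2, Prop.~3.10, Th.~3.11 and Prop.~2.28 of \cite{MF-S12}; these supply exactly the compatibilities $Li^*_{s_o}\pDR\simeq\pDR Li^*_{s_o}$, $Li^*_{s_o}\pSol\simeq\pSol Li^*_{s_o}$, the perversity of $\pDR$/$\pSol$ of a holonomic $\cD_X$-module in the absolute case, and the duality-and-perversity statement underlying Lemma~\ref{perv}. Your fibrewise reduction via Lemma~\ref{perv} and Lemma~\ref{lem:flatness} is the same proof spelled out. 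So: drop the first route (or relabel it as a consequence once Proposition~\ref{P:3.3} is established), and present the second route as the actual proof.
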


\begin{proof}
According to Corollary \ref{cor:holstrict}, this follows from Th.\,1.2, Prop.\,3.10, Th.\,3.11 and Prop.\,2.28 in \cite{MF-S12}.
\end{proof}

\begin{proof}[Proof of Proposition \ref{P:3.3}]\mbox{}\par
$(1)\Leftrightarrow (2)$ follows from Corollary \ref{cor:holstrict}.

$(1)\Rightarrow (3)$ follows from Corollary \ref{cor-perv}.

$(1)\Leftarrow (3)$: According to Lemma \ref {perv}, for any $s_o\in S$, $Li^*_{s_o}\pSol(\shm)$ is perverse, therefore, for any $s_o\in S$, $Li^*_{s_o}(\shm)$ is concentrated in degree $0$ and $\shh^0 Li^*_{s_o}(\shm)$ is holonomic. The result then follows by Corollary \ref{cor:Hjnul} and Lemma \ref{lem:flatness}.
\end{proof}

We now can make precise the behaviour of the functors $\pDR$ and $\pSol$. According to Lemma \ref{L:s}, given $\shm\!\in\!\rD^{\geq0} (\DXS)$ and $s_o\in S$, we have $Li^*_{s_o}(\shm)\!\in\! \rD^{\geq-d_S}(\shd_X)$. If moreover, $\shh^k(\shm)$ is $\pOS$-flat for every $k$, then $Li^*_{s_o}(\shm)\in \rD^{\geq 0}(\shd_X)$.

\begin{proposition}\label{P:10}
The functor $\pDR$ has the following behaviour when considering the standard t-structure on $\rD^\rb_\hol(\DXS)$ and the t-structure given on $\rD^\rb_\cc(\pOS)$:
\begin{enumerate}
\item
Let $\shm\in \rD^{\leq0}_\hol(\DXS)$. Then $\pDR\shm\in \pD^{\leq0}_\cc(\pOS)$.

\item
Let $\shm\in \rD^{\geq0}_\hol(\DXS)$. Then $\pDR\shm\in \pD^{\geq-d_S}_\cc(\pOS)$. Moreover, if for any $k$, $\shh^k(\shm)$ is $\pOS$-flat then $\pDR\shm\in \pD^{\geq 0}_\cc(\pOS)$.
\end{enumerate}
\end{proposition}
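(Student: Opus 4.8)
The plan is to deduce both assertions from the (classical) perverse t-exactness of the \emph{absolute} de~Rham functor, by specializing the parameter via the functors $Li^*_{s_o}$, $s_o\in S$.

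Two preliminary observations make the reduction possible. First, since the relative de~Rham complex $\Omega^{\sbullet}_{\XS/S}$ is a bounded complex of locally free (hence $\pOS$-flat) $\sho_{\XS}$-modules, the functor $\pDR$ commutes with the restrictions: for $\shm\in\rD^\rb_\hol(\DXS)$ and $s_o\in S$ there is a functorial isomorphism $Li^*_{s_o}\pDR\shm\simeq\pDR\,Li^*_{s_o}\shm$ in $\rD^\rb_\cc(\CC_X)$ (\cf \cite{MF-S12}), and moreover $Li^*_{s_o}\shm\in\rD^\rb_\hol(\cD_X)$ by \cite{MF-S12}. Second, the perversity half-conditions are detected after restriction: an object $F\in\rD^\rb_\cc(\pOS)$ belongs to $\pD^{\leq0}_\cc(\pOS)$ (\resp $\pD^{\geq-d_S}_\cc(\pOS)$) as soon as $Li^*_{s_o}F$ belongs to $\pD^{\leq0}_\cc(\CC_X)$ (\resp $\pD^{\geq-d_S}_\cc(\CC_X)$) for every $s_o\in S$. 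Indeed, fixing a $\mu$-stratification $(X_\alpha)$ adapted to $F$ (it is then adapted to each $Li^*_{s_o}F$) and using that $i_\alpha^{-1}$ obviously commutes with $Li^*_{s_o}$ while $i_\alpha^!$ does so by Lemma~\ref{RGamma}, the vanishing of $\shh^j$ of $i_\alpha^{-1}Li^*_{s_o}F\simeq Li^*_{s_o}i_\alpha^{-1}F$ (\resp of $i_\alpha^!Li^*_{s_o}F\simeq Li^*_{s_o}i_\alpha^!F$) in the relevant degree range, for all $s_o$, forces by Proposition~\ref{P:1} the vanishing of $\shh^j(i_\alpha^{-1}F)$ (\resp $\shh^j(i_\alpha^!F)$) there — the finiteness hypothesis of Proposition~\ref{P:1} holding because $i_\alpha^{-1}F$ and $i_\alpha^!F$ are $S$-$\CC$-constructible; this is also (part of) \cite[Prop.\,2.28]{MF-S12}, already used in the proof of Lemma~\ref{perv}. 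Recall that $\pDR\shm\in\rD^\rb_\cc(\pOS)$ by \cite[Th.\,1.2]{MF-S12}, so this machinery applies to $F=\pDR\shm$.

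Granting this, the two statements follow. For (1), if $\shm\in\rD^{\leq0}_\hol(\DXS)$ then, tensoring (locally) with a flat resolution of $p^{-1}(\sho_S/\mathfrak m_{s_o})$ of length $d_S$, hence sitting in degrees $[-d_S,0]$, preserves the condition of being in degrees $\leq0$, so $Li^*_{s_o}\shm\in\rD^{\leq0}_\hol(\cD_X)$; the absolute right t-exactness of $\pDR$ gives $\pDR\,Li^*_{s_o}\shm\in\pD^{\leq0}_\cc(\CC_X)$, hence $Li^*_{s_o}\pDR\shm\in\pD^{\leq0}_\cc(\CC_X)$ for all $s_o$, and the reduction above yields $\pDR\shm\in\pD^{\leq0}_\cc(\pOS)$. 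For (2), if $\shm\in\rD^{\geq0}_\hol(\DXS)$ then Lemma~\ref{L:s} gives $Li^*_{s_o}\shm\in\rD^{\geq-d_S}_\hol(\cD_X)$, and the absolute left t-exactness of $\pDR$ gives $Li^*_{s_o}\pDR\shm\simeq\pDR\,Li^*_{s_o}\shm\in\pD^{\geq-d_S}_\cc(\CC_X)$ for all $s_o$, whence $\pDR\shm\in\pD^{\geq-d_S}_\cc(\pOS)$. If moreover each $\shh^k\shm$ is $\pOS$-flat, the second part of Lemma~\ref{L:s} improves this to $Li^*_{s_o}\shm\in\rD^{\geq0}_\hol(\cD_X)$, hence $Li^*_{s_o}\pDR\shm\in\pD^{\geq0}_\cc(\CC_X)$ for all $s_o$ and $\pDR\shm\in\pD^{\geq0}_\cc(\pOS)$.

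The main obstacle is the second preliminary observation — that membership of $\pDR\shm$ in the half-subcategories is detected fiberwise — since the perverse t-structure on $\rD^\rb_\cc(\pOS)$ is not preserved by duality and its $\pD^{\geq}$ part must be handled through $i_\alpha^!$; this is exactly where Lemma~\ref{RGamma} (to commute $i_\alpha^!$ with $Li^*_{s_o}$) and Proposition~\ref{P:1} (to descend the vanishing, using $S$-$\CC$-constructibility for the finiteness of stalks) are needed. By contrast, the commutation $Li^*_{s_o}\pDR\simeq\pDR\,Li^*_{s_o}$ (flatness of the relative de~Rham complex) and the absolute perverse t-exactness of $\pDR$ are routine inputs.
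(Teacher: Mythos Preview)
Your proof is correct and follows essentially the same route as the paper's own argument: reduce to the absolute case via the commutation $Li^*_{s_o}\pDR\simeq\pDR\,Li^*_{s_o}$, use Lemma~\ref{L:s} for the degree bounds on $Li^*_{s_o}\shm$, and then detect the perversity half-conditions stratum-by-stratum using Lemma~\ref{RGamma} (for the $i_\alpha^!$ side) together with Proposition~\ref{P:1}. The only cosmetic difference is that you isolate the ``fiberwise detection'' step as a general preliminary observation, whereas the paper carries it out inline for each half separately (using $R\Gamma_{X_\alpha\times S}$ rather than $i_\alpha^!$, which amounts to the same thing).
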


\begin{proof}\mbox{}
\begin{enumerate}
\item
The assumption entails that, for any $s_o\in S$, $Li^*_{s_o}(\shm)\in \rD^{\leq0}_\hol(\shd_{X})$. Therefore, $\pDR(Li^*_{s_o}(\shm))\simeq Li^*_{s_o}(\pDR\shm)\in \pD^{\leq0}_\cc(X)$. As a consequence,
$$Li^*_{s_o} (\pDR\shm|_{X_{\alpha}\times S})\simeq \pDR(Li^*_{s_o}(\shm))|_{X_{\alpha}}\in \rD^{\leq -\dim X_{\alpha}}(X_{\alpha}).$$ The statement then follows by Proposition \ref{P:1}.

\item
We have to prove that $R\Gamma_{X_{\alpha}\times S}(\pDR \shm)\in \rD^{\geq -\dim X_{\alpha}-d_S}(\XS)$. By Lemma \ref{RGamma}, we have, for any $s_o\in S$,
$$Li^*_{s_o} R\Gamma_{X_{\alpha}\times S}(\pDR\shm)\simeq R\Gamma_{X_{\alpha}\times \{s\}}(Li^*_{s_o}(\pDR\shm))$$
On the other hand $Li^*_{s_o}(\shm)\in \rD^{\geq-d_S}_\hol(\shd_X)$ so $\pDR Li^*_{s_o}(\shm)\in
\pD^{\geq-d_S}_\cc(X)$. Therefore
$$R\Gamma_{X_{\alpha}}(Li^*_{s_o}(F))\in \rD^{\geq-\dim X_{\alpha}-d_S}(X)$$
and the statement follows again by Proposition \ref{P:1}. The same argument implies the second part of the statement since, by Lemma \ref{L:s}, when $\shh^k(\shm)$ is strict, for each~$k$, $Li^*_{s_o}(\shm)\in \rD^{\geq 0}_\hol(\shd_X)$ for any~$s$.\qedhere
\end{enumerate}
\end{proof}

By the same arguments of Proposition \ref{P:10} , we obtain:

\begin{proposition}\label{P:12}
The functor $\pSol$ satisfies the following:
\begin{enumerate}
\item
Let $\shm\in \rD^{\leq0}_\hol(\DXS)$. Then $\pSol\shm\in \pD^{\geq0}_\cc(\pOS)$.

\item
Let $\shm\in \rD^{\geq0}_\hol(\DXS)$. Then $\pSol\shm\in \pD^{\leq d_S}_\cc(\pOS)$. Moreover, if for any~$k$, $\shh^k(\shm)$ is strict then $\pSol\shm\in \pD^{\leq 0}_\cc(\pOS)$.
\end{enumerate}
\end{proposition}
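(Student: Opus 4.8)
The plan is to deduce Proposition~\ref{P:12} from Proposition~\ref{P:10} by the duality relation $\pSol\shm\simeq\bD\pDR\shm$, exactly as announced. First I would record the basic compatibility of $\bD$ with the $t$-structures in play: if $G\in\pD^{\leq0}_\cc(\pOS)$, what can be said about $\bD G$? In the absolute case duality exchanges $\pD^{\leq0}$ and $\pD^{\geq0}$, but here (as emphasized in Section~\ref{subsec:Sconstructible}) it does not, because of the $\sho_S$-direction of dimension $d_S$. The precise statement I need is that $\bD$ sends $\pD^{\leq0}_\cc(\pOS)$ into $\pD^{\geq-d_S}_\cc(\pOS)$ and $\pD^{\geq-d_S}_\cc(\pOS)$ into $\pD^{\leq0}_\cc(\pOS)$, and that it exchanges $\pD^{\leq0}_\cc$ and $\pD^{\geq0}_\cc$ on the strict part; this is the content of \cite[Prop.\,2.28]{MF-S12} combined with Lemma~\ref{perv}, which lets one test everything after applying $Li^*_{s_o}$, where one is back in the absolute situation and duality does exchange the two halves of the $t$-structure.

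Granting that, the proof of (1) is immediate: if $\shm\in\rD^{\leq0}_\hol(\DXS)$, then by Proposition~\ref{P:10}(1) we have $\pDR\shm\in\pD^{\leq0}_\cc(\pOS)$, hence $\pSol\shm=\bD\pDR\shm\in\pD^{\geq0}_\cc(\pOS)$. For (2), if $\shm\in\rD^{\geq0}_\hol(\DXS)$ then Proposition~\ref{P:10}(2) gives $\pDR\shm\in\pD^{\geq-d_S}_\cc(\pOS)$, and applying $\bD$ yields $\pSol\shm\in\pD^{\leq0+d_S}_\cc(\pOS)=\pD^{\leq d_S}_\cc(\pOS)$ once one has the dualization estimate above in the form $\bD(\pD^{\geq-d_S}_\cc)\subset\pD^{\leq d_S}_\cc$ — note here I am using that $\bD\pDR\shm\simeq\pSol\shm$ and biduality $\bD\bD\simeq\id$ on $\rD^\rb_\cc(\pOS)$, \cf \cite[Prop.\,2.23]{MF-S12}. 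For the improved bound in (2), the hypothesis that every $\shh^k(\shm)$ is strict feeds into Proposition~\ref{P:10}(2) to give $\pDR\shm\in\pD^{\geq0}_\cc(\pOS)$, and then strictness also guarantees (via Lemma~\ref{perv}, testing on each $Li^*_{s_o}$, which by Lemma~\ref{L:s} lands in $\rD^{\geq0}_\hol(\shd_X)$) that $\bD$ of this object sits in $\pD^{\leq0}_\cc(\pOS)$, giving $\pSol\shm\in\pD^{\leq0}_\cc(\pOS)$.

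Alternatively — and this is probably cleaner to write — I would mimic the proof of Proposition~\ref{P:10} verbatim, replacing $\pDR$ by $\pSol$ throughout and using that $Li^*_{s_o}$ commutes with $\pSol$ up to the absolute $\pSol$ functor, that $\pSol$ of an object in $\rD^{\leq0}_\hol(\shd_X)$ lies in $\pD^{\geq0}_\cc(X)$ and of an object in $\rD^{\geq0}_\hol(\shd_X)$ lies in $\pD^{\leq0}_\cc(X)$ (the absolute statements), together with Proposition~\ref{P:1}, Lemma~\ref{RGamma} and Lemma~\ref{L:s} to transfer the degree estimates from all the fibers $s_o$ back to $\XS$. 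Concretely, for (1) one checks $Li^*_{s_o}R\shhom(\,\cdot\,)$-type vanishing after restriction to each stratum $X_\alpha\times S$ and concludes by Proposition~\ref{P:1}; for (2) one analyzes $R\Gamma_{X_\alpha\times S}(\pSol\shm)$ through $Li^*_{s_o}$, uses $Li^*_{s_o}(\shm)\in\rD^{\geq-d_S}_\hol(\shd_X)$ in general and $\rD^{\geq0}_\hol(\shd_X)$ under strictness (Lemma~\ref{L:s}), so that the absolute $\pSol$ gives membership in $\pD^{\leq d_S}_\cc(X)$ resp.\ $\pD^{\leq0}_\cc(X)$ stratum by stratum, and Proposition~\ref{P:1} globalizes.

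The only genuinely delicate point is bookkeeping the shift by $d_S$ and making sure the direction of the inequality is correct after dualizing: since $\bD$ is contravariant and involves $R\shhom_{\pOS}(\,\cdot\,,\pOS)$ with $\sho_S$ of injective dimension $d_S$, a complex concentrated in degrees $\geq-d_S$ dualizes into degrees $\leq d_S$ (not $\leq0$), which is exactly why the perverse bound in (2) degrades to $\pD^{\leq d_S}_\cc$ in the non-strict case and recovers $\pD^{\leq0}_\cc$ precisely when strictness kills the extra cohomology of $Li^*_{s_o}\shm$ in negative degrees. Everything else is a routine transcription of the argument for Proposition~\ref{P:10}, which is why the authors are entitled to say ``by the same arguments''.
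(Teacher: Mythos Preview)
Your second approach---mimicking the proof of Proposition~\ref{P:10} directly, via the commutation $Li^*_{s_o}\pSol\simeq\pSol Li^*_{s_o}$, the absolute exactness properties of $\pSol$, and Proposition~\ref{P:1}, Lemma~\ref{RGamma}, Lemma~\ref{L:s}---is correct and is exactly what the paper intends by ``by the same arguments''. One slip to fix: you have the roles of $i^{-1}_\alpha$ and $R\Gamma_{X_\alpha\times S}$ reversed relative to Proposition~\ref{P:10}. Since the conclusion of~(1) here is $\pD^{\geq0}$, it is checked via $i^!_\alpha$, i.e.\ $R\Gamma_{X_\alpha\times S}(\pSol\shm)\in\rD^{\geq-\dim X_\alpha}$; since the conclusion of~(2) is $\pD^{\leq d_S}$, it is checked via $i^{-1}_\alpha$, i.e.\ $(\pSol\shm)|_{X_\alpha\times S}\in\rD^{\leq-\dim X_\alpha+d_S}$. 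The contravariance of $\pSol$ swaps the two halves compared with $\pDR$.

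Your first approach (deduce from Proposition~\ref{P:10} by applying $\bD$ to $\pDR\shm$) works for~(1), since one genuinely has $\bD(\pD^{\leq0}_\cc)\subset\pD^{\geq0}_\cc$, but it does \emph{not} give the sharp bound in~(2). The inclusion $\bD(\pD^{\geq-d_S}_\cc)\subset\pD^{\leq d_S}_\cc$ you invoke is false: take $X$ a point and $F=(\sho_S/\mathfrak m_{s_o})[d_S]\in\pD^{\geq-d_S}$; then $\bD F=R\shhom_{\sho_S}(\sho_S/\mathfrak m_{s_o},\sho_S)[-d_S]$ is concentrated in degree $2d_S$. The defect is that the $d_S$-shift is incurred twice---once in Proposition~\ref{P:10}(2) and once again in the $\sho_S$-dualization---so the duality route only yields $\pSol\shm\in\pD^{\leq 2d_S}_\cc$. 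To reach $\pD^{\leq d_S}$ you must argue directly on $Li^*_{s_o}\pSol\shm$ as in your second approach, which is precisely why that one is the right proof.
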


Simple examples show that the final statement in Proposition \ref{P:10} does not hold in general in the non strict case.

\begin{theorem}[Proper pushforward]\label{T:hol}
Let $f:X\to Y$ be a proper morphism of complex manifolds and let $\shm$ belong to $\rD^\rb_\hol(\DXS)$ and has $f$-good cohomology. Then
\begin{enumeratea}
\item\label{T:hol1}
the pushforward $\Df_*\shm\!:=\!Rf_*(\shd_{Y\leftarrow X/S}\otimes^L_{\DXS}\shm)$ belongs to $\rD^\rb_\hol(\DYS)$,
\item\label{T:hol2}
the de~Rham complex satisfies $\pDR\Df_*\shm\simeq Rf_*\pDR\shm$ functorially in $\shm$,
\item\label{T:hol3}
the solution complex satisfies $\pSol\Df_*\shm\simeq Rf_*\pSol\shm$ functorially in $\shm$.
\end{enumeratea}
\end{theorem}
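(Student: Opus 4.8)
The plan is to reduce the three assertions to the case where $\shm$ is a single holonomic, $f$-good $\DXS$-module --- by dévissage along the truncation triangles of $\shm$, using that $\Df_*$, $Rf_*$, $\pDR$, $\pSol$ and $\bD$ are triangulated and that each $\shh^j\shm$ is again holonomic and $f$-good --- and then, for the functorial isomorphisms, to reduce to Kashiwara's absolute statements by testing on the specialization functors $Li^*_{s_o}$. For \eqref{T:hol1} I would first check that $\Df_*\shm$ has $\DYS$-coherent cohomology; this is the direct image analogue, along $f:X\to Y$ rather than along the parameter, of Proposition \ref{prop:pushforward}, proved by the same argument --- factor $f$ through a closed embedding, treated by Kashiwara's equivalence \ref{th:kashiwaraequivalence}, and a projection, treated via a local good filtration $F_\sbullet\shm$ and the spectral sequence of the direct image, exactly as in \cite[\S4.7]{Kashiwara03}. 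Holonomicity is then the characteristic variety estimate, obtained by the same graded argument: if $\Char\shm\subset\Lambda\times S$ with $\Lambda\subset T^*X$ closed conic Lagrangian, then $\Char(\Df_*\shm)$ lies in the image of $\Lambda\times S$ under the Lagrangian correspondence $T^*X\xleftarrow{\varpi_f}X\times_YT^*Y\xrightarrow{\rho_f}T^*Y$ applied fibrewise over $S$, that is, in $\Lambda'\times S$ with $\Lambda'=\rho_f(\varpi_f^{-1}\Lambda)$; properness of $f$ makes $\Lambda'$ closed conic Lagrangian, whence $\Df_*\shm\in\rD^\rb_\hol(\DYS)$.

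For \eqref{T:hol2} I would use the natural morphism $\theta:Rf_*\pDR\shm\to\pDR\Df_*\shm$ produced, as in the absolute case, by transitivity of the relative de~Rham functor and the projection formula, and show $\theta$ is an isomorphism by applying $Li^*_{s_o}$ for each $s_o\in S$. The commutations to establish are: $Li^*_{s_o}$ with $\Df_*$ (base change for the transfer bimodule $\shd_{Y\leftarrow X/S}$ and the projection formula for $Rf_*$, both legitimate since $Li^*_{s_o}(\cbbullet)=(\cbbullet)\otimes^L_{\pOS}p^{-1}(\sho_S/\mathfrak{m}_{s_o})$ with $\sho_S/\mathfrak{m}_{s_o}$ locally perfect over $\sho_S$), $Li^*_{s_o}$ with $\pDR$ (\cf \cite[Prop.\,3.1]{MF-S12}), and $Li^*_{s_o}$ with $Rf_*$ (same projection formula). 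Granting these, $Li^*_{s_o}\theta$ is identified with the classical isomorphism $Rf_*\pDR(Li^*_{s_o}\shm)\isom\pDR\Df_*(Li^*_{s_o}\shm)$ of \cite{Kashiwara03}, so the cone $C$ of $\theta$ has $Li^*_{s_o}C\simeq0$ for all $s_o$. Since $\pDR\Df_*\shm\in\rD^\rb_\cc(\pOS)$ by \eqref{T:hol1} and \cite[Th.\,1.2]{MF-S12}, while $Rf_*\pDR\shm\in\rD^\rb_\cc(\pOS)$ because $S$-$\C$-constructibility is stable by $Rf_*$ for $f$ proper (\cite[Cor.\,2.8\,\&\,Prop.\,2.20]{MF-S12}), the cohomology stalks of $C$ are finite over $\sho_{S,s_o}$, and Proposition \ref{P:1} then forces $C\simeq0$, \ie $\theta$ is an isomorphism.

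Finally \eqref{T:hol3} follows from \eqref{T:hol2} by duality: from $\pDR(\cbbullet)=\bD\pSol(\cbbullet)$ on $\rD^\rb_\hol$ and biduality (\cite[Prop.\,2.23]{MF-S12}) one gets $\pSol(\cbbullet)\simeq\bD\pDR(\cbbullet)$, and properness of $f$ yields $\bD\circ Rf_*\simeq Rf_*\circ\bD$ on $\rD^\rb_\cc(\pOS)$ (relative Poincaré--Verdier duality; alternatively, on the $\shd$-module side, Proposition \ref{P:rel} and Corollary \ref{C:rel3} give $\Df_*\bD\simeq\bD\Df_*$), so that
\[
\pSol\Df_*\shm\simeq\bD\pDR\Df_*\shm\simeq\bD Rf_*\pDR\shm\simeq Rf_*\bD\pDR\shm\simeq Rf_*\pSol\shm;
\]
alternatively one may rerun the $Li^*_{s_o}$ argument of \eqref{T:hol2} with $\pSol$ in place of $\pDR$. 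I expect the main obstacle to be \eqref{T:hol1}: setting up the relative coherence of $\Df_*\shm$ and, above all, extracting a characteristic variety bound of the \emph{exact} product shape $\Lambda'\times S$, so that holonomicity is preserved in the restricted relative sense of \cite{MF-S12} rather than only fibrewise; once that is done, parts \eqref{T:hol2} and \eqref{T:hol3} amount to bookkeeping around the specialization functors and Proposition \ref{P:1}.
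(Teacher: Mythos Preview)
Your argument is correct, but it differs from the paper chiefly in part~\eqref{T:hol2}. For~\eqref{T:hol1} the paper simply cites \cite[Th.\,4.2 \& Cor.\,4.3]{Sch-Sch94}; your sketch essentially reproduces the content of that reference, so there is no real divergence. For~\eqref{T:hol3} your duality chain is the same as the paper's (the paper writes it on the $\shd$-module side, using $\pSol\simeq\pDR\circ\bD$ and $\bD\Df_*\simeq\Df_*\bD$ from \cite[Th.\,5.15]{Sch-Sch94}, then applies~\eqref{T:hol2} to $\bD\shm$; you give the mirror version on the constructible side, and also mention the $\shd$-module version).

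The genuine difference is in~\eqref{T:hol2}. You construct the comparison morphism and then test it fibrewise via $Li^*_{s_o}$, invoking constructibility of both sides and Proposition~\ref{P:1} to kill the cone. This works, but it is more than is needed and forces you to use properness of~$f$ and holonomicity of~$\shm$ (for the constructibility statements) at a place where neither is required. The paper's observation is that the absolute proof of $\pDR\Df_*\shm\simeq Rf_*\pDR\shm$ (\eg \cite[Th.\,4.2.5]{H-T-T08}) is a purely formal manipulation of the transfer bimodule and the relative de~Rham complex, and it goes through verbatim in the relative setting for \emph{arbitrary} $\shm\in\rD^\rb(\DXS)$, without holonomicity, coherence, or properness. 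Your specialization method is in the spirit of the rest of the paper, and it is a legitimate alternative; the paper's route is simply shorter and yields a stronger statement. (A minor point: the commutation of $Li^*_{s_o}$ with $\pDR$ is \cite[Prop.\,2.1]{MF-S12}, not Prop.\,3.1.)
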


\begin{proof}\mbox{}
\begin{enumeratea}
\item
The coherence and the holonomicity of the cohomology groups of $\Df_*\shm$ follow from \cite[Th.\,4.2 and Cor.\,4.3]{Sch-Sch94}.
\item
The proof for $\cD_X$-modules applies in a straightforward way and does not use holonomicity nor coherence of $\shm$ and neither properness of $f$ (\cf \eg \cite[Th.\,4.2.5]{H-T-T08}).
\item
We have
\[
\pSol\Df_*\shm\overset{(1)}\simeq\pDR\bD\Df_*\shm\overset{(2)}\simeq\pDR\Df_*\bD\shm\overset{(3)}\simeq Rf_*\pDR\bD\shm\overset{(4)}\simeq Rf_*\pSol\shm,
\]
where (1) and (4) follow from \cite[(5)]{MF-S12}, (2) is given by \cite[Th.\,5.15]{Sch-Sch94} and (3) follows from \eqref{T:hol2} above.\qedhere
\end{enumeratea}
\end{proof}

\section{Regular holonomic \texorpdfstring{$\DXS$}{DXS}-modules}

\Subsection{Regularity}\label{subsec:regularity}
As usual $\rD^\rb_\rhol(\shd_X)$ denotes the bounded derived category of complexes with regular holonomic cohomology sheaves (for an introduction to regular holonomic $\shd$\nobreakdash-modules in the absolute case, we refer to \cite{Kashiwara03,Mebkhout87,Mebkhout04,H-T-T08}, and to the references therein for details).

\begin{definition}[Regular holonomic $\DXS$-module]\label{D:HR}
Let $\shm$ be a holonomic $\DXS$-module. We say that $\shm$ is regular holonomic if, for any $s_o\in S$, $Li^*_{s_o}\shm$ belongs to $\rD^\rb_{\rhol}(\shd_X)$.
\end{definition}

We similarly denote by $\rD^\rb_\rhol(\DXS)$ the bounded derived category of complexes with regular holonomic cohomology sheaves. Note that, given an exact sequence in $\Mod_\hol(\DXS)$,
$$0 \to \shn \to\shm\to\shl \to 0$$ if two of its terms are regular holonomic, then the third one is also regular holonomic.

\begin{proposition}\label{Rhol}
Given a distinguished triangle in $\rD^\rb_{\hol}(\DXS)$,
$$\shn \to\shm\to\shl\To {+1}$$ if two of its terms belong to $\rD^\rb_\rhol(\DXS)$ the same holds for the third.
\end{proposition}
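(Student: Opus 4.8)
The plan is to reduce the statement about distinguished triangles in the derived category to the already-observed stability of regular holonomicity under short exact sequences of modules, using the cohomology long exact sequence together with the characterization of regularity via the functors $Li^*_{s_o}$. First I would recall that $\rD^\rb_\rhol(\DXS)$ is by definition the full subcategory of $\rD^\rb_\hol(\DXS)$ whose objects have all cohomology sheaves regular holonomic, so it suffices to prove the analogous statement at the level of a short exact sequence of holonomic modules (already noted in the text, immediately before the proposition) and then propagate it through the long exact sequence of cohomology. So the real content is the following: given a distinguished triangle $\shn\to\shm\to\shl\To{+1}$ in $\rD^\rb_\hol(\DXS)$ with, say, $\shn$ and $\shl$ regular holonomic, one must show each $\shh^j\shm$ is regular holonomic; and symmetrically for the other two choices of which two terms are regular.

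The key steps, in order, are: (1) Write out the long exact sequence $\cdots\to\shh^j\shn\to\shh^j\shm\to\shh^j\shl\to\shh^{j+1}\shn\to\cdots$ in $\Mod_\hol(\DXS)$. (2) In the case $\shn,\shl\in\rD^\rb_\rhol$, decompose $\shh^j\shm$ into an extension $0\to\shk^j\to\shh^j\shm\to\shl'^j\to0$ where $\shl'^j:=\imm(\shh^j\shm\to\shh^j\shl)$ is a submodule of the regular holonomic module $\shh^j\shl$, hence regular holonomic (by the stability of regular holonomicity under submodules, which follows from the exact-sequence remark), and $\shk^j:=\ker(\shh^j\shm\to\shh^j\shl)=\imm(\shh^j\shn\to\shh^j\shm)$ is a quotient of the regular holonomic module $\shh^j\shn$, hence regular holonomic for the same reason; then $\shh^j\shm$ is an extension of two regular holonomic modules, hence regular holonomic. (3) For the case $\shn,\shm\in\rD^\rb_\rhol$, one argues instead that $\shh^j\shl$ sits in an extension of a quotient of $\shh^j\shm$ by a submodule of $\shh^{j+1}\shn$, both regular holonomic; and symmetrically when $\shm,\shl\in\rD^\rb_\rhol$ one writes $\shh^j\shn$ as an extension of a submodule of $\shh^j\shm$ by a quotient of $\shh^{j-1}\shl$. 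In each case the three-term exact-sequence stability finishes the argument.

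The only point requiring a word of care — and the main (mild) obstacle — is the stability of regular holonomicity under sub- and quotient modules in $\Mod_\hol(\DXS)$, which is needed to identify the kernels and images above as regular holonomic. This follows from the remark preceding the proposition applied to $0\to\shk\to\shh^j\shm\to\shh^j\shm/\shk\to0$ once one knows two of the three terms are regular; but to start that induction one needs, for instance, that a submodule of a regular holonomic module is regular holonomic. One can obtain this directly from Definition \ref{D:HR}: for a submodule $\shn'\subset\shm$ with $\shm$ regular holonomic, the cone of $Li^*_{s_o}\shn'\to Li^*_{s_o}\shm$ is $Li^*_{s_o}(\shm/\shn')$, and $\rD^\rb_\rhol(\shd_X)$ in the absolute case is stable under sub, quotient, extension and cones; combining this with Lemma \ref{lem:flatness}-type control of the degrees where $\shh^\bullet Li^*_{s_o}$ can be nonzero, one deduces that $Li^*_{s_o}\shn'$ and $Li^*_{s_o}(\shm/\shn')$ both lie in $\rD^\rb_\rhol(\shd_X)$, whence $\shn'$ and $\shm/\shn'$ are regular holonomic. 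Once this is in place, steps (1)–(3) are routine diagram chases and the proposition follows.
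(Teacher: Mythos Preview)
Your reduction via the long exact sequence is sound, and you have correctly put your finger on the real issue: the remark preceding the proposition only gives 2-out-of-3 for short exact sequences, whereas the long-exact-sequence argument needs the stronger Serre property (stability of $\Mod_\rhol(\DXS)$ under subobjects and quotients). The gap is in your justification of that Serre property. From the triangle $Li^*_{s_o}\shn'\to Li^*_{s_o}\shm\to Li^*_{s_o}(\shm/\shn')\To{+1}$ with only the middle vertex known to lie in $\rD^\rb_\rhol(\shd_X)$, one cannot conclude that the outer vertices do: a single vertex of a triangle lying in a triangulated subcategory never forces the other two, and the amplitude bound $[-d_S,0]$ on $Li^*_{s_o}$ of a module does not change this (take $\shm=0$). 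Your appeal to Lemma~\ref{lem:flatness} is also off target: that lemma characterizes strictness, not regularity, and plays no role here.

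The paper gives no proof, and the cleaner route it presumably has in mind bypasses the Serre property entirely. Namely, one checks that for $\shm\in\rD^\rb_\hol(\DXS)$ the two conditions ``all $\shh^j\shm$ are regular holonomic'' and ``$Li^*_{s_o}\shm\in\rD^\rb_\rhol(\shd_X)$ for every $s_o$'' coincide; once this is known, the proposition is immediate by applying the triangulated functor $Li^*_{s_o}$ and the 2-out-of-3 property of $\rD^\rb_\rhol(\shd_X)$. The forward implication is a trivial d\'evissage by truncation. For the reverse, use the hypercohomology spectral sequence
\[
E_2^{p,q}=\shh^pLi^*_{s_o}\shh^q\shm\;\Longrightarrow\;\shh^{p+q}Li^*_{s_o}\shm.
\]
The $E_\infty^{p,q}$ are subquotients of the abutment, hence regular holonomic $\shd_X$-modules by the (absolute) Serre property of $\Mod_\rhol(\shd_X)\subset\Mod_\hol(\shd_X)$. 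When $d_S=1$ (the paper's working hypothesis from Section~\ref{sec:3} on), $p\in\{-1,0\}$ and the spectral sequence degenerates at $E_2$, so each $E_2^{p,q}=\shh^pLi^*_{s_o}\shh^q\shm$ is regular holonomic; this is exactly the statement that $\shh^q\shm$ is regular holonomic. For general $d_S$ one has to work harder, but this is not needed for the paper. In short: your long-exact-sequence strategy is not wrong, but it commits you to proving more than necessary, and your sketch of that extra step does not go through as written.
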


\begin{proposition}\label{P:15}
Assume that $\shm$ is an object of $\rD^\rb_{\rhol}(\DXS)$. Then so is $\bD\shm$.
\end{proposition}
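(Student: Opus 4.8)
The plan is to reduce the statement to the absolute (``$S$ a point'') case by specializing the parameter $s_o$ and using the compatibility of duality with the functors $Li^*_{s_o}$. First I would recall, from \cite[Prop.\,3.1]{MF-S12}, that for any $s_o\in S$ one has a natural isomorphism $Li^*_{s_o}\bD\shm\simeq\bD Li^*_{s_o}\shm$ in $\rD^\rb(\shd_X)$, where on the right-hand side $\bD$ denotes the absolute duality functor. This already takes care of the holonomicity: if $\shm\in\rD^\rb_\hol(\DXS)$ then $\bD\shm$ has holonomic cohomology (this is \cite[Prop.\,2.5]{Sch-Sch94}, already invoked in the proof of Corollary \ref{cor:holstrict}), so $\bD\shm\in\rD^\rb_\hol(\DXS)$ and Definition \ref{D:HR} applies to it. (For a bounded complex, ``regular holonomic'' means each cohomology sheaf is a regular holonomic module; by the remark following Definition \ref{D:HR} on the stability of regularity under extensions in short exact sequences, one may work directly with the complex.)

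Next I would exploit the hypothesis. Since $\shm\in\rD^\rb_\rhol(\DXS)$, Definition \ref{D:HR} gives that $Li^*_{s_o}\shm\in\rD^\rb_\rhol(\shd_X)$ for every $s_o\in S$. Combining this with the isomorphism just recalled,
\[
Li^*_{s_o}\bD\shm\simeq\bD\,Li^*_{s_o}\shm,
\]
and using that in the \emph{absolute} case $\rD^\rb_\rhol(\shd_X)$ is stable under the duality functor $\bD$ (this is one of the foundational properties of regular holonomic $\cD_X$-modules, see \cite{Kashiwara03,Mebkhout89,H-T-T08}), I conclude that $Li^*_{s_o}\bD\shm\in\rD^\rb_\rhol(\shd_X)$ for every $s_o\in S$. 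Applying Definition \ref{D:HR} once more — now to the holonomic complex $\bD\shm$ — yields $\bD\shm\in\rD^\rb_\rhol(\DXS)$, which is the claim.

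The only genuinely delicate point is making sure the relative duality functor $\bD$ on $\DXS$-modules is compatible with restriction to the fiber, i.e.\ that $Li^*_{s_o}\bD\shm\simeq\bD Li^*_{s_o}\shm$ really holds with the absolute $\bD$ on the right. This is precisely the content of \cite[Prop.\,3.1]{MF-S12}, so the main work has been done in the earlier paper; here it suffices to quote it. The one subtlety to watch is a possible shift: the relative duality uses the relative dualizing sheaf $\DXS\otimes\Omega_{\XS/S}^{\otimes-1}[d_X]$ with $d_X=\dim X$, and this matches the absolute dualizing datum on the fiber $X\times\{s_o\}$ because $\dim X$ is the same, so no correction term appears. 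With that compatibility in hand, the argument above is a two-line formal deduction, so I do not expect any real obstacle beyond correctly citing the fiberwise compatibility of $\bD$.
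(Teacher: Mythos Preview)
Your proof is correct and follows essentially the same route as the paper's: both invoke the commutation $Li^*_{s_o}\bD\shm\simeq\bD Li^*_{s_o}\shm$ (which is \cite[Prop.\,3.1]{MF-S12}) together with the stability of $\rD^\rb_\rhol(\shd_X)$ under duality in the absolute case. The paper's proof is a one-line version of your argument.
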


\begin{proof}
Since the functors $\bD$ and $Li^*_{s_o}$ commute, the result follows by the definition and the fact that $\rD^\rb_{\rhol}(\shd_X)$ is stable by duality.
\end{proof}

\begin{corollary}[of Theorem \ref{T:hol}]\label{T:rholim}
Let $f:X\to Y$ be a proper morphism of complex manifolds.
Let $\shm$ belong to $\rD^\rb_\rhol(\DXS)$. Then $Rf_*(\shd_{Y\from X/S}\otimes^L_{\DXS}\shm)$ belongs to $\rD^\rb_\rhol(\DYS)$.
\end{corollary}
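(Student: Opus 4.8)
The plan is to deduce this from Theorem \ref{T:hol} by checking the hypotheses fiberwise via the functor $Li^*_{s_o}$, exactly as Proposition \ref{P:15} was obtained. First I would note that $\rD^\rb_\rhol(\DXS)\subset\rD^\rb_\hol(\DXS)$, and that a regular holonomic object is in particular $f$-good cohomology-wise: goodness of the cohomology sheaves of $\shm$ with respect to the proper map $f$ is automatic for holonomic modules over a compact-fiber situation, or can be arranged locally; in any case, this is the hypothesis needed to invoke Theorem \ref{T:hol}\eqref{T:hol1}, which already gives that $Rf_*(\shd_{Y\from X/S}\otimes^L_{\DXS}\shm)=\Df_*\shm$ belongs to $\rD^\rb_\hol(\DYS)$. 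So the only thing left is the \emph{regularity} of the cohomology sheaves of $\Df_*\shm$, \ie that $Li^*_{s_o}(\Df_*\shm)\in\rD^\rb_\rhol(\shd_Y)$ for every $s_o\in S$.

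The key step is the base-change compatibility $Li^*_{s_o}(\Df_*\shm)\simeq \Df_*(Li^*_{s_o}\shm)$, where on the right $\Df_*$ now denotes the \emph{absolute} $\cD$-module pushforward along $f:X\to Y$. This is the relative analogue of the standard commutation of $\cD$-module pushforward with non-characteristic (here, parameter) restriction; it follows formally from the projection formula $Rf_*(\cdot)\otimes^L(\cdot)$ together with the fact that $Li^*_{s_o}$ is computed by tensoring with $p^{-1}(\sho_S/\mathfrak m_{s_o})$, which is pulled back from $S$ and hence commutes with $Rf_*$ by the (trivial, since $f\times\id_S$ is proper and the complex is pulled back from $S$) projection formula, and with the transfer bimodule tensor product. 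Once this isomorphism is in hand, regularity is immediate: by Definition \ref{D:HR} applied to $\shm$, each $Li^*_{s_o}\shm$ lies in $\rD^\rb_\rhol(\shd_X)$; the classical theorem that the proper pushforward of a regular holonomic complex by a morphism of complex manifolds is again regular holonomic (\cf \cite{Kashiwara03,H-T-T08}) then gives $\Df_*(Li^*_{s_o}\shm)\in\rD^\rb_\rhol(\shd_Y)$, hence $Li^*_{s_o}(\Df_*\shm)\in\rD^\rb_\rhol(\shd_Y)$ for all $s_o$, which is exactly the regularity condition for $\Df_*\shm$.

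I expect the main obstacle to be the careful verification of the base-change isomorphism $Li^*_{s_o}(\Df_*\shm)\simeq \Df_*(Li^*_{s_o}\shm)$ at the level of derived categories, keeping track of the relative transfer bimodule $\shd_{Y\from X/S}$ versus the absolute one $\shd_{Y\from X}$ and of the shifts; the point is that $\shd_{Y\from X/S}\otimes^L_{p^{-1}\sho_S}(\sho_S/\mathfrak m_{s_o})\simeq \shd_{Y\from X}$ on the fiber over $s_o$, compatibly with the $Rf_*$, so the two operations genuinely commute. Everything else is formal: membership in $\rD^\rb_\hol$ is Theorem \ref{T:hol}\eqref{T:hol1}, and the fiberwise regularity criterion is Definition \ref{D:HR}. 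In fact, since Theorem \ref{T:hol} is already stated and proved, and since $f$-goodness is part of the hypothesis of that theorem and is preserved, the cleanest write-up is: by Theorem \ref{T:hol}\eqref{T:hol1}, $\Df_*\shm\in\rD^\rb_\hol(\DYS)$; for $s_o\in S$, the commutation of $Li^*_{s_o}$ with the proper pushforward gives $Li^*_{s_o}\Df_*\shm\simeq \Df_*Li^*_{s_o}\shm$, which lies in $\rD^\rb_\rhol(\shd_Y)$ since $Li^*_{s_o}\shm\in\rD^\rb_\rhol(\shd_X)$ and regular holonomicity is preserved by proper pushforward in the absolute case; thus $\Df_*\shm\in\rD^\rb_\rhol(\DYS)$.
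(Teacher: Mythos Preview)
Your proposal is correct and follows exactly the paper's approach: the paper's proof is the single sentence ``The regularity follows from the commutativity of $Li^*_{s_o}$ with $Rf_*$ and $\otimes$, for any $s_o\in S$,'' which is precisely your base-change argument reducing to the absolute preservation of regular holonomicity under proper pushforward. Your write-up simply unpacks this one line in more detail.
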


\begin{proof}
The regularity follows from the commutativity of $Li^*_{s_o}$ with $R f_*$ and $\otimes$, for any $s_o\in S$.
\end{proof}

\subsection{Deligne extension of an $S$-locally constant sheaf}\label{subsec:Deligneextension}

Let $D$ be a normal crossing divisor in $X$ and let $j:X^*:=X\moins D\hto X$ denote the inclusion. Let $\cF$ be a coherent $S$-locally constant sheaf on $\XsS$ and let $(E,\nabla)=(\cO_{\XsS}\otimes_{\pOS}\nobreak\cF,\rd_X)$ be the associated coherent $\cO_{\XsS}$-module with flat relative connection (\cf Remark \ref{rem:RH}). In particular, $E$ is naturally endowed with the structure of a left $\cD_{\XsS/S}$-module and $j_*E$ with that of a $\DXS$-module. There exists a coherent $\sho_S$-module~$\cG$ such that, locally on $\XsS$, $\cF\simeq p^{-1}\cG$ (\cf Proposition \ref{prop:cG}). More precisely, let~$U$ be any contractible open set of $X^*$; then $\cF_{|\US}\simeq p_U^{-1}\cG$ (\cf Proposition \ref{prop:contractible}).

Let $\varpi:\wt X\to X$ denote the real blowing up of $X$ along the components of~$D$. Denote by $\wtj:X^*\hto\wt X$ the inclusion, so that $j=\varpi\circ\wtj$. Let $x_o\in D$, $\wt x_o\in\varpi^{-1}(x_o)$ and let $s_o\in S$. Choose local coordinates $(x_1,\dots,x_\ell,x'_{\ell+1},\dots,x'_n)$ at $x_o$ such that $D=\{x_1\cdots x_\ell=0\}$ and consider the associated polar coordinates $(\rhog,\thetag,\boldsymbol{x}'):=(\rho_1,\theta_1,\dots,\rho_\ell,\theta_\ell,x'_{\ell+1},\dots,x'_n)$ so that $\wt x_o$ has coordinates $\rhog^o=0$, $\thetag^o$, $\boldsymbol{x}^{\prime o}=0$. For $\epsilon>0$, we set
\[
\wt U_\epsilon:=\{\Vert\rhog\Vert<\epsilon,\Vert\boldsymbol{x}'\Vert<\epsilon,\Vert\thetag-\thetag^o\Vert<\epsilon\},\quad \wt U^*_\epsilon:=\wt U_\epsilon\moins\{\rho_1\cdots\rho_\ell=0\}.
\]
On the other hand, for $s_o\in S$, we denote by $V$ some open neighborhood of $s_o$ in $S$. Note that since $\wt U^*_\epsilon$ is contractible, we have $\cF_{|\wt U^*_\epsilon\times S}\simeq p^{-1}_{\wt U^*_\epsilon}\cG$ locally on $S$ (\cf Proposition \ref{prop:contractible}), and thus $(E,\nabla)_{|\wt U^*_\epsilon\times S}\simeq (\cO_{\wt U^*_\epsilon\times S}\otimes_{\pOS}p^{-1}\cG,\rd_X)$.

\begin{definition}[Moderate growth]\label{def:mod}\mbox{}
\begin{enumeratea}
\item
A germ of section $\wt v\in(\wtj_*E)_{(\wt x_o,s_o)}$ is said to have \emph{moderate growth} if for some (or any) system of generator of $\cG_{s_o}$, some $\epsilon>0$ and some $V$ so that $\wt v$ is defined on $\wt U_\epsilon\times V$, its coefficients on the chosen generators of~$1\otimes\cG_{s_o}$ (these are sections of $\cO(\wt U^*_\epsilon\times V)$ by means of the isomorphism above) are bounded by $C\rhog^{-N}$, for some $C,N>0$.

\item
A germ of section $v\in(j_*E)_{(x_o,s_o)}$ is said to have \emph{moderate growth} if for each~$\wt x_o$ in $\varpi^{-1}(x_o)$, the corresponding germ in $(\wtj_*E)_{(\wt x_o,s_o)}$ has moderate growth.
\end{enumeratea}
\end{definition}

\begin{theorem}\label{th:Deligneext}
The subsheaf $\wt E$ of $j_*E$ consisting of local sections having moderate growth is stable by $\nabla$ and is $\cO_{\XS}(*D)$-coherent.
\end{theorem}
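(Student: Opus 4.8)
\emph{Idea of proof.} Both assertions are local on $\XS$, so I fix a point $(x_o,s_o)$ with $x_o\in D$, keep the coordinates and polar coordinates of the statement so that $D=\{x_1\cdots x_\ell=0\}$ near $x_o$, and choose a small polydisc $W\ni x_o$ and a small ball $V\ni s_o$, writing $W^*=W\moins D$. Stability of $\wt E$ by $\nabla$ and by multiplication by $\cO_{\XS}(*D)$ is elementary: $\nabla$ differentiates only in the $X$-directions, so the Cauchy inequalities on polydiscs of polyradius comparable to $(\rho_1,\dots,\rho_\ell)$ turn a bound $C\rhog^{-N}$ into a bound $C'\rhog^{-N-1}$, and meromorphic functions with poles along $D$ have moderate growth; hence $\wt E$ is an $\cO_{\XS}(*D)$-submodule of $j_*E$ stable by $\nabla$.

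The coherence is the real content. By Propositions \ref{prop:cG} and \ref{prop:contractible}, after shrinking $W$ and $V$ I may describe $\cF_{|W^*\times V}$ by a coherent $\sho_S$-module $\cG$ on $V$ together with commuting monodromy automorphisms $T_1,\dots,T_\ell\in\Aut_{\sho_S}(\cG)$ along the branches $\{x_i=0\}$, with $E_{|W^*\times V}\simeq\cO_{W^*\times V}\otimes_{\pOS}\cF$ and $\nabla=\rd_X$. I would first construct logarithms of the $T_i$: shrinking $V$, the endomorphism of the fibre $\cG(s_o)$ induced by $T_i$ has finitely many nonzero eigenvalues, and for $\zeta$ off a small neighbourhood of them $\zeta-T_i$ is a surjective, hence bijective, endomorphism of $\cG_{s_o}$ (a surjective endomorphism of a finitely generated module over a commutative ring is an isomorphism); the holomorphic functional calculus with a fixed branch of $\log$ then yields, after a further shrinking, commuting $B_1,\dots,B_\ell\in\End_{\sho_S}(\cG)$ with $\exp(2\pi i B_i)=T_i$. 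Set $x^{\boldsymbol B}=\exp\bigl(\sum_iB_i\log x_i\bigr)$; this is a single-valued $\End_{\sho_S}(\cG)$-valued holomorphic function on each sector of $\wt{W^*}\times V$, of moderate growth (its norm is bounded by a power of $\rhog^{-1}$), with monodromy $T_i$ around $\{x_i=0\}$. Since this monodromy matches that of $\cF$, each $g\in\cG$ yields a single-valued section $\sigma_g:=x^{\boldsymbol B}g$ of $E$ over $W^*\times V$ of moderate growth, whence an $\cO_{\XS}(*D)$-linear morphism
\[
\Phi\colon\cO_{\XS}(*D)\otimes_{\pOS}\cG\longrightarrow j_*E,\qquad f\otimes g\longmapsto f\,\sigma_g,
\]
with image inside $\wt E$.

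The proof is then completed by showing that $\Phi$ is an isomorphism onto $\wt E$: indeed $\cG$ is $\sho_S$-coherent and $\cO_{\XS}(*D)$ a coherent sheaf of rings, so $\wt E\simeq\cO_{\XS}(*D)\otimes_{\pOS}\cG$ is $\cO_{\XS}(*D)$-coherent (and $\nabla$ is carried to $\rd_X+\sum_iB_i\,\rd x_i/x_i$, which re-proves the stability). Injectivity of $\Phi$ follows after restriction to $X^*\times S$, where $\Phi$ restricts on each contractible $U\subset X^*$ to multiplication by the invertible function $x^{\boldsymbol B}$, together with the fact that $\cO_{\XS}(*D)\otimes_{\pOS}\cG$, being an $\cO_{\XS}(*D)$-module, has no nonzero section supported on $D\times S$. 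Surjectivity onto $\wt E$ is the crux: given $\wt v\in\wt E_{(x_o,s_o)}$, the element $w:=x^{-\boldsymbol B}\wt v$ is, by the monodromy computation, a single-valued $\cG$-valued holomorphic function on $W^*\times V$, of moderate growth along $D$ since $x^{-\boldsymbol B}$ is, and one must show that such a function is the germ of a section of $\cO_{\XS}(*D)\otimes_{\pOS}\cG$. \textbf{I expect this to be the main obstacle.} When $\cG$ is $\sho_S$-free it is the classical removable-singularity and meromorphic-extension statement underlying Deligne's theorem (also obtained through the functor $\RH^S$, \cf Lemma \ref{RHV}); for a general coherent $\cG$ one reduces to that case by a finite presentation of $\cG$ by free $\sho_S$-modules on $V$, after checking that the moderate-growth condition is preserved under lifting sections along such a presentation — equivalently, propagating the free case through the dévissage of $\cG$ — and this bookkeeping, intrinsic to the relative (non-locally-free) situation, is where the argument needs the most care.
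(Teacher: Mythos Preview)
Your proposal is the paper's proof. Both localize, build commuting logarithms $A_k\in\End_{\sho_S}(\cG)$ of the monodromies $T_k$ (you via holomorphic functional calculus, the paper by citing \cite{Wang08}), form the explicit coherent model $\wh E_1:=\cO_{\XS}(*D)\otimes_{\pOS}\cG$ with logarithmic connection $\nabla_{x_k\partial_{x_k}}=A_k$---your $\Phi$ is exactly the resulting inclusion $\wh E_1\hookrightarrow j_*E$---and then identify $\wt E$ with $\wh E_1$. The paper dispatches your flagged ``main obstacle'' in one sentence (once the single-valued generators $1\otimes g_i$ are seen to have moderate growth via $x^{\pm A}$, a moderate-growth section has moderate-growth holomorphic coefficients on them, hence meromorphic ones); your d\'evissage through a free presentation of $\cG$ is a legitimate way to make that step precise when $\cG$ is not free, but the paper does not spell it out.
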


\begin{proof}
The problem is local on $\XS$. We thus assume that $\XS$ is a small neighborhood of $(x_o,s_o)$ as above. In such a neighborhood, giving the local system is equivalent to giving $T_1,\dots,T_\ell \in \Aut(\cG)$ which pairwise commute, according to Proposition \ref{prop:monodromyrepr}. According to \cite[Cor.\,2.3.10 \& (3.45)]{Wang08}, in the neighborhood of $s_o$ there exists for each $k$ a logarithm of $T_k$, hence there exists $A_k\in\End(\cG)$ such that $T_k=\exp(-2\pi i A_k)$, and the formula (2.11) of \cite{Wang08} can be used to show that there exist $A_1,\dots,A_\ell\in\End(\cG)$ which pairwise commute. Set $\wh E_1:=\cO_{\XS}(*D)\otimes_{\pOS}\cG$, equipped with the connection~$\nabla$ such that $\nabla_{x_k\partial_{x_k}}:\wh E_1\to\wh E_1$ is given by $A_k$ if $k=1,\dots,\ell$, and zero otherwise. Set $E_1=\wh E_{1|\XsS}$. Then the monodromy representation of $\nabla$ on $E_1$ is given by $T_1,\dots,T_\ell$, from which one deduces an isomorphism $(E,\nabla)\simeq(E_1,\nabla)$, according to Proposition \ref{prop:monodromyrepr} and Remark \ref{rem:RH}. It is then enough to show that $\wt E_1=\wh E_1$, where the former is as in the statement of the theorem, since we clearly have $\wt E\simeq\wt E_1$.

Let us fix local generators $\bg:=(g_i)$ of $\cG$ and let us still denote by $A_k$ a matrix of the endomorphism $A_k$ with respect to $(g_i)$. Any local section $v$ of $E_1$ can be expressed as $v=(1\otimes\bg)\cdot f$ for some vector $f$ of local holomorphic functions. Let us set $x^A:=x_1^{A_1}\cdots x_\ell^{A_\ell}$. A family of $\nabla$-horizontal generators is then given by $(1\otimes\bg)\cdot x^{-A}$, showing that the generators $1\otimes g_i$ of $\wh E_1$ have moderate growth, hence are local sections of $\wt E_1$. Therefore, sections of $j_*E_1$ have moderate growth if and only if their coefficients over the generators $1\otimes g_i$ have moderate growth. Since these coefficients are sections of $\cO_{\XsS}$, they must be meromorphic. Hence $\wt E_1=\wh E_1$.
\end{proof}

\begin{remark}\label{rem:tildeexactfunctor}
The functors $j_*$ \resp $\wtj_*$ are exact functors from the category of coherent $\cO_{\XsS}$-modules with integrable relative connection to that of $\cO_{\XS}$- \resp $\wtj_*\cO_{\XsS}$-modules with integrable relative connection, since any point of $D$ \resp $\varpi^{-1}(D)$ has a fundamental system of neighborhoods whose intersection with $X\moins D$ \resp $\wt X\moins\nobreak\varpi^{-1}(D)$ is Stein. Similarly, the correspondence $E\mto\wt E$ is an exact functor from the category of coherent $\cO_{\XsS}$-modules with integrable relative connection to that of $\varpi^{-1}\cO_{\XS}$-modules with integrable relative connection. Indeed, given a morphism $\varphi:(E,\nabla)\to(E',\nabla)$, it is clear that the morphism $\wtj_*\varphi$ sends~$\wt E$ to~$\wt E'$. The only point to check is right exactness, that is, that the induced morphism~$\wt\varphi$ is onto as soon as $\varphi$ is onto. Keeping the notation of the proof of Theorem~\ref{th:Deligneext}, we can start with $E_1,E'_1$. The surjectivity of $\varphi$ is equivalent to that of the morphism between the corresponding relative local systems, and restricting to~$x_o$, to the induced morphism $G\to G'$. As a consequence, the morphism $\wh\varphi$ is onto, and therefore so is $\wt\varphi$, according to the identification proved in the theorem.
\end{remark}

\begin{corollary}\label{cor:Deligneext}
Assume moreover that $d_S=1$. Then $\wt E$ is $\DXS$-holonomic and regular with characteristic variety contained in $\Lambda\times S$, where $\Lambda$ is the union of the conormal spaces of the natural stratification of $(X,D)$. Moreover, if $F$ is $\pOS$-locally free, then $\wt E$ is strict.
\end{corollary}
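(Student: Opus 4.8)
The plan is to localize on $\XS$ and reduce the statement to the explicit local model built in the proof of Theorem~\ref{th:Deligneext}. All four assertions --- $\DXS$-holonomicity, the bound $\Char\wt E\subseteq\Lambda\times S$, regularity, and strictness when $\cF$ is $\pOS$-locally free --- are of a local nature on $\XS$, so we may assume that $\XS$ is a product of a small polydisc by an open set $V\subseteq S$, that $D=\{x_1\cdots x_\ell=0\}$, and that $\wt E\simeq\wh E_1:=\cO_{\XS}(*D)\otimes_{\pOS}\cG$, where $\cG$ is a coherent $\sho_S$-module and the integrable relative connection $\nabla$ is given by pairwise commuting endomorphisms $A_1,\dots,A_\ell\in\End(\cG)$ through $\nabla_{x_k\partial_{x_k}}=A_k$ for $1\le k\le\ell$ and $\nabla_{\partial_{x'_m}}=0$ for $m>\ell$. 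By Theorem~\ref{th:Deligneext} we already know that $\wh E_1$ is $\cO_{\XS}(*D)$-coherent and stable by $\nabla$.

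First I would establish $\DXS$-coherence, holonomicity, and the inclusion $\Char\wt E\subseteq\Lambda\times S$, where $\Lambda=\bigcup_\alpha\overline{T^*_{X_\alpha}X}$ is the union of the conormals to the strata of the natural stratification of $(X,D)$. Together these constitute the relative analogue of the classical fact that a coherent $\cO_X(*D)$-module equipped with an integrable connection is a holonomic $\cD_X$-module with characteristic variety contained in $\Lambda$. For the explicit model $\wh E_1$ one argues exactly as in the absolute case: one produces a local $\cO_{\XS}$-coherent lattice of $\wh E_1$ that generates it over $\DXS$ and is stable under the logarithmic vector fields $x_k\partial_{x_k}$ (which act on it through the $A_k$); the good filtration it induces has associated graded a coherent $\gr\DXS\simeq\sho_{T^*X\times S}$-module supported on $\Lambda\times S$, by the same principal-symbol computation as for $\cO_{\XS}(*D)$ itself, the parameter~$S$ being merely carried along. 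Hence $\wt E$ is $\DXS$-coherent, and since $\Lambda$ is Lagrangian in $T^*X$ it is holonomic with $\Char\wt E\subseteq\Lambda\times S$.

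For the regularity I would pass to the parameter fibres. As $\cO_{\XS}(*D)$ is $\pOS$-flat, we have, for every $s_o\in S$,
\[
Li^*_{s_o}\wt E\simeq\cO_X(*D)\otimes_\CC\bigl(\cG_{s_o}\otimes^L_{\sho_{S,s_o}}\CC\bigr),
\]
and since $d_S=1$ makes $\sho_{S,s_o}$ a discrete valuation ring, $\cG_{s_o}\otimes^L_{\sho_{S,s_o}}\CC$ is a two-term complex $V^{-1}\to V^0$ of finite-dimensional $\CC$-vector spaces; hence $\shh^jLi^*_{s_o}\wt E\simeq\cO_X(*D)\otimes_\CC V^j$, endowed with the logarithmic connection $\rd+\sum_k\ov{A}_k\,\rd x_k/x_k$ induced by the $A_k$. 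Each of these is a meromorphic connection with logarithmic poles along $D$, hence regular holonomic over $\cD_X$ by Deligne's theory \cite{Deligne70}; therefore $Li^*_{s_o}\wt E\in\rD^\rb_\rhol(\shd_X)$ for all $s_o$, which by Definition~\ref{D:HR} is precisely the assertion that $\wt E$ is regular holonomic. Finally, if $\cF$ --- equivalently $\cG$ --- is $\pOS$-locally free, then $\wh E_1=\cO_{\XS}(*D)\otimes_{\pOS}\cG$ is the tensor product of the $\pOS$-flat sheaf $\cO_{\XS}(*D)$ by a locally free $\pOS$-module, hence is itself $\pOS$-flat, \ie strict; alternatively, $V^{-1}=0$ for every $s_o$, so $Li^*_{s_o}\wt E$ is concentrated in degree~$0$ and Lemma~\ref{lem:flatness} applies.

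The hard part is the second paragraph, namely the relative version of ``$\cO_X(*D)$-coherent with connection $\Rightarrow$ holonomic with $\Char\subseteq\Lambda$''. Its delicate point is that $\wh E_1$ is \emph{not} generated over $\DXS$ by the evident sections $1\otimes g_i$ --- the presence of the connection forces one to use a lattice adapted to the logarithmic poles, much as $\cO_X(*D)$ is generated over $\cD_X$ by $(x_1\cdots x_\ell)^{-1}$ rather than by~$1$ --- and one must then verify that the associated graded of such a lattice is supported exactly on $\Lambda\times S$, uniformly in $s$. This is a local computation with the principal symbol of a relative meromorphic connection along a normal crossing divisor. Granting it, the remaining steps (the computation of $Li^*_{s_o}\wt E$, the appeal to the regularity of logarithmic connections, and the flatness check) are routine.
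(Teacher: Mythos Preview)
Your outline is correct and takes a genuinely different route from the paper. The paper proceeds by reduction: for $\cG$ locally free it passes to a finite ramified cover $(S',s'_o)\to(S,s_o)$ on which $\cG$ acquires an $\cO_{S'}[A_1,\dots,A_\ell]$-filtration with rank-one subquotients (proved in a separate lemma), then uses an external-product argument to reduce to $d_X=1$ and the explicit presentation $(\cO_{\CC\times S}(*0),x\partial_x-\lambda(s))\simeq\cD_{\CC\times S/S}/(x\partial_x-\lambda(s)+1)$; for general coherent $\cG$ it takes a two-term $\cO_S$-free resolution (available since $d_S=1$) and reduces to the free case. Your lattice argument is more direct and does not require either the ramification or the reduction to $\cG$ free: the lattice $L=\cO_{\XS}(x_1\cdots x_\ell)^{-N}\otimes_{\pOS}\cG$ is stable under each $x_k\nabla_{\partial_{x_k}}$ and each $\nabla_{\partial_{x'_m}}$, so the symbols $x_k\xi_k$ and $\xi'_m$ annihilate $\gr^F$, giving $\Char\subseteq\Lambda\times S$. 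The point you flag but do not carry out --- that $L$ actually generates $\wh E_1$ over $\DXS$ --- comes down to the invertibility of $A_k-m$ in $\End_{\cO_S}(\cG)_{s_o}$ for all integers $m\ge N$; this holds once $N$ exceeds the finitely many integer roots at $s_o$ of a monic integral equation $p_k(t)=0$ for $A_k$ over $\cO_{S,s_o}$, uniformly in $m$ on a fixed neighbourhood since $|p_k(m)(s)|\asymp m^{\deg p_k}$. Your treatment of regularity and strictness via $Li^*_{s_o}$ is correct and close to the paper's. The paper's approach buys the explicit rank-one presentation and the filtration lemma; yours is closer to the standard absolute-case argument and avoids the auxiliary constructions.
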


\begin{proof}
We keep the notation of the proof of Theorem \ref{th:Deligneext}. Since the statement is local on $\XS$, we can work with $E_1$. We fix $s_o\in S$ and we take a local coordinate~$s$ centered at $s_o$. We denote by $(S,s_o)$ the germ of~$S$ at~$s_o$.

\subsubsection*{Step one: assume $G$ is $\cO_S$-locally free}
In this case, we are reduced to proving that~$\wh E_1$ is strict holonomic with characteristic variety contained in $\Lambda\times S$, since its restriction to any~$s_o$ is a $\cD_X$-module of Deligne type, hence is regular holonomic. Note that the strictness of $\wh E_1$ is obvious. We regard~$\cG_{s_o}$ as an $\cO_{S,s_o}[A_1,\dots,A_\ell]$-module which is $\cO_{S,s_o}$-free. Let $\rho:(S',s'_o)\to (S,s_o)$ be the finite ramification of order $N$. Then $\cG_{s_o}$ is identified with the invariant part of the pull-back $\cG'_{s'_o}$ of $\cG_{s_o}$ by the Galois group~$\ZZ^N$. Similarly, $\wh E_1$ is identified with the invariant part of $\rho_*\rho^*\wh E_1$ by the Galois group. The assertions of the corollary hold then for $\wh E_1$ if they hold for $\wh E'_1:=\rho^*\wh E_1$, according to Proposition \ref{prop:pushforward}.

\begin{lemma}
There exists a finite ramification $\rho:(S',s'_o)\to (S,s_o)$ such that the pull-back $\cG'_{s'_o}$ of $\cG_{s_o}$ has a finite filtration by $\cO_{S',s'_o}[A_1,\dots,A_\ell]$-submodules whose successive quotients have rank one over $\cO_{S',s'_o}$.
\end{lemma}

By the lemma and the previous remarks, we are reduced to the case where $G$ has rank one over $\cO_S$, so $A_k(s)$ is a holomorphic function $\lambda_k(s)$ and $\wh E_1=\cO_{\XS}(*D)$ endowed with the $\DXS$-action defined by $\partial_{x_k}\cdot 1=\lambda_k(s)/x_k$ if $k=1,\dots,\ell$, and $\partial_{x_k}\cdot 1=0$ otherwise. This is the external product over $\cO_S$ of the $\cD_{\CC\times S/S}$-modules $\cO_{\CC\times S}(*0)$ endowed with the connection $x_k\partial_{x_k}-\lambda_k(s)$ ($k=1,\dots,\ell$) or with the connection $\partial_{x_k}$ ($k>\ell$). It is therefore enough to to show the corollary in the case $d_X=1$. The latter case being obvious, we are reduced to proving the $\cD_{\CC\times S/S}$-coherence of $(\cO_{\CC\times S}(*0),x\partial_x-\lambda(s))$ and to showing that the characteristic variety is contained in $(T^*_\CC\CC)\times S\cup (T^*_0\CC)\times S$.

Up to isomorphism, we can assume that, if $\lambda(0)\in\NN$, then $\lambda(0)=0$. We then denote by $\mu$ its order of vanishing, \ie $\lambda(s)=s^\mu u(s)$ with $u(0)\neq0$. Then one has the relation
\[
\partial_x^m\cdot\frac1x=\frac{\prod_{j=1}^m(\lambda(s)-j)}{x^{m+1}},
\]
in which the numerator is thus invertible in $\cO_{S,s_o}$. It follows that $1/x$ is a $\cD_{\CC\times S/S}$-generator of this module. We thus have a surjective morphism
\[
\cD_{\CC\times S/S}/\cD_{\CC\times S/S}(x\partial_x-(\lambda(s)-1))\to (\cO_{\CC\times S}(*0),x\partial_x-\lambda(s))
\]
sending the class of $1$ to $1/x$. It obviously becomes an isomorphism after tensoring with $\cO_{\CC\times S}(*0)$. Since the left-hand side has no $\cO_S$-torsion (\cf\cite[Ex.\,3.12]{MF-S12}), we conclude that the above morphism is an isomorphism. Now, the assertions of the corollary are clear for $\cD_{\CC\times S/S}/\cD_{\CC\times S/S}(x\partial_x-(\lambda(s)-1))$.

\subsubsection*{Step two}
We now relax the assumption of local freeness on $G$. If $\cO_{S,s_o}^N\to G_{s_o}$ is onto, then the kernel is torsion free, hence $\cO_{S,s_o}$-free, and we have an exact sequence
\[
0\to G''\to G'\to G\to0
\]
where $G',G''$ are $\cO_{S,s_o}$-free of finite rank. By flatness of $\cO_{\XS}(*D)$ over $\cO_S$, we have an exact sequence
\[
0\to\wh E''_1\to\wh E'_1\to\wh E_1\to0,
\]
from which we deduce, according to Step one, that $\wh E_1$ is holonomic with characteristic variety contained in $\Lambda\times S$. Moreover, the cohomology of $Li_{s_o}^*\wh E_1$ appears as the kernel and cokernel of the morphism $i_{s_o}^*\wh E''_1\to i_{s_o}^*\wh E'_1$, hence is also regular holonomic.
\end{proof}

\begin{proof}[Proof of the lemma]
Let us first work with the $\cO_{S,s_o}(*0)$-vector space $G_{s_o}(*0):=\cO_{S,s_o}(*0)\otimes_{\cO_{S,s_o}}G_{s_o}$. There exists a finite ramification $$\rho:(S',s'_o)\to (S,s_o)$$ such that each equation $\det(t\id-A_k(s'))=0$ has all its solutions in the field $\cO_{S',s'_o}(*0)$. These solutions, being algebraic over $\cO_{S',s'_o}$, belong to this ring. We can then assume from the beginning that all eigenvalues belong to $\cO_{S,s_o}$. Then $G_{s_o}(*0)$ decomposes as an $\cO_{S,s_o}(*0)[A_1,\dots,A_\ell]$-module with respect to the multi-eigenvalues $\lambda(s)=(\lambda_1(s),\dots,\lambda_\ell(s))$ as $G_{s_o}(*0)=\bigoplus_\lambda G_{s_o}(*0)_\lambda$, and $A_k(s)-\lambda_k(s)$ is nilpotent on $G_{s_o}(*0)_\lambda$. If we choose a total order on the set of~$\lambda$'s, we can define a filtration $$G_{s_o}(*0)_{\leq\lambda}:=\bigoplus_{\lambda'\leq\lambda} G_{s_o}(*0)_{\lambda'}.$$ It induces a filtration $G_{s_o,\leq\lambda}:=G_{s_o}(*0)_{\leq\lambda}\cap G_{s_o}$, where the intersection is taken in $G_{s_o}(*0)$, and every successive quotient $G_{s_o,\leq\lambda}/G_{s_o,<\lambda}$ is an $\cO_{S,s_o}[A_1,\dots,A_\ell]$-submodule of $G_{s_o}(*0)_\lambda$, hence is $\cO_{S,s_o}$-locally free and $A_k(s)-\lambda_k(s)$ is nilpotent on it for every $k=1,\dots,\ell$.

We can therefore assume from the beginning that every $A_k(s)$ is nilpotent on $G_{s_o}$. We now argue by induction on $\ell$. Consider the kernel filtration $G_{s_o}(*0)_j:=\ker A_1^j$. This is a filtration by $\cO_{S,s_o}(*0)[A_1,\dots,A_\ell]$-submodules and $A_1$ acts by zero on the quotient module $G_{s_o}(*0)_j/G_{s_o}(*0)_{j-1}$ for every $j$. As above, we can induce this filtration on $ G_{s_o}$ by setting $G_{s_o,j}:=G_{s_o}(*0)_j\cap G_{s_o}$ and $G_{s_o,j}/G_{s_o,j-1}$ is contained in $G_{s_o}(*0)_j/G_{s_o}(*0)_{j-1}$, hence has no $\cO_{S,s_o}$-torsion, \ie is $\cO_{S,s_o}$-free. By induction on $\ell$, we find a filtration whose successive quotients are free $\cO_{S,s_o}$-modules and on which every $A_k$ acts by zero, so that every rank-one $\cO_{S,s_o}$-submodule is also trivially an $\cO_{S,s_o}[A_1,\dots,A_\ell]$-submodule, and the lemma is proved.
\end{proof}

The following definition is a relative version of \cite[Def.\,2.3.1]{K-K81}.

\begin{definition}\label{D-t}
A coherent $\DXS$-module $\shl$ is said of D-type with singularities along $D$ if it satisfies the following conditions:
\begin{enumerate}
\item
$\Char\shl\subset (\pi^{-1}(D)\times S)\cup (T^*_{X} \XS)$,
\item
$\shl$ is regular holonomic and strict,
\item
$\shl\simeq\shl(*D)$.
\end{enumerate}
\end{definition}

\begin{proposition}\label{T:D-T}
The category of holonomic systems $\shl$ of D-type along $D$ is equivalent to the category of locally free $p_{X\setminus D}^{-1}\sho_S$-modules of finite type $F$ on $(X\setminus D)\times S$ under the correspondences $\shl\mto F=\shh^0 \DR\shl|_{(X\setminus D)\times S}$ and $F\mto \shl=\tilde{E}$.
\end{proposition}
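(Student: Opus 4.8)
The plan is to check that the two correspondences are well-defined functors which are quasi-inverse to one another. Set $X^*:=X\setminus D$. Given a locally free $p_{X^*}^{-1}\sho_S$-module $F$ of finite type on $\XsS$, put $(E,\nabla):=(\cO_{\XsS}\otimes_{p_{X^*}^{-1}\sho_S}F,\rd_X)$. By Theorem~\ref{th:Deligneext} and Corollary~\ref{cor:Deligneext}, $\wt E$ is $\cO_{\XS}(*D)$-coherent (hence $\wt E\simeq\wt E(*D)$), regular holonomic and strict, with $\Char\wt E\subset\Lambda\times S$ where $\Lambda$ is the union of the conormals to the strata of $(X,D)$; since the conormal of the open stratum is the zero section, $\Lambda\subset\pi^{-1}(D)\cup T^*_XX$, so $\wt E$ is of D-type along $D$, and Remark~\ref{rem:tildeexactfunctor} gives the functoriality of $F\mto\wt E$. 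Conversely, if $\shl$ is of D-type, condition~(1) of Definition~\ref{D-t} says that $\shl_{|\XsS}$ is $\cO_{\XsS}$-coherent with integrable relative connection; by Remark~\ref{rem:RH} and the results on $S$-locally constant sheaves recalled in the appendix, $F:=\shh^0\DR(\shl_{|\XsS})$ is a coherent $S$-locally constant sheaf and the natural morphism $\cO_{\XsS}\otimes_{p_{X^*}^{-1}\sho_S}F\to\shl_{|\XsS}$ is an isomorphism; being coherent $S$-locally constant (hence locally of the form $p^{-1}G$ with $G$ coherent over $\sho_S$, by Proposition~\ref{prop:cG} and Proposition~\ref{prop:contractible}) and $p^{-1}\sho_S$-flat by strictness of $\shl$, the sheaf $F$ is locally free of finite type. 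Functoriality in $\shl$ is clear.

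The composite $F\mto\wt E\mto\shh^0\DR(\wt E_{|\XsS})$ is canonically the identity, since sections over $\XsS$ automatically have moderate growth, so $\wt E_{|\XsS}=E$ and $\shh^0\DR(E,\rd_X)=F$, functorially in $F$. For the other composite, fix $\shl$ of D-type, write $E:=\shl_{|\XsS}$, identified as above with $\cO_{\XsS}\otimes_{p_{X^*}^{-1}\sho_S}F$ for $F=\shh^0\DR E$, and construct a functorial isomorphism $\shl\simeq\wt E$ as follows. The adjunction morphism $\shl\to j_*j^{-1}\shl=j_*E$ is injective, its kernel being the subsheaf of sections of $\shl$ supported on $D\times S$, which vanishes because $\shl\simeq\shl(*D)$. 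I would then show that the image of $\shl$ in $j_*E$ lies in the subsheaf $\wt E$ of moderate-growth sections, that is, that the sections of $\shl$ have moderate growth along $D\times S$ in the sense of Definition~\ref{def:mod}; this is deduced from the regularity of $\shl$ (see below). Granting it, the inclusion $\shl\hto\wt E$ is an isomorphism: since $\shl\simeq\shl(*D)$, the image of $\shl$ is an $\cO_{\XS}(*D)$-submodule of $\wt E$ which coincides with $\wt E$ over $\XsS$, so the quotient $\wt E/\shl$ is a holonomic $\DXS$-module that is an $\cO_{\XS}(*D)$-module supported on $D\times S$; such a module is zero, for $R\Gamma_{[D\times S]}$ vanishes on $\cO_{\XS}(*D)$-modules while it is the identity on holonomic modules supported on $D\times S$. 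The resulting isomorphism $\shl\isom\wt E$ is functorial, which together with the first composite yields the asserted equivalence.

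The one genuinely nontrivial point is the moderate-growth statement: the sections of a strict regular holonomic $\DXS$-module $\shl$ with $\shl\simeq\shl(*D)$ and $\Char\shl\subset\pi^{-1}(D)\cup T^*_XX$, viewed inside $j_*(\shl_{|\XsS})$, have moderate growth along $D$. This is the relative counterpart of Deligne's classical regularity theorem and is where the difficulty lies. The plan is to prove it by running the reductions of the proof of Corollary~\ref{cor:Deligneext}: after a finite ramification of $S$ near $s_o$ and a dévissage of the monodromy representation to the rank-one situation (via the logarithms of~\cite{Wang08}), $\shl$ becomes, over $\XsS$, a successive extension of meromorphic connections $\cO_{\CC\times S}(*0)$ with connection $x\partial_x-\lambda(s)$, whose $\nabla$-horizontal generator $x^{-\lambda(s)}$ manifestly has moderate growth, and an induction on the length of such a filtration then controls an arbitrary section of $\shl$. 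Alternatively, one applies the absolute statement to each $Li^*_{s_o}\shl=i^*_{s_o}\shl$ (legitimate by strictness) and makes the resulting growth order locally uniform in $s_o$ using the $\DXS$-coherence of $\shl$. Every remaining step is formal.
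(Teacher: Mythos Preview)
Your overall architecture matches the paper's: both directions of the correspondence, the injectivity of $\shl\to j_*E$ via $\shl\simeq\shl(*D)$, the moderate-growth inclusion $\shl\hto\wt E$, and the vanishing of the cokernel. The only incomplete step is precisely the one you flag as nontrivial, and neither of your two proposed routes closes it.

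Route (a), the dévissage via ramification and rank-one pieces, filters $E=\shl_{|\XsS}$ (equivalently the local system $F$), not $\shl$. You obtain a filtration over $\XsS$, but there is no reason the inclusion $\shl\hto j_*E$ respects it, and you cannot filter $\shl$ over $\XS$ without already knowing $\shl\subset\wt E$, which is what you are trying to prove. So this route is circular.

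Route (b) is the paper's route in spirit, but the mechanism you invoke does not deliver the conclusion. $\DXS$-coherence gives finitely many local generators; the absolute theorem then says that for a fixed generator $u$ and each $s_o$ there is some $N(s_o)$ with $|u(\cdot,s_o)|\leq C\rhog^{-N(s_o)}$. Coherence alone says nothing about the local boundedness of $s_o\mapsto N(s_o)$. The paper isolates exactly this point as a separate lemma (Lemma~\ref{fibers}, due to Barlet): for $F$ locally free, a section of $j_*(F\otimes_{\pOS}\cO_{\XsS})$ lies in $\wt E$ if and only if its restriction to every fibre $s=s_o$ has moderate growth. The proof first reduces, via the matrix $x^A=x_1^{A_1(s)}\cdots x_\ell^{A_\ell(s)}$, to a single holomorphic function $v(x,s)$ on $(U\setminus D)\times V$ which is meromorphic in $x$ for each fixed $s$; by Hartogs one may take $D$ smooth with equation $t=0$, write the Laurent expansion $v=\sum_k v_k(x,s)t^k$, and observe that the closed analytic sets $X_m=\{(x,s)\mid v_k(x,s)=0\ \text{for all }k\leq -m\}$ increase with union $U\times V$, so some $X_{m_0}$ already equals $U\times V$. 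This Baire-type step is the genuine input your argument is missing; once you have it, your paragraph (b) becomes the paper's proof verbatim.
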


Before entering the proof of Proposition \ref{T:D-T} we need the following description of relative moderate growth.

\begin{lemma}\label{fibers}
Assume that $F$ is a locally free $p_{X\setminus D}^{-1}\sho_S$-modules of finite type. Then a section $v$ of \hbox{$j_*(F\otimes_{\pOS}\sho_{(X\setminus D)\times S})$} is a section of $\tilde{E}$ if and only if, for each $s_o\in S$, $v(\cdot,s_o)$ has moderate growth as a section of the $\C$-local system $\tilde{E}|_{\{s=s_o\}}$ on $X\setminus D$. In particular, $i_{s_o}^*\tilde{E}=\tilde{i_{s_o}^*E}$.
\end{lemma}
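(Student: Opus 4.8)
The plan is to reduce everything to the explicit description of $\wt E$ obtained in the proof of Theorem \ref{th:Deligneext}, working locally near a point $(x_o,s_o)$ with $x_o\in D$, and then to separate the "moderate growth" condition from the condition of being a section of $j_*(F\otimes_{\pOS}\sho)$. Recall that in that proof we replaced $(E,\nabla)$ by $(E_1,\nabla)$ with $\wh E_1=\cO_{\XS}(*D)\otimes_{\pOS}\cG$, where $\cG$ is here a locally free $\sho_S$-module of finite rank (since $F$ is locally free), and we showed $\wt E_1=\wh E_1$, a flat $\sho_S$-module. The $\nabla$-horizontal frame is $(1\otimes\bg)\cdot x^{-A}$ with $x^{-A}=x_1^{-A_1}\cdots x_\ell^{-A_\ell}$ and the $A_k\in\End(\cG)$ pairwise commuting. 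So a section $v$ of $\wtj_*(F\otimes_{\pOS}\sho_{\XsS})$ writes uniquely as $v=(1\otimes\bg)\cdot x^{-A}\cdot h$ with $h$ a vector of multivalued holomorphic functions on $\wt U^*_\epsilon\times V$ that are actually single-valued holomorphic on $X^*\times V$, \ie $v=(1\otimes\bg)\cdot f$ with $f=x^{-A}h$ a vector of holomorphic functions on $\wt U^*_\epsilon\times V$.

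First I would record the two characterizations to be compared. On one hand, $v$ is a section of $\wt E_1=\wh E_1$ iff the coefficients $f_i$ of $v=(1\otimes\bg)\cdot f$ are meromorphic along $D$, uniformly in $s\in V$ — this is exactly what the proof of Theorem \ref{th:Deligneext} gives, combined with $\sho_S$-flatness of $\cO_{\XS}(*D)$. On the other hand, for fixed $s_o$, the restriction $i_{s_o}^*E_1$ is the $\cD_X$-module $\cO_{X^*}\otimes_{\CC}(\cG/\mathfrak{m}_{s_o}\cG)$ with connection having residues $A_k(s_o)$, its Deligne extension $\wt{i_{s_o}^*E_1}$ has horizontal frame $(1\otimes\ov\bg)\cdot x^{-A(s_o)}$, and $v(\cdot,s_o)=(1\otimes\ov\bg)\cdot f(\cdot,s_o)$ has moderate growth iff the $f_i(\cdot,s_o)$ are meromorphic along $D$. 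So the statement amounts to: a holomorphic vector $f$ on $\wt U^*_\epsilon\times V$ is meromorphic along $D\times V$ (bounded by $C\Vert\rhog\Vert^{-N}$ for some fixed $C,N$ near $(\wt x_o,s_o)$) if and only if for every $s_o'\in V$ the restriction $f(\cdot,s_o')$ is meromorphic along $D$.

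The heart of the argument is therefore a Hartogs/uniform-boundedness lemma: if $f$ is holomorphic on $(\Delta^*)^\ell\times\Delta^{n-\ell}\times V$ and for each fixed $s'\in V$ the function $x\mapsto f(x,s')$ extends meromorphically across $\{x_1\cdots x_\ell=0\}$, then $f$ itself extends meromorphically across $D\times V$; equivalently, $x^{N}f$ extends holomorphically for $N$ large enough, locally uniformly in $s$. I expect this to be the main obstacle, and it is precisely the content of Lemma \ref{fibers} whose proof the authors credit to D.\,Barlet. The natural approach: expand $f$ in a Laurent series $f=\sum_{\bm\in\ZZ^\ell} c_{\bm}(x',s)\,x_1^{m_1}\cdots x_\ell^{m_\ell}$ with $c_{\bm}$ holomorphic in $(x',s)$ (Cauchy integrals over small circles $|x_k|=r_k$ give holomorphic dependence on $s$); the fiberwise meromorphy says that for each fixed $s_o'$ only finitely many negative $\bm$ have $c_{\bm}(\cdot,s_o')\not\equiv0$, but a priori the "finitely many" could grow as $s_o'$ varies. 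One then argues by a Baire-category / countable-union argument on $V$ (the sets $V_N=\{s'\in V: c_{\bm}(\cdot,s')\equiv0\ \forall \bm \text{ with some } m_k<-N\}$ are closed, being zero loci of the holomorphic $c_{\bm}$, and cover $V$), or alternatively by a direct estimate, to conclude that some fixed $N$ works on a neighborhood of $s_o$; then $x_1^N\cdots x_\ell^N f$ is holomorphic, \ie $v$ is a section of $\wt E_1$.

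Finally, once Lemma \ref{fibers} is in place, the statement $i_{s_o}^*\wt E=\wt{i_{s_o}^*E}$ follows formally: the natural morphism $i_{s_o}^*\wt E\to\wtj_*i_{s_o}^*E=\wtj_*(i_{s_o}^*F\otimes\sho_{X^*})$ lands in $\wt{i_{s_o}^*E}$ because moderate growth of $v$ implies moderate growth of $v(\cdot,s_o)$ (trivially), and it is surjective onto $\wt{i_{s_o}^*E}$ by the "if" direction of the Lemma together with the fact that, $F$ being locally free and $\cG$ being $\sho_S$-free, every section $\ov v$ of $\wt{i_{s_o}^*E}$ lifts to a holomorphic $v$ on $\wtj_*(F\otimes\sho)$ whose every fiber is meromorphic — indeed we may lift $\ov v=(1\otimes\ov\bg)\cdot\ov f$ to $v=(1\otimes\bg)\cdot f$ with $f$ a holomorphic lift of $\ov f$ that we can choose to still have the Laurent expansion truncated in the $x_k^{-1}$-directions by the same bound, using $\sho_S$-freeness of $\cG_{s_o}$. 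Injectivity on the level of $\wt E_1=\wh E_1=\cO_{\XS}(*D)\otimes_{\pOS}\cG$ with $\cG$ flat is clear since $i_{s_o}^*$ applied to a free-over-$\sho_S$ module is just reduction mod $\mathfrak m_{s_o}$. This gives $i_{s_o}^*\wt E=\wt{i_{s_o}^*E}$ and completes the proof.
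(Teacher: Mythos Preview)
Your proof is correct and follows essentially the same route as the paper's. Both arguments reduce, via the frame $(1\otimes\bg)$ from the proof of Theorem \ref{th:Deligneext}, to the scalar statement that a holomorphic function on $(U\setminus D)\times V$ which is fiberwise meromorphic along $D$ is in fact meromorphic along $D\times V$, and both prove this by a Laurent expansion together with an exhaustion of the parameter space by closed analytic sets (your $V_N$; the paper's $X_m$).

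The only organizational differences are: the paper first treats the $S$-constant case separately and then reduces the general locally free case to it by multiplying by the matrix $x_1^{A_1(s)}\cdots x_\ell^{A_\ell(s)}$ (which does not affect growth), whereas you absorb this step directly by working in the frame $(1\otimes\bg)$ from the start; and the paper invokes Hartogs to reduce to $D$ smooth before writing a one-variable Laurent series, while you keep the $\ell$-variable expansion. Neither difference is substantive. Your treatment of the ``In particular'' assertion $i_{s_o}^*\wt E=\wt{i_{s_o}^*E}$ is more explicit than the paper's (which leaves it implicit), but it is immediate from the identification $\wt E_1=\wh E_1=\cO_{\XS}(*D)\otimes_{\pOS}\cG$ once $\cG$ is locally free.
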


\begin{proof}[\proofname\ (provided by Daniel Barlet)]
\item[]
\subsubsection*{Case $(1)$}
Let us assume that $F$ is $S$-constant. We may assume that the rank of $F$ is $1$. The statement being local, we may take local coordinates in a neighborhood of $(x_o,s_o)\in D\times S$ and assume that we are given a holomorphic function $v(x,s)$ in $(U\setminus D)\times V$, where $U$ is an open ball centered in $x_o$ and $V$ is an open ball centered in~$s_o$, such that, for any $s\in V$, $v(x,s)$ is meromorphic with poles along $D$. To~prove that~$v$ is meromorphic with poles along $D\times V$ it is sufficient, by Hartogs, to assume~$D$ non singular, hence defined by a coordinate $t=0$. Writing the Laurent's expansion of~$v$ $$v(x,s)=\sum_{i\in\Z} v_i(x,s)t^i,$$ where the $v_i$ are holomorphic in $U\times V$, we introduce, for $m\geq 0$, the increasing sequence of closed analytic sets
$$X_m=\{(x,s)\mid v_k(x,s)=0, \forall k\leq -m\}.$$
By the assumption, $U\times V=\bigcup_m X_m$ hence there must exist $m_0$ such that $X_{m_0}=U\times V$, which proves the claim.

\subsubsection*{Case $(2)$}
Let us assume that $D=\{x_1\cdots x_\ell=0\}$.
For a general $S$-local system $F$ locally free of rank $d$, let $\cG$ and $$(T_i(s), A_i(s))_{i=1,\dots,\ell},\quad s\in V$$ be given by Proposition \ref{prop:monodromyrepr} and Theorem \ref{th:Deligneext}, such that $T_i(s)=\exp(-2i\pi A_i(s))$, $i=1,\dots, \ell$. Let $(v_1,\dots, v_d)$ be a section of $F\otimes_{p_{X\setminus D}^{-1}\sho_S} \sho_{(X\setminus D)\times S}$ defined in $U'\times V$, for an open convex subset $U'$ of $U\setminus D$ (where we keep the notation of Case (1)). Then, according to Case (1),
\[
x_1^{A_1(s)}\cdots x_\ell^{A_\ell(s)}\begin{pmatrix}v_1(x,s)\\\vdots\\v_d(x,s)\end{pmatrix}
=\begin{pmatrix}u_1(x,s)\\\vdots\\ u_d(x,s)\end{pmatrix},
\]
where each $u_i$ is a meromorphic function with poles along $D$. Since the action of the matrix $x_1^{A_1(s)}\cdots x_\ell^{A_\ell(s)}$ does not affect the growth along $D$ the statement follows.
\end{proof}

\begin{proof}[\proofname\ of Proposition \ref{T:D-T}]
According to Theorem \ref{th:Deligneext}, if $F$ is a $\pOS$-local system locally free on \hbox{$(X\setminus D)\times S$}, $\tilde{E}$ is a coherent $\DXS$-module of D-type. Conversely, suppose that~$\shl$ is a coherent $\DXS$-module of D-type along $D$. The strictness assumption entails that $F:=\ho_{\DXS}(\cO_{\XS}, \shl)|_{(X\setminus D)\times S}$ is locally free of finite type.
Let $\psi:\shl\to j_*(F\otimes_{p|_{(X\setminus D)\times S}^{-1}\sho_S}\sho_{(X\setminus D)\times S})$ be the natural morphism of $\DXS$-modules. Since $\psi$ is an isomorphism on $(X\setminus D)\times S$ and $\shl=\shl(*D)$, it is injective. By definition, for each section $u$ of $\psi(\shl)$ and each fixed $s_o\in S$, $u(\cdot,s_o)$ is a section of a regular holonomic $\cD_X$-module, hence has moderate growth in the sense of \cite[p.\,862]{K-K81}. According to Lemma \ref{fibers}, $u$ is a local section of $\wt E$, hence $\psi(\shl)\subset\wt E$. The quotient sheaf $\wt E/\psi(\shl)$ also satisfies $\wt E/\psi(\shl)(*D)=\wt E/\psi(\shl)$, and is zero on $(X\setminus D)\times S$. Therefore $\psi(\shl)=\tilde{E}$.
\end{proof}

\section{Relative tempered cohomology functors}\label{sec:3}
We shall keep the notations of Section \ref{S:1} but for the main purpose of this section we may have to allow $X$ and $S$ to be real analytic manifolds since we often make use of subanalytic techniques which are naturally associated to real analytic structures. Indeed, when $X$ and $S$ are complex, it is often convenient to use the ``realification'' tool which enables one to go from the real to the complex analytic setting tensoring by the Dolbeault complex $\sho_{\overline{X}\times \overline{S}}$, where $\overline{X}$ and $\overline{S}$ denote the respective complex conjugate manifold.
We shall specify each case whenever there is a risk of ambiguity.

\subsection{Relative subanalytic site}\label{subsec:relsubanalytic}
We recall below the main constructions and results contained in \cite{MF-P14} and obtain complementary results to be used in the sequel. We refer to \cite{K-S01} as a foundational paper and to \cite{K-Sch06} for a detailed exposition on the general theory of sheaves on sites.

Let $X$ and $S$ be real analytic manifolds. On $\XS$ it is natural to consider the family $\sht$ consisting of finite unions of open relatively compact subsets and the family~$\sht'$ of finite unions of open relatively compact sets of the form $U\times V$ making $\XS$ both a $\sht$- and a $\sht'$-space in the sense of \cite{E-P16} and \cite{K-S96}. The associated sites $({\XS})_{\sht}$ and $({\XS})_{\sht'}$ are nothing more than, respectively, $(\XS)_\sa$ and the product of sites $X_\sa\times S_\sa$.

We shall denote by $\rho$, without reference to $\XS$ unless otherwise specified, the natural functor of sites $\rho:\XS \to (\XS)_\sa$ associated to the inclusion $\Op((X\times\nobreak S)_\sa) \subset \Op(\XS)$. Accordingly, we shall consider the associated functors $\rho_{*}, \rho^{-1}, \rho!$ introduced in \cite{K-Sch06} and studied in \cite{Prelli08}.

We shall also denote by $\rho':\XS \to (\XS)_{\sht'}$ the natural functor of sites. Following \cite{E-P16} we have functors $\rho'_*$ and $\rho'_!$ from $\Mod(\CC_{\XS})$ to $\Mod(\CC_{X_\sa\times S_\sa})$.

Note that $W$ is a $\sht'$-open subset or, equivalently, $W\in \Op(X_\sa\times S_\sa)$, if $W$ is a locally finite union of relatively compact subanalytic open subsets $W$ of the form $U \times V$, $U \in \Op(X_\sa)$, $V \in \Op(S_\sa)$. We denote by $\eta:(X \times S)_\sa \to X_\sa \times S_\sa$ the natural functor of sites associated to the inclusion $\Op(X_\sa \times S_\sa) \hto \Op((X \times S)_\sa)$.

\begin{remark}
\label{R}
As well-known consequences of the properties of $\sht$-spaces (\cf\cite[Ch.\,6,\,\S6.4,\,Prop.\,6.6.3]{K-S01}, see also \cite{Prelli08}) we recall:
\begin{itemize}
\item
$\rho'^{-1}$ and $\rho'_!$ are exact and commute with tensor products.
\item
If $f:X\to Y$ is a morphism, $\rho'^{-1}$ commutes with $f^{-1}$ and $\rho'_*$ commutes with $f_*$.
\item
$\rho'^{-1} \circ \rho'_* = \rho'^{-1}\circ \rho'_!=\Id$.
\item
Adjunctions:
\begin{align*}
\rho'_*\shhom(\rho^{\prime -1}(\cbbullet),\cbbullet) &\simeq \shhom(\cbbullet,\rho'_*(\cbbullet)),\\
\rho'^{-1}\shhom(\rho'_!(\cbbullet),\cbbullet)&\simeq \shhom(\cbbullet,\rho'^{-1}(\cbbullet)).
\end{align*}
\item
$\rho'_*$ commutes with $\shhom$ and $\Rhom$.
\item
Let $f$ be a real analytic map $X\to Y$. Still denoting by~$f$ the morphism $f\times \Id_S: \XS\to Y\times S$ or the associated morphism of sites, $X_{\sa}\times S_{\sa}\to Y_{\sa}\times S_{\sa}$,
according to \cite[17.5]{K-Sch06}, we have
\begin{itemize}
\item
a left exact functor of relative direct image
\[
f_*:\Mod(\CC_{X_{\sa}\times S_{\sa}})\to \Mod(\CC_{Y_{\sa}\times S_{\sa}}),
\]

\item
an exact functor of relative inverse image
\[
f^{-1}: \Mod(\CC_{Y_{\sa}\times S_{\sa}})\to \Mod(\CC_{X_{\sa}\times S_{\sa}}),
\]
and $(f^{-1},f_*)$ is a pair of adjoint functors.
\end{itemize}
\item
$\rho'^{-1}$ commutes with $f^{-1}$ and $\rho'_*$ commutes with $f_*$.
\end{itemize}

For example, the fourth item follows from adjunction and from the second item:
\[
\shhom(\rho'_*(\cbbullet),\rho'_*(\cbbullet)) \simeq \rho'_*\shhom(\rho'^{-1} \circ \rho'_*(\cbbullet),\cbbullet)\simeq
\rho'_*\shhom(\cbbullet,\cbbullet).
\]
For the commutation with $\Rhom$ one uses injective resolutions plus the property that~$\rho_*'$ transform injective objects into quasi-injective objects which are $\shhom(\rho'_*(F),\cbbullet)$-acyclic for any $F$.
\end{remark}

If $\shr$ is a sheaf of rings on $X_{\sa}\times S_{\sa}$, these properties remain true in $\Mod(\shr)$. According to \cite[Th.\,18.1.6 \& Prop.\,18.5.4]{K-Sch06}, $\Mod(\mathcal{R})$ is a Grothendieck category so it admits enough injectives and enough flat objects. Hence the derived functors appearing in the sequel are well-defined.

\subsection{Complements on S-constructible sheaves}\label{subsec:complements}
Along the proofs of the following results and the main constructions throughout this paper, as explained in the introduction, we need to consider families of open subanalytic sets generating the open coverings of $X_{\sa}$ formed by Stein, hence $\sho_S$-acyclic open subsets. This requires the assumption $d_S=1$.

\begin{assumption}
Throughout this section \ref{subsec:complements} we shall assume that $S$ is a complex manifold with complex dimension $d_S=1$ and we still denote by $S$ the underlying real analytic manifold.
\end{assumption}
The following result shows that $\Mod_\rc(\pOS)$ is a $\rho_*$ as well as a $\rho'_*$-acyclic category.

\begin{proposition}\label{P:236}
Let $F\in \Mod_\rc(\pOS)$. Then $\shh^k\rho_*(F)=\shh^j\rho'_*(F)=0$ for $k>0$. In particular $\rho'_*$ is exact on $\Mod_\rc(\pOS)$.
\end{proposition}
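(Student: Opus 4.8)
The plan is to reduce the statement to the vanishing of $H^k(U\times V;F)$ for $k>0$, with $U\times V$ ranging over a conveniently chosen basis of relatively compact subanalytic product opens, and then to compute this cohomology by a dévissage along a subanalytic triangulation together with a Künneth formula. Recall first that $R^k\rho_*F$, resp.\ $R^k\rho'_*F$, is the sheaf on $(\XS)_\sa$, resp.\ on $X_\sa\times S_\sa$, associated with the presheaf $W\mapsto H^k(W;F)$ (injective $\CC_{\XS}$-modules are flasque, hence $\rho_*$- and $\rho'_*$-acyclic). The products $U\times V$, with $U$ a relatively compact subanalytic open of $X$ and $V$ one of $S$, form a generating family of \emph{both} sites; moreover one is free to let $V$ run only over a basis of relatively compact subanalytic Stein opens of $S$, available by the standing assumption $d_S=1$, on which every coherent $\sho_S$-module is $\Gamma$-acyclic (Cartan's Theorem~B). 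Since a presheaf vanishing on a generating family has zero sheafification, it suffices to prove that $H^k(U\times V;F)=0$ for $k>0$ for all such $U\times V$.

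Next I would set up the dévissage. Fix a subanalytic triangulation of $X$ compatible with a stratification adapted to $F$, let $X^{(k)}$ denote its $k$-skeleton, and let $F_k$ be the extension by zero to $\XS$ of $F|_{(X\setminus X^{(k-1)})\times S}$, so that $F_0=F$. For each $k$ one has a short exact sequence $0\to F_{k+1}\to F_k\to G_k\to 0$, where $G_k$ is the extension by zero of $F|_{Z_k\times S}$, $Z_k$ being the union of the open $k$-simplices: a locally closed subanalytic subset of $X$, equal to the topological disjoint union of its open cells. As $R\rho_*$ and $R\rho'_*$ are triangulated, it is enough to establish the vanishing for each $G_k$. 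Since an open simplex $\sigma^\circ$ is contractible, $F|_{\sigma^\circ\times S}$ is an $S$-locally constant sheaf on a contractible base, hence of the form $p^{-1}\cG_\sigma$ for a coherent $\sho_S$-module $\cG_\sigma$ (Proposition~\ref{prop:contractible}, together with the $\sho_S$-coherence built into $S$-$\R$-constructibility). Thus $G_k$ is a locally finite direct sum of sheaves of the form $(i_{\sigma^\circ!}\CC_{\sigma^\circ})\boxtimes\cG_\sigma$, the external tensor product over $\CC$ --- with respect to the two projections from $\XS$ --- of the $\R$-constructible sheaf $i_{\sigma^\circ!}\CC_{\sigma^\circ}$ on $X$ with $\cG_\sigma$.

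For such a sheaf and a basis element $U\times V$ with $V$ Stein, the Künneth formula gives
\[
R\Gamma\bigl(U\times V;(i_{\sigma^\circ!}\CC_{\sigma^\circ})\boxtimes\cG_\sigma\bigr)\simeq R\Gamma(U;i_{\sigma^\circ!}\CC_{\sigma^\circ})\otimes^L_\CC R\Gamma(V;\cG_\sigma)\simeq R\Gamma(U;i_{\sigma^\circ!}\CC_{\sigma^\circ})\otimes_\CC\cG_\sigma(V),
\]
where the last identification uses Theorem~B. Now $i_{\sigma^\circ!}\CC_{\sigma^\circ}$ is $\R$-constructible on the real analytic manifold $X$, so $R\rho_{X*}(i_{\sigma^\circ!}\CC_{\sigma^\circ})$ is concentrated in degree $0$ by the Kashiwara--Schapira theorem (\cf\cite{K-S01,Prelli08}); equivalently $H^k(U;i_{\sigma^\circ!}\CC_{\sigma^\circ})=0$ for $k>0$ and $U$ in a basis of $X_\sa$. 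Hence $H^k(U\times V;G_k)=0$ for $k>0$, and by the first paragraph $R^k\rho_*F=R^k\rho'_*F=0$ for $k>0$, for every $F\in\Mod_\rc(\pOS)$. Finally, given a short exact sequence $0\to F'\to F\to F''\to 0$ in $\Mod_\rc(\pOS)$, the long exact sequence of $R\rho'_*$ reads $0\to\rho'_*F'\to\rho'_*F\to\rho'_*F''\to R^1\rho'_*F'=0$, so $\rho'_*$ is exact on $\Mod_\rc(\pOS)$.

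I do not anticipate a single hard step, but rather several points needing care: checking that the extension-by-zero functors of the skeletal filtration commute with $R\rho_*$ and $R\rho'_*$ as used above; checking that the normal form $F|_{\sigma^\circ\times S}\simeq p^{-1}\cG_\sigma$ with $\cG_\sigma$ coherent really follows from the appendix results on $S$-locally constant sheaves; and --- the point most easily overlooked --- checking that the products $U\times V$ with $V$ Stein still form a generating family of the \emph{full} subanalytic site $(\XS)_\sa$, and not merely of $X_\sa\times S_\sa$, so that the vanishing on this family controls $R^k\rho_*$ as well as $R^k\rho'_*$. The two substantive external ingredients --- the Kashiwara--Schapira concentration theorem for $R\rho_*$ on $\R$-constructible sheaves, and Cartan's Theorem~B on the Stein opens of the curve $S$ --- are available off the shelf.
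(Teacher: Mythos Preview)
Your approach is sound in outline but differs from the paper's, and there is one logical slip worth flagging. The paper argues directly with open stars: after fixing a subanalytic triangulation compatible with an adapted stratification (via \cite[Prop.~8.2.5]{K-S90}), it shows that the images $i(U(\sigma))\times V$ of open stars, with $V$ Stein, form a finite cover of any given $U\times V$ on which $H^k(\cdot;F)$ vanishes for $k>0$. The vanishing is obtained by mimicking \cite[Prop.~8.1.4]{K-S90} in the relative setting: $H^k(U(\sigma)\times V;F)\simeq H^k(V;F|_{\{x\}\times S})$ for any $x\in|\sigma|$, and the right-hand side vanishes for $k>0$ since $F|_{\{x\}\times S}$ is $\sho_S$-coherent and $V$ is Stein. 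No skeletal filtration, no K\"unneth. Your route---filter by skeleta, reduce to sheaves $\CC_{\sigma^\circ}\boxtimes\cG_\sigma$, then invoke K\"unneth, Cartan~B, and the absolute Kashiwara--Schapira theorem---has the appeal of factoring the relative statement through the absolute one, at the cost of more moving parts.

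The slip is the claimed equivalence ``$R\rho_{X*}(\CC_{\sigma^\circ})$ concentrated in degree~$0$, equivalently $H^k(U;\CC_{\sigma^\circ})=0$ for $k>0$ and $U$ in a basis of~$X_\sa$''. The forward direction fails in general: vanishing of a sheafification only says that every class is killed on some cover, not that there is a \emph{uniform} basis on which the presheaf vanishes. What \emph{is} true---and is how the absolute theorem is actually proved---is that the open stars of a triangulation adapted to the sheaf do form such a basis. You have two clean fixes. Either run the d\'evissage directly at the level of the long exact sequence for $R\rho'_*$ (show $R^k\rho'_*(\CC_{\sigma^\circ}\boxtimes\cG_\sigma)=0$ by covering with Stein $V_i$, applying K\"unneth, and using the absolute theorem to kill the $X$-factor locally; then climb back through the filtration), which avoids naming a basis at all. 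Or declare the basis to be $\{U(\tau)\times V:\tau\text{ a simplex},\ V\text{ Stein}\}$ from the outset and use \cite[Prop.~8.1.4]{K-S90} to get $H^k(U(\tau);\CC_{\sigma^\circ})=0$ for $k>0$---at which point the d\'evissage becomes unnecessary and you have recovered the paper's argument. One smaller point: the K\"unneth isomorphism needs $R\Gamma(U;\CC_{\sigma^\circ})$ to be perfect over~$\CC$, which holds for relatively compact subanalytic~$U$ but deserves a word.
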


\begin{proof}
Let $U$ and $V$ be open subanalytic relatively compact sets respectively in $X$ and in $S$. Since dimension of $S$ is $1$, we may assume that $V$ is Stein. Similarly to the proof of \cite[Lem.\,2.1.1]{Prelli08}, it is sufficient to prove that, for each $k\neq 0$, there exists a finite covering $\{U_j\times V_j\}_{j\in I}, U_j\times V_j\in \sht'$, of
$U\times V$, such that $H^k(U_j\times V_j, F)=0$.

Let $X=\bigcup_{\alpha}X_{\alpha}$ be a Whitney stratification adapted to $F$. By \cite[Prop.\,8.2.5]{K-S90}, there exist a simplicial complex $\bK=(K, \Delta)$ and a homeomorphism $i: |\bK|\simeq X$ such that, for any simplex $\sigma\in\Delta$, there exists $\alpha$ such that $i(|\sigma|)\subset X_{\alpha}$ and $i(|\sigma|)$ is a subanalytic manifold of $X$. Moreover, we may assume that $U$ is a finite union of the images by
$i$ of open subsets $U(\sigma)$ of $|\bK|$, with $U(\sigma)=\bigcup_{\tau\in\Delta, \tau\supset \sigma}|\tau|$.
We shall see that we may take for $U_j\times V_j$ the open sets $i(U(\sigma))\times V$.
Therefore, still denoting by~$i$ the homeomorphism $|\bK|\times S\to \XS$, it~is enough to prove that for any $\sigma\in\Delta$ and any $x\in |\sigma|$, we have:
\begin{enumeratei}
\item
$H^0(U(\sigma)\times V; i^*F)\simeq H^0(V, F_{\{x\}\times S})$,
\item
$H^k(U(\sigma)\times V; i^*F)=H^k(V, F_{\{x\}\times S})=0$, for $j\neq 0$.
\end{enumeratei}

The proof of (i) and (ii) proceeds mimicking the proof of \cite[Prop.\,8.1.4]{K-S90}, using Proposition \ref{P:const}, the fact that the $F|_{\{x\}\times S}$ is $\sho_S$-coherent and that $V$ is Stein.
\end{proof}

\begin{remark}
By construction the isomorphisms (i) commute with the restrictions to open subsets $V'\subset V$ in $S$.
\end{remark}

Recall that, given an abelian category $\shc$, $\rK^{\rb}(\shc)$ denotes the category of complexes in $\shc$ having bounded cohomology, the morphisms being defined up to homotopy. For a locally closed set of $\XS$, $\CC_Z$ denotes both the constant sheaf on $Z$ and its extension by zero as a sheaf on $\XS$ (\cf\cite[Prop.\,2.5.4]{K-S90}).

\begin{proposition}\label{P:C1}
Let $F\in \Mod_\rc(\pOS)$. Then $F$ is quasi-isomorphic to a complex
$$0\to\oplus_{i_{\alpha}\in I_{\alpha}} \pOS \otimes \CC_{U_{\alpha, i_{\alpha}}\times V_{\alpha,i_{\alpha}}}\to\cdots\to\oplus_{i_{\beta}\in I_{\beta}} \pOS \otimes \CC_{U_{\beta, i_{\beta}}\times V_{\beta,i_{\beta}}} \to 0,$$
where $\{U_{j,i_j}\}_{j,i_j}$ are locally finite families of relatively compact open subanalytic subsets of $X$ and $\{V_{j,i_j}\}_{j,i_j}$ are locally finite families of relatively compact open subanalytic subsets of $S$.
\end{proposition}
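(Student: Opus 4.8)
The plan is to adapt to the relative setting the classical construction of a finite resolution of an $\R$-constructible sheaf by finite direct sums of constant sheaves on relatively compact subanalytic open subsets (\cf the proof of \cite[Prop.\,8.1.4]{K-S90} and the constructions in \cite{Prelli08}). Two points differ from the absolute case. First, along a stratum $F$ is not locally constant with finite-dimensional stalks but, by Proposition~\ref{prop:cG}, is locally of the form $p^{-1}\cG$ for a coherent $\sho_S$-module $\cG$; since $d_S=1$, over any relatively compact Stein subanalytic open $V\subset S$ such a $\cG$ has a two-term free resolution $0\to\sho_V^{a}\to\sho_V^{b}\to\cG_{|V}\to0$, the kernel of $\sho_V^{b}\twoheadrightarrow\cG_{|V}$ being torsion free, hence locally free, hence (Stein curve) free. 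Second, the $\pOS$-linear structure must be carried along, which is harmless since all the operations used ($\otimes_{\CC}\CC_W$, extension by zero, restriction to $X_\sa\times S_\sa$) are $\pOS$-linear.

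Then I would proceed as follows. By \cite[Prop.\,8.2.5]{K-S90}, fix a subanalytic triangulation $i\colon|\bK|\isom X$, $\bK=(K,\Delta)$, adapted to the stratification of $F$, with a total order on the vertices of $K$; for a simplex $\sigma$ let $U(\sigma)=\bigcup_{\tau\supset\sigma}i(|\tau|)$ be its open star and choose $x_\sigma\in i(|\sigma|)$, so that $F_{|U(\sigma)\times S}\simeq p^{-1}(F_{|\{x_\sigma\}\times S})$ with $F_{|\{x_\sigma\}\times S}$ coherent over $\sho_S$. Next cover $S$ by a locally finite family $\{V_m\}$ of relatively compact Stein subanalytic open subsets whose finite intersections are again relatively compact Stein subanalytic open and whose nerve is finite-dimensional (e.g. the open stars of a subanalytic triangulation of $S$). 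Now form the total complex $L^\sbullet$ of the bicomplex combining: in the $X$-direction, the augmented alternating \v{C}ech complex of the vertex-star covering of $X$, whose term in \v{C}ech-degree $p$ is the direct sum over the $p$-simplices $\sigma$ of the extensions by zero of $F_{|U(\sigma)\times S}\simeq p^{-1}(F_{|\{x_\sigma\}\times S})$; and in the $S$-direction, the \v{C}ech complex of $\{V_m\}$ together with the two-term free resolutions of the coefficient sheaves $F_{|\{x_\sigma\}\times S}$ over the $V_m$ and their intersections. Then $L^\sbullet$ is bounded, each $L^k$ is a locally finite direct sum of sheaves $\pOS\otimes\CC_{U(\sigma)\times(V_{m_0}\cap\cdots\cap V_{m_q})}$, which are of the required form, and $L^\sbullet$ carries an augmentation $L^0\to F$.

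It remains to check that $L^\sbullet\to F$ is a quasi-isomorphism. One argument is direct: the augmented alternating \v{C}ech complex $\cdots\to\bigoplus_{i_0<i_1}\CC_{U_{i_0i_1}}\to\bigoplus_{i}\CC_{U_i}\to\CC_X\to0$ of an open covering $\{U_i\}$ is exact (its stalk at $x$ is the augmented simplicial chain complex of the full simplex on $\{i:x\in U_i\}$, which is contractible) and stays exact after $\otimes_{\CC}F$; applying this in the $X$-direction, and using that the $S$-direction rows are exact by construction (\v{C}ech complex of a covering composed with a free resolution), one obtains exactness of the total complex, \ie the quasi-isomorphism. Alternatively — and this is presumably the intended route, since it also takes care of the subanalytic-site bookkeeping — one tests the cone of $L^\sbullet\to F$ on the generating family $\{U(\sigma)\times V\}$ of $\Op(X_\sa\times S_\sa)$: by Proposition~\ref{P:236} the sheaves involved are $\rho'_*$-acyclic, and over such a set the identifications used in the proof of that proposition ($H^0(U(\sigma)\times V;F)\simeq H^0(V;F_{|\{x\}\times S})$, vanishing of higher cohomology, contractibility of the star, Stein-ness of $V$) reduce $R\Gamma(U(\sigma)\times V;L^\sbullet)$ to the tensor product of an acyclic augmented simplicial cochain complex with an acyclic augmented \v{C}ech/free-resolution complex computing $H^0(V;F_{|\{x\}\times S})$, hence to $F$ concentrated in degree $0$.

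The main obstacle is the global organisation of the bicomplex: that the simplicial differentials in the $X$-direction are compatible with the \v{C}ech and syzygy differentials in the $S$-direction, that the coefficient data $F_{|\{x_\sigma\}\times S}$ glue coherently as $\sigma$ varies, and that the total complex has uniformly bounded cohomological length and locally finite terms. These are controlled by the local finiteness and finite-dimensionality of the two triangulations and by $d_S=1$, which via the Stein-curve argument keeps the $S$-direction resolutions of length one; Proposition~\ref{P:236} is what turns the exactness verification, when performed on the subanalytic site, into the combinatorial acyclicity of a contractible star with Stein coefficients.
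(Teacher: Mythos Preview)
There is a genuine gap in your construction: the assertion $F_{|U(\sigma)\times S}\simeq p^{-1}(F_{|\{x_\sigma\}\times S})$ is false in general. The open star $U(\sigma)$ meets strata of several dimensions, so $F_{|U(\sigma)\times S}$ is $S$-constructible but need not be $S$-constant. Already in the absolute case this fails: on $X=[0,1]$ with $F=\CC_{(0,1)}$ and the obvious triangulation, the star of the vertex~$0$ is $[0,1)$ and $F_{|[0,1)}=\CC_{(0,1)}$, which is not constant. Consequently the \v{C}ech complex in your $X$-direction has terms $\bigoplus_\sigma j_{\sigma,!}(F_{|U(\sigma)\times S})$ that are \emph{not} of the required form $\pOS\otimes\CC_{U\times V}$, and the subsequent $S$-direction resolution has nothing canonical to act on. What Proposition~\ref{P:236} actually gives you is an isomorphism at the level of \emph{sections}, $H^0(U(\sigma)\times V;F)\simeq H^0(V;F_{|\{x_\sigma\}\times S})$, not at the level of sheaves.

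The paper exploits precisely this section-level isomorphism. It does not claim $F_{|U(\sigma)\times S}$ is constant; instead it uses Proposition~\ref{P:236}(i) to produce, for each $\sigma$, a \emph{morphism} $p^{-1}R_\sigma\otimes\CC_{U(\sigma)\times V_\sigma}\to F$ (where $R_\sigma$ is a free resolution of $F_{|\{x_\sigma\}\times S}$ on a Stein $V_\sigma$), which is a quasi-isomorphism only along $|\sigma|\times S$. One then proceeds by induction on the simplicial dimension~$i$: summing over $\sigma\in\Delta_0$ gives $\phi_0:G_0\to F$ that is a quasi-isomorphism on $|\bK_0|\times S$; the cone $H_i$ of $\phi_i$ is acyclic on $|\bK_i|\times S$, so one repeats the construction for $H_i$ over the $(i{+}1)$-simplices and passes to $\phi_{i+1}$ via the octahedral axiom. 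Your bicomplex idea could in principle be repackaged along these lines, but the essential missing ingredient is that the elementary pieces are attached to $F$ by morphisms, not by local isomorphisms, and the induction is what organises the successive cancellations.
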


\begin{proof}
We shall adapt the outline of the proof of \cite[Prop.\,A.2]{D-G-S11}. Let $X=\bigcup_{\alpha}X_{\alpha}$ be a Whitney stratification adapted to $F$. We keep the notation of Proposition \ref{P:236} and its proof when using \cite[Prop.\,8.2.5]{K-S90}.

For each integer $i$, let $\Delta_i\subset\Delta$ denote the subset of simplices of dimension $\leq i$ and set $\bK_i=(K, \Delta_i)$. We shall prove by induction on $i$ that there exists a morphism $\phi_i:G_i\to F$ in $\rK^{\rb}(\pOS)$ such that:
\begin{enumeratea}
\item\label{enum:Ka}
The $G^k_i$ are finite direct sums of $\pOS\otimes \CC_{h(U_{\sigma})\times V_{i,\sigma}}$ for some $\sigma\in\Delta_i$ and subanalytic open set $V_{i,\sigma}$ of $S$,
\item\label{enum:Kb}
The family $(h(U(|\sigma|))\times V_{i,\sigma})_{\sigma\in\Delta_i}$ is a locally finite covering of $h(|\bK_i|)\times S$,
\item\label{enum:Kc}
One has
$$\phi_i|_{|\bK_0|\times S}: G_i|_{|\bK_0|\times S}\to F|_{|\bK_0|\times S}$$ is a quasi-isomorphism.
\end{enumeratea}

\subsubsection*{Case $i=0$}

Let $x\in h(|\bK_0|)$, \ie $x=h(\sigma)$ for some $\sigma\in\bK_0$ and let $s_o\in S$. We have that $F|_{\{x\}\times S}\simeq G_{0,\sigma}$ for some $G_{0,\sigma}\in \rD^{\rb}_{\coh}(\sho_S)$; we then choose a subanalytic open set $V_{\sigma}\subset S$ such that $s_o\in V_{\sigma}$ and that $G_{0,\sigma}|_{V_{\sigma}}$ admits a bounded locally free $\sho_S|_{V_{\sigma}}$ resolution $R_{0, \sigma}\to G_{0,\sigma}|_{V_{\sigma}}$. Since $\dim S=1$, we may assume that $V_{\sigma}$ is Stein.

Clearly, the family $(h(U(\sigma))\times V_{\sigma})_{\sigma\in\Delta_0}$ is a locally finite covering of $h(|\bK_0|)\times S$. By (i) of Proposition \ref{P:236} we have isomorphisms of $\CC$-vector spaces
$$
\Gamma(h(U(\sigma))\times V'; F)\simeq \Gamma(V'; F|_{{\{x\}\times V_{\sigma}}}).
$$
In view of the freeness of $R_{\sigma,0}$, of the fact that $V'$ is Stein and of isomorphisms (i) and (ii) of Proposition \ref{P:236}, we conclude quasi-isomorphisms in $\rK^{\rb}(\CC)$
\begin{enumerate}
\item[(iii)]
$\Gamma (V'; R_{\sigma,0})\to \Gamma(h(U(\sigma))\times V'; F)$,
\end{enumerate}
which commute with restrictions. On the other hand we have isomorphisms
\begin{enumerate}
\item[(iv)]
$\phi_{V'}:\Gamma(h(U(\sigma))\times V'; F)\simeq \Gamma(V'; p_*\ho_{\rK^\rb(\CC_{\XS})}( \CC_{h(U(\sigma))\times V_{\sigma}}, F))$,
\end{enumerate}
which also commute with restrictions to open subsets $V''$ of $V'$.

Combining (iii) and (iv) we get a quasi-isomorphism in $\rK(\Mod(\sho_{V_{\sigma}}))$
$$
R_{\sigma,0}\to p_*\ho_{\rK^\rb(\CC_{\XS})|_{V_{\sigma}}}( \CC_{h(U(\sigma))\times V_{\sigma}}, F)|_{V_{\sigma}}.
$$
By adjunction, we get a morphism in $\rK^\rb(\Mod(p^{-1}\sho_{V_{\sigma}}))$:
$$p|_{V_{\sigma}}^{-1}R_{\sigma,0}\to \ho_{\rK^\rb(\CC_{\XS})}( \CC_{h(U(\sigma))\times V_{\sigma}}, F)|_{X\times V_{\sigma}}$$
which, by the functorial properties of $\ho$ and $\otimes$, induces a morphism
\begin{equation}
\phi_{\sigma,0}: p^{-1}R_{\sigma,0}\otimes \CC_{h(U(\sigma))\times V_{\sigma}}\to F.
\end{equation}
By construction $\phi_0:=\oplus_{\sigma\in\Lambda_0}\phi_{\sigma,0}$ gives the desired morphism.

\subsubsection*{General case}
Let us assume that $\phi_i$ is constructed and let us consider the distinguished triangle in $\rK^\rb(\Mod(\pOS))$:
$$
H_i\To{v_i} G_i\To{\phi_i} F\To{+1},
$$
where $H_i|_{h(|\bK_i|)\times S}$ is quasi-isomorphic to $0$. Therefore
$$
\bigoplus_{\sigma\in \Delta_{i+1}\setminus \Delta_i}
H_i|_{h(|\sigma|)\times S}\to H_i|_{h(|\bK_{i+1}|)\times S}$$
is a quasi-isomorphism.

Likewise the case $i=0$, let us choose, for each $\sigma\in\Delta_{i+1}\setminus\Delta_i$ and each $s_o\in S$, an open subanalytic relatively compact open subset $V_{\sigma,i+1}$ in $S$ containing $s$, a complex $R_{\sigma,i+1}$ of free $\sho_{V_{\sigma,i+1}}$-modules quasi-isomorphic to $F|_{\{x\}\times V_{\sigma,i+1}}$, for arbitrary $x\in |\sigma|$, and a morphism
$$
R_{\sigma,i+1}\to p_*\ho(\CC_{h(U(\sigma))\times V_{\sigma,i+1}}, H_i)|_{V_{\sigma,i+1}}.
$$
The family obtained as union of $(h(U_{\sigma})\times V_{\sigma,i})_{\sigma\in \Delta_i}$ and $(h(U_{\sigma})\times V_{\sigma,i+1})_{\sigma\in \Delta_{i+1}\setminus \Delta_i}$ clearly satisfies \eqref{enum:Kb} with respect to $h(|\bK_{i+1}|)\times S$.

As above we deduce a morphism
$$
\phi'_{i+1}:G'_{i+1}:=\bigoplus_{\sigma\in\Delta_{i+1}\setminus\Delta_i} p^{-1}R_{\sigma,i+1}\otimes \CC_{h(U(\sigma))\times V_{\sigma,i+1}}\to H_i
$$
such that, for $(x,s)\in h(|\bK_{i+1}|\setminus |\bK_{i}|)\times S$, the ${\phi'_{i+1}}_{(x,s)}$ are quasi-isomorphisms. For $(x,s)\in h(|\bK_i|)\times S$, the condition on $H_i$ entails that ${\phi'_{i+1}}_{(x,s)}$ is trivially a quasi-isomorphism. Therefore, $\phi'_{i+1}|_{h(|\bK_{i+1}|)\times S}$ is a quasi-isomorphism.

Let $G_{i+1}$ and $H_{i+1}$ be defined by the distinguished triangles
$$G'_{i+1}\To{\phi'_{i+1}}H_i\to H_{i+1}\To{+1}\quad\text{and}\quad
G'_{i+1}\To{v_i\circ\phi'_{i+1}} G_i\to G_{i+1}\To{+1}.$$
By construction and the induction hypothesis, $G_{i+1}$ satisfies \eqref{enum:Ka}. The octahedral axiom applied to the preceding triangles induces a morphism $\phi_{i+1}: G_{i+1}\to F$ and hence a distinguished triangle
$$H_{i+1}\to G_{i+1}\To{\phi_{i+1}}F\To{+1}.$$
Since by its construction ${H_{i+1}}|_{h(|\bK_{i+1}|)\times S}$ is quasi-isomorphic to zero, $\phi_{i+1}$ satisfies~\eqref{enum:Kc} as desired.
\end{proof}
\begin{remark}
\label{lct}
Recall (see \cite[Prop.\,3.9]{Sch-Sch94}) that each $U_{j,i_j}$ can be chosen so that $\bD'\CC_{U_{j,i_j}}\simeq\CC_{\ov U_{j,i_j}}$.
\end{remark}

Let $q:\XS\to X$ denote the projection on the first factor.

\begin{corollary}\label{L:210}
Let $F \in D^b_\rc(\pOS)$, $F'\in D^b_\rc(\C_X)$ and let $s_o\in S$. Then
\begin{enumeratea}
\item\label{L:210a}
$\rho'_*F \otimes \rho'_*q^{-1}F' \simeq \rho'_*(F \otimes q^{-1}F')$.

\item\label{L:210b}
The natural morphism
\[
\rho'_*p^{-1}(\sho_S/\mathfrak{m}_{s_o})\otimes^L_{\rho'_*p^{-1}\sho_S}\rho'_*F\to \rho'_*(p^{-1}(\sho_S/\mathfrak{m}_{s_o})\otimes^L_{p^{-1}\sho_S}F)
\]
is an isomorphism.
\end{enumeratea}
\end{corollary}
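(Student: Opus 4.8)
The plan is to reduce both assertions to the explicit description of $S$-$\RR$-constructible sheaves provided by Propositions~\ref{P:236} and~\ref{P:C1}, together with the $\rho'_*$-acyclicity of the objects of $\Mod_\rc(\pOS)$.

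For part~\eqref{L:210a}, I would first note that, since $\rho'^{-1}$ is exact, commutes with $\otimes$ and satisfies $\rho'^{-1}\rho'_*=\Id$ (Remark~\ref{R}), there is a canonical morphism $\rho'_*F\otimes\rho'_*q^{-1}F'\to\rho'_*(F\otimes q^{-1}F')$, and the issue is to prove that it is an isomorphism. As the statement is local on $X$ and on $S$, I would fix a triangulation of $X$ adapted to a common stratification of $F$ and $F'$, use Proposition~\ref{P:C1} to replace $F$ by a bounded complex whose terms are finite direct sums of sheaves $\pOS\otimes\CC_{U\times V}$ ($U\subset X$ an associated open star, $V\subset S$ a relatively compact subanalytic Stein open), and the classical absolute analogue (\cf \cite[Prop.\,A.2]{D-G-S11}) to replace $F'$ by a bounded complex whose terms are finite direct sums of sheaves $\CC_{U'}$ ($U'\subset X$ an associated open star). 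Since finite intersections of open stars are again open stars, all the sheaves occurring in the two resolutions, as well as those occurring in the term-by-term tensor product $F\otimes q^{-1}F'$, are $\rho'_*$-acyclic by Proposition~\ref{P:236} (and its evident $\CC$-coefficient variant), so that $\rho'_*F$, $\rho'_*q^{-1}F'$ and $\rho'_*(F\otimes q^{-1}F')$ are all computed termwise. One is thus reduced to the single elementary identity
\[
\rho'_*\bigl(\pOS\otimes\CC_{U\times V}\bigr)\otimes\rho'_*\CC_{U'\times S}\simeq\rho'_*\bigl(\pOS\otimes\CC_{(U\cap U')\times V}\bigr),
\]
where I have used $q^{-1}\CC_{U'}=\CC_{U'\times S}$. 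This I would verify by evaluating both members on $\sht'$-open subsets $U_1\times V_1$, using that $\CC_{U'\times S}$ is constant in the $S$-direction and that $V_1\cap V$ may be taken Stein because $\dim S=1$, so that (as in the proof of Proposition~\ref{P:236}) the cohomology over such opens is concentrated in degree $0$ and the $U'$-part decouples from the $V$-part.

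For part~\eqref{L:210b}, I would exploit $\dim S=1$: choosing a local coordinate $s$ on $S$ vanishing at $s_o$, the sequence $0\to\sho_S\To{s}\sho_S\to\sho_S/\mathfrak{m}_{s_o}\to0$ is a free resolution, so $p^{-1}(\sho_S/\mathfrak{m}_{s_o})\otimes^L_{p^{-1}\sho_S}F$ is represented by the two-term complex $\{F\To{s}F\}$. Applying $\rho'_*$---which is exact on $\Mod_\rc(\pOS)$ by Proposition~\ref{P:236}---to the pulled-back resolution $0\to\pOS\To{s}\pOS\to p^{-1}(\sho_S/\mathfrak{m}_{s_o})\to0$ shows that $\rho'_*p^{-1}(\sho_S/\mathfrak{m}_{s_o})$ admits the analogous length-one free resolution over the sheaf of rings $\rho'_*p^{-1}\sho_S=\rho'_*\pOS$; hence the left-hand side of~\eqref{L:210b} is represented by $\{\rho'_*F\To{s}\rho'_*F\}$. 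For the right-hand side I would replace $F$ by a bounded complex of objects of $\Mod_\rc(\pOS)$ via Proposition~\ref{P:C1}; these being $\rho'_*$-acyclic, the mapping cone of $s$ on this complex is again a complex of $\rho'_*$-acyclic sheaves, so $R\rho'_*$ is computed termwise and the right-hand side is also represented by $\{\rho'_*F\To{s}\rho'_*F\}$. Under these identifications the natural morphism is the identity in each degree, hence an isomorphism.

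The main obstacle is that $\rho'_*$ is neither exact nor monoidal in general---unlike $\rho'_!$---so that neither isomorphism is formal: the whole argument rests on Proposition~\ref{P:236} to recover exactness on the constructible category and on Proposition~\ref{P:C1} to reduce to the elementary sheaves $\pOS\otimes\CC_{U\times V}$, which must be chosen compatibly (via a common adapted triangulation) so that acyclicity is available for all the sheaves in sight. Once this reduction is in place, what remains is routine: in~\eqref{L:210a} the elementary identity checked on $\sht'$-opens, and in~\eqref{L:210b} the verification that $R\rho'_*$ commutes with the mapping cone of $s$.
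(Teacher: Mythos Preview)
Your approach is essentially the same as the paper's: both parts are reduced, via Propositions~\ref{P:236} and~\ref{P:C1}, to the elementary building blocks $F=\pOS\otimes\CC_{U\times V}$ and $F'=\CC_{U'}$, and part~\eqref{L:210b} is handled identically by identifying both sides with the mapping cone of $s$ using the exactness of $\rho'_*$ on the constructible category.

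The only real difference is in how the base case of~\eqref{L:210a} is dispatched. You propose a direct verification on $\sht'$-opens, arguing that the $U'$-part decouples from the $V$-part; the paper instead invokes two ready-made identities from the literature, namely \cite[Lem.\,3.6(2)]{MF-P14} (to separate the $X$- and $S$-factors under $\rho'_*$) and \cite[Th.\,2.2.6(2)]{E-P16} (to obtain $\rho'_*\CC_{(U\cap U')\times S}\simeq\rho'_*\CC_{U\times S}\otimes\rho'_*\CC_{U'\times S}$). Your direct computation would work but is a bit more laborious; quoting these results is cleaner and spares you the sectionwise analysis. Your insistence on a \emph{common} adapted triangulation is not needed: the paper reduces $F$ and $F'$ independently, and since the tensor product is over $\CC$ there is no flatness obstruction in combining the two resolutions.
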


\begin{proof}\mbox{}
\begin{enumeratea}
\item
According to Proposition \ref{P:C1}, which provides a $p^{-1}(\sho_S)$-flat resolution of $F$, we may assume that $F=\C_{U\times V}\otimes p^{-1}\sho_S$. Similarly, as proved in \cite{Kashiwara84}, we may assume $F'=\C_{U'}$ for some open relatively compact subset $U'\subset X$. Therefore $F\otimes q^{-1}F'=\C_{(U\cap U')\times S}\otimes p^{-1}(\sho_S\otimes \C_V)$. So, on one hand, according to \cite[Lem.\,3.6(2)]{MF-P14} we have $$\rho'_*(F\otimes q^{-1}F')=\rho'_*\C_{(U\cap U')\times S}\otimes \rho'_*p^{-1}(\sho_S\otimes \C_V)$$
On the other hand we have, for the same reason,
\begin{align*}
\rho'_*(F)\otimes \rho'_*(q^{-1}F')&=\rho'_*(\C_{U\times S}\otimes p^{-1}(\sho_S\otimes \C_V))\otimes \rho'_*(\C_{U'\times S})\\
&=\rho'_*(\C_{U\times S})\otimes \rho'_*p^{-1}(\sho_S\otimes \C_V))\otimes \rho'_*(\C_{U'\times S})
\end{align*}
and the result follows from the equality $\rho'_*(\C_{(U\cap U')\times S}=\rho'_*(\C_{U\times S})\otimes \rho'_*(\C_{U'\times S})$
(\cf \hbox{\cite[Th.\,2.2.6(2)]{E-P16}}).

\item
According to Proposition \ref{P:236}, $\rho'_*$ is exact on $D^b_\rc(\pOS)$ and, as above, we may assume that $F=\C_{U\times V}\otimes p^{-1}\sho_S$. Up to shrinking $V$ (possible by the construction of the family $\{V_{j,i_j}\}_{j,i_j}$ mentioned in Proposition \ref{P:C1}), we can also assume that there is a holomorphic coordinate $s$ vanishing at $s_o$ defined on $V$. It remains to observe that the left term in \eqref{L:210b} is realized by the complex
$\rho'_*F\To{s-s_o}\rho'_*F$ and the right term by $\rho'_*(F\To{s-s_o}F)$. They are thus isomorphic by the exactness of~$\rho'_*$.\qedhere
\end{enumeratea}
\end{proof}

\subsection{Relative subanalytic sheaves}
Subanalytic sheaves are defined on the subanalytic site of a real analytic manifold, so we start by assuming that $X$ and $S$ are real analytic manifolds, and we denote by $\shd_{\XS}$ the sheaf of linear differential operators with real analytic coefficients.

In the absolute case ($S=\mathrm{pt}$), the functors of tempered cohomology from $\rD^\rb_{\rc}(\C_X)$ to $\rD^\rb(\shd_X)$, respectively denoted by $\tho(\cdot,\Db_X)$ ($\Db_X$ is the sheaf of distribution on the underlying $C^\infty$ manifold, with its $\cD_X$-module structure) and $\tho(\cdot,\sho_X)$ (in the complex case), were introduced by M. Kashiwara in \cite{Kashiwara84} and later, in \cite{K-S01}, the authors showed that they can be recovered using the language of subanalytic sheaves as $\rho^{-1}\rh(\cdot, \Db_X^t)$ \resp $\rho^{-1}\rh(\cdot,\sho_X^t)$, where $\Db_X^t$ is the subanalytic sheaf of tempered distributions, \resp $\sho_X^t$ is the subanalytic complex of tempered holomorphic functions on~$X_{\sa}$.
Recall that, if $U$ is an open relatively compact subanalytic subset of $X$, for any open subset $\Omega\subset X$, $\Gamma(\Omega, \tho(\C_U,\Db_X))$ is the space of distributions on $\Omega\cap U$ which extend to $\Omega$ and, if moreover $X$ is complex and $U$ is Stein, \hbox{$\Gamma(\Omega, \tho(\C_U, \sho_X))$} is the space of holomorphic functions on $\Omega\cap U$ which have a moderate growth with respect to the distance to $\partial U$ and so they extend as distributions to $\Omega$.
Here we will adapt these notions to the relative case.

Let $G$ be a sheaf on $(\XS)_{\sht'}$. One defines the (separated) presheaf $\eta^\gets G$ on $(\XS)_\sa$ by setting, for $W \in \Op((\XS)_\sa)$,
$$
\eta^\gets G(W)= \lind {W \subset W'}G(W')
$$
with $W' \in \Op((\XS)_{\sht'})$ (\cf Section \ref{subsec:relsubanalytic} for $\sht'$). Let $\imin\eta G$ be the associated sheaf.

Let $F$ be a subanalytic sheaf on $(\XS)_\sa$. We shall denote by $F^{S,\sharp}$ the sheaf on $X_\sa \times S_\sa$ associated to the presheaf
\[
\begin{array}{rll}
\Op(X_\sa \times S_\sa) & \displaystyle\to \Mod(\C)& \\[2pt]
U \times V & \displaystyle\mto \Gamma(X \times V;\imin\rho\Gamma_{U \times S}F) &\simeq \Hom(\CC_U \boxtimes \rho_!\CC_V,F) \\[2pt]
&& \simeq \lpro {\substack{W \Subset V\\ W\in\Op^c(S_\sa)}}\Gamma(U \times W;F).
\end{array}
\]
With the notations above, for a morphism $f:X\to Y$ of analytic manifolds, we have
\begin{align*}
f^{-1}(F^{S.\sharp})&\simeq ((f\times \id_S)^{-1}F)^{S,\sharp},\\
f_*(F^{S.\sharp})&\simeq ((f\times \id_S)_*F)^{S,\sharp},
\end{align*}
for any $F\in\Mod(\mathbb{K}_{(\XS)_\sa})$. We set
\begin{equation}\label{E:100}
F^{S}:=\imin\eta F^{S,\sharp}
\end{equation}
and call it the \emph{relative sheaf associated to $F$}. It is a sheaf on $(X \times S)_\sa$ and $(\cbbullet)^S$ defines a left exact functor on $\Mod(\CC_{(X \times S)_\sa})$. We will denote by $(\cbbullet)^{RS,\sharp}$ and $(\cbbullet)^{RS} \simeq \imin\eta \circ (\cbbullet)^{RS,\sharp}$ the associated right derived functors.

Recall that by \cite[Lem.\,3.4]{MF-P14}, we have an isomorphism $\id \simeq R\eta_*\eta^{-1}$.

\begin{definition}\label{Dbt}
We define $\Db_{\XS}^{t,S}$ as the relative sheaf associated to $\Db^{t}_{\XS}$. It is naturally endowed with a structure of $\rho_!\shd_{\XS}$-module as well as a structure of $\rho'_*p^{-1}\sho_S$-module which commutes with the structure of $\rho_!\DXS$-module.
\end{definition}

By \cite[ Props.\,5.1(2), 5.2(i) and 5.3(2)]{MF-P14}, $\Db^{t,S}_{\XS}$ has the following properties:
\begin{align*}
\tag{i}
&\left\{\begin{aligned}
\Gamma(U \times V;\Db^{t,S}_{\XS})&=\Gamma(X \times V;\imin\rho\Gamma_{U \times S}\Db_{\XS}^t)\\
&\simeq \Gamma(X \times V;\tho(\CC_{U \times S},\Db_{\XS})),
\end{aligned}\right.\\
\tag{ii}
&\left\{\begin{aligned}
\!\imin\rho\rh(G \boxtimes H,\Db^{t,S}_{\XS}) &\simeq \imin\rho\rh(G \boxtimes \rho_!H,\Db_{\XS}^{t,S})\\
&\simeq \rh(\CC_X \boxtimes H,\tho(G \boxtimes \CC_S,\Db_{\XS})),\\
&\hspace*{2cm}\text{for any $G\in \rD^\rb_\rc(X), \, H\in \rD^\rb_\rc(S)$.}
\end{aligned}\right.
\end{align*}

\noindent(iii)
$\Db^{t,S}_{\XS}$ is $\Gamma(U \times V;\cdot)$-acyclic for each $U \in \Op(X_\sa)$, $V \in \Op(S_\sa)$. In particular, $\Db^{t}_{\XS}$ is $(\cbbullet)^{S,\sharp}$-acyclic and hence $(\cbbullet)^S$-acyclic.

As a consequence, we have an isomorphism in $\rD^\rb(\shd_{\XS})$
\begin{equation}\label{E:23}
\rho^{-1}\Db^{t,S}_{\XS}\simeq {\rho'}^ {-1}\Db^{t, S,\sharp}_{\XS}\simeq \Db_{\XS}.
\end{equation}
Having in mind the underlying real analytic structures, the preceding statements make sense when either $X$ or $S$ or both are complex.
\begin{assumption} {Throughout the rest of this section we shall assume that $X$ is a complex manifold of dimension $d_X$. Throughout the rest of this work we assume that~$S$ is complex and $d_S=1$.}
\end{assumption}

We denote as usual by $\overline{X}\times \overline{S}$ the complex conjugate manifold and regard \hbox{$(\XS)\times(\overline{X}\times\overline{S})$} as a complexification of the real analytic manifold $(\XS)_{\R}$ underlying $\XS$. We shall write for short $\Db^{t,S}_{\XS}$ instead of $\Db^{t,S}_{(\XS)_{\R}}$. By \hbox{\cite[Lem.\,5.4 and Lem.\,5.5]{MF-P14}}, there is a natural action of $\eta^{-1}\rho'_*\pOS$, $\rho_!\shd_{\XS}$ and $\rho_!\shd_{\overline{X}\times \overline{S}}$ on $\Db^{t,S}_{\XS}$ and the same argument holds for $\rho'$ instead of $\rho$ in the two last actions with $\Db^{t,S,\sharp}_{\XS}$ instead of $\Db^{t,S}_{\XS}$. We then define
$\cO_{\XS}^{t,S}$ as the derived relative complex associated to ${\sho}^t_{\XS}$, that~is, it is defined via the isomorphism in $\rD^\rb(\rho_!\shd_{\XS})$
$$\cO_{\XS}^{t,S} \simeq (\rh_{\rho_!\shd_{\overline{X}\times \overline{S}}}(\rho_!\sho_{\overline{X}\times \overline{S}},{\Db}^t_{\XS}))^{RS}.$$
${\sho}^t_{\XS}$ is naturally an object of $\rD^\rb(\eta^{-1}\rho'_*\pOS$). More precisely, setting
$\cO_{\XS}^{t,S,\sharp}:=(\cO_{\XS}^{t})^{RS,\sharp}$, then $\cO_{\XS}^{t,S}\simeq \eta^{-1} \cO_{\XS}^{t,S,\sharp}$.

According to the $(\cbbullet)^{S.\sharp}$ and the $(\cbbullet)^{S}$-acyclicity of ${\Db}^t_{\XS}$ (\cf \cite[Prop.\,5.2(i)] {MF-P14}) we get isomorphisms in $\rD^\rb(\rho_!\shd_{\XS})$
$$\cO_{\XS}^{t,S} \simeq \rh_{\rho_!\shd_{\overline{X}\times \overline{S}}}(\rho_!\sho_{\overline{X}\times \overline{S}},\Db^{t,S}_{\XS})$$
and
$$\cO_{\XS}^{t,S,\sharp}\simeq \rh_{\rho'_!\shd_{\overline{X}\times \overline{S}}}(\rho'_!\sho_{\overline{X}\times \overline{S}},\Db^{t,S,\sharp}_{\XS}).$$
Moreover, by \cite[Prop.\,4.1\,\&\,5.7] {MF-P14}, for $G=\CC_U$ and $H =\CC_V$ we have isomorphisms in $\rD^\rb(\rho_!\shd_{\XS})$
\begin{equation}\label{E:ot}
\begin{split}
\rho'^{-1}\rh(\CC_{U\times V},\sho^{t,S,\sharp}_{\XS})
&\simeq \rho^{-1}\rh(\CC_{U\times V}, \cO_{\XS}^{t,S})\\
&\simeq \rho^{-1}\rh(\CC_U \boxtimes \rho_!\CC_V,\sho^{t}_{\XS})\\
&\simeq \rh(\CC_{X\times V}, \tho(\CC_{U \times S},\sho_{\XS})),\\&\hspace*{1cm}\text{for any $U\in\Op(X_\sa)$ and $V\in\Op(S_\sa)$.}
\end{split}
\end{equation}
Since $\sho_{\XS}\simeq \rho'^{-1}(\sho^{t,S,\sharp}_{\XS})$, by adjunction we get a morphism in $\rD^\rb( \rho'_!\shd_{\XS})$
\begin{equation}\label{E:t}
\sho^{t,S,\sharp}_{\XS}\to R\rho'_*\sho_{\XS}
\end{equation}

\begin{lemma}\label{L.201}
The morphism \eqref{E:t} induces an isomorphism in $\rD^\rb(\rho'_*\pOS):$
\begin{multline*}
\rh_{\rho'_!\DXS}(\rho'_!\cO_{\XS}, \cO_{\XS}^{t.S,\sharp})\\
\isom\rh_{\rho'_!\DXS}(\rho'_!\cO_{\XS}, R\rho'_*\sho_{\XS})\simeq \rho'_*\pOS.
\end{multline*}
\end{lemma}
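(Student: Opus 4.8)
The plan is to interpret $\rh_{\rho'_!\DXS}(\rho'_!\cO_{\XS},\cbbullet)$ as a \emph{relative de Rham functor} and to evaluate it on the two arguments separately. First I would use the relative Spencer complex $\DXS\otimes_{\sho_{\XS}}\bigwedge^{-\sbullet}\Theta_{\XS/S}$, a resolution of $\cO_{\XS}$ by left $\DXS$-modules induced from locally free $\sho_{\XS}$-modules of finite rank. Since $\rho'_!$ is exact and commutes with tensor products (\cf Remark \ref{R}), applying it gives a resolution of $\rho'_!\cO_{\XS}$ of the same shape over $\rho'_!\DXS$; hence, exactly as in the absolute case (\cf \cite{MF-P14}), for any $N\in\rD^\rb(\rho'_!\DXS)$ there is a functorial isomorphism $\rh_{\rho'_!\DXS}(\rho'_!\cO_{\XS},N)\simeq\DR_{\XS/S}(N)$, where $\DR_{\XS/S}(N)$ denotes the relative de Rham complex $(\rho'_!\Omega^\sbullet_{\XS/S}\otimes_{\rho'_!\sho_{\XS}}N,\nabla)$, concentrated in degrees $0,\dots,d_X$.

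Applying this with $N=R\rho'_*\sho_{\XS}$: since each $\Omega^k_{\XS/S}$ is $\sho_{\XS}$-locally free of finite rank, so is $\rho'_!\Omega^k_{\XS/S}$ over $\rho'_!\sho_{\XS}$, and a projection formula — checked over a basis of relatively compact subanalytic opens on which $\Omega^k_{\XS/S}$ is $\sho_{\XS}$-free — yields $\rho'_!\Omega^k_{\XS/S}\otimes_{\rho'_!\sho_{\XS}}R\rho'_*\sho_{\XS}\simeq R\rho'_*\Omega^k_{\XS/S}$. Hence $\DR_{\XS/S}(R\rho'_*\sho_{\XS})\simeq R\rho'_*\Omega^\sbullet_{\XS/S}$, the derived pushforward of the relative de Rham complex of $\sho_{\XS}$. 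By the relative holomorphic Poincaré lemma $\Omega^\sbullet_{\XS/S}$ is a resolution of $\pOS$, so $\DR_{\XS/S}(R\rho'_*\sho_{\XS})\simeq R\rho'_*\pOS$; and $\pOS$ is $S$-locally constant, hence an object of $\Mod_\rc(\pOS)$, so Proposition \ref{P:236} gives $R\rho'_*\pOS\simeq\rho'_*\pOS$. This yields the second isomorphism of the statement.

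It remains to prove that the morphism $\DR_{\XS/S}(\sho^{t,S,\sharp}_{\XS})\to\DR_{\XS/S}(R\rho'_*\sho_{\XS})\simeq\rho'_*\pOS$ induced by \eqref{E:t} is an isomorphism of subanalytic sheaves on $X_\sa\times S_\sa$. It is enough to test on sections over the opens $U\times V$, with $U\in\Op(X_\sa)$ and $V\in\Op(S_\sa)$ relatively compact and — using $d_S=1$ — Stein, which form a basis. By the comparison isomorphism \eqref{E:ot} (applied termwise to the locally free $\Omega^k_{\XS/S}$), the sections of $\DR_{\XS/S}(\sho^{t,S,\sharp}_{\XS})$ over $U\times V$ are computed by the relative de Rham complex in the $X$-directions of $\tho(\CC_{U\times S},\sho_{\XS})$, taken over $X\times V$, and the displayed morphism becomes the one induced by $\tho(\CC_{U\times S},\sho_{\XS})\to\sho_{\XS}$. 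One is then reduced to the \emph{tempered holomorphic Poincaré lemma} of Kashiwara (\cite[\S5]{Kashiwara84}; see also \cite[Ch.\,7]{K-S01}), read with a holomorphic parameter in $V$: that tempered relative de Rham complex is a resolution of $\pOS$ over the relevant opens.

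The main obstacle is this last reduction. The morphism \eqref{E:t} is \emph{not} an isomorphism before applying $\DR_{\XS/S}$; its becoming one afterwards is precisely a regularity property of $\sho^{t,S,\sharp}_{\XS}$ whose analytic content is the tempered Poincaré lemma, and the work lies in passing from the relative assertion to the absolute one via \eqref{E:ot} while keeping track of the $\rho'_*\pOS$-module structures and of the $\rho'_*$-acyclicity granted by Proposition \ref{P:236}. A secondary technical point is the projection-formula commutation of $R\rho'_*$ with the (locally free, finite rank) relative de Rham complex invoked in the second paragraph.
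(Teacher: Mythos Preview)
Your approach is essentially the same as the paper's, only repackaged: interpreting $\rh_{\rho'_!\DXS}(\rho'_!\cO_{\XS},\cbbullet)$ as a relative de~Rham functor via the Spencer resolution is equivalent to the paper's passage through $\shd_{\XS}\otimes_{\DXS}\cO_{\XS}\simeq\shd_{\XS\to S}$, and both proofs then reduce to a comparison over basic opens $U\times V$ using \eqref{E:ot}. The second isomorphism is handled the same way in both (relative Poincar\'e plus $\rho'_*$-acyclicity from Proposition~\ref{P:236}).

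The one substantive difference is in the key analytic input for the first isomorphism. The paper does \emph{not} invoke a ``tempered relative Poincar\'e lemma'' directly; instead it identifies $\rh_{\rho_!\shd_{\XS}}(\rho_!\shd_{\XS\to S},\cO^t_{\XS})\simeq p^{-1}\sho_S^t$ by citing \cite[Cor.\,A.3.7]{Prelli13}, and then finishes by a duality argument (using $\bD'\CC_U\simeq\CC_{\ov U}$ for suitable $U$) to show that the natural map $\rho^{-1}\rh(\CC_{U\times S},\pOS^t)\to\rh(\CC_{U\times S},\pOS)$ is an isomorphism, coming down to $\rho_S^{-1}\sho_S^t\simeq\sho_S$. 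Your citation to \cite[\S5]{Kashiwara84} is imprecise: Kashiwara's paper does not state a tempered Poincar\'e lemma in the form you need (with an $S$-parameter, in the subanalytic framework), and ``reading it with a holomorphic parameter'' is exactly the content that \cite{Prelli13} supplies. So your outline is correct, but the analytic core should be sourced to Prelli rather than to \cite{Kashiwara84} or \cite{K-S01}; once that is done, your argument and the paper's are the same argument in two presentations.
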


\begin{proof}
We start by proving the first isomorphism. Since the family of open subanalytic sets of the form $U\times V$ generate the open coverings of $X_{\sa}\times S_{\sa}$, it is sufficient to prove that, for any open subanalytic relatively compact sets $U$ in X and~$V$ in $S$, the morphism \eqref{E:t} induces an isomorphism, functorial in $U\times V$
\begin{multline*}
R\Gamma(U\times V,\rh_{\rho'_!\DXS}(\rho'_!\cO_{\XS}, \cO_{\XS}^{t,S,\sharp}))\\ \to R\Gamma(U\times V;\rh_{\rho'_!\DXS}(\rho'_!\sho_{\XS},R\rho'_* \sho_{\XS}))
\end{multline*}
We have a chain of isomorphisms
\begin{align*}
R\Gamma(&U\times V,\rh_{\rho'_!\DXS}(\rho'_!\cO_{\XS}, \cO_{\XS}^{t,S,\sharp}))\\
&\simeq \Rh(\rho'_*\CC_{U\times V},\rh_{\rho'_!\DXS}(\rho'_!\cO_{\XS}, \cO_{\XS}^{t,S,\sharp}))\\
&\simeq \Rh_{\rho'_!\DXS}(\rho'_!\cO_{\XS}, \rh(\CC_{U\times V}, \cO_{\XS}^{t,S,\sharp}))\\
&\simeq \Rh_{\DXS}(\cO_{\XS}, \rho'^{-1}\rh(\rho'_*\CC_{U\times V},\cO_{\XS}^{t,S,\sharp}))\quad\text{(by adjunction)}\\
&\simeq \Rh_{\DXS}(\cO_{\XS}, \rh(\CC_{X\times V}, \tho(\CC_{U \times S},\sho_{\XS})))\\
&\simeq \Rh(\CC_{X\times V}, \rh_{\DXS}(\cO_{\XS}, \tho(\CC_{U \times S},\sho_{\XS}))),\quad\text{(by \eqref{E:ot})}.
\end{align*}
Similarly we have the chain of isomorphisms:
\begin{align*}
R\Gamma(U\times V&;\rh_{\rho'_!\DXS}(\rho'_!\sho_{\XS},R\rho'_* \sho_{\XS}))\\
&\simeq \Rh(\rho'_*\C_{U\times V},\rh_{\rho'_!\DXS}(\rho'_!\sho_{\XS},R\rho'_*\sho_{\XS}))\\
&\simeq \Rh_{\rho'_!\DXS}(\rho'_!\sho_{\XS},\rh(\rho'_*\C_{U\times V},R\rho'_*\sho_{\XS}))\\
&\simeq \Rh_{\DXS}(\sho_{\XS},\rho'^{-1}\rh(\rho'_*\C_{U\times V},R\rho'_*\sho_{\XS}))\\
&\simeq \Rh_{\DXS}(\sho_{\XS},\rh(\C_{X\times V},\Rhom(\C_{U \times S},\sho_{\XS})))\\
&\simeq\Rh(\C_{X\times V},\rh_{\DXS}(\sho_{\XS},\rh(\C_{U \times S},\sho_{\XS}))).
\end{align*}

The isomorphisms of each chain are compatible with \eqref{E:t} because they come from natural equivalences of functors. We have thus reduced the proof to showing that the morphism
\begin{multline*}
\tag{i}
\shf_1:=\rh_{\DXS}(\cO_{\XS}, \tho(\CC_{U\times S},\cO_{\XS}))\\
\to \shf_2:=\rh_{\DXS}(\cO_{\XS}, \rh(\CC_{U\times S},\cO_{\XS}))
\end{multline*}
is an isomorphism in $\rD^ \rb(\pOS)$, functorial in $U$. The functoriality on $U$ is obvious. According to Proposition \ref{P:C1} and Remark \ref{lct}, we may assume that $U$ is relatively compact contractible and $\bD'\CC_U\simeq\nobreak\CC_{\ov U}$. Moreover, since the statement is a local question, we may also consider that $X=\C^n$ with the coordinates $x=(x_1,\dots, x_n)$ and then $\sho_{\XS}$ is realized by
\[
{\DXS}/\DXS\partial_{x_1}+\DXS\partial_{x_{2}}+\cdots+\DXS\partial_{x_n}.
\]

On the one hand, remarking that $\shd_{\XS}\otimes _{\DXS}\sho_{\XS}$ is nothing but the transfer module $\shd_{\XS\to S}$ associated to $p:\XS\to S$, we deduce a functorial chain of isomorphisms
\begin{align*}
\shf_1
&\simeq\rh_{\shd_{\XS}}(\shd_{\XS}\otimes _{\DXS}\cO_{\XS}, \tho(\CC_{U\times S},\cO_{\XS}))\\
&\simeq\rho^{-1}\rh_{\rho_!\shd_{\XS}}(\rho_!\shd_{\XS\to S}, \rh(\CC_{U\times S},\cO^t_{\XS}))\\
&\overset{(3)}\simeq \rho^{-1}\rh(\CC_{U\times S}, \rh_{\rho_!\shd_{\XS}}(\rho_!\shd_{\XS\to S}, \cO^t_{\XS}))\\
&\overset{(4)}\simeq \rho^{-1}\rh(\CC_{U\times S}, \pOS^t)
\end{align*}
where $\rho_S$ denotes the morphism of sites $S\to S_\sa$ thus satisfying $\rho_S p=p\rho$, $(3)$~follows by the associative property relating the derived functors of $\otimes$ and $\ho$ and~$(4)$~follows from \cite[Cor.\,A.3.7]{Prelli13}.

Similarly, in the non-tempered case we have a chain of isomorphisms :
\begin{align*}
\shf_2
&\simeq\rh_{\shd_{\XS}}(\shd_{\XS}\otimes _{\DXS}\cO_{\XS}, \rh(\CC_{U\times S},\cO_{\XS}))\\
&\simeq\rh_{\shd_{\XS}}(\shd_{\XS\to S}, \rh(\CC_{U\times S},\cO_{\XS}))\\
&\simeq \rh(\CC_{U\times S}, \rh_{\shd_{\XS}}(\shd_{\XS\to S}, \cO_{\XS}))\\
&\simeq \rh(\CC_{U\times S}, \pOS)
\end{align*}
The isomorphisms of each chain are compatible with (i) because they come from natural equivalences of functors. Hence it remains to prove that
the natural morphism
\[
\tag{ii}\rho^{-1}\rh(\CC_{U\times S}, \pOS^t)\to\rh(\CC_{U\times S}, \pOS)
\]
is an isomorphism.
We have a commutative diagram of natural morphisms
\[
\xymatrix{
\rho^{-1}\rh(\CC_{U\times S}, \pOS^t)\ar[r]^-{(\text{ii})}\ar[d]_\wr&\rh(\CC_{U\times S}, \pOS)\ar[d]^\wr\\
\rho^{-1}(\C_{\overline{U}\times S}\otimes \pOS^t)\ar[r]\ar[d]_\wr&\C_{\overline{U}\times S}\otimes \pOS\\
\C_{\overline{U}\times S}\otimes \rho^{-1}\pOS^t \ar[ur]^-{(\text{iii})}&
}
\]
where the vertical morphisms are obtained from \hbox{\cite[Th.\,3.4.4]{K-S90}} (in the framework of sheaves on $(\XS)_\sa$ in \cite[Prop.\,5.3.9]{Prelli13} for the left arrow) together with the assumption on~$U$. Then (iii) is an isomorphism because $\rho^{-1}\pOS^t\simeq p^{-1}\rho_S^{-1}\sho^t_S\to \pOS$ is equal to $p^{-1}$ of the natural isomorphism $\rho_S^{-1}\sho^t_S\simeq\sho_S$, concluding therefore the proof of (i).

For the second isomorphism in Lemma \ref{L.201}, we note that, for any $U, V$ as above,
\begin{align*}
\Rh(\C_{X\times V}, \rh_{\DXS}(&\sho_{\XS},\rh(\C_{U \times S},\sho_{\XS})))\\
&\simeq \Rh(\C_{U\times V}, \rh_{\DXS}(\sho_{\XS},\sho_{\XS}))\\
&\simeq R\Gamma(U\times V; \rh_{\DXS}(\sho_{\XS},\sho_{\XS}))).
\end{align*}
The last expression is isomorphic to
$$
R\Gamma(U\times V; \pOS)=R\Gamma(U\times V; \rho'_*\pOS),
$$
as desired.
\end{proof}

\subsection{The functors $\TH^S$ and $\RH^S$}\label{subsec:RHTH}

Recall that $\rho'^{-1}\rho'_!=\Id$, so if $X$ is either a real or a complex analytic manifold, we have $\rho'^{-1}\rho'_!\DXS=\DXS$. We define the following functors.
\begin{itemize}
\item
If $X$ is a real analytic manifold, let us consider the subsheaf $\DXSR$ of the sheaf $\shd_{\XS_{\R}}$ of linear differential operators with real analytic coefficients on \hbox{$\XS_\RR$} which commute with $\pOS$. Hence the operators in $\DXSR$ are those not containing holomorphic derivations with respect to $S$. Then $\TH^S:\rD^\rb_\rc(\pOS)\mto\rD^\rb(\DXSR)$ is given by the assignment
$$F\mto \TH^S(F):=
\rho'^{-1}\rh_{\rho'_*\pOS}(\rho'_*F, \Db^{t,S,\sharp}_{\XS}),$$
\item
If $X$ is a complex manifold of complex dimension $d_X$, $\RH^S:\rD^\rb_\rc(\pOS)\mto\rD^\rb(\DXS)$ is given by the assignment
$$F\mto \RH^S(F):=
\rho'^{-1}\rh_{\rho'_*\pOS}(\rho'_*F, \sho^{t,S,\sharp}_{\XS})[d_X].$$
\end{itemize}

Clearly $\rho^{-1}=\rho'^{-1}\eta_*$ but $\rho_*\neq \eta^{-1}\rho'_*$ hence, if in the preceding definitions we replace $\rho'$ by $\rho$, we obtain a different notion.

When $X$ is complex, considering the underlying real analytic structure on $X$, the functor $\TH^S(\cbbullet)$ is also defined on $\rD^\rb_{\rc}(\pOS)$ with values in $\rD^\rb(\shd_{X_\RR\times S_\RR/S})$. Note that $\shd_{X_\RR\times S_\RR/S}$ contains $\shd_{\overline{X}\times \overline{S}}$ as a subsheaf. Note also that, by the adjunction formula for $\rho'$, we have an isomorphism in $\rD^\rb(\DXS)$\
\begin{equation}\label{E:3n}
\RH^S(F)\simeq \rh_{\shd_{\overline{X}\times \overline{S}}}(\sho_{\overline{X}\times \overline{S}}, \TH^S(F))[d_X].
\end{equation}

We have a functorial isomorphism in $\rD^\rb(\DXS)$:
$$\Rhom_{\pOS}(F,\sho_{\XS})\simeq\rho'^{-1}\Rhom_{\rho'_*{\pOS}}(\rho'_*F, R\rho'_*\sho_{\XS})$$
Combining this isomorphism with \eqref{E:t} we obtain a functorial morphism
\begin{equation}
\RH^S(F)[-d_X]\to\Rhom_{\pOS}(F,\sho_{\XS}),
\end{equation}
and therefore, for any object $\shm$ of $\rD^\rb(\DXS)$, a bi-functorial morphism
\begin{multline}\label{E:Theta2}
\Rhom_{\DXS}(\shm, \RH^S(F)[-d_X])\\
\to \Rhom_{\DXS}(\shm, \Rhom_{\pOS}(F,\sho_{\XS})).
\end{multline}

\begin{lemma}\label{L:comp}
Let $F\in \rD^\rb_\rc(\pOS)$. Then the natural morphism
\begin{multline*}
\rh_{\DXS}(\sho_{\XS}, \RH^S(F)[-d_X])\\
\to \rh_{\DXS}(\sho_{\XS}, \rh_{\pOS}(F, \sho_{\XS}))
\end{multline*}
is an isomorphism. In particular $\pDR(\RH^S(F))\simeq \bD F$.
\end{lemma}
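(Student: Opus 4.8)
The plan is to reduce to a generating family of objects and then quote the computation already carried out in the proof of Lemma \ref{L.201}. Both $F\mto\rh_{\DXS}(\sho_{\XS},\RH^S(F)[-d_X])$ and $F\mto\rh_{\DXS}(\sho_{\XS},\rh_{\pOS}(F,\sho_{\XS}))$ are triangulated, contravariant in $F$ and commute with finite direct sums, the morphism of the statement is a morphism of such functors, and the assertion is local on $\XS$; so, by Proposition \ref{P:C1} together with Remark \ref{lct}, it suffices to treat $F=\pOS\otimes\CC_{U\times V}$ where $U$ (resp.\ $V$) is a relatively compact open subanalytic subset of $X$ (resp.\ of $S$), with $U$ contractible and $\bD'\CC_U\simeq\CC_{\ov U}$.

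For such an $F$, on the target side one has $\rh_{\pOS}(F,\sho_{\XS})\simeq\rh(\CC_{U\times V},\sho_{\XS})\simeq\rh(\CC_{X\times V},\rh(\CC_{U\times S},\sho_{\XS}))$, hence $\rh_{\DXS}(\sho_{\XS},\rh_{\pOS}(F,\sho_{\XS}))\simeq\rh(\CC_{X\times V},\shf_2)$ with $\shf_2$ as in the proof of Lemma \ref{L.201}. On the source side, since $\rho'_*F\simeq\rho'_*\pOS\otimes\rho'_*\CC_{U\times V}$ (\cf \cite[Lem.\,3.6(2)]{MF-P14}), the definition of $\RH^S$ and \eqref{E:ot} give $\RH^S(F)[-d_X]\simeq\rho'^{-1}\rh(\rho'_*\CC_{U\times V},\sho^{t,S,\sharp}_{\XS})\simeq\rh(\CC_{X\times V},\tho(\CC_{U\times S},\sho_{\XS}))$, hence $\rh_{\DXS}(\sho_{\XS},\RH^S(F)[-d_X])\simeq\rh(\CC_{X\times V},\shf_1)$ with $\shf_1$ as in that proof. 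Under these identifications the morphism of the statement becomes $\rh(\CC_{X\times V},-)$ applied to the morphism $\shf_1\to\shf_2$ appearing there, both being induced by \eqref{E:t}, and it was already established (statement~(i) in the proof of Lemma \ref{L.201}) that $\shf_1\to\shf_2$ is an isomorphism. I expect the only genuine difficulty to be bookkeeping: one must check that the chains of isomorphisms on the two sides are exactly the ones induced by the natural morphism $\sho^{t,S,\sharp}_{\XS}\to R\rho'_*\sho_{\XS}$ of \eqref{E:t}, which amounts to verifying that every isomorphism used comes from a natural transformation of functors — precisely as already done in Lemma \ref{L.201}.

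For the last assertion, the isomorphism just proved, the commutation of the $\DXS$- and $\pOS$-module structures on $\sho_{\XS}$, and the relative holomorphic Poincaré lemma $\rh_{\DXS}(\sho_{\XS},\sho_{\XS})\simeq\pOS$ yield
\[
\rh_{\DXS}(\sho_{\XS},\RH^S(F)[-d_X])\simeq\rh_{\DXS}(\sho_{\XS},\rh_{\pOS}(F,\sho_{\XS}))\simeq\rh_{\pOS}(F,\rh_{\DXS}(\sho_{\XS},\sho_{\XS}))\simeq\rh_{\pOS}(F,\pOS).
\]
Shifting by $2d_X$ and using that $\pDR(\shm)=\rh_{\DXS}(\sho_{\XS},\shm)[d_X]$ together with the description $\bD F\simeq\rh_{\pOS}(F,\pOS)[2d_X]$ of the duality on $\rD^\rb_\cc(\pOS)$ (\cf \cite{MF-S12} and the proof of Proposition \ref{prop:FDFperverse}) gives $\pDR(\RH^S(F))\simeq\bD F$.

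Alternatively, one can avoid the reduction to a generating family and argue directly: writing the Spencer resolution of $\sho_{\XS}$ and using that $\rho'^{-1}$ is exact, $\rh_{\DXS}(\sho_{\XS},\RH^S(F)[-d_X])\simeq\rho'^{-1}\rh_{\rho'_*\pOS}\big(\rho'_*F,\rh_{\rho'_!\DXS}(\rho'_!\sho_{\XS},\sho^{t,S,\sharp}_{\XS})\big)$ after exchanging the two commuting internal $\Rhom$'s, and likewise $\rh_{\DXS}(\sho_{\XS},\rh_{\pOS}(F,\sho_{\XS}))\simeq\rho'^{-1}\rh_{\rho'_*\pOS}\big(\rho'_*F,\rh_{\rho'_!\DXS}(\rho'_!\sho_{\XS},R\rho'_*\sho_{\XS})\big)$. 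Lemma \ref{L.201} identifies both coefficient objects with $\rho'_*\pOS$ compatibly with \eqref{E:t}, so the morphism is an isomorphism; this is perhaps the shortest route, but it rests on the same input.
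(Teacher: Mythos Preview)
Your proposal is correct, and in fact your ``alternative'' route at the end is exactly the paper's own proof: the paper rewrites both sides via adjunction as $\rho'^{-1}\rh_{\rho'_*\pOS}\bigl(\rho'_*F,\rh_{\rho'_!\DXS}(\rho'_!\sho_{\XS},-)\bigr)$ with the inner argument being $\sho^{t,S,\sharp}_{\XS}$ and $R\rho'_*\sho_{\XS}$ respectively, and then invokes Lemma~\ref{L.201} directly on the inner objects; the last assertion is obtained, as you do, by commuting the two $\rh$'s and using $\rh_{\DXS}(\sho_{\XS},\sho_{\XS})\simeq\pOS$.

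Your primary approach (reduction to generators $F=\pOS\otimes\CC_{U\times V}$ via Proposition~\ref{P:C1}) is also valid but somewhat redundant: it unpacks the \emph{proof} of Lemma~\ref{L.201} (the comparison $\shf_1\to\shf_2$) rather than simply citing its \emph{statement}, so you end up reproving what you could quote. The paper's direct argument---your alternative---is cleaner precisely because Lemma~\ref{L.201} was already formulated at the level of subanalytic sheaves, so no second dévissage to generators is needed.
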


\begin{proof}
We have a chain of functorial isomorphisms in $\rD^\rb(\pOS)$\begin{align*}
\rh_{\DXS}(&\sho_{\XS}, \RH^S(F)[-d_X])\\
&\simeq \rho'^{-1}\rh_{\rho'_!\DXS}(\rho'_!\sho_{\XS}, \rh_{\rho'_*\pOS}(R\rho'_*F, \sho^{t,S,\sharp}_{\XS}))\\
&\simeq \rho'^{-1}\rh_{\rho'_*\pOS}(R\rho'_*F, \rh_{\rho'_!\DXS}(\rho'_!\sho_{\XS}, \sho^{t,S,\sharp}_{\XS})).
\end{align*}
Similarly, the other side, we have a chain of functorial isomorphisms
\begin{align*}
\rh_{\DXS}&(\sho_{\XS}, \rh_{\pOS}(F, \sho_{\XS}))\\
&\simeq \rho'^{-1}\rh_{\rho'_!\DXS}(\rho'_!\sho_{\XS}, \rh_{\rho'_*\pOS}(R\rho'_*F, R\rho'_*\sho_{\XS}))\\
&\simeq \rho'^{-1}\rh_{\rho'_*\pOS}(R\rho'_*F, \rh_{\rho'_!\DXS}(\rho'_!\sho_{\XS}, R\rho'_*\sho_{\XS})).
\end{align*}
The first part of the statement then follows by Lemma \ref{L.201}. Let us prove the last assertion. We have, functorially in $F$, a chain of isomorphisms
\begin{multline*}
\rh_{\DXS}(\sho_{\XS}, \rh_{\pOS}(F, \sho_{\XS}))\\
\simeq \rh_{\pOS}(F, \rh_{\DXS}(\sho_{\XS},\sho_{\XS}))\\
\simeq \rh_{\pOS}(F, p^{-1}\sho_{S}))\simeq \bD'(F).\qedhere
\end{multline*}
\end{proof}

\begin{example}\label{Ex:RH}
Let $X$ be a complex manifold. For a given hypersurface $Y$ of $X$ (possibly with singularities) let us denote by $j$ the open inclusion $X\setminus Y\hto X$ as well as the associated map $(X\setminus Y)\times S\hto \XS$.\begin{enumerate}
\item\label{Ex:RH1}
Assume $F\simeq \pOS^\ell|_{(X\setminus Y)\times S}$ for some $\ell \in\mathbb{N}$. Then $\tho(\CC_{(X\setminus Y)\times S}, \sho_{\XS})$ is a regular holonomic $\shd_{\XS}$-module endowed with a natural structure of $\DXS$-module, as proved in \cite{Kashiwara84}, hence it is regular holonomic as a $\DXS$-module. More precisely, given locally an equation $f=0$ defining $Y$, $\tho(\CC_{(X\setminus Y)\times S}, \sho_{\XS})$ is the localized of $\sho_{\XS}$ with respect to $f$. By Corollary~\ref{L:210}\eqref{L:210a} we have in $\rD^\rb(\DXS)$
\begin{align*}
\RH^S(j!F)[-d_X]&\simeq\rho'^{-1}\rh_{\rho'_*\pOS}(\rho'_*\pOS^\ell\otimes \rho'_*\CC_{(X\setminus Y)\times S}, \sho^{t,S,\sharp}_{\XS})\\
&\simeq \rho'^{-1}\rh_{\rho'_*\pOS}(\rho'_*\pOS^\ell,\rh(\rho'_*\CC_{(X\setminus Y)\times S}, \sho^{t,S,\sharp}_{\XS}))\\
&\simeq \rho^{-1}\rh(\rho_*\CC_{(X\setminus Y)\times S}, \sho^{t,S}_{\XS}))^\ell,
\end{align*}
and by \eqref{E:ot} we get $$ \RH^S(j!F)[-d_X] \simeq\tho(\CC_{(X\setminus Y)\times S}, \sho_{\XS})^\ell.$$

\item\label{Ex:RH2}
Let us assume that $F\simeq p^{-1} \cG$ with $\cG$ coherent over $\sho_S$, that is, $F$ is an $S$-constant local system on $(X\setminus Y)\times S$.

According to Case (1), by considering a local free resolution $\sho_S^{\cbbullet}$ of $\cG$ on a sufficiently small open subset $V$ of $S$, we obtain that $\RH^S(j_!F)[-d_X]|_{p^{-1}V}$ is quasi-isomorphic to a complex for which the terms are finite direct sums of $\tho(\CC_{(X\setminus Y)\times S}, \sho_{\XS})$ and the differentials are given by the right multiplication by matrices with entries in $\pOS$, hence $\DXS$-linear morphisms. Therefore the cohomology groups are regular holonomic $\DXS$-modules.

\item\label{Ex:RH3}
We assume $X=\C$.
For any $F\in \Mod_\cc(\pOS)$ such that $(\C^*, \{0\})$ is an adapted stratification we have $F\otimes \CC_{\{0\}\times S}\simeq p^{-1}\cG\otimes \CC_{\{0\}\times S}$ for some coherent $\sho_S$\nobreakdash-module $\cG$. Then, by Corollary \ref{L:210}\eqref{L:210a},
\begin{align*}
\RH^S(F)&\simeq \rh_{\pOS}(p^{-1}\cG, \tho(\CC_{\{0\}\times S}, \sho_{\XS}))[1]\\
&\simeq \rh_{\pOS}(p^{-1}\cG, B_{\{0\}\times S|\XS}).
\end{align*}
\end{enumerate}
\end{example}

\subsection{Extension in the case of an open subanalytic set}\label{subsec:Ext}
\begin{assumption} In this section $X$ is a complex manifold and $U$ denotes an open subanalytic subset of $X$.
\end{assumption}
Let $U_{X_\sa}$ be the subanalytic site induced by $X_\sa$ on $U$ (\cf \cite[Rem.\,1.1.1]{Prelli08}), and let $j:U_{X_\sa}\times S_\sa\to X_\sa\times S_\sa$ denote the open embedding of subanalytic sites. Recall that the authorized open coverings in $U_{X_{\sa}}$ are those obtained as intersections of coverings of $X_{\sa}$ with~$U$. We keep the notation $j$ for the morphism $j\times \Id_S: U\times S\to \XS$ and $\rho'$ for the morphism of sites $U\times S\to U_{X_{\sa}}\times S_{\sa}$. One easily checks from the notion of morphism of sites (see \cite[Chap.\,16]{K-Sch06} for details) that the items in Remark \ref{R} still hold in this framework with $f=j$.

\begin{lemma}\label{L:Groth}
Let $F$ be a $\pOS$-coherent $S$-locally constant sheaf on $U\times S$. Then, for any $\rho'_*p^{-1}_{U\times S}\sho_S$-module $\shl$, the natural morphism
\begin{starequation}\label{EGroth}
\rho'_*\bD'F\overset{L}{\otimes}_{\rho'_*\pOS}\shl \to\Rhom_{\rho'_*\pOS}(\rho'_*F,\shl)
\end{starequation}%
is an isomorphism.
\end{lemma}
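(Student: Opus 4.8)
The plan is to reduce, by dévissage, to the case where $F$ is free over $\pOS$, where both sides of $\eqref{EGroth}$ are canonically $\shl^{\oplus r}$ and the morphism is the identity. The two inputs that make this reduction available are special to $d_S=1$: the ring $\sho_S$ is regular of dimension one, so every coherent $\sho_S$-module has, locally, a free resolution of length at most one; and, by Proposition \ref{P:236}, $\rho'_*$ is exact on $\Mod_\rc(\pOS)$ (so in particular $R\rho'_*$ agrees with $\rho'_*$ on $\rD^\rb_\rc(\pOS)$).

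First I would record that $\eqref{EGroth}$ is a morphism, natural in $F$, between two contravariant cohomological functors of the variable $F$: indeed $\bD'=\Rhom_{\pOS}(\cbbullet,\pOS)$ turns a short exact sequence into a distinguished triangle, $R\rho'_*$ is triangulated, and $\cbbullet\overset{L}{\otimes}_{\rho'_*\pOS}\shl$ and $\Rhom_{\rho'_*\pOS}(\cbbullet,\shl)$ are triangulated. Hence, given a short exact sequence $0\to L_1\to L_0\to F\to 0$ of $\pOS$-coherent $S$-locally constant sheaves, the five lemma for distinguished triangles reduces the statement for $F$ to the statements for $L_0$ and $L_1$ — the only thing to check being that $\bD'F,\bD'L_0,\bD'L_1$ all lie in $\rD^\rb_\rc(\pOS)$, which holds because $S$-$\R$-constructibility is a local condition and, locally on $U\times S$, $\bD'F$ is the inverse image by $p$ of a bounded complex of coherent $\sho_S$-modules.

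Next I would use that the assertion is local on $U_{X_\sa}\times S_\sa$: the sets $W\times V$ with $W$ a contractible subanalytic open of $U$ and $V$ a subanalytic open of $S$ form a generating family, and $\bD'$, $\rho'_*$, $\overset{L}{\otimes}$ and $\Rhom$ all commute with restriction to opens. Over such a $W\times V$, by Propositions \ref{prop:cG} and \ref{prop:contractible} one has $F_{|W\times V}\simeq p^{-1}\cG$ with $\cG$ coherent over $\sho_S$; after shrinking $V$, $\cG_{|V}$ has a resolution $0\to\sho_V^{\oplus r_1}\to\sho_V^{\oplus r_0}\to\cG_{|V}\to 0$, and applying the exact functor $p^{-1}$ gives $0\to(\pOS)^{\oplus r_1}\to(\pOS)^{\oplus r_0}\to F_{|W\times V}\to 0$ (all terms $S$-$\R$-constructible). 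By the previous step this reduces us to $F=\pOS$, for which $\bD'F=\pOS$, so $\rho'_*\bD'F\overset{L}{\otimes}_{\rho'_*\pOS}\shl\simeq\shl\simeq\Rhom_{\rho'_*\pOS}(\rho'_*\pOS,\shl)$ and $\eqref{EGroth}$ is the identity.

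The step I expect to be the actual obstacle — though it is bookkeeping rather than a genuine difficulty — is keeping the dévissage inside the category on which $\rho'_*$ is known to be exact: one must verify that every sheaf produced (the free modules $(\pOS)^{\oplus r_i}$, the original $F$, and the complexes $\bD'F$, $\bD'L_i$) stays in $\rD^\rb_\rc(\pOS)$, so that Proposition \ref{P:236} applies and the distinguished triangles are genuinely preserved after $\rho'_*$ and after restriction to the opens $W\times V$. Everything else is formal manipulation of the six operations on the subanalytic site.
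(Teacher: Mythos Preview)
Your proposal is correct and follows essentially the same route as the paper: localize on $U_{X_\sa}\times S_\sa$, use a finite free $\pOS$-resolution of $F$ (available since $d_S=1$), and reduce to the trivial case $F=\pOS$. The paper phrases the reduction as ``$\rho'_*F^\cbbullet$ is a $\shhom_{\rho'_*\pOS}(\cbbullet,\shg)$-acyclic resolution of $\rho'_*F$'' rather than via distinguished triangles and the five lemma, but this is only a cosmetic difference; the bookkeeping you flag (that all intermediate objects stay in the $\rho'_*$-exact range, i.e.\ in $\Mod_\rc(\pOS)$ or $\rD^\rb_\rc(\pOS)$) is exactly the point the paper uses implicitly when asserting that $\rho'_*$ of the resolution is still a resolution.
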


\begin{proof}
The construction of \eqref{EGroth} is similar to the case of usual sheaves:
\begin{enumerate}
\item
For any $\shf,\shg,\shh\in\Mod(\rho'_*\pOS)$, we have a natural morphism
\[
\shhom_{\rho'_*\pOS}(\shf,\shg)\otimes_{\rho'_*\pOS}\shh\to \shhom_{\rho'_*\pOS}(\shf,\shg\otimes_{\rho'_*\pOS}\shh).
\]
We deduce a natural morphism
\begin{equation}\label{eq:ast}
\Rhom_{\rho'_*\pOS}(\shf,\shg)\overset{L}{\otimes}_{\rho'_*\pOS}\shh\to \Rhom_{\rho'_*\pOS}(\shf,\shg\overset{L}{\otimes}_{\rho'_*\pOS}\shh)
\end{equation}
by considering a flat resolution of $\shh$ and an injective resolution of $\shg$.

\item
We have $\bD'F:=\Rhom_{\pOS}(F, \pOS)$.
Recall that $\rho'_*$ commutes with $\shhom$ and $\Rhom$ hence $$\rho'_*\bD'F\simeq \Rhom_{\rho'_*\pOS}(\rho'_*F, \rho'_*\pOS).$$
\end{enumerate}
Therefore, the desired morphism \eqref{EGroth} is obtained from \eqref{eq:ast} with
\[
\shf=\rho'_*F,\, \shg=\rho'_*\pOS,\, \shh=\shl.
\]
To prove that it is an isomorphism it is sufficient to consider a locally finite covering of $U\times S$ by open subsets of the form $U'\times V$, $U'\in\Op(U_{X_{\sa}})$, $V\in \Op(S_{\sa})$, such that $F|_{U'\times V}$ admits a free $\pOS$-resolution $F^\cbbullet$ of finite length. It~follows that $\rho'_*F^\cbbullet$ is a $\shhom_{\rho'_*\pOS}(\cdot, \shg)$-injective resolution of $\rho'_*F|_{U'\times V}$, for any $\shg\in\nobreak\Mod(\rho'_*\pOS|_{U'\times V}$). Hence we are reduced to the case $F=\pOS$, which is clear.
\end{proof}

\begin{corollary}\label{C:RHDe}
Let $F$ be a $\pOS$-coherent $S$-locally constant sheaf on $U\times S$. Then we have a natural isomorphism in $\Mod_{\coh}(\shd_{U\times S/S})$:
\[
\RH^S(\bD'F)[-n]\simeq F\otimes_{\pOS}\sho_{U\times S}
\]
\end{corollary}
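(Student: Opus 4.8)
The plan is to unwind the definition of $\RH^S$ and apply Lemma \ref{L:Groth} with a well-chosen $\shl$, then identify the resulting sheaf with the relative connection $F\otimes_{\pOS}\sho_{U\times S}$ by using the comparison isomorphism \eqref{E:t} together with the description \eqref{E:ot} of $\sho^{t,S,\sharp}_{\XS}$ localized along $U$. First I would write, by definition of $\RH^S$ (applied on the subanalytic site $U_{X_\sa}\times S_\sa$, where by the remark preceding the lemma all the formalism of Remark \ref{R} carries over to $f=j$),
\[
\RH^S(\bD'F)[-n] \simeq \rho'^{-1}\rh_{\rho'_*\pOS}\bigl(\rho'_*\bD'F,\, \sho^{t,S,\sharp}_{U\times S}\bigr).
\]
Here one must first observe that $\RH^S$ commutes with the open restriction to $U\times S$, \ie that $\RH^S$ computed on $U_{X_\sa}\times S_\sa$ using $\sho^{t,S,\sharp}_{U\times S}:=j^{-1}\sho^{t,S,\sharp}_{\XS}$ agrees with $j^{-1}\RH^S$ on $X$; this is the relative version of the analogous fact in \cite{Kashiwara84} and follows from the exactness of $j^{-1}$ and $\rho'^{-1}$ and the commutation of $\rho'_*$ with $j_*$ (the second item of Remark \ref{R}).

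Next I would apply Lemma \ref{L:Groth}, which is exactly tailored to this situation: since $F$ is $\pOS$-coherent and $S$-locally constant, and since $\sho^{t,S,\sharp}_{U\times S}$ is (a complex of) $\rho'_*p^{-1}_{U\times S}\sho_S$-modules, the natural morphism
\[
\rho'_*\bD'(\bD'F)\overset{L}{\otimes}_{\rho'_*\pOS}\sho^{t,S,\sharp}_{U\times S} \to \rh_{\rho'_*\pOS}\bigl(\rho'_*\bD'F,\, \sho^{t,S,\sharp}_{U\times S}\bigr)
\]
is an isomorphism. By biduality for $\pOS$-coherent $S$-locally constant sheaves (\cf \cite[Prop.\,2.23]{MF-S12}, applied fibrewise over $S$) one has $\bD'(\bD'F)\simeq F$, so the right-hand side becomes $\rho'_*F\overset{L}{\otimes}_{\rho'_*\pOS}\sho^{t,S,\sharp}_{U\times S}$. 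Applying $\rho'^{-1}$ and using that $\rho'^{-1}$ is exact and commutes with $\overset{L}{\otimes}$ (Remark \ref{R}, first item), together with $\rho'^{-1}\rho'_*F\simeq F$ and $\rho'^{-1}\sho^{t,S,\sharp}_{U\times S}\simeq\sho_{U\times S}$ (the latter being \eqref{E:23} in the holomorphic incarnation, \ie $\sho_{\XS}\simeq\rho'^{-1}\sho^{t,S,\sharp}_{\XS}$), I obtain $\RH^S(\bD'F)[-n]\simeq F\otimes_{\pOS}\sho_{U\times S}$. Finally one checks that the $\shd_{U\times S/S}$-module structure obtained this way on the right-hand side is the tautological one $(F\otimes_{\pOS}\sho_{U\times S},\rd_X)$, which is forced because the isomorphism is $\rho'^{-1}\rho'_*\DXS=\DXS$-linear and $F\otimes_{\pOS}\sho_{U\times S}$ is generated over $\shd_{U\times S/S}$ by $F$ on which the action is the given $S$-local structure.

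The main obstacle I expect is the bookkeeping at the first step: verifying that one may indeed compute $\RH^S$ on the induced subanalytic site $U_{X_\sa}\times S_\sa$ and that this agrees with the restriction of $\RH^S$ from $X$, so that Lemma \ref{L:Groth} (stated over $U\times S$) is applicable, and ensuring that the $\rho'_*p^{-1}_{U\times S}\sho_S$-module hypothesis on $\shl=\sho^{t,S,\sharp}_{U\times S}$ is met in the derived sense (one should take a suitable representative, or argue term-by-term using that each $\sho^{t,S,\sharp}$ is a module over the relevant sheaf of rings by Definition \ref{Dbt} and the discussion following it). Once the site-theoretic compatibilities are in place, everything else is a formal consequence of Lemma \ref{L:Groth}, biduality, and \eqref{E:23}; there is no hard analytic input, the tempered growth conditions having already been absorbed into the comparison isomorphisms of Section \ref{sec:3}.
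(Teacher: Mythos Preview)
Your approach is essentially the same as the paper's: unwind the definition of $\RH^S$, apply Lemma~\ref{L:Groth} with $\shl=\sho^{t,S,\sharp}_{U\times S}$, and use $\rho'^{-1}\sho^{t,S,\sharp}_{U\times S}\simeq\sho_{U\times S}$ together with the commutation of $\rho'^{-1}$ with tensor products. The paper's proof is two lines and leaves implicit exactly the points you spell out (the biduality $\bD'\bD'F\simeq F$ and the site-theoretic compatibility on $U$).

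One small caveat worth flagging: Lemma~\ref{L:Groth} is stated for a \emph{sheaf} $F$, whereas you are applying it with $\bD'F$ in place of $F$, which is in general only a bounded complex with coherent $S$-locally constant cohomology (it is a sheaf only when $F$ is $\pOS$-locally free). The paper glosses over this too. The fix is routine: since $d_S=1$, each germ of $F$ admits a two-term free $\pOS$-resolution, hence so does each germ of $\bD'F$, and the proof of Lemma~\ref{L:Groth} (which reduces to the case $F=\pOS$) goes through verbatim for such complexes by d\'evissage on the length of the resolution. Your identified ``main obstacle'' about the derived hypothesis on $\shl$ is a red herring; the actual point to check is this extension of the lemma from sheaves to complexes on the $F$-side.
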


\begin{proof}
We have $$\RH^S(\bD'F)[-n]=\rho'^{-1}\Rhom_{\rho'_*\pOS}(\rho'_*\bD'F,\sho_{U\times S}^{t,S,\sharp})$$
Then we apply \eqref{EGroth} with $\shl=\sho_{U\times S}^{t,S,\sharp}$, recalling that $\rho'^{-1}$ commutes with tensor products and that $\rho'^{-1}\sho_{U\times S}^{t,S,\sharp}\simeq \sho_{U\times S}$.
\end{proof}

\begin{lemma}\label{L:1/7}
With the preceding notations, let $F\in\rD^\rb_\rc(p^{-1}_{U}\sho_S)$. Then there are natural isomorphisms in $\rD^\rb(\DXS)$
\begin{align}\tag{\protect\ref{L:1/7}$\,*$}\label{eq:L:1/7*}
\TH^S(j_!F)&\simeq\rho'^{-1}Rj_*R\shhom_{\rho'_*p_{U}^{-1}\sho_S}(\rho'_*F, j^{-1}\Db^{t, S,\sharp}_{\XS}),
\\
\tag{\protect\ref{L:1/7}$\,**$}\label{eq:L:1/7**}
\RH^S(j_!F)[-d_X]&\simeq
\rho'^{-1}Rj_*R\shhom_{\rho'_*p_{U}^{-1}\sho_S}(\rho'_*F, j^{-1}\sho^{t, S,\sharp}_{\XS}).
\end{align}
Moreover, if $U=X\setminus D$, where $D$ is a normal crossing divisor, and $F$ is $\pOS$-locally free of finite rank, $$\RH^S(j_!F)[-d_X]$$ is concentrated in degree zero.
\end{lemma}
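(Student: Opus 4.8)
The plan is to obtain the two displayed isomorphisms as formal consequences of the adjunction attached to the open embedding $j$ of subanalytic sites, and then to establish the degree-zero concentration by a local reduction to the models of Example~\ref{Ex:RH} and Corollary~\ref{cor:Deligneext}. For the isomorphisms I would argue for \eqref{eq:L:1/7*}, the proof of \eqref{eq:L:1/7**} being identical with $\sho^{t,S,\sharp}_{\XS}$ in place of $\Db^{t,S,\sharp}_{\XS}$ (the shift $[d_X]$ in the definition of $\RH^S$ being cancelled by the $[-d_X]$ in the statement). Unwinding the definition, $\TH^S(j_!F)=\rho'^{-1}\rh_{\rho'_*\pOS}(\rho'_*(j_!F),\Db^{t,S,\sharp}_{\XS})$. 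The first step is the commutation $\rho'_*(j_!F)\simeq j_!(\rho'_*F)$, the right-hand $j_!$ being extension by zero for the open embedding of subanalytic sites: resolving $F$ by the building blocks $\pOS\otimes\CC_{U'\times V}$ of Proposition~\ref{P:C1} and using the exactness of $\rho'_*$ on $\Mod_\rc$ (Proposition~\ref{P:236}) together with Corollary~\ref{L:210}\eqref{L:210a}, one checks the identity on these blocks, whence in general. The second step is the adjunction $\rh_{\shr}(j_!G,H)\simeq Rj_*\rh_{j^{-1}\shr}(G,j^{-1}H)$, valid for $f=j$ by the extension of Remark~\ref{R} announced before the lemma; applying it with $\shr=\rho'_*\pOS$, $G=\rho'_*F$, $H=\Db^{t,S,\sharp}_{\XS}$, using $j^{-1}\rho'_*\pOS\simeq\rho'_*p_U^{-1}\sho_S$, and then applying $\rho'^{-1}$, yields exactly \eqref{eq:L:1/7*}.

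For the final assertion the question is local on $X$, so I would assume $\XS$ is a neighbourhood of a point of $D=\{x_1\cdots x_\ell=0\}$ and that $F$ is $\pOS$-locally free of finite rank; such an $F$ is in particular a $\pOS$-coherent $S$-locally constant sheaf, so that Corollary~\ref{C:RHDe} applies to $\bD'F$. By \eqref{eq:L:1/7**}, $\RH^S(j_!F)[-d_X]\simeq\rho'^{-1}Rj_*\shk$ with $\shk=\rh_{\rho'_*p_U^{-1}\sho_S}(\rho'_*F,j^{-1}\sho^{t,S,\sharp}_{\XS})$. Corollary~\ref{C:RHDe} (applied to $\bD'F$ and biduality) shows that the restriction of $\rho'^{-1}Rj_*\shk$ to $(X\setminus D)\times S$ is the relative connection $\bD'F\otimes_{\pOS}\sho_{(X\setminus D)\times S}$, hence concentrated in degree zero, while the non-negative amplitude of $\sho^{t,S,\sharp}_{\XS}$ kills the cohomology in negative degrees. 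Thus everything reduces to controlling the cohomology along $D$ and proving its vanishing in positive degrees.

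This positive-degree vanishing is the main obstacle. To treat it I would use the horizontal frame $(1\otimes\bg)\,x^{-A}$ of the proof of Theorem~\ref{th:Deligneext}, where $x^{-A}=x_1^{-A_1}\cdots x_\ell^{-A_\ell}$ and the $A_k$ are commuting logarithms of the monodromies of $F$. Since every entry of $x^{\pm A}$ is a finite combination of powers and logarithms of the $x_k$ and so has moderate growth, multiplication by $x^{\pm A}$ preserves the tempered subsheaf $j^{-1}\sho^{t,S,\sharp}_{\XS}$; this is the relative counterpart of the fibrewise statement of Lemma~\ref{fibers}. Hence, up to this moderate-growth twist, the monodromy of $F$ is trivialised and the computation of $\rho'^{-1}Rj_*\shk$ is reduced to the case $F\simeq p^{-1}\cG$ with $\cG$ locally free over $\sho_S$, where Example~\ref{Ex:RH}\eqref{Ex:RH1} identifies $\rho'^{-1}Rj_*\shk$ with a finite direct sum of copies of $\tho(\CC_{(X\setminus D)\times S},\sho_{\XS})$; being the localisation $\sho_{\XS}(*D)$, the latter is concentrated in degree zero. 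Together with the fact that the degree-zero part is the moderate-growth (Deligne) extension of $\bD'F\otimes_{\pOS}\sho_{(X\setminus D)\times S}$, which is strict regular holonomic by Corollary~\ref{cor:Deligneext}, this yields the claim. The delicate point to be secured carefully is exactly the compatibility of the frame change $x^{\pm A}$ with the subanalytic tempered structure of $\sho^{t,S,\sharp}_{\XS}$, which is where the real blow-up analysis underlying Theorem~\ref{th:Deligneext} genuinely enters.
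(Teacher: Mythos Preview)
Your proof is essentially correct and shares the same key idea as the paper's---the reduction to the $S$-constant case via the moderate-growth frame change $x^{\pm A}$---but the implementations differ in two places worth noting.

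For the isomorphisms \eqref{eq:L:1/7*}--\eqref{eq:L:1/7**}, you argue via a commutation $\rho'_*(j_!F)\simeq j_!(\rho'_*F)$ checked on the building blocks of Proposition~\ref{P:C1}, followed by the $(j_!,j^{-1})$-adjunction. The paper instead writes $j_!F\simeq Rj_*F\otimes j_!\CC_{U\times S}$, applies Corollary~\ref{L:210}\eqref{L:210a} to split $\rho'_*$ over the tensor product, and then invokes the identity $\Rhom(\rho'_*j_!\CC_{U\times S},\,\cbbullet)\simeq Rj_*j^{-1}(\cbbullet)$ from \cite[(2.4.4), Prop.~2.4.4]{K-S01}. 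Your route is more direct; the paper's has the advantage of citing a ready-made result rather than verifying the $\rho'_*$/$j_!$ commutation by hand.

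For the concentration in degree zero, the paper does \emph{not} work directly with $\sho^{t,S,\sharp}_{\XS}$ as you do, but with $\Db^{t,S,\sharp}_{\XS}$. The reason is exactly the ``delicate point'' you flag at the end: $\Db^{t,S,\sharp}_{\XS}$ is a genuine sheaf in degree zero and is $\Gamma(U'\times V',\cbbullet)$-acyclic, so $Rj_*j^{-1}\Db^{t,S,\sharp}_{\XS}\simeq j_*j^{-1}\Db^{t,S,\sharp}_{\XS}$ is automatic; the frame change by $\exp(A_1(s)\log x_1)\cdots\exp(A_\ell(s)\log x_\ell)$ is then an honest isomorphism of \emph{sheaves} $\shhom_{\rho'_*p_U^{-1}\sho_S}(\rho'_*F,\,j^{-1}\Db^{t,S,\sharp}_{\XS})\simeq(j^{-1}\Db^{t,S,\sharp}_{\XS})^\ell$, and the paper checks that it is compatible with the actions of $j^{-1}\rho'_!\sho_{\XS}$ and $j^{-1}\rho'_!\shd_{\overline X\times\overline S}$. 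One then passes to $\sho^{t,S,\sharp}$ via the Dolbeault realisation \eqref{E:3n}. Your direct approach with $\sho^{t,S,\sharp}_{\XS}$ (a complex) is workable in principle, but making the frame-change isomorphism precise at that level amounts to redoing the $\Db$-argument termwise in the Dolbeault resolution; the paper's detour through $\Db^{t,S,\sharp}$ is the clean way to secure the point you yourself identify as delicate.
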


\begin{proof}
The proofs of \eqref{eq:L:1/7*} and \eqref{eq:L:1/7**} are similar so we only prove \eqref{eq:L:1/7**}. Let us start by noting that, from \cite[(2.4.4),\,Prop.\,2.4.4]{K-S01}, one deduces an isomorphism of functors on $\rD^\rb(\C_{X_{\sa}\times S_{\sa}})$
$$\Rhom(\rho'_*j_!\C_{U\times S}, (\cbbullet))\to Rj_*\Rhom(\rho'_*C_{U\times S}, j^{-1}(\cbbullet))\simeq Rj_*j^{-1}(\cbbullet)$$ using the following facts:
\begin{itemize}
\item{$\rho'_*j_!\C_{U\times S}=(j_!\C_{U\times S})_{X_{\sa}\times S_{\sa}}$ since $X_{\sa}\times S_{\sa}$ is a $\sht'$-space as explained in the beginning of this section.}
\item{One derives isomorphism $(2.4.4)$ of \loccit\ using injective resolutions since $j^{-1}$ transforms injective objects into injective objects.}
\end{itemize}
We have:
\begin{align*}
\RH^S(j_!F)&\simeq \rho'^{-1} \rh_{\rho'_*p_{X}^{-1}\sho_S}(\rho'_*j_!F, \sho^{t,S,\sharp}_{\XS})\\
&\simeq \rho'^{-1} \rh_{\rho'_*\pOS}(\rho'_*(Rj_*F\otimes j_!\CC_{U\times S}) , \sho^{t,S,\sharp}_{\XS})\\
&\overset{(1)}\simeq \rho'^{-1} \rh_{\rho'_*p_{X}^{-1}\sho_S}(\rho'_*Rj_*F\otimes\rho'_* j_!\C_{U\times S}, \sho^{t,S,\sharp}_{\XS}))\\
&\simeq \rho'^{-1} \rh_{\rho'_*p_{X}^{-1}\sho_S}(\rho'_*Rj_*F, \Rhom(\rho'_*j_!\C_{U\times S}, \sho^{t,S,\sharp}_{\XS}))\\
&\overset{(2)}\simeq \rho'^{-1} \rh_{\rho'_*p_{X}^{-1}\sho_S}(Rj_*\rho'_*F, Rj_*j^{-1}\sho^{t,S,\sharp}_{\XS}))\\
&\overset{(3)}\simeq \rho'^{-1} Rj_*\rh_{\rho'_*p_{U}^{-1}\sho_S}(\rho'_*F, j^{-1}\sho^{t,S,\sharp}_{\XS}).
\end{align*}
The isomorphism (1) follows from Corollary \ref {L:210}\eqref{L:210a}, (2) follows from the commutation of~$\rho'_*$ and $j_*$ and the preceding remark, and (3) follows by adjunction.

Let us now prove the second part of the statement. We use a similar argument as in \cite[Lem.\,7.4]{Kashiwara84}. We assume that $F$ is $S$-locally free of rank $\ell$ and let $d_X=n$. Note that, according to Example \ref{Ex:RH}\eqref{Ex:RH1}, the result is true if $F$ is constant. Note also that the commutation of $\rho'^{-1}$ with $j^{-1}$ together with Corollary \ref{C:RHDe} imply that $\RH^S(j_!F)[-d_X]|_{(X\setminus Y)\times S}$ is concentrated in degree zero.

By \cite[Prop.\,5.2(i)]{MF-P14} and the exactness of $j^{-1}$, $j^{-1}\Db^{t,S,\sharp}_{\XS}$ is concentrated in degree zero, hence, by the assumption on $F$
$$R\shhom_{\rho'_*p_{U}^{-1}\sho_S}(\rho'_*F, j^{-1}\Db^{t, S,\sharp}_{\XS})$$ is also concentrated in degree zero.

Let us assume that $D=\{x_1\cdots x_d=0\}$ where $(x_1,\dots,x_n)$ are local coordinates in a neighborhood $\nb_X(x_o)$ of $x_o\in D$. Let also $s$ denote a local coordinate in $W\subset S$. Let us identify $\nb_D(x_o)$ with an open subset of $\C^n$ and $W$ to an open subset in $\C$. Let~$U'$ denote an open ball in $\nb_D(x_o)$ and $V$ denote an open ball in $W$. Let $\cG$ and $$(T_i(s), A_i(s))_{i=1,\dots,d},\quad s\in V$$ be given by Proposition \ref{prop:monodromyrepr} and Theorem \ref{th:Deligneext} with respect to $F$, such that $T_i(s)=\exp(-2i\pi A_i(s)), i=1,\dots, d$.

Let $\phi$ be a section of $\shhom_{\rho'_*p_{U}^{-1}\sho_S}(\rho'_*F, j^{-1}\Db^{t, S,\sharp}_{\XS})$ defined on $(U'\setminus D)\times V$. Note that, on any open set of the form $\gamma\times V$ with $\gamma$ open subanalytic simply connected in $U'\setminus D$, $$\exp(A_1(s)\log x_1)\cdots\exp(A_{d}(s)\log x_d)$$ is a matrix with holomorphic entries which are tempered in $X\times V$ (\cf \cite[Ex.\,5.1]{MF-P14}).
Then, the $\rho'_*p^ {-1}\sho_S$ linearity of $\phi$ implies that $$\exp(A_1(s)\log x_1)\cdots\exp(A_{d}(s)\log x_d)\phi$$ is a well-defined section of $\shhom_{\rho'_*p_{U}^{-1}\sho_S}(\rho'_*\cG, j^{-1}\Db^{t, S,\sharp}_{\XS})$ on $(U'\setminus D)\times V$. Since the open sets of the form $(U'\setminus D)\!\times\!V$ form a basis of the topology in $U_{X_{\sa}}\!\times \nobreak\!\nobreak S_{\sa}$, this means that the multiplication by $\prod_i\exp(A_i(s)\log x_i)$ defines an isomorphism
\begin{equation}\label{eq:ast2}
\shhom_{\rho'_*p_{U}^{-1}\sho_S}(\rho'_*F, j^{-1}\Db^{t, S,\sharp}_{\XS})\simeq \shhom_{\rho'_*p_{U}^{-1}\sho_S}(\rho'_*p^{-1}\cG, j^{-1}\Db^{t, S,\sharp}_{\XS})
\end{equation}
where $p^{-1}$$\cG$ is $S$-constant and free. Therefore the righthand side of \eqref{eq:ast2} is isomorphic to $({j^{-1}\Db_{\XS}^{t,S,\sharp}})^\ell$. By the $\Gamma(U'\times V',\cbbullet)$-acyclicity of $\Db_{\XS}^{t,S,\sharp}$ for arbitrary relatively compact subanalytic sets $U'$ and $V'$ in $X$ and $S$ respectively (\cf \cite[Prop.\,5.2(i)]{MF-P14}), we have $Rj_*j^{-1}\Db_{\XS}^{t,S,\sharp}\simeq j_*j^{-1}\Db_{\XS}^{t,S,\sharp}$, that is, it is concentrated in degree zero, hence $\TH^{S}(j_!F)$ is concentrated in degree zero. Moreover, by the construction, the isomorphism \eqref{eq:ast2} preserves the actions of $j^{-1}\rho'_!\sho_{\XS}$-modules and of $j^{-1}\rho'_!\shd_{\overline{X}\times \overline{S}}$-modules. Thus, by (\ref{E:3n}), \eqref{eq:ast2} induces an isomorphism of $\rho'^{-1}j_*j^{-1}\rho'_!\sho_{\XS}$-modules, hence of $\rho'^{-1}\rho'_!\sho_{\XS}\simeq\sho_{\XS}$-modules $$\RH^S(j_!F)[-n]\simeq \RH^S(j_!\bD'p^{-1}\cG)[-n].$$ So we may assume from the beginning that $F$ is constant and the result follows.
\end{proof}

\subsection{Functorial properties}\label{subsec:functorialprop}
In order to study the functorial properties of $\TH^S$ and $\RH^S$ useful for the sequel, we come back for a moment to the real framework in the first factor but of course remain complex in the $S$ factor.
Let $f:Y\to X$ be a morphism of real (or complex) analytic manifolds. We shall still denote by $f$ the associated morphism $f\times \id_S: Y\times S\to \XS$. We shall study the associated derived functor $\Df_*:\rD^\rb(\DYS)\mto\rD^\rb(\DXS)$. We begin with the relative version of \cite[Th.\,4.1]{Kashiwara84}:

\begin{theorem}\label{L:A3}
Let $f:Y\to X$ be a morphism of real analytic manifolds, let $F\in \rD^{\rb}_\rc({p_Y}^{-1}\sho_S)$, and assume that $f$ is proper on $\supp F$. Then we have a canonical isomorphism in $\rD(\DXS)$:
$$\Df_*\TH^S(F)\simeq \TH^S(Rf_*F).
$$
\end{theorem}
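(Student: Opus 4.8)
The plan is to transpose Kashiwara's proof of \cite[Th.\,4.1]{Kashiwara84} to the relative subanalytic framework, with $S$ (of dimension $d_S=1$) carried along as a parameter untouched by $f\times\id_S$. The first step is a reduction, through the graph, to two elementary situations. Writing $i\colon Y\hto Y\times X$, $y\mto(y,f(y))$, for the graph embedding and $\pi\colon Y\times X\to X$ for the second projection, we have $f=\pi\circ i$, hence $\Df_*=\Dpi_*\circ\Di_*$ and $Rf_*=R\pi_*\circ Ri_*$. Since $i$ is a closed embedding, $Ri_*F\in\rD^\rb_\rc(p^{-1}_{Y\times X}\sho_S)$, and the hypothesis that $f$ is proper on $\supp F$ says precisely that $\pi$ is proper on $\supp Ri_*F=i(\supp F)$. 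So it suffices to prove the statement when $f$ is a closed embedding (with $F$ arbitrary) and when $f$ is a projection $\pi\colon X\times Z\to X$ with $\supp F$ proper over $X$. In the latter situation one embeds $Z$ as a closed submanifold of some $\R^N$, applies the embedding case to $X\times Z\hto X\times\R^N$ (properness of $\supp F$ over $X$ makes its $x$-fibres compact, hence $\supp F$ remains proper over $X\times\R^{N-1}$), and then, writing $\R^N=\R\times\R^{N-1}$, reduces by induction on $N$ to $Z=\R$. Throughout, one uses freely that $\TH^S(\cbbullet)=\rho'^{-1}\rh_{\rho'_*p^{-1}\sho_S}(\rho'_*(\cbbullet),\Db^{t,S,\sharp})$, that $\rho'^{-1}$ is exact and commutes with $f^{-1}$, $\otimes$ and $Rj_*$ (Remark~\ref{R}), and the usual adjunctions among $\rh$, $\otimes$, $Rj_*$, $j^{-1}$ together with the projection formula, exactly as in the proof of Lemma~\ref{L:1/7}; this reduces each case to an assertion about the relative subanalytic sheaf $\Db^{t,S,\sharp}$, the $\rho'_*p^{-1}\sho_S$-linearity of all identifications being automatic since $f$ acts on the $X$-factor only.

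For the embedding case the problem is local, so we may assume $Y=\{t_1=\dots=t_d=0\}$ in coordinates $(t_1,\dots,t_d,x')$ on $X$. Here $Ri_*=i_*$ is exact, $\Di_*\shn=\bigoplus_{\alpha\in\N^d}\shn\,\partial_t^\alpha$, and one checks directly that $\Di_*\TH^S(F)\simeq\TH^S(i_!F)$ using that a tempered distribution on $X\times V$ supported on $\{t=0\}\times V$ is a finite sum $\sum_\alpha u_\alpha(x',s)\,\delta^{(\alpha)}(t)$ with $u_\alpha$ tempered on $\{t=0\}\times V$ --- in the spirit of the relative Kashiwara equivalence (Theorem~\ref{th:kashiwaraequivalence}), with which the two sides are matched term by term. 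R-constructibility is preserved, the properness hypothesis is vacuous, and all identifications are $\rho'_*p^{-1}\sho_S$-linear because $t$ is an $X$-coordinate.

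The projection case with $Z=\R$ is the core. Here $\Dpi_*\shn$ is $R\pi_*$ of the relative de~Rham complex of $\shn$ along the fibre, \ie $R\pi_*[\shn\xrightarrow{\partial_t}\shn]$ up to a shift, and the task is to show that $R\pi_*$ applied to $[\TH^S(F)\xrightarrow{\partial_t}\TH^S(F)]$ reproduces $\TH^S(R\pi_*F)$. This amounts to the parametrized, tempered form of ``integration of distributions along the fibres'': for a tempered distribution on an open subanalytic subset of $X\times\R\times V$ whose support is proper over $X\times V$, fibre integration $\int(\cbbullet)\,dt$ produces a tempered distribution on $X\times V$, and the two-term complex $[\Db^{t}\xrightarrow{\partial_t}\Db^{t}]$ with such supports resolves $\Db^{t}_X$ up to a shift. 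Over a fixed small $V\subset S$ this is Kashiwara's computation in \cite[\S4]{Kashiwara84} for the real analytic manifold $X\times\R$ with $V$ a passive parameter; transporting it to the relative sheaves $\Db^{t,S,\sharp}$ and to $R\pi_*$ is legitimate by the $\Gamma(U\times V,\cbbullet)$-acyclicity of $\Db^{t,S,\sharp}$ (\cite[Prop.\,5.2(i)]{MF-P14}) and the descriptions of $\rh(\rho'_*\C_{U\times V},\Db^{t,S,\sharp})$ recalled in Section~\ref{sec:3}, the assumption $d_S=1$ being used, as in Propositions~\ref{P:236} and~\ref{P:C1}, to produce bases of subanalytic open coverings by Stein, hence $\sho_S$-acyclic, open sets.

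The main obstacle is exactly this last step: one must check that Kashiwara's fibre-integration/relative Poincaré argument goes through \emph{with temperedness} and \emph{within the relative subanalytic formalism} --- that it is compatible with $\rho'_!$, $\rho'_*$, $(\cbbullet)^{S,\sharp}$ and with the $\rho'_*p^{-1}\sho_S$-action, that $R\pi_*$ commutes with the relevant $\rh$'s (which uses the properness of $\supp F$ over $X$ together with the acyclicity results above), and that the isomorphism so obtained is functorial in $F$ and compatible with the graph factorization and the induction on $N$. Granting this, the two cases combine via $f=\pi\circ i$ to yield the canonical isomorphism $\Df_*\TH^S(F)\simeq\TH^S(Rf_*F)$ in $\rD(\DXS)$.
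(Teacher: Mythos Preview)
Your approach is a genuine alternative to the paper's, but considerably more laborious, and the gap you yourself flag at the end is real.

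The paper does not redo Kashiwara's graph-factorization/fibre-integration argument in the relative setting at all. Instead it uses Proposition~\ref{P:C1}: any $F\in\rD^\rb_\rc(p_Y^{-1}\sho_S)$ is quasi-isomorphic to a bounded complex whose terms are locally finite sums of building blocks $p_Y^{-1}\sho_S\otimes\CC_{U\times V}$ with $U\subset Y$, $V\subset S$ relatively compact subanalytic. By induction on the length one reduces to such a single block. For $F=p_Y^{-1}\sho_S\otimes\CC_{U\times V}$ one has $\TH^S(F)\simeq\tho(\CC_{U\times V},\Db_{Y\times S})$ (the $\pOS$-factor cancels against the $\rho'_*\pOS$ in $\rh_{\rho'_*\pOS}$, cf.\ Corollary~\ref{L:210}\eqref{L:210a} and Example~\ref{Ex:RH}), and $Rf_*F\simeq Rf_*(\CC_{U\times V})\otimes\pOS$ by the projection formula. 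Thus both sides of the claimed isomorphism become \emph{absolute} $\tho$'s, and the statement is exactly \cite[Th.\,4.1]{Kashiwara84} applied to $\CC_{U\times V}$ (or $\CC_U\boxtimes\CC_V$) on the real manifold $Y\times S_\RR$. No relative fibre-integration, no compatibility checks with $\rho'_!,\rho'_*,(\cbbullet)^{S,\sharp}$.

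Your route---graph embedding, reduction to $Z=\RR$, tempered Poincar\'e along the fibre---is Kashiwara's original strategy transplanted to the relative site. It can presumably be made to work, but the step you label ``the main obstacle'' genuinely is one: you would need a relative analogue of the fibre-integration isomorphism for $\Db^{t,S,\sharp}$ together with all the stated compatibilities, and none of that is available off the shelf in the paper or in \cite{MF-P14}. The paper's device of resolving $F$ by product-type blocks sidesteps this entirely, because on such blocks $\TH^S$ collapses to the absolute $\tho$ and the $S$-direction is a pure spectator. That is what Proposition~\ref{P:C1} buys, and it is the point you are missing.
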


\begin{proof}
We can replace $F$ with a complex as in Proposition \ref{P:C1} and we argue by induction on its length, so it is sufficient to assume that $F$ is of the form $\CC_{U\times V}\otimes p_Y^{-1}\sho_S$, where $U$ (\resp $V$) is a relatively compact subanalytic open subset of~$Y$ (\resp $S$). In that case, one has $\TH^S(F)\simeq T\ho(\CC_{U\times V}, \Db_{Y\times S})$. On the other hand
$$Rf_*(\CC_{U\times V}\otimes p^{-1}_Y\sho_S)\simeq Rf_*(\CC_{U\times V})\otimes \pOS$$ hence
$$\TH^S(Rf_*(\CC_{U\times V}\otimes p^{-1}_Y\sho_S))\simeq T\ho(Rf_*(\CC_{U})\otimes \CC_{X\times V}, \Db_{\XS}).$$ Therefore the statement follows by the absolute case in \cite[Th.\,4.1]{Kashiwara84}.
\end{proof}

Recalling \eqref{E:3n} and adapting \cite[Lem.\,7.2]{Kashiwara84} one obtains:

\begin{theorem}\label{L:A4}
Let $f:Y\to X$ be a morphism of complex analytic manifolds, let $F\in \rD^{\rb}_\rc({p_Y}^{-1}\sho_S)$, and assume that $f$ is proper on $\supp F$. Then we have a canonical isomorphism in $\rD(\shd_{\XS/ S})$:
$$\Df_*\RH^S(F)\simeq \RH^S(Rf_*F).$$
\end{theorem}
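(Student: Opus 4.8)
The plan is to reduce the assertion to its real analytic counterpart, Theorem~\ref{L:A3}, through the comparison \eqref{E:3n} between $\RH^S$ and $\TH^S$. Let $f_\RR:Y_\RR\times S_\RR\to X_\RR\times S_\RR$ be the real analytic map underlying $f\times\id_S$, and let $\Df_{\RR,*}$ denote the corresponding pushforward of complexes of $\shd_{Y_\RR\times S_\RR/S}$-modules to complexes of $\shd_{X_\RR\times S_\RR/S}$-modules. Since $\supp\TH^S(F)\subset\supp F$, the map $f$ is proper on $\supp\TH^S(F)$, and Theorem~\ref{L:A3} applied to $f_\RR$ gives $\Df_{\RR,*}\TH^S(F)\simeq\TH^S(Rf_*F)$. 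On the other hand, \eqref{E:3n} yields
\[
\RH^S(F)[-d_Y]\simeq\rh_{\shd_{\ov Y\times\ov S}}(\sho_{\ov Y\times\ov S},\TH^S(F)),\qquad \RH^S(Rf_*F)[-d_X]\simeq\rh_{\shd_{\ov X\times\ov S}}(\sho_{\ov X\times\ov S},\TH^S(Rf_*F)).
\]
Hence the statement will follow once one establishes the relative analogue of \cite[Lem.\,7.2]{Kashiwara84}: for a complex $\shn$ of $\shd_{Y_\RR\times S_\RR/S}$-modules on which $f$ is proper, the holomorphic relative pushforward $\Df_*:\rD^\rb(\DYS)\to\rD^\rb(\DXS)$ satisfies
\[
\Df_*\bigl(\rh_{\shd_{\ov Y\times\ov S}}(\sho_{\ov Y\times\ov S},\shn)\bigr)\simeq\rh_{\shd_{\ov X\times\ov S}}(\sho_{\ov X\times\ov S},\Df_{\RR,*}\shn),
\]
the shift $d_Y-d_X$ being absorbed into the normalization of $\Df_*$ (compatibly with $\pDR\circ\Df_*\simeq Rf_*\circ\pDR$, with Lemma~\ref{L:comp}, and with Poincar\'e--Verdier duality for $f$ proper on supports).

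To prove this commutation I would resolve $\sho_{\ov Y\times\ov S}$ by the de~Rham complex for $\shd_{\ov Y\times\ov S}$, that is, the Dolbeault complex in the antiholomorphic directions of $Y\times S$: then $\rh_{\shd_{\ov Y\times\ov S}}(\sho_{\ov Y\times\ov S},\shn)$ is computed by the associated relative Dolbeault complex with coefficients in $\shn$, a complex of $\DYS$-modules whose terms and differentials involve only operators of $\shd_{Y_\RR\times S_\RR/S}$, the holomorphic $S$-directions being kept as parameters throughout. Next one uses that, locally, the transfer bimodule $\shd_{X_\RR\times S_\RR\from Y_\RR\times S_\RR/S}$ (with real analytic coefficients) decomposes according to the holomorphic/antiholomorphic bidegree into the holomorphic relative transfer bimodule $\shd_{\XS\from\YS/S}$ and the antiholomorphic one $\shd_{\ov X\times\ov S\from\ov Y\times\ov S}$, and that the antiholomorphic Dolbeault complex on $\ov X\times\ov S$ pulls back along $f$ to the complex on $\ov Y\times\ov S$, compatibly with the Dolbeault differentials. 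Working on $Y\times S$ before applying $Rf_*$, these identifications match the integrand of the left-hand side with that of the right-hand side; since $f$ is proper on the relevant supports, $Rf_*$ then commutes with the remaining $\rh$'s and derived tensor products (projection formula, coherence, and the acyclicity of the tempered distribution sheaves recalled in Section~\ref{sec:3}), which produces the desired isomorphism. All the identifications being canonical, it is $\shd_{X_\RR\times S_\RR/S}$-linear, hence, after extracting the holomorphic part, $\DXS$-linear.

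Combining the two ingredients gives $\Df_*\RH^S(F)\simeq\RH^S(Rf_*F)$ in $\rD(\DXS)$, functorially in $F$ and in $f$ among maps proper on the relevant supports. The main obstacle is the commutation lemma of the second paragraph: carrying Kashiwara's argument into the $S$-relative subanalytic framework forces one to track the relative structure through the antiholomorphic Dolbeault resolution and the transfer bimodules, and to verify the $\shd_{X_\RR\times S_\RR/S}$- and then $\DXS$-linearity of the successive identifications; the reduction to $F$ of the form $\CC_{U\times V}\otimes p_Y^{-1}\sho_S$ via Proposition~\ref{P:C1}, and the underlying isomorphisms of complexes of sheaves, are by contrast routine.
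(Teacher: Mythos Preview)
Your proposal is correct and follows exactly the route indicated by the paper, which states only that the theorem is obtained by recalling \eqref{E:3n} and adapting \cite[Lem.\,7.2]{Kashiwara84}. You have simply unpacked what that adaptation consists of: reduce to Theorem~\ref{L:A3} via \eqref{E:3n}, then prove the commutation of the antiholomorphic Dolbeault resolution with $\Df_*$ by factoring the real transfer bimodule through its holomorphic and antiholomorphic parts.
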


\begin{proposition}\label{P:Lis}
Let $X$ be a complex manifold. For any $F\in \rD^{\rb}_\rc(\pOS)$ and any $s_o\in S$, there is a natural morphism
$$Li^*_{s_o}\mathrm{RH^S}(F)[-d_X]\to \mathrm \tho(Li^*_{s_o} F,\sho_X)$$ which is an isomorphism, where we identify $X$ with $X\times\{s_o\}$ and $X_{\sa}$ with $X_{\sa}\times \{s_o\}$.
\end{proposition}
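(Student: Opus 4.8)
The plan is to construct the morphism directly from the definition $\RH^S(F)=\rho'^{-1}\rh_{\rho'_*\pOS}(\rho'_*F,\sho^{t,S,\sharp}_{\XS})[d_X]$, and then to verify it is invertible on the generating family of objects provided by Proposition~\ref{P:C1}.

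For the construction I would use that $d_S=1$: fix near $s_o$ a holomorphic coordinate $s$ vanishing at $s_o$, so that for every $G\in\rD^\rb(\pOS)$ one has $Li^*_{s_o}G\simeq\{G\To{s}G\}$. Restriction of a relatively tempered holomorphic function along the slice $\{s=s_o\}$ provides a morphism of complexes $\{\sho^{t,S,\sharp}_{\XS}\To{s}\sho^{t,S,\sharp}_{\XS}\}\to i_{s_o*}(\sho^t_X)^\sharp$ onto the absolute tempered sheaf on $X_\sa\simeq X_\sa\times\{s_o\}$; this uses only that $s$ is holomorphic and invertible on $\{s\neq s_o\}$, together with the description of the sections of $\sho^{t,S,\sharp}$ in \cite{MF-P14}. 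Applying $\rh_{\rho'_*\pOS}(\rho'_*F,\cbbullet)$ and then $\rho'^{-1}$ — both triangulated and compatible with the $\pOS$-action, with $\rho'^{-1}$ commuting with $\rh$ and $\otimes$ — and invoking Corollary~\ref{L:210}\eqref{L:210b} (which is precisely the commutation of $\rho'^{-1}$ with $Li^*_{s_o}$ on $\rD^\rb_\rc(\pOS)$), I obtain the desired functorial morphism $Li^*_{s_o}\RH^S(F)[-d_X]\to\tho(Li^*_{s_o}F,\sho_X)$.

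To show it is an isomorphism I would observe that both $F\mto Li^*_{s_o}\RH^S(F)[-d_X]$ and $F\mto\tho(Li^*_{s_o}F,\sho_X)$ are triangulated functors ($Li^*_{s_o}$ sends $\rD^\rb_\rc(\pOS)$ into $\rD^\rb_\cc(\CC_X)$, where $\tho(\cbbullet,\sho_X)$ is triangulated) and the morphism is natural; hence by Proposition~\ref{P:C1} it suffices to treat $F=\CC_{U\times V}\otimes\pOS$ for relatively compact subanalytic $U\subset X$, $V\subset S$. If $s_o\notin V$ both sides vanish. If $s_o\in V$, after shrinking $V$ inside its covering family so that $V$ is Stein with coordinate $s$, one has $Li^*_{s_o}F\simeq\CC_U$ on $X\simeq X\times\{s_o\}$, so the right-hand side is $\tho(\CC_U,\sho_X)$. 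For the left-hand side, \eqref{E:ot} identifies $\RH^S(F)[-d_X]$ with $\rh(\CC_{X\times V},\tho(\CC_{U\times S},\sho_{\XS}))$; since Kashiwara's $\tho(\cbbullet,\sho)$ commutes with external products and $\tho(\CC_S,\sho_S)=\sho_S$, one has $\tho(\CC_{U\times S},\sho_{\XS})\simeq\tho(\CC_U,\sho_X)\boxtimes\sho_S$, on which multiplication by $s$ is injective with cokernel $i_{s_o*}\tho(\CC_U,\sho_X)$. Taking the $s$-cone (that is, $Li^*_{s_o}$) and then $\rh(\CC_{X\times V},\cbbullet)$, which is triangulated and acts as the identity on sheaves supported on $X\times\{s_o\}\subset X\times V$, I get $Li^*_{s_o}\RH^S(F)[-d_X]\simeq\tho(\CC_U,\sho_X)$, and a diagram chase identifies this with the morphism built above.

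The step I expect to be the main obstacle is making the construction of the natural transformation manifestly compatible with the identifications \eqref{E:ot} used in the model computation, so that one gets the actual morphism of the statement rather than merely an abstract isomorphism; the ostensible ``tempered division'' difficulty — that $Li^*_{s_o}$ of the relative tempered sheaf is concentrated in degree zero and coincides with the absolute tempered sheaf on the slice — reduces, on the model $\CC_{U\times V}\otimes\pOS$, to the injectivity of multiplication by $s$ on $\sho_S$ via the Künneth decomposition $\tho(\CC_{U\times S},\sho_{\XS})\simeq\tho(\CC_U,\sho_X)\boxtimes\sho_S$, and so does not present a genuine difficulty there.
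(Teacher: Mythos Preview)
Your argument is correct and close in spirit to the paper's, but the organization differs in a way worth noting. You construct a morphism first and then verify invertibility on the generators $\CC_{U\times V}\otimes\pOS$ supplied by Proposition~\ref{P:C1}, invoking a K\"unneth identity $\tho(\CC_{U\times S},\sho_{\XS})\simeq\tho(\CC_U,\sho_X)\boxtimes\sho_S$ to handle the model case. The paper instead produces a chain of \emph{isomorphisms} from the outset: it uses that $\sho_S/\mathfrak m_{s_o}$ is perfect over $\sho_S$ (since $d_S=1$) to push $Li^*_{s_o}$ through the $\Rhom$ by a change-of-rings formula, then applies Corollary~\ref{L:210}\eqref{L:210b} on the argument side, reducing everything to the single computation $\rho'_*(p^{-1}(\sho_S/\mathfrak m_{s_o}))\overset{L}{\otimes}_{\rho'_*\pOS}\sho^{t,S,\sharp}_{\XS}\big|_{X_\sa\times\{s_o\}}\simeq\sho^t_X$. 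The paper then passes to $\Db^{t,S,\sharp}$ (since $Li^*_{s_o}$ commutes with the Dolbeault $\Rhom$) and quotes \cite[Th.\,4.5\,(4.8)]{K-S96} for $Li^*_{s_o}\tho(\CC_{U\times S},\Db_{\XS})\simeq\tho(\CC_U,\Db_X)$, which is the distributional analogue of your K\"unneth step. The paper's route avoids the compatibility check you flag as the main obstacle (matching the constructed morphism with the model isomorphism), since no separate morphism is ever built; your route has the advantage of staying with $\sho^t$ throughout and making the role of Proposition~\ref{P:C1} explicit.
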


\begin{proof}
Let us construct the morphism. We have\enlargethispage{\baselineskip}%
\begin{align*}
Li^*_{s_o}\mathrm{RH^S}&(F)[-d_X]=p^{-1}(\sho_S/\mathfrak{m}_{s_o})\overset{L}{\otimes}_{\pOS}\RH^S(F)[-d_X]\\
&\simeq \rho'^{-1}\Big(\rho'_*(p^{-1}(\sho_S/\mathfrak{m}_{s_o}))\overset{L}{\otimes}_{\rho'_*{\pOS}}\Rhom_{\rho'_*{\pOS}}(\rho'_*F, \sho^{t,S,\sharp}_{\XS})\Big)\\
&\overset{(*)}\simeq \rho'^{-1}\Rhom_{\rho'_*(p^{-1}(\sho_S/\mathfrak{m}_{s_o}))}\Big((\rho'_*(p^{-1}(\sho_S/\mathfrak{m}_{s_o})\overset{L}{\otimes}_{\pOS}F),\\[-5pt]
&\hspace*{6cm}\rho'_*(p^{-1}(\sho_S/\mathfrak{m}_{s_o}))\overset{L}{\otimes}_{\rho'_*{\pOS}}\sho^{t,S,\sharp}_{\XS}\Big)\\
&\simeq\rho'^{-1}\Rhom_{\rho'_*(p^{-1}(\sho_S/\mathfrak{m}_{s_o}))}\Big(\rho'_*Li^*_{s_o}F, \rho'_*(p^{-1}(\sho_S/\mathfrak{m}_{s_o}))\overset{L}{\otimes}_{\rho'_*{\pOS}}\sho^{t,S,\sharp}_{\XS}\Big),
\end{align*}
where $(*)$ uses Corollary \ref{L:210}\eqref{L:210b}. So it remains to show
$$\rho'_*(p^{-1}(\sho_S/\mathfrak{m}_{s_o}))\overset{L}{\otimes}_{\rho'_*{\pOS}}\sho^{t,S,\sharp}_{\XS}|_{X_{\sa}\times\{s\}}\simeq \sho^t_X$$
Since $Li^*_{s_o}$ commutes with $\Rhom_{\shd_{\overline{X}\times \overline{S}}}(\sho_{\overline{X}\times\overline{S}},\cbbullet)$ it is sufficient to show
$$\rho'_*(p^{-1}(\sho_S/\mathfrak{m}_{s_o}))\overset{L}{\otimes}_{\rho'_*{\pOS}}\Db^{t,S,\sharp}_{\XS}|_{X_{\sa}\times\{s\}}\simeq \Db^t_X$$
Taking any local coordinate $s$ on $S$ centered at $s_o$, this amounts to showing
$$\{\Gamma(U\times V;\Db^{t,S,\sharp}_{\XS})\To{s}\Gamma(U\times V;\Db^{t,S,\sharp}_{\XS})\}\simeq\Gamma(U; \Db^t_X)$$ for any relatively compact open subsets $U\subset X$, $V\subset S$. We note that
\begin{itemize}
\item
$\Gamma(U\times V; \Db^{t,S,\sharp}_{\XS})=\Gamma(X\times V, \tho(U\times S, \Db_{\XS}))$,
\item
$Li^*_{s_o}\tho(U\times S, \Db_{\XS})|_{\{s_o\}}\simeq \tho(U,\Db_X)$ (\cf \cite[Th.\,4.5 (4.8)]{K-S96}).
\end{itemize}
Therefore
\[
\Gamma(X\times V; \tho(U\times S, \Db_{\XS}))\To{s}\Gamma(X\times V; \tho(U\times S, \Db_{\XS}))
\]
is quasi-isomorphic to
\[
\Gamma(X; \tho(U, \Db_{X}))=\Gamma(U; \Db^t_X),
\]
which gives the desired result.
\end{proof}

\section{Proof of the main results}

In order to apply the results of Sections \ref{subsec:complements}--\ref{subsec:functorialprop}, \emph{we continue to assume that $d_S=1$ and that $X$ is a complex manifold of complex dimension $d_X$}.

\begin{remark}[The locally constant case]
In view of Corollary \ref{C:RHDe} and Remark \ref{rem:RH}, Theorem \ref{T:11} is true if $F$ is an $S$-locally constant coherent sheaf. Similarly, the isomorphism of Theorem \ref{C:14} holds for $\shm=F\otimes_{\pOS}\cO_{\XS}$. Moreover, we recover Deligne's Riemann-Hilbert correspondence by means of $\RH^S$ as an equivalence between the category of $S$-local systems on $\XS$ and the category of coherent $\sho_{\XS}$-modules endowed with a relative flat connexion.
\end{remark}

\subsection{Proof of Theorem \ref{T:11}}
We first consider the setting of Section \ref{subsec:Deligneextension}, that is, we consider in $X$ a normal crossing divisor $D$, $j: (X\setminus D)\times S\to \XS$ denotes the open inclusion and $F$ is an $S$-locally constant coherent sheaf of $p^{-1}_{X\setminus D}\sho_S$-modules,

We set $E_F:=\sho_{(X\setminus D)\times S}\otimes_{p^{-1}_{X\setminus D}\sho_S}F$. The $\cO_{\XS}$-module $j_*E_F$ carries a natural structure of $\DXS$-module. Recall that in Theorem \ref{th:Deligneext} we denoted by $\tilde{E_F}$ the subsheaf of $j_*E_F$ consisting of local sections having moderate growth. Since $d_S=1$, according to Corollary \ref{cor:Deligneext}, $\tilde{E_F}$ is regular holonomic and has a characteristic variety contained in
\hbox{$(\pi^{-1}(D)\times S)\cup(T^*_{X} \XS)$}, where $\pi$ is the projection from $T^*X$ to $X$. Moreover $\tilde{E_F}\simeq \tilde{E_F}(*D)$, hence $R\Gamma_{[D\times S]}\tilde{E_F}=0$.

\begin{lemma}\label{RHV}
Assume that $F$ is $\pOS$-locally free of finite rank. Then we have, functorially in $F$,
$$\tilde{E_F}\simeq\RH^S(\pSol\tilde{E_F}).$$
\end{lemma}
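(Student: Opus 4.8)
The plan is to reduce to the two cases where the statement is essentially already available: the case where $F$ is $S$-constant (treated via Example~\ref{Ex:RH}\eqref{Ex:RH1} together with the explicit description of $\tho(\CC_{(X\setminus D)\times S},\sho_{\XS})$ as a localized sheaf) and the local situation near a point of $D$, where Deligne's extension $\tilde E_F$ has been identified in the proof of Theorem~\ref{th:Deligneext} with the explicit module $\wh E_1 = \cO_{\XS}(*D)\otimes_{\pOS}\cG$ equipped with the logarithmic connection. First I would observe that the statement is local on $\XS$ and functorial, so I may work in a polydisc neighborhood of $(x_o,s_o)$ with $D=\{x_1\cdots x_\ell=0\}$, and I may use the isomorphism $(E_F,\nabla)\simeq(E_1,\nabla)$ together with $\tilde E_F\simeq\tilde E_1=\wh E_1$ established there.

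The core computation is the following. Since $\tilde E_F\simeq\tilde E_F(*D)$ is $\DXS$-coherent and regular holonomic (Corollary~\ref{cor:Deligneext}), it makes sense to compute $\pSol\tilde E_F$ and then apply $\RH^S$. Using the matrix $x^A=x_1^{A_1}\cdots x_\ell^{A_\ell}$ of $\nabla$-horizontal multivalued generators, the $\rho'_*\pOS$-linear multiplication by $\prod_i\exp(A_i(s)\log x_i)$ provides, exactly as in the proof of Lemma~\ref{L:1/7}, an isomorphism trivializing the coefficient module, which reduces the computation of $\RH^S(\pSol\tilde E_F)$ to the $S$-constant (free) case. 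In that case, if $f=x_1\cdots x_\ell$ is a local equation of $D$, then $\tilde E_F$ is the localization of $(\sho_{\XS})^{\mathrm{rk}F}$ along $f$, its solution complex $\pSol\tilde E_F$ is the $S$-locally constant sheaf $j_!$ of a $\pOS$-free sheaf (up to shift), and Example~\ref{Ex:RH}\eqref{Ex:RH1} gives $\RH^S(\pSol\tilde E_F)[-d_X]\simeq\tho(\CC_{(X\setminus D)\times S},\sho_{\XS})^{\mathrm{rk}F}$, which is precisely $\tilde E_F$ in this case. Chasing the isomorphism back through the trivialization gives $\tilde E_F\simeq\RH^S(\pSol\tilde E_F)$ in general, and one checks functoriality in $F$ since every isomorphism used (the trivialization, the identifications in Example~\ref{Ex:RH}) is natural with respect to $\pOS$-linear, $\DXS$-linear morphisms, using Remark~\ref{rem:tildeexactfunctor} for the exactness of $E\mto\tilde E$.

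Alternatively — and this is perhaps the cleaner route — I would combine the already-available building blocks directly: Corollary~\ref{C:RHDe} gives $\RH^S(\bD'F)[-d_X]\simeq F\otimes_{\pOS}\sho_{(X\setminus D)\times S}=E_F$ over $(X\setminus D)\times S$, and the key point is to promote this to an isomorphism over all of $\XS$ once one knows that $\pSol\tilde E_F\simeq j_!(\text{something})$. Here one uses that, since $\tilde E_F\simeq\tilde E_F(*D)$ and $E_F$ is $\pOS$-locally free, the solution complex of $\tilde E_F$ has no sections supported on $D\times S$; more precisely $j^{-1}\pSol\tilde E_F$ is, up to shift, the $\pOS$-dual local system of $F$, and the natural triangle relating $j_!j^{-1}$ and $R\Gamma_{[D\times S]}$ forces $\pSol\tilde E_F\simeq j_!\bD'F[d_X]$ (the regular-holonomicity and the equality $\tilde E_F=\tilde E_F(*D)$ are what kill the $D\times S$-supported part, via Proposition~\ref{prop:holLi*0} applied fiberwise). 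Then $\RH^S(\pSol\tilde E_F)[-d_X]\simeq\RH^S(j_!\bD'F)[-d_X]$, and by Lemma~\ref{L:1/7} this last complex is concentrated in degree zero and, by the trivialization argument there, isomorphic to $\tho(\CC_{(X\setminus D)\times S},\cdot)$ of a free module, which one identifies with $\tilde E_F$ using the explicit description $\tilde E_F=\wh E_1$.

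The main obstacle I anticipate is the verification that $\pSol\tilde E_F$ really is of the form $j_!(\text{local system})[d_X]$, i.e. that it has no component supported on $D\times S$ — equivalently, that $R\Gamma_{[D\times S]}\RH^S(\pSol\tilde E_F)=0$ matches $R\Gamma_{[D\times S]}\tilde E_F=0$. In the absolute case this is classical (solutions of a localized regular holonomic module along $D$ form $j_!$ of a local system), but in the relative setting one must check it is compatible with the $\pOS$-structure and survives the non-freeness issues; the cleanest argument is to apply $Li^*_{s_o}$, use Proposition~\ref{P:Lis} to reduce the vanishing to the absolute statement $\tho(Li^*_{s_o}\pSol\tilde E_F,\sho_X)\simeq i^*_{s_o}\tilde E_F$ (noting $i^*_{s_o}\tilde E_F=\widetilde{i^*_{s_o}E_F}$ by Lemma~\ref{fibers}), and then invoke Proposition~\ref{P:1} to conclude the global isomorphism from the pointwise ones. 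Finally one must check the resulting isomorphism is $\DXS$-linear and functorial, which follows since all intermediate isomorphisms are.
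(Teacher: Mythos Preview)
Your second approach together with the ``main obstacle'' paragraph is essentially the paper's proof: the paper begins exactly by showing $\pSol\tilde E_F\simeq j_!\bD'F[d_X]$ via $Li^*_{s_o}$, using \cite[Prop.\,2.2]{MF-S12} (your Proposition~\ref{P:1}), \cite[Prop.\,2.1]{MF-S12}, strictness of $\tilde E_F$, and Lemma~\ref{fibers} to reduce to the known absolute statement $j_!\bD'i^*_{s_o}F\isom\Sol\wt{i^*_{s_o}E_F}$; then it rewrites $\RH^S(j_!\bD'F)[-d_X]$ using Lemmas~\ref{L:1/7} and~\ref{L:Groth} as $\rho'^{-1}Rj_*(\rho'_*F\otimes_{\rho'_*\pOS}j^{-1}\sho^{t,S,\sharp}_{\XS})$, concentrated in degree zero.

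The one place where the paper diverges from your sketch is the final identification with $\tilde E_F$. You propose to use the $x^A$-trivialization from the proof of Lemma~\ref{L:1/7} to reduce to the $S$-constant case and then invoke Example~\ref{Ex:RH}\eqref{Ex:RH1}. But, as the proof of Lemma~\ref{L:1/7} itself records, multiplication by $\prod_i\exp(A_i(s)\log x_i)$ only yields an isomorphism of $\sho_{\XS}$-modules (it commutes with the $\shd_{\overline X\times\overline S}$-action and the $\sho_{\XS}$-action, not with the holomorphic $\partial_{x_i}$), so this route does not directly produce a $\DXS$-linear isomorphism. The paper instead identifies $\rho'^{-1}Rj_*(\rho'_*F\otimes j^{-1}\sho^{t,S,\sharp}_{\XS})$ with $\tilde E_F$ \emph{as subsheaves of $j_*E_F$}: it checks, on a basis of neighborhoods of points of $D\times S$, that a section of the former is precisely a compatible family of holomorphic vectors with the monodromy of $F$ which are tempered in $X\times V'$, and by \cite[Prop.\,5.8]{MF-P14} this is exactly the moderate-growth condition defining $\tilde E_F$. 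Since both sides then sit inside $j_*E_F$ as the same sub-$\DXS$-module, the $\DXS$-linearity is automatic. Your route can be salvaged (e.g.\ by appealing to the uniqueness in Proposition~\ref{T:D-T}, once you know $\RH^S(j_!\bD'F)[-d_X]$ is of D-type), but as written it has this small gap. Also, your passing reference to Proposition~\ref{prop:holLi*0} for the vanishing of the $D\times S$-supported part of $\pSol\tilde E_F$ is misplaced---that proposition concerns holonomic $\DXS$-modules, not constructible sheaves; the correct tool is the one you give in the obstacle paragraph.
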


\begin{proof}
We first rewrite the right-hand side as $\RH^S(j_!\bD'F)[-d_X]$. Indeed, since $\bD'F=\Sol E_F$ on $X\moins D$, we have a natural morphism $j_!\bD'F\to\Sol\wt{E_F}$. We will prove that it is an isomorphism. Since both complexes are $S$-$\C$-constructible, we are left with proving the same property after applying $Li_{s_o}^*$ for any $s_o\in S$, according to \cite[Prop.\,2.2]{MF-S12}. On the other hand, $Li_{s_o}^*\Sol\wt{E_F}=\Sol Li_{s_o}^*\wt{E_F}$ after \cite[Prop.\,2.1]{MF-S12}, and $\Sol Li_{s_o}^*\wt{E_F}=\Sol i_{s_o}^*\wt{E_F}$ since $\wt{E_F}$ is strict (Corollary \ref{cor:Deligneext}). Moreover, $i_{s_o}^*\wt{E_F}=\wt{i_{s_o}^*E_F}$ (Lemma \ref{fibers}). Similarly, one checks that $Li_{s_o}^*j_!\bD'F=j_!\bD'i_{s_o}^*F$. It is well-known that $j_!\bD'i_{s_o}^*F\to\Sol\wt {i_{s_o}^*E_F}$ is an isomorphism, hence the desired assertion.

According to Lemma \ref{L:1/7}, the complex $\RH^S(j_!\bD'F)[-d_X]$ is concentrated in degree zero, and according to Lemma \ref{L:Groth} (with $\shh= j^{-1}\sho^{t, S,\sharp}_{\XS})$, \eqref{eq:L:1/7**} and the $\pOS$-flatness of $F$ we have
\begin{align*}
\RH^S(j_!\bD'F)[-d_X]&\simeq
\rho'^{-1}Rj_*R\shhom_{\rho'_*\pOS}(\rho'_*\bD'F, j^{-1}\sho^{t, S,\sharp}_{\XS})\\
&\simeq
\rho'^{-1}Rj_*(\rho'_*F\overset{L}\otimes_{\rho'_*\pOS} j^{-1}\sho^{t, S,\sharp}_{\XS})\\
&\simeq
\rho'^{-1}Rj_*(\rho'_*F\otimes_{\rho'_*\pOS} j^{-1}\sho^{t, S,\sharp}_{\XS}).
\end{align*}
We shall prove that $\rho'^{-1}Rj_*(\rho'_*F{\otimes}_{\rho'_*p_{X\setminus D}^{-1}\sho_S} j^{-1}\sho^{t, S,\sharp}_{\XS})$ coincides with $\tilde{E_F}$. Firstly, applying the commutation of $\rho'^{-1}$ with $j^{-1}$ together Corollary \ref{C:RHDe} entails that $\rho'^{-1}Rj_*(\rho'_*F{\otimes}_{\rho'_*p_{X\setminus D}^{-1}\sho_S} j^{-1}\sho^{t, S,\sharp}_{\XS})$ and $\tilde{E_F}$ coincide on $(X\setminus D)\times S$. Therefore it is enough to prove that, for each $(y,s_o)\in D\times S$, for any $W\in \Op(X)$ running in a basis of neighborhoods of $y$ and for any $V\in\Op(S_{\sa})$ running in a basis of neighborhoods of $s_o$, we have
\begin{multline*}
\tag{a}
\varprojlim_{\substack{{V'\in \Op^c(S_{\sa}), s_o\in V',V'\Subset V}\\{U\in \Op^c(X_{\sa}),\,y\in U, \,U\Subset W}}}\hspace*{-3mm}
\Gamma((U\setminus D)\times V'; \rho'_*F\otimes_{\rho'_*p_{X\setminus D}^{-1}\sho_S} j^{-1}\sho^{t, S,\sharp}_{\XS})\\[-10pt]
= \Gamma((W\setminus D)\times V; \tilde{E_F}).
\end{multline*}

Recall that, by definition of $\otimes$, the subanalytic sheaf $\rho'_*F\otimes_{\rho'_*p_{X\setminus D}^{-1}\sho_S} j^{-1}\sho^{t, S,\sharp}_{\XS}$ is the sheaf associated to the presheaf defined by the formula:
$$\omega\times \omega'\mto \Gamma(\omega\times \omega'; F)\otimes \Gamma(\omega\times \omega'; j^{-1}\sho_{\XS}^{t,S,\sharp}), \quad \omega\in\Op((X\setminus D)_{X_{\sa}}),\,\omega'\in\Op(S_{\sa}).$$
(b) Therefore a section $h$ in
\[
\Gamma((U\setminus D)\times V'; \rho'_*F\otimes_{\rho'_*p_{X\setminus D}^{-1}\sho_S} j^{-1}\sho^{t, S,\sharp}_{\XS})
\]
is uniquely determined by the data of an open covering of $U\setminus D$ by simply connected Stein open subanalytic sets $(U_{\beta})_{\beta\in B}$ and of a family $(h_{\beta})_{\beta\in B}$ of vectors of~$\ell$ holomorphic functions, $h_{\beta}=({h_{i,\beta}})_{i=1,\dots,\ell}$, such that, for each $i=1,\dots,\ell$, $h_{i,\beta}$ is a holomorphic function defined in $U_{\beta}\times V'$ tempered in $X\times V'$ and such that the $h_{\beta}$ have the monodromy of $F$.

Taking local coordinates $(x_1,\dots, x_n)$ in $X$, $s$ in $S$, such that $D$ is given by an equation $x_i\cdots x_d=0$ in a neighborhood of $y=0\in D$, we may assume that
\begin{align*}
W&=B_{\delta}(0)=\{(x_1,\dots,x_n), |x_j|<\delta, j=1,\dots, d \},\quad V=B_{\delta}(s_o),\\
U&=B_{\epsilon}(0)\text{ and } V'=B_{\epsilon}(s_o),
\end{align*}
for some $\delta>0$ sufficiently small and arbitrary $\epsilon$ satisfying $0<\epsilon<\delta$.

\begin{enumerate}
\item
Let $f\in \Gamma((W\setminus D)\times V; \tilde{E_F})$.
We can decompose $W\setminus D$ as a union of
a finite family $(W_{\alpha,\epsilon_{\alpha}})_{\alpha\in A, \epsilon_{\alpha}>0}$ of open convex subsets such that, for each $(\alpha, \epsilon_{\alpha})$ and for each $U$, $W_{\alpha,\epsilon_{\alpha}}\cap U:=U^*_{\alpha,\epsilon_{\alpha}}$ is a convex open subset (hence Stein) in the conditions of Definition \ref{def:mod}.

For each $\alpha\in A$ we can choose an isomorphism
\[
\psi_{\alpha,\epsilon_{\alpha}}: F|_{W_{\alpha,\epsilon_{\alpha}}\times V}\simeq \pOS ^\ell|_{W_{\alpha,\epsilon_{\alpha}}\times V}
\]
which induces an isomorphism
\[
\phi_{\alpha,\epsilon_{\alpha}}:\tilde{E_F}|_{W_{\alpha,\epsilon_{\alpha}}\times V}\simeq \sho_{\XS}^\ell|_{W_{\alpha,\epsilon_{\alpha}}\times V}.
\]
Then, setting $\phi_{\alpha,\epsilon_{\alpha}}(f):=f_{\alpha,\epsilon_{\alpha}}$, the family $(f_{\alpha, \epsilon_{\alpha}})_{\alpha\in A}$ has the monodromy of $F$.

Let ${f_{i,\alpha,\epsilon_{\alpha}}}$ denote the $i$ component of $f_{\alpha,\epsilon_{\alpha}}$,$i=1,\dots,\ell$.
By construction, each $f_{i,\alpha,\epsilon_{\alpha}}$ is holomorphic (hence tempered) at any $(x,s)$ such that $x\in\partial U^*_{\alpha,\epsilon_{\alpha}}\setminus D$ and $s\in V$. Hence, by \cite[Prop.\,5.8]{MF-P14}, $f_{i,\alpha,\epsilon_{\alpha}}$ satisfies the estimation of Definition \ref{def:mod} if and only if, for each $i=1,\dots, \ell$, $\alpha \in A$, $f_{i,\alpha,\epsilon_{\alpha}}|_{U^*_{\alpha,\epsilon_{\alpha}}\times V'}$ is tempered at $X\times V'$.

Therefore, the family $(f_{\alpha,\epsilon_{\alpha}})_{\alpha\in A}$ defines an element of
\[
\Gamma((U\setminus D)\times V'; \rho'_*F\otimes_{\rho'_*p_{X\setminus D}^{-1}\sho_S} j^{-1}\sho^{t, S,\sharp}_{\XS}).
\]
With $\epsilon \to \delta$ we obtain $f$ as a section on $(W\setminus D)\times V$ of
\[
\rho'^{-1}Rj_*(\rho'_*F{\otimes}_{\rho'_*p_{X\setminus D}^{-1}\sho_S} j^{-1}\sho^{t, S,\sharp}_{\XS}).
\]

\item
By the characterization of the elements of
\[
\Gamma((U\setminus D)\times V'; \rho'_*F\otimes_{\rho'_*p_{X\setminus D}^{-1}\sho_S} j^{-1}\sho^{t, S,\sharp}_{\XS})
\]
given in (b), the converse is similar.\qedhere
\end{enumerate}
\end{proof}

\begin{lemma}\label{L:211}
Let $F$ be an $S$-locally constant coherent $\pOS$-module on $(X\setminus D)\times S$. Then $\RH^S(j_!F)\in \rD^\rb_\rhol(\DXS)$.
\end{lemma}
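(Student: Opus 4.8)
The plan is to reduce, by a dévissage, to two situations that are already controlled: the case where $F$ is $\pOS$-locally free of finite rank, handled by Lemma~\ref{RHV} and Corollary~\ref{cor:Deligneext}, and the case where $j_!F$ is a $\C$-constructible sheaf supported on a single slice $X\times\{s_o\}$ of $\XS$. Since regular holonomicity of a coherent $\DXS$-module is a local question on $X$ (and, for the regularity condition at a given point of $S$, a question about a neighbourhood of that point), I would first pass to a polydisc in $X$ where $D=\{x_1\cdots x_\ell=0\}$ and to a small neighbourhood of a point $s_o\in S$, noting that on all the pieces below the characteristic variety stays inside $\Lambda\times S$ with the \emph{same} $\Lambda$ (the conormals to the natural stratification of $(X,D)$), so that holonomicity is also a local question on $S$. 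A short exact sequence $0\to F'\to F\to F''\to0$ of $S$-locally constant $\pOS$-modules yields, after $j_!$, a distinguished triangle in $\rD^\rb_\cc(\pOS)$, hence a distinguished triangle in $\rD^\rb(\DXS)$ after $\RH^S$; since $\rD^\rb_\rhol(\DXS)$ is a triangulated subcategory of $\rD^\rb_\hol(\DXS)$ by Proposition~\ref{Rhol}, it is enough to treat the graded pieces of a convenient finite filtration of $F$.

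To produce such a filtration I would write $F\simeq p^{-1}\cG$ with $\cG$ coherent over $\sho_S$, equipped with the commuting monodromy logarithms $A_1,\dots,A_\ell\in\End(\cG)$ of the proof of Theorem~\ref{th:Deligneext}. The $\sho_S$-torsion submodule $\cG_\tors$ is $A_\bullet$-stable, giving $0\to F_\tors\to F\to F_\lf\to0$ with $F_\lf$ locally free of finite rank (a torsion-free coherent module over the curve $S$ is free) and $F_\tors$ supported on $(X\setminus D)\times S_0$ for a discrete $S_0\subset S$; filtering $\cG_\tors$ by the $A_\bullet$-stable submodules $\ker(s-s_o)^k$ and then refining by the joint generalized eigenspaces of the commuting $A_k$ reduces the graded pieces of $F_\tors$ to rank-one $\C$-local systems on $X\setminus D$ placed on a slice $X\times\{s_o\}$.

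For a locally free piece $G$, Corollary~\ref{C:RHDe}, the identification $\pSol\wt{E_G}\simeq j_!\bD'G[d_X]$ from the proof of Lemma~\ref{RHV}, and biduality ($G=\bD'\bD'G$) give $\RH^S(j_!G)[-d_X]\simeq\wt{E_{\bD'G}}$, which is regular holonomic with $\Char\subset\Lambda\times S$ by Corollary~\ref{cor:Deligneext}. For a slice-supported piece, write $j_!G\simeq i_{s_o*}H$ with $i_{s_o}\colon X\times\{s_o\}\hto\XS$ and $H=j'_!\mathcal L\in\rD^\rb_\cc(\C_X)$ ($j'\colon X\setminus D\hto X$); using Corollary~\ref{L:210}\eqref{L:210a} to split off the factor $\C_{X\times\{s_o\}}=p^{-1}(\sho_S/\mathfrak m_{s_o})$ and a free resolution of $\sho_S/\mathfrak m_{s_o}$, one identifies $\RH^S(i_{s_o*}H)$, up to a shift, with the slice-pushforward $i_{s_o*}\bigl(\tho(H,\sho_X)\bigr)$, where $\tho(H,\sho_X)$ is regular holonomic over $\shd_X$ by Kashiwara's theorem~\cite{Kashiwara84}; such a slice-pushforward is regular holonomic over $\DXS$, since in the $S$-direction its characteristic variety only acquires the equation of $s_o$ and since $Li^*_s$ of it vanishes for $s\neq s_o$ and equals $\tho(H,\sho_X)\oplus\tho(H,\sho_X)[1]$ at $s=s_o$. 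Finally, regularity of the whole of $\RH^S(j_!F)$ I would obtain globally from Proposition~\ref{P:Lis}: $Li^*_{s_o}\RH^S(j_!F)[-d_X]\simeq\tho\bigl(j_!Li^*_{s_o}F,\sho_X\bigr)$, and since $F$ is $S$-locally constant, $Li^*_{s_o}F$ has $\C$-locally constant cohomology on $X\setminus D$, so $j_!Li^*_{s_o}F\in\rD^\rb_\cc(\C_X)$ and its $\tho(\cdot,\sho_X)$ lies in $\rD^\rb_\rhol(\shd_X)$; together with holonomicity, the elementary long exact sequence relating $Li^*_{s_o}$ of a complex to $Li^*_{s_o}$ of its cohomology modules (valid since $d_S=1$) then gives regularity of each cohomology module.

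The step I expect to be the main obstacle is the slice-supported case, that is, making precise the identification $\RH^S(i_{s_o*}H)\simeq i_{s_o*}\tho(H,\sho_X)[\,\text{shift}\,]$: it is the relative analogue of the compatibility of Kashiwara's functor with pushforward along the inclusion of a point in a disc, and it demands careful bookkeeping with the functors $\rho,\rho',\eta$ and with $\sho^{t,S,\sharp}_{\XS}$ in the $S$-direction, which the excerpt records only for embeddings in the $X$-direction (Theorems~\ref{L:A3} and~\ref{L:A4}). As an alternative one could run the dévissage after the finite ramification $S'\to S$ of Corollary~\ref{cor:Deligneext}, over which the graded pieces become $\sho_{S'}$-free of rank one; the difficulty then migrates to descending regular holonomicity along $S'\to S$, combining Proposition~\ref{prop:pushforward} with the fact that $F$ is a direct summand of the pushforward of its pullback.
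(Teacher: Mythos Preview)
Your overall dévissage into the $\pOS$-torsion and locally free parts, and your treatment of the locally free piece via Lemma~\ref{RHV} and Corollary~\ref{cor:Deligneext}, match the paper. The difference is in the torsion step, where you work harder than necessary.

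In the paper, once one has reduced (by the filtration by $\ker s^k$, exactly as you do) to the case $sF=0$, one writes $j_!F\simeq (j_!F'\boxtimes\CC_S)\otimes p^{-1}(\sho_S/s\sho_S)$ with $F'$ a $\CC$-local system on $X\setminus D$, splits off the $p^{-1}(\sho_S/s\sho_S)$ factor as you suggest via Corollary~\ref{L:210}\eqref{L:210a}, and then invokes \cite[Prop.\,4.7(1)]{MF-P14} to obtain directly
\[
\RH^S(j_!F)\simeq\Rhom_{\pOS}\bigl(p^{-1}(\sho_S/s\sho_S),\,\tho(j_!F'\boxtimes\CC_S,\sho_{\XS})\bigr).
\]
Here $\tho(j_!F'\boxtimes\CC_S,\sho_{\XS})$ is regular holonomic over $\shd_{\XS}$ by Kashiwara's absolute theorem, hence over $\DXS$, and applying $\Rhom_{\pOS}(p^{-1}(\sho_S/s\sho_S),\,\cdot\,)$ (i.e.\ the cone of multiplication by $s$) preserves this. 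So the paper never needs to identify anything with a slice pushforward $i_{s_o*}\tho(H,\sho_X)$, and the ``main obstacle'' you flag simply does not arise: the compatibility of $\RH^S$ with an embedding in the $S$-direction is bypassed by staying on $\XS$ and using the absolute $\tho$. Your further refinement by joint generalised eigenspaces of the monodromy down to rank-one pieces is likewise unnecessary; the argument works for any $\CC$-local system $F'$.

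Your closing observation that, once holonomicity is known, regularity follows uniformly from Proposition~\ref{P:Lis} (since $Li^*_{s_o}\RH^S(j_!F)[-d_X]\simeq\tho(Li^*_{s_o}j_!F,\sho_X)\in\rD^\rb_\rhol(\shd_X)$) is correct and is a pleasant shortcut the paper does not make explicit here.
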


\begin{proof}
One considers the exact sequence of $S$-local systems
$$0\to F_\tors\to F\to F_\lf\to 0$$
where $F_\tors$ denotes the S-local system of $\pOS$-torsion sections of $F$ and $F_\lf$ denotes the quotient $F/F_\tors$. According to Lemma \ref{RHV} and Theorem \ref{th:Deligneext}, the result holds for~$F_\lf$. By the functoriality of $\RH^S$, it will hold for $F$ provided it holds for $F_\tors$.

So we now assume that $F$ is a torsion module. By definition, $F$ is $p^{-1}_{X\setminus D}\sho_S$-coherent so the support of $F$ is contained in $p^{-1}_{X\setminus D} S_0$, where $S_0$ is a discrete subset of $S$. Let us consider $s_o\in S_0$, $x_o\in D$ and let us prove that $\RH^S(j_!F)$ is regular holonomic in a neighborhood of $(x_o,s_o)$. If $s$ is a local coordinate vanishing at $s_o$, we can choose a power $N$ such that $s^NF=0$. Arguing by induction on $N$, one easily reduces to the case $N=1$. In that case, $F$ is isomorphic to $F'\boxtimes \sho_S/s\sho_S $ for some $\C$-local system~$F'$ on $X\setminus D$. We have
\begin{align*}
\RH^S(j_!F)&=\rho'^{-1}\Rhom_{\rho'_*{p^{-1}_{X}\sho_S}}(\rho'_*j_!F, \sho^{t,S,\sharp}_{\XS})\\
&\simeq \rho'^{-1}\Rhom(\rho'_*(j_!(F'\boxtimes \sho_S/s\sho_S), \sho^{t,S,\sharp}_{\XS}).
\end{align*}
On the other hand
\begin{align*}
j_!(F'\boxtimes \sho_S/s\sho_S)&\simeq j_!((F'\boxtimes\CC_S)\otimes p^{-1}_{X\setminus D}(\sho_S/s\sho_S)\\
&\simeq j_!(F'\boxtimes\CC_S)\otimes p^{-1}_{X}(\sho_S/s\sho_S)\\
&\simeq (j_!F'\boxtimes\CC_S)\otimes p^{-1}_{X}(\sho_S/s\sho_S),
\end{align*}
hence, according to \cite[Prop.\,4.7(1)]{MF-P14}, we get
$$\RH^S(j_!F)\simeq \Rhom_{p^{-1}_{X}\sho_S}(p^{-1}_{X}(\sho_S/s\sho_S),\tho(j_!F'\boxtimes \CC_{S},\sho_{\XS})).$$
Since $\tho(j_!F'\boxtimes \CC_{S},\sho_{\XS})$ is in $\Mod_\rhol(\DXS)$, the result follows.
\end{proof}

\begin{proof}[End of the proof of Theorem \ref{T:11}]
We now consider the general situation. We have the tools to follow the outline of Kashiwara's proof in the absolute case \cite[\S7.3]{Kashiwara84}.
We may assume that $F$ is an $S$\nobreakdash-$\CC$\nobreakdash-cons\-truc\-tible sheaf. Then we argue by induction on the dimension of a closed analytic set $Z$ such that $Z\times S\subset \supp F$. Let $Z_0$ be a closed analytic subset of $Z$ such that
\begin{enumerate}
\item{$F|_{(Z\setminus Z_0)\times S}$ is locally constant coherent over $\pOS$.}
\item{$Z\setminus Z_0$ is non singular.}
\end{enumerate}

By the induction hypothesis, $\mathrm{RH^S}(F_{Z_0\times S})$ belongs to $\rD^\rb_{\rhol}(\DXS)$. So we may assume that $F_{Z_0\times S}=0$. The question is local on $Z$. Consider then a projective morphism $\pi:X'\to X$ such that $X'$ is non singular and $\pi(X')=Z$, $Z'_0:=\pi^{-1}(Z_0)$ is a normal crossing divisor in $X'$ and $\pi: X'\setminus Z'_0\to Z\setminus Z_0$ is an isomorphism. Let $F'=\pi^{-1}(F)$. Then we obtain that $F'|_{Z'_0\times S}=0$, $F'_{(X'\setminus Z'_0)\times S}$ is locally constant coherent, and $R\pi_*F'=F$. Now the first part of Theorem \ref{T:11} follows straightforwardly from Proposition \ref{L:A4} and Lemma \ref {L:211}.

For the second part, we note that, since $\RH^S(F)$ is holonomic by the first part, Lemma \ref{L:comp}, together with \cite[Cor.\,3.9]{MF-S12}, implies $\bD\pSol(\RH^S(F))\simeq\bD F$, and we conclude by bi-duality \cite[Prop.\,2.23]{MF-S12}. As a consequence, $\pSol$ is essentially surjective from $\rD^\rb_{\rhol}(\DXS)$ to $\rD^\rb_\cc(\pOS)$.
\end{proof}

\begin{proof}[Proof of Corollary \ref{P:fin1}]
Set $\shm=\RH^S(F)$. By Proposition \ref{P:3.3}, it is enough to check that, if $F$ and $\bD F$ are perverse, then so are $\pSol\shm$ and $\pDR\shm$. But $\pSol\shm\simeq F$ by Theorem \ref{T:11}, and Lemma \ref{L:comp} gives $\pDR\shm\simeq\bD F$.
\end{proof}

\subsection{A reminder on mixed twistor $\shd$-modules}\label{subsec:MTMreminder}
Let $\sha$ be a subset of $\RR\times\CC$. The theory of $\sha$-mixed twistor $\shd$-modules on a complex manifold $X$ has been developed in \cite{Mochizuki11}, after the pure regular case considered in \cite{Bibi01c,Mochizuki07}, leading to the category $\MTM(X)_\sha$. Pure objects in $\MTM(X)_\sha$ are triples $(\shm',\shm'',C)$ (satisfying various conditions) consisting of coherent modules $\shm',\shm''$ over the sheaf of rings $\shr_{X\times\CC}$ of operators in $z\partial_{x_i}$ with coefficients in $\sho_{X\times\CC}$ ($z$ is the coordinate on the factor~$\CC$, corresponding to the ``twistor line''), and $C$ is a sesquilinear pairing that does not need to be made precise here. For our purpose, we restrict the setting to $S=\CC^*$, and we identify in a natural way $\shr_{X\times\CC^*}$ with $\DXS$ since $z$ is invertible. The holonomy property imposed to define $\MTM(X)_\sha$ implies the holonomy property of $\shm',\shm''$ on $\XS$ as defined in \hbox{\cite[\S3.4]{MF-S12}}. Moreover, $\shm',\shm''$ are strict. Mixed objects are endowed with a finite weight filtration $W_\sbullet$ whose graded pieces are pure objects. We say that \emph{$\shm$ underlies an $\sha$-mixed twistor $\shd$-module} if it is equal to the restriction to $\XS$ of the component~$\shm''$ of an object of $\MTM(X)_\sha$ (in~particular, we do neither care about the pairing $C$ nor the weight filtration). We denote by $\Mod_{\rhol,\MTM_\sha}(\DXS)$ the category whose objects are regular holonomic $\DXS$-modules underlying an object of $\MTM(X)_\sha$, and whose morphisms are induced from those of $\MTM(X):=\MTM(X)_{\RR\times\CC}$ (since $\MTM(X)_\sha$ is a full subcategory of $\MTM(X)$). It is an abelian category, as follows from the abelianity of $\MTM(X)_\sha$ and Proposition \ref{Rhol}. In Theorem \ref{C:14} we consider objects in this category with $\sha=\RR\times\{0\}$, but morphisms are in $\Mod(\DXS)$. One can define similarly $\Mod_{\hol,\MTM_\sha}(\DXS)$.

As an example, let us recall the main theorem of \cite{Mochizuki07} which implies that, when~$X$ is smooth projective, any irreducible regular holonomic $\shd_X$-module (or any finite direct sum of such) gives rise, by a suitable twistor deformation, to an object of $\Mod_{\rhol,\MTM}(\DXS)$. When all local monodromies of the corresponding de~Rham complex have eigenvalues of absolute value equal to one, we obtain an object of $\Mod_{\rhol,\MTM_\sha}(\DXS)$ with $\sha=\RR\times\{0\}$, as in Theorem \ref{C:14}.

The projective pushforward and duality functors are defined in $\MTM_\sha$ (\cf\cite[\S7.2.2\,\&\,\S13.3]{Mochizuki11} for the most general setting) and their restriction to the component $\shm''_{|\XS}$ are the corresponding functors for $\DXS$-modules. Also, Kashiwara's equivalence holds in $\MTM(X)_\sha$ (\cf \cite[Prop.\,7.2.8]{Mochizuki11}) and, together with Theorem \ref{th:kashiwaraequivalence}, we obtain that it holds in $\Mod_{\rhol,\MTM_\sha}(\DXS)$.

If $Y$ is a hypersurface in $X$, there is a localization functor in the category $\MTM(X)$, denoted by $[*Y]$ (\cf\cite[\S11.2.2]{Mochizuki11}). It preserves $\MTM(X)_\sha$ for any $\sha$, and when $\sha=\RR\times\{0\}$, it induces the functor $\shm(*Y):=\cO_{\XS}(*(\YS))\otimes_{\cO_{\XS}}\shm$. This explains our choice of $\sha$ in Theorem \ref{C:14}, \emph{that we fix from now on}. We have a natural morphism $\shm\to\shm(*Y)$ induced from the natural morphism in $\MTM(X)_\sha$, whose kernel and cokernel are supported on $Y$.

\begin{lemma}\label{lem:locrholMTM}
The $\DXS$-module $\shm(*Y)$ is an object of $\Mod_{\rhol,\MTM_\sha}(\DXS)$ if~$\shm$ is so.
\end{lemma}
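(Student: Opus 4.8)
The plan is to read off the two conditions defining $\Mod_{\rhol,\MTM_\sha}(\DXS)$ — namely, underlying an object of $\MTM(X)_\sha$, and being regular holonomic as a $\DXS$-module in the sense of Definition \ref{D:HR} — from the corresponding properties of $\shm$. The first condition is, essentially by construction, a consequence of Mochizuki's localization functor; the second requires a short compatibility argument with the functors $Li^*_{s_o}$.

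Concretely, I would first pick an object $(\shm',\shm'',C)$ of $\MTM(X)_\sha$ whose component $\shm''$ restricts to $\shm$ on $\XS$, which exists by hypothesis. Applying the localization functor $[*Y]$ on $\MTM(X)$, which preserves $\MTM(X)_\sha$ and whose effect on the second component is, for $\sha=\RR\times\{0\}$, exactly the functor $\shm\mapsto\shm(*Y)=\cO_{\XS}(*\YS)\otimes_{\cO_{\XS}}\shm$ recalled in Section \ref{subsec:MTMreminder} (\cf \cite[\S11.2.2]{Mochizuki11}), one gets that $\shm(*Y)$ is the restriction to $\XS$ of the $\shm''$-component of $[*Y](\shm',\shm'',C)\in\MTM(X)_\sha$. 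Hence $\shm(*Y)$ underlies an $\sha$-mixed twistor $\shd$-module; in particular, by the reminder of Section \ref{subsec:MTMreminder}, it is a \emph{coherent, holonomic, strict} $\DXS$-module. It is worth stressing that this is exactly the point where the MTM hypothesis is used in an essential way: without it, the $\DXS$-coherence (let alone holonomicity) of the $\cO$-localization $\shm(*Y)$ would run into the lack of a relative Bernstein--Sato polynomial mentioned in the introduction.

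It remains to verify that $\shm(*Y)$ is regular holonomic, i.e. that $Li^*_{s_o}(\shm(*Y))\in\rD^\rb_\rhol(\shd_X)$ for every $s_o\in S$. Writing $\cO_{\XS}(*\YS)=\varinjlim_k\cO_{\XS}(k\YS)$ as a filtered colimit of invertible $\cO_{\XS}$-modules, and using that each $\cO_{\XS}(k\YS)$ is flat over $\cO_{\XS}$ and over $\pOS$, that $Li^*_{s_o}$ commutes with filtered colimits, and the projection formula $Li^*_{s_o}(\cO_{\XS}(k\YS)\otimes_{\cO_{\XS}}\shm)\simeq\cO_X(kY)\otimes_{\cO_X}Li^*_{s_o}\shm$, one obtains a natural isomorphism $Li^*_{s_o}(\shm(*Y))\simeq(Li^*_{s_o}\shm)(*Y)$. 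Since $\shm$ is regular holonomic, $Li^*_{s_o}\shm\in\rD^\rb_\rhol(\shd_X)$, and the classical stability of $\rD^\rb_\rhol(\shd_X)$ under localization along a hypersurface (\cf \cite{Kashiwara03}) gives that $(Li^*_{s_o}\shm)(*Y)\in\rD^\rb_\rhol(\shd_X)$, whence the claim. The only step needing any care is this last compatibility $Li^*_{s_o}(\shm(*Y))\simeq(Li^*_{s_o}\shm)(*Y)$; the remaining inputs — that $[*Y]$ is defined, preserves $\MTM(X)_\sha$, and is computed on $\shm''$ by $\shm\mapsto\shm(*Y)$ — are quoted from \cite{Mochizuki11}.
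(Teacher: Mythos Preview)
Your proof is correct and follows essentially the same approach as the paper: the MTM property of $\shm(*Y)$ is read off from Mochizuki's localization functor $[*Y]$ (which the paper takes as already established in the reminder preceding the lemma), and regularity is checked by reducing $Li^*_{s_o}(\shm(*Y))$ to $(Li^*_{s_o}\shm)(*Y)$ and invoking the absolute case. The only difference is in how this last compatibility is obtained: the paper uses that $\shm$ is strict (hence so is $\shm(*Y)$, by $\cO_{\XS}$-flatness of $\cO_{\XS}(*\YS)$), so that $Li^*_{s_o}$ collapses to the underived $i^*_{s_o}$ on both sides and the identity $i^*_{s_o}(\shm(*Y))=(i^*_{s_o}\shm)(*Y)$ is immediate; your filtered-colimit argument reaches the same conclusion without appealing to strictness, at the cost of a line or two more.
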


\begin{proof}
It is a matter of proving regularity. Since $\shm$ is strict, so is $\shm(*Y)$, by flatness of $\cO_{\XS}(*(\YS))$ over $\cO_{\XS}$. As a consequence, the functor~$Li_{s_o}^*$ acts on them as~$i_{s_o}^*$, and we have $i_{s_o}^*(\shm(*Y))=(i_{s_o}^*\shm)(*Y):=\cO_X(*Y)\otimes_{\cO_X}(i_{s_o}^*\shm)$ for all $s_o$. We know that holonomic regularity of $\shd_X$-modules is preserved by localization along a hypersurface, so $i_{s_o}^*(\shm(*Y))$ is regular holonomic.\end{proof}

\begin{lemma}\label{lem:pullbackrholMTM}
Let $\pi:X'\to X$ be a morphism between complex manifolds and let~$\shm$ be an object of $\Mod_{\hol,\MTM_\sha}(\DXS)$ (\resp $\Mod_{\rhol,\MTM_\sha}(\DXS)$). Then the cohomology sheaves of the pullback complex $\Dpi^*\shm$ are holonomic (\resp regular holonomic) and strict.
\end{lemma}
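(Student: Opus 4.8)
The point is that a $\DXpS$-module underlying an object of $\MTM(X')_\sha$ is automatically regular holonomic and strict, so it suffices to prove that the cohomology sheaves of $\Dpi^*\shm$, after the shift by $[\dim X'-\dim X]$ needed to match our $\cD$-module normalisation of $\Dpi^*$ with Mochizuki's, underlie objects of $\MTM(X')_\sha$ (and in particular stay regular when $\shm$ is regular). The plan is to factor $\pi$ locally through its graph, writing $\pi=\mathrm{pr}\circ\iota_\pi$ with $\iota_\pi\colon X'\hto X'\times X$, $x'\mapsto(x',\pi(x'))$, a closed embedding and $\mathrm{pr}\colon X'\times X\to X$ the projection, and to treat the two factors in turn. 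For $\mathrm{pr}$, the pullback sends $\shm$, up to a shift, to the relative external product $\cO_{X'\times S}\boxtimes\shm$ over $\pOS$; this is concentrated in a single degree, is regular holonomic and strict because $\shm$ is, and underlies the external product of $\shm$ with the trivial twistor structure on $X'$, hence an object of $\MTM(X'\times X)_\sha$ (\cite{Mochizuki11}). One is thus reduced to a closed embedding, and, factoring it locally into embeddings of smooth hypersurfaces and inducting on the codimension, to the inclusion $i\colon H\hto Z$ of a smooth hypersurface with local equation $t$.

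For such an $i$ and $\shm$ underlying $\MTM(Z)_\sha$, I would use only the structure explicitly available on $\MTM$: abelianity, Kashiwara's equivalence (Theorem~\ref{th:kashiwaraequivalence} together with its $\MTM$ counterpart), the localization functor $[*H]$ (Lemma~\ref{lem:locrholMTM}) and the duality functor. Since $\bD\shm$ again underlies $\MTM(Z)_\sha$ and localization preserves $\MTM(Z)_\sha$, the ``$!$-localized'' module $\shm(!H):=\bD\big((\bD\shm)(*H)\big)$ underlies $\MTM(Z)_\sha$, and the canonical morphism $\shm(!H)\to\shm$ — being the dual of the morphism $\bD\shm\to(\bD\shm)(*H)$ in $\MTM(Z)_\sha$ — is a morphism in $\MTM(Z)_\sha$. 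By the standard $\cD$-module triangle $\shm(!H)\to\shm\to\Di_*\Di^*\shm\xrightarrow{+1}$ (up to a shift), its cone is $\Di_*\Di^*\shm$; it is supported on $H\times S$ and its cohomology sheaves are the kernel and cokernel of $\shm(!H)\to\shm$ computed in $\MTM(Z)_\sha$, hence underlie $\MTM(Z)_\sha$. By Kashiwara's equivalence (Theorem~\ref{th:kashiwaraequivalence} and its $\MTM$ version), the cohomology sheaves of $\Di^*\shm$ therefore underlie $\MTM(H)_\sha$, and in particular are regular holonomic and strict (Corollary~\ref{cor:holstrict}, Proposition~\ref{P:15}). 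Composing over the successive hypersurface sections and passing from single modules to complexes with $\MTM$-cohomology by dévissage on the canonical truncation (the functors involved have bounded cohomological amplitude), one gets that $\Dpi^*\shm$ is, up to the shift, a complex with cohomology underlying $\MTM(X')_\sha$; taking $\shh^k$ gives the statement, the non-regular variant being the same argument with $\Mod_{\hol,\MTM_\sha}$ throughout.

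The two delicate points are the shift bookkeeping, which has to be checked against Mochizuki's conventions for the $!$- and $*$-restrictions so that the triangle used above is the right one, and — more seriously — the dévissage for complexes, since neither the $!$-localization nor duality preserves strictness at the level of complexes. This second difficulty is to be handled by organising the induction so that the duality functor is applied only to single strict holonomic modules, where $\bD$ preserves regular holonomicity (Proposition~\ref{P:15}) and strictness (Corollary~\ref{cor:holstrict}), and by using that, since $d_S=1$, strictness means absence of $\pOS$-torsion, a property stable under extensions in $\Mod(\DXS)$; combined with the stability of regular holonomicity under extensions (the remark after Definition~\ref{D:HR} and Proposition~\ref{Rhol}), this lets the long exact sequences produced by the truncation triangles carry ``regular holonomic and strict'' through all the intermediate steps, even when those are no longer visibly objects of $\MTM$.
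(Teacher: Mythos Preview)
Your argument is correct and follows the same architecture as the paper's proof: factor $\pi$ through the graph as a projection followed by a closed embedding, handle the projection by external product, and reduce the closed embedding to a sequence of smooth hypersurface inclusions. The only substantive difference lies in the hypersurface step. The paper uses the morphism $\shm\to\shm(*Y)$ directly: since this is a morphism in $\MTM(X)_\sha$ (as set up in Section~\ref{subsec:MTMreminder}), its kernel and cokernel are in $\MTM(X)_\sha$, and Kashiwara's equivalence identifies them with $\shh^{-1}\Di^*\shm$ and $\shh^0\Di^*\shm$. You instead use the dual morphism $\shm(!H)\to\shm$, obtained by applying $\bD$ to $\bD\shm\to(\bD\shm)(*H)$. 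This is a valid variant, but it requires invoking the duality functor on $\MTM$ in addition to localization, whereas the paper's route needs only the $*$-localization functor and its natural morphism, both of which are already recorded in Section~\ref{subsec:MTMreminder}. In short, the paper's choice is the more economical one given what has been set up.

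Your closing discussion of ``delicate points'' is more cautious than necessary. Once you know that for a \emph{single} hypersurface inclusion the cohomology sheaves $\shh^j\Di^*\shm$ again underlie objects of $\MTM(H)_\sha$ (which your own argument establishes), the induction over successive hypersurfaces proceeds by applying the same step to each cohomology sheaf separately; there is no need to work with complexes or to worry about applying $\bD$ or $(!H)$ to non-strict objects. The only place where one must leave the category $\MTM$ is at the very end, when reassembling via the spectral sequence: the $\shh^k$ of the full pullback is an iterated extension of objects underlying $\MTM$, and one concludes by the stability of regular holonomicity and of strictness (i.e., absence of $\pOS$-torsion, since $d_S=1$) under extensions. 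This last point is implicit in the paper's proof as well.
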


\begin{proof}
We can decompose $\pi$ as the composition of a projection $X'\times X\to X$ with a closed inclusion of a smooth submanifold. The case of a projection is easily treated since it corresponds to the external product with $\sho_{\XS}$ over $\pOS$ and can be lifted at the level of $\MTM_\sha$ (\cf\cite[\S11.4.2]{Mochizuki11}). The regularity property is easily seen to be preserved.

Since the question is local, the case of a closed inclusion can be obtained by considering successive inclusions of closed smooth hypersurfaces. If $i:Y\hto X$ is smooth, Kashiwara's equivalence identifies $\shh^{-1}\Di^*\shm$ and $\shh^{0}\Di^*\shm$ with the kernel and cokernel of $\shm\to\shm(*Y)$, and the formers are therefore in $\Mod_{\hol,\MTM_\sha}(\DXS)$, \resp $\Mod_{\hol,\MTM_\sha}(\DXS)$, according to Lemma \ref{lem:locrholMTM}.
\end{proof}

Together with strictness, these will be the main properties used the proof below.

\begin{remark}\label{rem:proviso}
With $\sha$ and $S_0$ as in Remark \ref{rem:intro} of the introduction, the statements above hold true provided that we replace $S=\CC^*$ with $S\moins S_0$, and the proof of Theorem \ref{C:14} given below extends with this proviso.
\end{remark}

\subsection{Proof of Theorem \ref{C:14}}\label{subsec:C14}
Let $\shm,\shn$ be holonomic $\DXS$-modules and let $F$ be an object of $\rD^\rb_\cc(\pOS)$. Recall (\cf \cite[(2.6.7)]{K-S90}) that we have a natural isomorphism in $\rD(\pOS)$:
\[
\Rhom_{\cD_{\XS/S}}(\shm,\Rhom_{\pOS}(F,\cO_{\XS}))\simeq\Rhom_{\pOS}(F,\Sol\shm),
\]
which is bi-functorial with respect to $\shm,F$. By composing with \eqref{E:Theta2}, we obtain a bi-functorial morphism
\[
\Rhom_{\DXS}(\shm, \RH^S(F)[-d_X])\to\Rhom_{\pOS}(F,\Sol\shm).
\]
Choosing $F=\Sol\shn$ finally produces a bi-functorial morphism
\begin{equation}\label{eq:comparaison}
\Rhom_{\DXS}(\shm, \RH^S(\pSol\shn))\to\Rhom_{\pOS}(\Sol\shn,\Sol\shm).
\end{equation}
If \eqref{eq:comparaison} is an isomorphism, then by taking global sections we find a bi-functorial isomorphism
\[
\Hom_{\DXS}(\shm, \RH^S(\pSol\shn))\to\Hom_{\pOS}(\Sol\shn,\Sol\shm)
\]
and the isomorphism $(*)$ stated in Theorem \ref{C:14} is obtained as that corresponding to $\id_{\Sol\shm}$ when $\shn=\shm$, while $(**)$ follows by applying $(*)$ to $\shn$. We consider the following three statements.

\begin{enumeratea}
\item\label{enum:constructiblea}\itshape
If $\shm,\shn\in\Mod_{\rhol,\MTM_\sha}(\DXS)$, then the complex
\[
\Rhom_{\DXS}(\shm, \RH^S(\pSol\shn))
\]
is $S$-$\CC$-constructible.

\item\label{enum:constructibleb}
If $\shm,\shn\in\Mod_{\rhol,\MTM_\sha}(\DXS)$, then \eqref{eq:comparaison} is an isomorphism.

\item\label{enum:constructiblec}
If $\shm\in\Mod_{\rhol,\MTM_\sha}(\DXS)$, then so does $\RH^S(\pSol\shm)$.
\end{enumeratea}

We also denote by $\eqref{enum:constructiblea}_n$ (\resp $\eqref{enum:constructibleb}_n$, $\eqref{enum:constructiblec}_n$) the statement \eqref{enum:constructiblea} (\resp $\eqref{enum:constructibleb}$, $\eqref{enum:constructiblec}$) for $\shm,\shn$ with support in $X$ of dimension $\leq n$. The first part of Theorem \ref{C:14} follows from~$\eqref{enum:constructibleb}_n$ for any $n\geq0$ by setting $\shn=\shm$. We will prove $\eqref{enum:constructiblec}_n$ for any $n\geq0$, which will be enough, according to the lemma below.

\begin{lemma}\label{lem:abc}
For any $n\geq0$, the statements $\eqref{enum:constructiblea}_n$, $\eqref{enum:constructibleb}_n$ and $\eqref{enum:constructiblec}_n$ are equivalent.
\end{lemma}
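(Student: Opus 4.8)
The plan is to prove the cyclic chain of implications $\eqref{enum:constructiblec}_n\Rightarrow\eqref{enum:constructiblea}_n\Rightarrow\eqref{enum:constructibleb}_n\Rightarrow\eqref{enum:constructiblec}_n$. For $\eqref{enum:constructiblec}_n\Rightarrow\eqref{enum:constructiblea}_n$, set $\shn':=\RH^S(\pSol\shn)$; by $\eqref{enum:constructiblec}_n$ it belongs to $\Mod_{\rhol,\MTM_\sha}(\DXS)$ and, like $\shm$, has support of dimension $\leq n$, so it suffices to show that $\Rhom_{\DXS}(\shm,\shn')$ is $S$-$\CC$-constructible whenever $\shm,\shn'\in\Mod_{\rhol,\MTM_\sha}(\DXS)$. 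For this I would use the relative analogue of the standard identity $\Rhom_{\DXS}(\shm,\shn')\simeq\pDR\bigl(\bD'\shm\overset{L}{\otimes}_{\cO_{\XS}}\shn'\bigr)[-d_X]$, realizing $\bD'\shm\overset{L}{\otimes}_{\cO_{\XS}}\shn'$ as the $\cD$-module inverse image of the external product $\bD'\shm\boxtimes_{\pOS}\shn'$ by the diagonal embedding $\XS\hto(X\times X)\times S$. By Corollary \ref{cor:holstrict} and the duality functor on $\MTM_\sha$, $\bD'\shm$ again underlies a mixed twistor $\cD$-module; since external products are available in $\MTM_\sha$ (\cf \cite[\S11.4.2]{Mochizuki11}), so does $\bD'\shm\boxtimes_{\pOS}\shn'$ on $(X\times X)\times S$; hence by Lemma \ref{lem:pullbackrholMTM} its pullback along the (closed) diagonal embedding has regular holonomic cohomology, and $\pDR$ of an object of $\rD^\rb_\rhol(\DXS)$ is $S$-$\CC$-constructible by \cite[Th.\,1.2]{MF-S12}.

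For $\eqref{enum:constructiblea}_n\Rightarrow\eqref{enum:constructibleb}_n$, observe that under $\eqref{enum:constructiblea}_n$ both sides of \eqref{eq:comparaison} lie in $\rD^\rb_\cc(\pOS)$: the right-hand side $\Rhom_{\pOS}(\Sol\shn,\Sol\shm)$ is $\Rhom_{\pOS}$ of two $S$-$\CC$-constructible complexes (\cite[Th.\,1.2]{MF-S12}), which is again $S$-$\CC$-constructible. By \cite[Prop.\,2.2]{MF-S12} it is then enough to show that $Li^*_{s_o}$ turns \eqref{eq:comparaison} into an isomorphism for each $s_o\in S$. As $\shm$ and $\shn$ are strict, $Li^*_{s_o}\shm=i^*_{s_o}\shm$ and $Li^*_{s_o}\shn=i^*_{s_o}\shn$ are regular holonomic $\cD_X$-modules, and using Proposition \ref{P:Lis} (identifying $Li^*_{s_o}\RH^S(\cbbullet)$ with $\tho(Li^*_{s_o}(\cbbullet),\sho_X)[d_X]$), the commutation of $Li^*_{s_o}$ with $\Sol$, with $\Rhom_{\DXS}(\shm,\cbbullet)$ (base change, valid because $\shm$ is strict and locally has a finite free $\DXS$-resolution) and with $\Rhom_{\pOS}$, one identifies $Li^*_{s_o}$ of \eqref{eq:comparaison} with the absolute comparison morphism $\Rhom_{\cD_X}(i^*_{s_o}\shm,\tho(\Sol\,i^*_{s_o}\shn,\sho_X))\to\Rhom_{\CC_X}(\Sol\,i^*_{s_o}\shn,\Sol\,i^*_{s_o}\shm)$, which is an isomorphism by \cite[Cor.\,8.6]{Kashiwara84}.

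For $\eqref{enum:constructibleb}_n\Rightarrow\eqref{enum:constructiblec}_n$, take $\shn=\shm$ in \eqref{eq:comparaison} and pass to $H^0$ of global sections: the resulting isomorphism $\Hom_{\DXS}(\shm,\RH^S(\pSol\shm))\isom\Hom_{\pOS}(\Sol\shm,\Sol\shm)$ produces a morphism $\varphi\colon\shm\to\RH^S(\pSol\shm)$ corresponding to $\id_{\Sol\shm}$. By Theorem \ref{T:11} the target lies in $\rD^\rb_\rhol(\DXS)$, so $\varphi$ is a morphism of $\rD^\rb_\hol(\DXS)$, and by the construction of \eqref{eq:comparaison} (out of \eqref{E:Theta2}, Lemma \ref{L:comp} and the adjunction isomorphism), $\pSol\varphi$ is the canonical isomorphism $\pSol\RH^S(\pSol\shm)\simeq\pSol\shm$ of Theorem \ref{T:11}. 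Now $\pSol$, equivalently $\pDR=\bD\pSol$, is conservative on $\rD^\rb_\hol(\DXS)$: if $\pDR\shl=0$ then $Li^*_{s_o}\pDR\shl=\pDR\,Li^*_{s_o}\shl=0$ for all $s_o$, hence $Li^*_{s_o}\shl=0$ for all $s_o$ by conservativity of the absolute de\,Rham functor, hence $\shl=0$ by the corollary of Proposition \ref{prop:holLi*0}. Applying this to the cone of $\varphi$ shows $\varphi$ is an isomorphism, so $\RH^S(\pSol\shm)$ is isomorphic as a $\DXS$-module to $\shm$, and transporting the mixed twistor structure along $\varphi$ yields $\eqref{enum:constructiblec}_n$.

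The main obstacle is the $S$-$\CC$-constructibility of $\Rhom_{\DXS}(\shm,\shn')$ for objects of $\Mod_{\rhol,\MTM_\sha}(\DXS)$, which is also precisely what legitimizes the $Li^*_{s_o}$-reduction in $\eqref{enum:constructiblea}_n\Rightarrow\eqref{enum:constructibleb}_n$: in the relative setting the cohomology of a $\cD$-module inverse image need not be holonomic, so the argument genuinely relies on the fact that objects of $\Mod_{\rhol,\MTM_\sha}(\DXS)$ have controlled pullbacks (Lemma \ref{lem:pullbackrholMTM}), which is exactly where the mixed twistor hypothesis enters. A secondary point requiring care is the verification that $\pSol$ applied to $\varphi$ is literally the isomorphism of Theorem \ref{T:11}, which amounts to unwinding the construction of the bi-functorial morphism \eqref{eq:comparaison}.
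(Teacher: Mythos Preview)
Your proof is correct and follows essentially the same route as the paper: your $\eqref{enum:constructiblec}_n\Rightarrow\eqref{enum:constructiblea}_n$ argument is exactly the content of the paper's Proposition~\ref{P:Theta5} (with the same ingredients: the diagonal pullback identity, duality and external products in $\MTM_\sha$, Lemma~\ref{lem:pullbackrholMTM}, and constructibility of the de\,Rham complex), and your $\eqref{enum:constructiblea}_n\Rightarrow\eqref{enum:constructibleb}_n$ matches the paper's reduction via $Li^*_{s_o}$ to \cite[Cor.\,8.6]{Kashiwara84}. For $\eqref{enum:constructibleb}_n\Rightarrow\eqref{enum:constructiblec}_n$ you are more explicit than the paper (which simply says the morphism corresponding to $\id_{\Sol\shm}$ is the desired isomorphism): your conservativity argument for $\pSol$ on $\rD^\rb_\hol(\DXS)$ via Proposition~\ref{prop:holLi*0} correctly fills in what the paper leaves implicit.
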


Notice already that $\eqref{enum:constructibleb}_n\implique\eqref{enum:constructiblec}_n$ is obtained exactly as \eqref{eq:comparaison}${}\implique{}$Theorem \ref{C:14}.

\begin{proof}[Proof of $\eqref{enum:constructiblea}_n\implique\eqref{enum:constructibleb}_n$]
Assume that $\eqref{enum:constructiblea}_n$ holds true. We will prove that so does $\eqref{enum:constructibleb}_n$ by applying the functor $Li_{s_o}^*$ for any $s_o\in S$, to reduce to \cite[Cor.\,8.6]{Kashiwara84}.

Assume that $\shm,\shn\in\Mod_{\rhol,\MTM_\sha}(\DXS)$ have support of dimension~$\leq\nobreak n$. Accor\-ding to $\eqref{enum:constructiblea}_n$ and to \cite[Prop.\,2.2]{MF-S12}, \eqref{eq:comparaison} is an isomorphism as soon as $Li_{s_o}^*\eqref{eq:comparaison}$ is an isomorphism for any $s_o\in S$, since we already know that the complex $\Rhom_{\pOS}(\Sol\shn,\Sol\shm)$ is $S$-$\CC$-constructible, as $\Sol\shm,\Sol\shn$ are so (\cf \cite[Th.\,3.7]{MF-S12}).

On the other hand, arguing as for \cite[Prop.\,2.1]{MF-S12} by using \cite[(A.10)]{Kashiwara03} (together with \cite[(2.6.7)]{K-S90}), $Li_{s_o}^*\eqref{eq:comparaison}$ is the morphism
\[
\Rhom_{\cD_X}(Li_{s_o}^*\shm, Li_{s_o}^*\RH^S(\pSol\shn))\to\Rhom_{\CC}(Li_{s_o}^*\Sol\shn,Li_{s_o}^*\Sol\shm),
\]
and still by \cite[Prop.\,2.1]{MF-S12}, we can replace the right-hand side with
\[
\Rhom_{\CC}(\Sol Li_{s_o}^*\shn,\Sol Li_{s_o}^*\shm).
\]
On the other hand, by Proposition \ref{P:Lis}, we can replace $Li_{s_o}^*\RH^S(\pSol\shn)$ with $\tho(Li_{s_o}^*\Sol\shn,\cO_X)$. In such a way, $Li_{s_o}^*\eqref{eq:comparaison}$ is an isomorphism, according to \cite[Cor.\,8.6]{Kashiwara84}, and this ends the proof of $\eqref{enum:constructibleb}_n$.
\end{proof}

\begin{proof}[Proof of $\eqref{enum:constructiblec}_n\implique\eqref{enum:constructiblea}_n$]
This is an immediate consequence of Proposition \ref{P:Theta5} below.
\end{proof}

\begin{proposition}\label{P:Theta5}
Let $\shm, \, \shn \in \Mod_{\hol,\MTM_\sha}(\DXS)$. Then $\Rhom_{\DXS}(\shm, \shn)$ is $S$-$\C$-constructible.
\end{proposition}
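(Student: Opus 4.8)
The plan is to deduce the statement fibre by fibre over $S$ from Kashiwara's constructibility theorem, and to supplement this with a relative finiteness property established by dévissage; the mixed twistor hypothesis enters only to keep that dévissage inside a category of modules whose inverse and direct images are controlled, which — in the absence of a relative Bernstein--Sato theory — is not guaranteed for arbitrary strict regular holonomic $\DXS$-modules.

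First I would exploit strictness. Since $\shm$ and $\shn$ are components of objects of $\MTM(X)_\sha$ they are strict, so for every $s_o\in S$ one has $Li_{s_o}^*\shm=i_{s_o}^*\shm$ and $Li_{s_o}^*\shn=i_{s_o}^*\shn$, and these are regular holonomic $\shd_X$-modules. Working locally, resolving $\shm$ by a bounded complex of free $\DXS$-modules realises $\Rhom_{\DXS}(\shm,\shn)$ by a bounded complex built from copies of $\shn$, and, reasoning as in \cite[Prop.\,2.1]{MF-S12} with the base-change identity \cite[(A.10)]{Kashiwara03} (together with \cite[(2.6.7)]{K-S90}), strictness yields a functorial isomorphism $Li_{s_o}^*\Rhom_{\DXS}(\shm,\shn)\simeq\Rhom_{\shd_X}(i_{s_o}^*\shm,i_{s_o}^*\shn)$. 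By Kashiwara's constructibility theorem for holonomic $\shd_X$-modules — equivalently, in the regular case at hand, by the Riemann--Hilbert isomorphism $\Rhom_{\shd_X}(i_{s_o}^*\shm,i_{s_o}^*\shn)\simeq\Rhom_{\C_X}(\Sol i_{s_o}^*\shn,\Sol i_{s_o}^*\shm)$ underlying \cite[Cor.\,8.6]{Kashiwara84} — this fibre is $\C$-constructible for every $s_o$.

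By the fibrewise characterisation of $S$-$\C$-constructibility (Proposition \ref{P:1} and the microsupport description recalled in Section \ref{subsec:Sconstructible}), it remains to check that the cohomology sheaves of $\Rhom_{\DXS}(\shm,\shn)$ are, locally on $X$, of finite type over $\sho_S$. I would prove this by induction on $n=\dim(\supp\shm\cup\supp\shn)$, locally on $X$, following the pattern of \cite[Th.\,4.45]{Kashiwara03}. Let $Y\subset X$ be a hypersurface containing the singular locus of a common stratification of $\shm$ and $\shn$. The localization morphisms $\shm\to\shm(*Y)$ and $\shn\to\shn(*Y)$ (with $\shm(*Y),\shn(*Y)\in\Mod_{\hol,\MTM_\sha}(\DXS)$ by Lemma \ref{lem:locrholMTM}) have kernels and cokernels supported on $Y$; by Kashiwara's equivalence \ref{th:kashiwaraequivalence} in $\MTM(X)_\sha$ these are of the form $\Di_*(\cbbullet)$ with arguments in $\Mod_{\hol,\MTM_\sha}(\DYS)$ of support dimension $<n$. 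Feeding the corresponding exact sequences into $\Rhom_{\DXS}(\cbbullet,\shn)$ and $\Rhom_{\DXS}(\shm(*Y),\cbbullet)$, using Corollary \ref{C:rel3} for the closed embedding $\Di$ to rewrite $\Rhom_{\DXS}(\Di_*\shm',\shn)\simeq i_*\Rhom_{\DYS}(\shm',\Di^*\shn)[-\codim Y]$ (and, via the duality isomorphism $\Rhom_{\DXS}(\shm,\shn)\simeq\Rhom_{\DXS}(\bD\shn,\bD\shm)[-d_X]$ together with the stability of $\Mod_{\hol,\MTM_\sha}(\DXS)$ under $\bD$, likewise when $\Di_*$ sits in the second slot), and invoking Lemma \ref{lem:pullbackrholMTM} to see that $\Di^*\shn$, resp.\ $\Di^*\bD\shm$, has cohomology in $\Mod_{\hol,\MTM_\sha}(\DYS)$, the induction hypothesis and the hypercohomology spectral sequence dispose of all these terms. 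We are thus reduced to $\shm=\shm(*Y)$, $\shn=\shn(*Y)$. Choosing a projective modification $\pi\colon X'\to X$ with $\pi^{-1}(Y)=D'$ a normal crossing divisor and $\pi$ an isomorphism over $X\setminus Y$, the pullbacks $\Dpi^*\shm$, $\Dpi^*\shn$ have cohomology in $\Mod_{\hol,\MTM_\sha}(\shd_{X'\times S/S})$ (Lemma \ref{lem:pullbackrholMTM}), and their localizations along $D'$ are of D-type along $D'$ and, being regular meromorphic with the prescribed restriction to $X'\setminus D'$, coincide with the Deligne extensions of Theorem \ref{th:Deligneext} (Corollary \ref{cor:Deligneext}, Proposition \ref{T:D-T}, Lemmas \ref{RHV}, \ref{L:211}); using Theorem \ref{T:hol}, Corollary \ref{T:rholim} and Theorem \ref{L:A4}, the natural comparison morphisms between $\shm$, $\shn$ and the pushforwards by $\pi$ of these Deligne extensions have cones supported on $Y$, so one last application of the induction hypothesis reduces us to the case where $\shm$ and $\shn$ are of D-type along a normal crossing divisor $D$ on a smooth $X$. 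In that final case $\Rhom_{\DXS}(\shm,\shn)$ restricts on $X\setminus D$ to the $\pOS$-local system $\shhom_{\pOS}(F,G)$ built from the monodromy data, and, computing with the explicit $\nabla$-horizontal frames $x^A$ from the proof of Theorem \ref{th:Deligneext} (here $\dim S=1$ ensures the needed finite ramifications), one checks that its $\sho_S$-coherence persists across $D$, which is the desired finiteness.

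The main obstacle is the relative finiteness just sketched: the whole dévissage must be carried out so that every auxiliary module stays in $\Mod_{\hol,\MTM_\sha}$ — the point at which Lemmas \ref{lem:locrholMTM} and \ref{lem:pullbackrholMTM} (hence the mixed twistor hypothesis) are genuinely used — and then, in the normal crossing $D$-type case, one must control the growth and the $\sho_S$-module structure of the $\mathcal Ext$-sheaves in a neighbourhood of $D$. A secondary point requiring care is the commutation $Li_{s_o}^*\Rhom_{\DXS}(\shm,\shn)\simeq\Rhom_{\shd_X}(i_{s_o}^*\shm,i_{s_o}^*\shn)$, which relies essentially on the strictness of $\shm$ and $\shn$.
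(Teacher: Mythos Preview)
The paper's proof is entirely different and much shorter. It adapts Kashiwara's diagonal trick from \cite{Kashiwara78}: one rewrites
\[
\Rhom_{\DXS}(\shm,\shn)\simeq\Rhom_{\DXS}\bigl(\Omega_{\XS/S}^{d_X},\ \bD'\shm\overset{L}\otimes_{\sho_{\XS}}\shn\bigr)[d_X]
\]
and then $\bD'\shm\overset{L}\otimes_{\sho_{\XS}}\shn\simeq \Di^*_\Delta(\bD'\shm\boxtimes\shn)$, where $\Delta\subset X\times X$ is the diagonal and $\boxtimes$ is the external product over $\pOS$. Since $\bD'\shm$ and $\shn$ underlie objects of $\MTM_\sha$, so does their external product (\cite[Th.\,13.3.1 \& Prop.\,11.4.6]{Mochizuki11}), and Lemma~\ref{lem:pullbackrholMTM} then guarantees that $\Di^*_\Delta(\bD'\shm\boxtimes\shn)$ has holonomic cohomology. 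The displayed expression is therefore the relative de~Rham complex of an object of $\rD^\rb_\hol(\DXS)$, hence $S$-$\C$-constructible by \cite[Th.\,3.7]{MF-S12}. The MTM hypothesis is used exactly once, to control the diagonal pullback; no induction on the support and no reduction to D-type is needed.

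Your approach has a genuine gap at its pivot. The ``fibrewise characterisation of $S$-$\C$-constructibility'' you invoke is not what Proposition~\ref{P:1} says: that proposition is a vanishing criterion, not a constructibility criterion, and nothing in the paper (nor in \cite{MF-S12}) asserts that fibrewise $\C$-constructibility together with $\sho_S$-finiteness of the stalks forces $S$-$\C$-constructibility. A naive version of such a statement is false (take the pushforward of $\sho_S$ along the graph of a nonconstant map $S\to X$: every fibre is a constructible skyscraper and all stalks are $\sho_S$-finite, yet no stratification of $X$ is adapted). In your situation the fibres are constructible with respect to a \emph{fixed} stratification coming from $\Lambda$, and it is conceivable that the criterion then holds, but this would itself require a proof---for instance a bound on the microsupport of $\Rhom_{\DXS}(\shm,\shn)$ that you do not supply. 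Your dévissage could in principle be rewritten to prove $S$-$\C$-constructibility directly at each stage (rather than only $\sho_S$-finiteness), but the final D-type step, where you write ``one checks that its $\sho_S$-coherence persists across $D$'', would then have to establish local constancy along the boundary strata, not just coherence, and this is left unaddressed.
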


\begin{proof}
We shall apply the following relative versions of Lemmas 1.8, 1.9 and Proposition 4.7 of \cite{Kashiwara78}. Since they are trivial adaptations of the original ones, we omit their proof.

\begin{lemma}\label{L:K1}
Let $\shm\in\rD^\rb(\DXS)^{\op}$ and $\shn\in\rD^\rb(\DXS)$. Then $$\Rhom_{\DXS}(\Omega^{d_X}_{\XS/S},\shm\overset{L}{\otimes}_{\sho_{\XS}}\shn)\simeq
\shm\overset{L}{\otimes}_{\DXS}\shn[-d_X].$$
\end{lemma}

\begin{lemma}\label{L:K2}
Let $\shm\in\rD^\rb_{\coh}(\DXS)$ and let $\shn\in\rD^\rb(\DXS)$. Then we have an isomorphism
$$\Rhom_{\DXS}(\shm,\shn)\simeq\Rhom_{\DXS}(\Omega_{\XS/S}^n, \bD'\shm\overset{L}\otimes_{\sho_{\XS}}\shn)[d_X].$$
\end{lemma}

\begin{lemma}\label{L:K3}
Let $\shm$ and $\shn$ be two strict $\DXS$-modules. Then
$$\shm\overset{L}{\otimes}_{\sho_{\XS}}\shn\simeq \Di^*_X(\shm\boxtimes \shn),$$
where $X$ is identified to the diagonal of $X\times X$ by the inclusion $i_X$ and $\boxtimes$ denotes the external product over $\pOS$.
\end{lemma}

Let us now return to the proof of Proposition \ref{P:Theta5}. The assumption on $\shm$ and~$\shn$ entails that $\bD'\shm\boxtimes\shn\in\Mod_{\hol,\MTM_\sha}(\DXS)$, according to \cite[Th.\,13.3.1\,\& Prop.\,11.4.6]{Mochizuki11}. So the cohomology sheaves of \hbox{$\Di^*_X(\bD'\shm\boxtimes \shn)$} are holonomic, according to Lemma \ref{lem:pullbackrholMTM}. In view of Lemmas \ref{L:K1}, \ref{L:K2} and \ref{L:K3}, the result follows from \cite[Th\,3.7]{MF-S12}, which entails that the de~Rham complex of an object of $\rD^\rb_\hol(\DXS)$ is $S$-$\C$-constructible.
\end{proof}

\subsection{End of the proof of Theorem \ref{C:14}}
We first notice the following lemma.

\begin{lemma}\label{lem:cDtype}
Let $X'$ be a complex manifold and let $Y'$ be a divisor with normal crossing in $X'$. Let $\shm'$ be a $\DXpS$-module of D-type along $Y'$ (Definition \ref{D-t}) underlying an object of $\MTM(X')$. Then~$\eqref{enum:constructiblec}$ holds for $\shm'$.
\end{lemma}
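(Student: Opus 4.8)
The plan is to obtain the statement as a formal consequence of the equivalence of Proposition~\ref{T:D-T} together with Lemma~\ref{RHV}; no genuinely new argument is needed.

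First I would use Proposition~\ref{T:D-T}. Since $\shm'$ is of D-type along the normal crossing divisor $Y'$, that proposition identifies it with the moderate-growth extension $\wt{E_F}$ (in the sense of Theorem~\ref{th:Deligneext}) of $E_F:=\sho_{(X'\setminus Y')\times S}\otimes_{p^{-1}_{X'\setminus Y'}\sho_S}F$, where $F:=\shh^0\DR\shm'|_{(X'\setminus Y')\times S}$ is $p^{-1}_{X'\setminus Y'}\sho_S$-locally free of finite rank; in particular $F$ is an $S$-locally constant coherent sheaf, so we are in the situation of Sections~\ref{subsec:Deligneextension} and~\ref{subsec:Ext}.

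Next I would apply Lemma~\ref{RHV}, taking $X'$ and $Y'$ in the roles of $X$ and $D$. Since $F$ is $\pOS$-locally free of finite rank, it produces a canonical isomorphism $\wt{E_F}\simeq\RH^S(\pSol\wt{E_F})$, functorial in $F$ and hence, through Proposition~\ref{T:D-T}, functorial in $\shm'$ among D-type modules. Combining with the identification of the previous step, I get an isomorphism of $\DXpS$-modules
\[
\RH^S(\pSol\shm')\simeq\shm'.
\]
Finally, since $\shm'$ is regular holonomic and, under the hypothesis of Theorem~\ref{C:14}, underlies an object of $\MTM(X')_\sha$, the same holds for the isomorphic module $\RH^S(\pSol\shm')$: one simply transports the twistor structure along the above isomorphism, using that $\Mod_{\rhol,\MTM_\sha}(\DXpS)$ is stable under $\DXpS$-linear isomorphisms. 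This is exactly statement~\eqref{enum:constructiblec} for $\shm'$.

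There is no real obstacle here: all the analytic content --- the explicit computation of $\RH^S$ on the extended sheaves attached to a normal crossing divisor, via tempered holomorphic functions on the relative subanalytic site --- is already carried out in Lemma~\ref{RHV} and in Section~\ref{subsec:Ext}. The only points to keep track of are the degree shift by $\dim X'$ and the normalization of $\pSol$, which enter Lemma~\ref{RHV} in exactly the form required, and the observation that a D-type module automatically satisfies the hypotheses of that lemma.
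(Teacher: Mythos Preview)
Your proposal is correct and follows essentially the same approach as the paper's own proof: use Proposition~\ref{T:D-T} to write $\shm'=\wt{E_F}$ with $F$ locally free, invoke Lemma~\ref{RHV} to get $\shm'\simeq\RH^S(\pSol\shm')$, and transport the mixed twistor structure along this isomorphism. The paper's proof is just a terser version of what you wrote; the extra remarks on functoriality and degree shifts are not needed for the statement as phrased, but they are not wrong either.
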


\begin{proof}
By Proposition \ref{T:D-T}, we can write $\shm'=\tilde{E_F}$ for some locally free $p_{X'\moins Y'}^{-1}\sho_S$-module $F$. Then Lemma \ref{RHV} gives $\shm'\simeq\RH^S(\pSol\shm')$. This implies that $\RH^S(\pSol\shm')$ underlies an object of $\MTM(X')$.
\end{proof}

\subsubsection*{Proof of $\eqref{enum:constructiblec}_n$ by induction on $n$}
We note that $\eqref{enum:constructiblea}_0$ reduces to the case $X=\mathrm{pt}$ by Kashiwara's equivalence recalled in Section \ref{subsec:MTMreminder}, and is nothing but Lemma \ref{lem:cDtype} with $X=\mathrm{pt}$. Hence $\eqref{enum:constructiblec}_0$ holds and we can assume $n\geq1$.
By induction, we assume any of $\eqref{enum:constructiblea}_{n-1}$, $\eqref{enum:constructibleb}_{n-1}$ and $\eqref{enum:constructiblec}_{n-1}$ is true, according to Lemma~\ref{lem:abc}.

\begin{proof}[Reduction to the localized case]
As explained in Section \ref{subsec:MTMreminder}, the main reason for restricting to the category $\Mod_{\rhol,\MTM_\sha}(\DXS)$ is that, if $Y$ is a hypersurface in $X$, we have a localization morphism $\shm\to\shm(*Y)$ in this category (Lemma \ref{lem:locrholMTM}). The localization enables us to argue by induction on the dimension of the support of $\shm$.

We will prove $\eqref{enum:constructiblea}_n$ with $\shn=\shm$, and this will imply $\eqref{enum:constructiblec}_n$ for $\shm$, according to Lemma \ref{lem:abc}. We are thus proving a constructibility property, which is a local one (\cf Section \ref{subsec:Sconstructible}), so the question is local on $X$. Let $Z\subset X$ denote the support of~$\shm$. We assume that $\dim Z=n$.

Together with the induction hypothesis, we also assume that there exists a hypersurface~$Y$ in $X$ intersecting $Z$ in dimension $\leq n-1$ such that $\eqref{enum:constructiblec}_n$ holds true for $\shm(*Y)\in\Mod_{\rhol,\MTM_\sha}(\DXS)$. By the abelianity of the latter category (\cf Section~\ref{subsec:MTMreminder}), the kernel $\shk$, image $\cI$ and cokernel $\shc$ of the natural morphism $\shm\to\shm(*Y)$ are objects of $\Mod_{\rhol,\MTM_\sha}(\DXS)$, and $\shk,\shc$ have support of dimension $\leq n-1$. We can thus apply $\eqref{enum:constructiblec}_{n-1}$ to them by the induction hypothesis.

From the distinguished triangle
\[
\RH^S\pSol\cI\to\RH^S\pSol\shm[*Y]\to\RH^S\pSol\shc\To{+1}
\]
and according to Proposition \ref{P:Theta5} applied to the last two terms, we find that $\Rhom_{\DXS}(\shm, \RH^S(\pSol\cI))$ is $S$-$\C$-constructible. Then, from the distinguished triangle
\[
\RH^S\pSol\shm\to\RH^S\pSol\cI\to\RH^S\pSol\shk[1]\To{+1}
\]
and Proposition \ref{P:Theta5} applied similarly, we conclude that
\[
\Rhom_{\DXS}(\shm, \RH^S(\pSol\shm))
\]
is $S$-$\C$-constructible, that is, $\eqref{enum:constructiblea}_n$ holds for $\shn=\shm$. Then $\eqref{enum:constructibleb}_n$ for $\shn=\shm$ also holds, and then $\eqref{enum:constructiblec}_n$ for $\shm$ too.
\end{proof}

\begin{proof}[Proof of $\eqref{enum:constructiblec}_n$ in the localized case]
Recall that we work locally on $X$. Given $\shm\in\Mod_{\rhol,\MTM_\sha}(\DXS)$, we wish to find a hypersurface~$Y$ intersecting $\supp\shm=Z$ in dimension $\leq n-1$ such that $\eqref{enum:constructiblec}_n$ holds true for $\shn:=\shm(*Y)$. We choose~$Y$ such that, moreover, $Z^*:=Z\moins Z\cap Y$ is a smooth manifold of dimension $n$ and~$\shm_{|X\moins Y}$ is the pushforward (by the inclusion $Z^*\hto X\moins Y$) of an object underlying a smooth twistor $\shd$-module on $Z^*$, \ie an admissible variation of twistor structure (\cf\hbox{\cite[\S9]{Mochizuki11}}). Lemma \ref{lem:locrholMTM} entails that $\shn\in\Mod_{\rhol,\MTM_\sha}(\DXS)$.

We can then choose a projective morphism $\pi:X'\to X$ such that $X'$ is a complex manifold, $\pi^{-1}(Y)=Y'$ is a normal crossing divisor in $X'$, and~$\pi$ induces an isomorphism \hbox{$X'\moins Y'\isom Z\moins Z\cap Y$} (hence $d_{X'}\!=\!d_Z$). We set \hbox{$\shm'\!=\!(\Dpi^*\shn)[d_Z-d_X]$}. It~satisfies $\shm'=\shh^0(\shm')=\shh^0(\shm')(*Y')$. By Lemma \ref{lem:pullbackrholMTM}, it is an object of $\Mod_\rhol(\DXpS)$ and it is strict. We conclude that $\shm'$ is of D-type (Definition~\ref{D-t}) and, by Lemma \ref{lem:cDtype}, $\eqref{enum:constructiblec}$ holds for~$\shm'$.

On the other hand, the adjunction isomorphism of Corollary \ref{C:rel3} induces an isomorphism
\[
\Hom_{\DXpS}(\shm',\shm')\isom \Hom_{\DXS}(\Dpi_*\shm', \shn),
\]
and $\id_{\shm'}$ provides the adjunction morphism $\Dpi_*\shm'\to\shm$. Note that $\Dpi_*\shm'=(\Dpi_*\shm')(*Y)$, so finally $\Dpi_*\shm'\simeq\shn$. We end the proof that $\eqref{enum:constructiblec}_n$ holds for~$\shn$ by considering the isomorphisms
\[
\shn\simeq\Dpi_*\shm'\overset{\ref{lem:cDtype}}\simeq\Dpi_*\RH^S(\pSol\shm')\overset{\ref{L:A4}}\simeq\RH^S(R\pi_*\pSol\shm')\overset{\ref{T:hol}\eqref{T:hol3}}\simeq\RH^S(\pSol\shn).\qedhere
\]
\end{proof}

\subsubsection*{Proof of the functoriality in Theorem \ref{C:14}}
Let $\varphi:\shm\to\shn$ be a morphism in the category $\Mod(\DXS)$ and assume that $\shm,\shn$ are objects of $\Mod_{\rhol,\MTM_\sha}(\DXS)$, so that \eqref{eq:comparaison} is an isomorphism, according to the above proof. By the bi-functoriality of~\eqref{eq:comparaison}, we have a commutative diagram
\[
\xymatrix@R=.5cm{
\Hom_{\DXS}(\shm, \RH^S(\pSol\shm))\ar[r]^-\sim\ar[d]_{(\RH^S\pSol\varphi)\circ{}}&\Hom_{\pOS}(\Sol\shm,\Sol\shm)\ar[d]^{\circ(\Sol\varphi)}\\
\Hom_{\DXS}(\shm, \RH^S(\pSol\shn))\ar[r]^-\sim&\Hom_{\pOS}(\Sol\shn,\Sol\shm)\\
\Hom_{\DXS}(\shn, \RH^S(\pSol\shn))\ar[r]^-\sim\ar[u]^{{}\circ\varphi}&\Hom_{\pOS}(\Sol\shn,\Sol\shn)\ar[u]_{(\Sol\varphi)\circ{}}
}
\]
If $\eta_\shm:\shm\isom\RH^S(\pSol\shm)$ corresponds to $\Id_{\Sol\shm}$ via the horizontal isomorphism, and similarly for $\eta_\shn$, then the diagram shows that $(\RH^S\pSol\varphi)\circ\eta_\shm=\eta_\shn\circ\varphi$, since both correspond to $\Sol\varphi$ via the middle horizontal isomorphism.\qed

\refstepcounter{section}
\renewcommand{\thesection}{A}
\section*{Appendix. Locally constant sheaves of \texorpdfstring{$\pOS$}{pOS}-modules}

In this appendix, $S$ denotes a complex analytic space which is not necessarily reduced, $S_\red$ denotes the associated reduced space and $\sho_S$ denotes its structure sheaf (a sheaf of rings on $S_\red$). When there is no risk of confusion, we will use the notation~$S$ instead of $S_\red$ as the underlying space. We state the results we need without proofs, which are straightforward.

An \emph{$S$-constant sheaf of $\pOS$-modules} on $\XS$ is a sheaf of the form $p^{-1}\cG$ for some sheaf $\cG$ of $\sho_S$-modules. We say that $\cF$ is $\pOS$-coherent if $\cG$ is $\sho_S$-coherent. Similarly, there is the notion of $S$-constant sheaf of $\CC$-vector spaces.

\begin{proposition}[$S$-constant sheaves] \label{sor}
Let $X$ be a topological space. An~$S$\nobreakdash-constant sheaf $\cF$ of $\pOS$-modules on $\XS$ satisfies the following properties.
\begin{enumerate}
\item\label{sor1}
If $f:Y\to X$ is a continuous map, then $f^{-1}\cF$ is $S$-constant on $Y$.
\item\label{sor2}
If $U\subset X$ is a \emph{connected} open set in $X$ and $x\in U$, the natural morphism $p_{U,*}\cF\to i_x^{-1}\cF$ is an isomorphism of $\sho_S$-modules. Conversely, if $X$ is a connected topological space and $\cF$ is a sheaf of $\pOS$-modules on $\XS$ such that the natural morphism $p_*\cF\to i_x^{-1}\cF$ is an isomorphism of $\sho_S$-modules for each $x\in X$, then $\cF$ is $S$-constant. In particular, if $X$ is a connected topological space and $\cG$ is a sheaf of $\sho_S$-modules, the sheaf $p_*p^{-1}\cG$ is naturally identified to $\cG$.
\item\label{sor3}
If $\cG,\cG'$ are $\sho_S$-modules, there are canonical isomorphisms:
$$
p^{-1}(\cG\oplus\cG')\simeq p^{-1}\cG\oplus p^{-1}\cG' ,\qquad p^{-1}(\cG\otimes_{\sho_S}\cG')\simeq p^{-1}\cG\otimes_{\pOS} p^{-1}\cG',
$$
and if moreover $\cG$ is $\sho_S$-coherent or $X$ is \emph{locally connected}
$$
p^{-1}\shhom_{\sho_S}(\cG,\cG')\simeq\shhom_{\pOS}(p^{-1}\cG,p^{-1}\cG').
$$
\item\label{sor4}
The functor $p^{-1}$, from the category of $\sho_S$-modules to that of $\pOS$-modules is exact. Moreover, if $X$ is connected, this functor is fully faithful.
\item\label{sor5}
If $X$ is a connected topological space, the kernel, the image and the cokernel of a morphism between $S$-constant sheaves of $\pOS$-modules are $S$-constant sheaves of $\pOS$-modules.
\end{enumerate}
\end{proposition}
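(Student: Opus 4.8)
The plan is to establish \eqref{sor2} first and to derive the remaining items formally from it, together with the exactness of $p^{-1}$ and the adjunction $(p^{-1},p_*)$. For \eqref{sor1}, writing $g:=f\times\id_S\colon\YS\to\XS$ one has $p_X\circ g=p_Y$ (projection to $S$), hence $g^{-1}p_X^{-1}\cG=(p_X\circ g)^{-1}\cG=p_Y^{-1}\cG$, which is $S$-constant on $Y$.

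For \eqref{sor2} I would use the description of $p^{-1}\cG$ through the étale space $\mu\colon\wt\cG\to S$ of $\cG$, which is a local homeomorphism: a section of $p^{-1}\cG$ over $U\times W$, with $W\in\Op(S)$, is a continuous map $U\times W\to\wt\cG$ lying over the projection to $W$. For fixed $w\in W$ it restricts to a continuous map from $U$ into the discrete fibre $\mu^{-1}(w)$, hence is constant whenever $U$ is connected; therefore $\Gamma(U\times W,p^{-1}\cG)=\Gamma(W,\cG)$ compatibly with restriction in $W$, and this identifies the natural morphism $p_{U,*}p^{-1}\cG\to i_x^{-1}p^{-1}\cG$ with the identity of $\cG$. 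For the converse, with $X$ connected put $\cG:=p_*\cF$ and consider the adjunction morphism $\alpha\colon p^{-1}\cG\to\cF$. Since the open sets $U\times V$ form a cofinal system of neighbourhoods of $(x,s)$, one has $(p^{-1}\cG)_{(x,s)}=\cG_s=(p_*\cF)_s$, and under this identification $\alpha_{(x,s)}$ is exactly the stalk at $s$ of the hypothesised isomorphism $p_*\cF\to i_x^{-1}\cF$; hence $\alpha$ is an isomorphism and $\cF$ is $S$-constant. Taking $U=X$ gives the last assertion of \eqref{sor2}.

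For \eqref{sor3} and \eqref{sor4}: exactness of $p^{-1}$ is immediate on stalks, and compatibility with $\oplus$ and $\otimes$ is the usual behaviour of an inverse image functor, using $p^{-1}\sho_S=\pOS$ and $(\cG\otimes_{\sho_S}\cG')_s=\cG_s\otimes_{\sho_{S,s}}\cG'_s$. For $\shhom$ there is always a natural morphism $p^{-1}\shhom_{\sho_S}(\cG,\cG')\to\shhom_{\pOS}(p^{-1}\cG,p^{-1}\cG')$; if $\cG$ is $\sho_S$-coherent, apply the exact functor $p^{-1}$ to a local presentation $\sho_S^m\to\sho_S^n\to\cG\to0$ and use $\shhom_{\pOS}(\pOS^k,\cbbullet)=(\cbbullet)^k$ to present both sides as the same kernel; if instead $X$ is locally connected, compute the stalk of the right-hand side at $(x,s)$ along neighbourhoods $U\times V$ with $U$ connected, using $\Hom_{\pOS|_{U\times S}}(p^{-1}\cG,p^{-1}\cG')\simeq\Hom_{\sho_S}(\cG,\cG')$, which follows from \eqref{sor2}. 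The full faithfulness of $p^{-1}$ for $X$ connected is then immediate, since by adjunction and \eqref{sor2} one has $\Hom_{\pOS}(p^{-1}\cG,p^{-1}\cG')\simeq\Hom_{\sho_S}(\cG,p_*p^{-1}\cG')\simeq\Hom_{\sho_S}(\cG,\cG')$.

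Finally, for \eqref{sor5} with $X$ connected, a morphism $\varphi$ between $S$-constant sheaves is of the form $p^{-1}\psi$ for a unique $\psi$ by \eqref{sor4}, and since $p^{-1}$ is exact, $\ker\varphi=p^{-1}\ker\psi$, $\im\varphi=p^{-1}\im\psi$ and $\coker\varphi=p^{-1}\coker\psi$ are all $S$-constant. The only step requiring real care is \eqref{sor2}: in the direct part the key point is that a continuous section of an étale space over a connected base lying in a single fibre is constant, and in the converse part the identification, on stalks, of the adjunction morphism with the family of morphisms $(p_*\cF\to i_x^{-1}\cF)_{x\in X}$; everything else is formal.
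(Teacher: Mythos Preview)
Your proposal is correct and follows essentially the same approach as the paper: the key step is \eqref{sor2}, proved via the \'etale space $\wt\cG\to S$ (sections over $U\times W$ with $U$ connected are constant along $U$ because the fibres of $\wt\cG$ are discrete), and the remaining items are derived formally. The only notable variation is that you obtain full faithfulness of $p^{-1}$ from adjunction together with $p_*p^{-1}\cG\simeq\cG$, whereas the paper argues directly that a morphism of sheaf spaces $X\times\wt\cG\to\wt\cG'$ must be constant in the $X$-variable; both arguments are equally short and valid.
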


We say that a sheaf $\cF$ on $\XS$ of $\pOS$-modules is \emph{$S$-locally constant} if each point $(x,s)\in \XS$ has a neighborhood on which $\cF$ is $S$-constant. We then say that $\cF$ is $\pOS$-coherent if it is locally (on $\XS$) isomorphic to the pull-back by~$p$ of a $\sho_S$-coherent sheaf.

\begin{proposition}\label{prop:cG}
If $X$ is connected and locally connected, and if $\cF$ is $S$-locally constant on $\XS$, then there exists a sheaf $\cG$ of $\sho_S$-modules such that, locally on $\XS$, we have $\cF\simeq p^{-1}\cG$. We can choose for $\cG$ any of the sheaves $i_x^{-1}\cF$ for $x\in X$. Moreover, $\cF$ is $\pOS$-coherent if and only if $\cG$ is $\sho_S$-coherent.
\end{proposition}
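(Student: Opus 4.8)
The plan is to produce, for an \emph{arbitrarily fixed} $x_o\in X$, the sheaf $\cG:=i_{x_o}^{-1}\cF$ (a sheaf of $i_{x_o}^{-1}\pOS=\sho_S$-modules on $S$) and to show that $\cF\simeq p^{-1}\cG$ holds locally on $\XS$; since $x_o$ is arbitrary, this yields at once the existence of $\cG$ and the fact that any $i_x^{-1}\cF$ is a legitimate choice, and the coherence statement will then follow formally from the full faithfulness of $p^{-1}$ (Proposition \ref{sor}\eqref{sor4}).

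First I would record the local picture. Given $(x,s)\in\XS$, $S$-local constancy of $\cF$ together with local connectedness of $X$ provides a product neighborhood $U\times V$ of $(x,s)$ with $U$ connected and $\cF|_{U\times V}\simeq p_U^{-1}\cH$ for some $\cH\in\Mod(\sho_S|_V)$; then, by Proposition \ref{sor}\eqref{sor2}, $\cH\simeq p_{U,*}(\cF|_{U\times V})\simeq(i_z^{-1}\cF)|_V$ for every $z\in U$, so that $(i_z^{-1}\cF)|_V\simeq(i_{z'}^{-1}\cF)|_V$ for all $z,z'\in U$ and $\cF|_{U\times V}\simeq p_U^{-1}\big((i_z^{-1}\cF)|_V\big)$. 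The next step is to globalize this over $X$ at a fixed $s\in S$. Letting $(y,s)$ run over $X\times\{s\}$, the connected open sets $U$ so obtained form an open cover of $X$; as $X$ is connected, $x$ can be joined to $x_o$ by a finite chain $U_0,\dots,U_k$ of members of this cover with $x\in U_0$, $x_o\in U_k$, $U_i\cap U_{i+1}\neq\varnothing$, and associated neighborhoods $V_0,\dots,V_k$ of $s$. Picking $z_i\in U_i\cap U_{i+1}$, applying the last isomorphism of the previous step successively to $x,z_0\in U_0$, to $z_0,z_1\in U_1$, \dots, to $z_{k-1},x_o\in U_k$, and restricting to $V':=\bigcap_{i=0}^k V_i\ni s$, I obtain $(i_x^{-1}\cF)|_{V'}\simeq(i_{x_o}^{-1}\cF)|_{V'}$, whence on the neighborhood $U_0\times V'$ of $(x,s)$
\[
\cF\simeq p_{U_0}^{-1}\big((i_x^{-1}\cF)|_{V'}\big)\simeq p_{U_0}^{-1}\big((i_{x_o}^{-1}\cF)|_{V'}\big)=(p^{-1}\cG)|_{U_0\times V'}.
\]
Since $(x,s)$ is arbitrary, the first two assertions follow.

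For the coherence statement: if $\cG$ is $\sho_S$-coherent then $\cF$ is $\pOS$-coherent by the very definition, since locally $\cF\simeq p^{-1}\cG$; conversely, if $\cF$ is $\pOS$-coherent, then near any $(y,s_o)$ there is an $\sho_S$-coherent $\cG'$ with $\cF\simeq p^{-1}\cG'$, and comparing with $\cF\simeq p^{-1}\cG$ on a common product neighborhood $U\times V$ with $U$ connected, the resulting isomorphism $p_U^{-1}(\cG|_V)\simeq p_U^{-1}(\cG'|_V)$ descends, by full faithfulness of $p_U^{-1}$, to an isomorphism $\cG|_V\simeq\cG'|_V$; since $\cG'$ is $\sho_S$-coherent and coherence is local, $\cG$ is $\sho_S$-coherent near $s_o$, hence everywhere. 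I expect the only genuinely delicate point to be the globalization over $X$: because the local models $p_U^{-1}\cH$ live over neighborhoods $V$ of the base point $s$ that may shrink as $y\in X$ varies, there is no trivialization of $\cF$ over a slab $X\times V$, so the identification of the sheaves $i_x^{-1}\cF$ has to be carried out one point $s\in S$ at a time through the connectedness (chain) argument above; the rest is purely formal from Proposition \ref{sor}.
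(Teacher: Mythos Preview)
Your proof is correct and follows essentially the same approach as the paper's: both fix $\cG^{(x_o)}:=i_{x_o}^{-1}\cF$, use Proposition~\ref{sor}\eqref{sor2} to identify the local model $\cH$ with $(i_z^{-1}\cF)|_V$ for any $z$ in the connected trivializing set $U$, and then invoke connectedness of $X$ to propagate this identification from $x_o$ to an arbitrary point. The only cosmetic difference is that the paper phrases the connectedness step as ``the set of $x$ for which $\cG^{(x)}$ is locally isomorphic to $\cG^{(x_o)}$ is open and closed'', whereas you unwind this into an explicit finite-chain argument; your treatment of the coherence statement via full faithfulness is also correct and slightly more detailed than the paper's.
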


In other words, the isomorphisms are locally defined, but the sheaf $\cG$ exists globally on $S$. However, this sheaf is not unique.

\begin{proposition}
Assume that $X$ is locally connected. Let $\cF$ be a sheaf of $\pOS$-modules on $\XS$. Then $\cF$ is an $S$-locally constant sheaf of $\pOS$-modules if and only if it is $S$-locally constant as a sheaf of $\CC$-vector spaces.
\end{proposition}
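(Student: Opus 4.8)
The plan is to prove the two implications separately; the ``only if'' direction is immediate, and all the content is in the ``if'' direction, where one must upgrade a local trivialization as a sheaf of $\CC$-vector spaces to a local trivialization as a sheaf of $\pOS$-modules. For the easy direction: if $\cF$ is $S$-locally constant as a sheaf of $\pOS$-modules, then locally on $\XS$ one has $\cF\simeq p^{-1}\cG$ with $\cG$ a sheaf of $\sho_S$-modules; since $\cG$ is a fortiori a sheaf of $\CC$-vector spaces on $S$, this exhibits $\cF$ as $S$-locally constant in the sense of $\CC$-vector spaces.

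For the converse, fix $(x_o,s_o)\in\XS$. Using that $X$ is locally connected, I would first choose a \emph{connected} open neighborhood $U$ of $x_o$ and an open neighborhood $V$ of $s_o$ such that $\cF_{|U\times V}$ is $S$-constant as a sheaf of $\CC$-vector spaces. The key move is then to set $\cG:=(p_U)_*(\cF_{|U\times V})$, where $p_U\colon U\times V\to V$ is the projection: because $\cF$ is a sheaf of $\pOS$-modules, $(p_U)_*$ turns it into a sheaf of $\sho_S|_V$-modules, so $\cG$ carries a canonical $\sho_S$-module structure, and the adjunction counit
\[
\varepsilon\colon p_U^{-1}\cG\longrightarrow \cF_{|U\times V}
\]
is automatically $\pOS$-linear. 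It then remains to check that $\varepsilon$ is an isomorphism, and since this can be tested on the underlying sheaves of $\CC$-vector spaces, it follows from Proposition \ref{sor}\eqref{sor2} (whose proof is topological and applies verbatim to sheaves of $\CC$-vector spaces): for $U$ connected and $\cF_{|U\times V}$ $S$-constant as a $\CC$-sheaf, the natural morphism $p_U^{-1}(p_U)_*(\cF_{|U\times V})\to\cF_{|U\times V}$ is an isomorphism. Hence $\cF_{|U\times V}\simeq p_U^{-1}\cG$ as sheaves of $\pOS$-modules, and since $(x_o,s_o)$ was arbitrary, $\cF$ is $S$-locally constant as a sheaf of $\pOS$-modules.

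I do not anticipate a real obstacle; the only subtlety is that a $\CC$-linear local trivialization does not by itself single out an $\sho_S$-module to serve as the ``model'', which is precisely why one replaces the (non-canonical) $\CC$-sheaf on $V$ by the canonically $\sho_S$-linear sheaf $(p_U)_*(\cF_{|U\times V})$; by connectedness of $U$ these two are isomorphic as $\CC$-sheaves, so the tautological $\pOS$-linear counit is already the sought isomorphism. Local connectedness of $X$ enters only to guarantee that $U$, and a basis of open subsets inside it, can be taken connected, so that pushforward along $p_U$ behaves exactly as in the absolute case $X=\mathrm{pt}$ underlying Proposition \ref{sor}.
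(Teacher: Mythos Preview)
Your proof is correct and follows essentially the same route as the paper's: reduce locally to $X$ connected, take $\cG=(p_U)_*\cF$ with its canonical $\sho_S$-structure, and verify that the $\pOS$-linear counit $p_U^{-1}\cG\to\cF$ is an isomorphism by checking it on the underlying $\CC$-sheaves, where it reduces to the content of Proposition \ref{sor}\eqref{sor2}. The paper's phrasing differs only cosmetically (it invokes the unit $\cG\to p_*p^{-1}\cG$ being an isomorphism rather than appealing to \ref{sor}\eqref{sor2} directly), but the argument is the same.
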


\begin{proposition}\label{prop:stab}
If $\cF,\cF'$ are $S$-locally constant on $\XS$, and $\varphi:\cF\to\cF'$ is $\pOS$-linear, then $\cF\oplus\cF'$, $\cF\otimes_{\pOS}\cF'$, $\shhom_{\pOS}(\cF,\cF')$, $\ker\varphi$, $\im\varphi$ and $\coker\varphi$ are also $S$-locally constant. If $\cF$ and $\cF'$ are moreover $\pOS$-coherent, so are these sheaves.
\end{proposition}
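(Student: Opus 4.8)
The plan is to reduce everything to a local statement on $\XS$, since both ``$S$-locally constant'' and ``$\pOS$-coherent'' are by definition local conditions on $\XS$, and then to feed into the argument the formal properties of the functor $p^{-1}$ collected in Proposition \ref{sor} (and Proposition \ref{prop:cG}). Concretely, I would fix a point $(x_o,s_o)\in\XS$, pick a connected open neighbourhood $U$ of $x_o$ in $X$ and an open neighbourhood $V$ of $s_o$ in $S$ small enough that $\cF_{|U\times V}\simeq p_U^{-1}\cG$ and $\cF'_{|U\times V}\simeq p_U^{-1}\cG'$ for sheaves $\cG,\cG'$ of $\sho_S$-modules on $V$; in the $\pOS$-coherent case, Proposition \ref{prop:cG} lets us take $\cG,\cG'$ to be $\sho_S$-coherent.

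For the three bifunctors I would argue directly from Proposition \ref{sor}\eqref{sor3}: on $U\times V$ one has $\cF\oplus\cF'\simeq p_U^{-1}(\cG\oplus\cG')$, $\cF\otimes_{\pOS}\cF'\simeq p_U^{-1}(\cG\otimes_{\sho_S}\cG')$ and $\shhom_{\pOS}(\cF,\cF')\simeq p_U^{-1}\shhom_{\sho_S}(\cG,\cG')$ (the last isomorphism using local connectedness of $X$, or $\sho_S$-coherence of $\cG$). Each right-hand side is $S$-constant on $U\times V$, hence each left-hand side is $S$-locally constant on $\XS$; and if $\cG,\cG'$ are $\sho_S$-coherent then so are $\cG\oplus\cG'$, $\cG\otimes_{\sho_S}\cG'$ and $\shhom_{\sho_S}(\cG,\cG')$ — the last because $\sho_S$ is a coherent sheaf of rings on a complex analytic space — whence $\pOS$-coherence of the corresponding sheaves.

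For $\ker\varphi$, $\im\varphi$ and $\coker\varphi$ I would again shrink to a connected $U$ and use the full faithfulness of $p_U^{-1}$ (Proposition \ref{sor}\eqref{sor4}) to write $\varphi_{|U\times V}=p_U^{-1}\psi$ for a unique $\sho_S$-linear morphism $\psi:\cG\to\cG'$ on $V$; this is exactly what underlies Proposition \ref{sor}\eqref{sor5}. Since $p_U^{-1}$ is exact (Proposition \ref{sor}\eqref{sor4}), it commutes with the formation of kernels, images and cokernels, so on $U\times V$ these three sheaves are isomorphic to $p_U^{-1}$ of $\ker\psi$, $\im\psi$ and $\coker\psi$ respectively, hence they are $S$-locally constant; and in the coherent case $\ker\psi$, $\im\psi$, $\coker\psi$ are $\sho_S$-coherent because the $\sho_S$-coherent sheaves form an abelian subcategory, which yields the coherence assertion.

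The computations involved are entirely routine, so the only genuine points of care — the closest thing to an obstacle — are of a bookkeeping nature: one must make sure that the auxiliary sheaf $\cG$ used over $U\times V$ really comes, via $p_U^{-1}$, from an $\sho_S$-module on $V$ (this is precisely what Propositions \ref{sor} and \ref{prop:cG} supply), and one must check that the local connectedness of $X$ needed for the full faithfulness of $p_U^{-1}$ and for the $\shhom$ isomorphism is available in the intended range of application. Crucially, none of these local identifications need be compatible on overlaps: both conclusions ($S$-local constancy and $\pOS$-coherence) are purely local on $\XS$, so no global gluing is required, which is why the statement is indeed straightforward.
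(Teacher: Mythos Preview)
Your argument is correct and is precisely the kind of routine verification the paper has in mind: the appendix explicitly states that these results are given ``without proofs, which are straightforward,'' and your reduction to Proposition~\ref{sor} (exactness and full faithfulness of $p^{-1}$ on connected pieces, together with the compatibility of $p^{-1}$ with $\oplus$, $\otimes$, $\shhom$) is exactly the intended route. Your caveat about local connectedness of $X$ for the $\shhom$ case is well observed; in the paper's applications $X$ is always a manifold, so this hypothesis is available.
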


\begin{corollary}
The category of $S$-locally constant sheaves of $\pOS$-modules (\resp and $\pOS$-coherent) is a full abelian subcategory of the category of sheaves of $\pOS$-modules.
\end{corollary}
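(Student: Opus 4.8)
The plan is to deduce the corollary formally from Proposition~\ref{prop:stab}, via the standard recognition criterion for abelian subcategories: a full additive subcategory $\cB$ of an abelian category $\cA$ that contains a zero object and is stable, inside $\cA$, under finite direct sums, kernels and cokernels is itself abelian, and the inclusion $\cB\hto\cA$ is exact (kernels and cokernels in $\cB$ being computed as in $\cA$). Here $\cA$ will be the abelian category of sheaves of $\pOS$-modules on $\XS$, and $\cB$ the full subcategory of $S$-locally constant sheaves (\resp those that are moreover $\pOS$-coherent); fullness of $\cB$ is built into the definition of the category, its morphisms being by definition all the $\pOS$-linear maps.

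First I would observe that $\cB$ contains a zero object, namely $p^{-1}(0)=0$, which is $S$-constant and $\sho_S$-coherent, and that, being a full subcategory of an additive category, $\cB$ is additive as soon as it is stable under finite direct sums --- which is part of Proposition~\ref{prop:stab}. Then, given a $\pOS$-linear morphism $\varphi\colon\cF\to\cF'$ between objects of $\cB$, the same proposition guarantees that $\ker\varphi$, $\im\varphi$ and $\coker\varphi$, formed in the ambient category, again belong to $\cB$; by fullness they serve as a kernel and a cokernel of $\varphi$ in $\cB$ as well. Consequently the canonical morphism from the coimage of $\varphi$ to its image, which is an isomorphism in the ambient (abelian) category, is an isomorphism in $\cB$, and this is exactly the remaining axiom required for $\cB$ to be abelian with exact inclusion functor.

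For the $\pOS$-coherent variant I would repeat this argument word for word, invoking each time the coherence part of Proposition~\ref{prop:stab} in place of the bare $S$-local-constancy part, and using that $0$ is $\sho_S$-coherent. I do not anticipate a genuine difficulty: the statement is purely formal once Proposition~\ref{prop:stab} is available. The only point deserving attention is that the kernels, images and cokernels of morphisms in $\cB$ must be checked to coincide with those taken in the ambient category of $\pOS$-modules; this is immediate from the fullness of $\cB$ together with Proposition~\ref{prop:stab}, so that verification is routine rather than an obstacle.
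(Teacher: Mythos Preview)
Your proposal is correct and matches the paper's approach: the corollary is stated immediately after Proposition~\ref{prop:stab} with no proof given, the appendix noting that the results are straightforward. Your argument is precisely the standard verification that a full subcategory closed under finite direct sums, kernels and cokernels in an abelian category is an abelian subcategory, which is the intended reasoning.
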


\begin{corollary}
Let $0\to\cF'\to\cF\to\cF''\to0$ be an exact sequence of sheaves of $\pOS$-modules. If $\cF,\cF'$ (\resp $\cF,\cF''$) are $S$-locally constant (\resp and coherent), then so are $\cF'$ (\resp $\cF''$).
\end{corollary}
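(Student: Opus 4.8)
The plan is to deduce the statement directly from Proposition~\ref{prop:stab}. Being $S$-locally constant is a purely local property on $\XS$, and the claim involves only kernels and cokernels of the morphisms already present in the given short exact sequence, so no new construction is required. The first step is the elementary observation that, for $0\to\cF'\to\cF\to\cF''\to0$ exact, exactness identifies $\cF''$ with the cokernel of the $\pOS$-linear inclusion $\iota\colon\cF'\to\cF$, and identifies $\cF'$ with the kernel of the $\pOS$-linear surjection $\pi\colon\cF\to\cF''$. These are the only two facts one needs beyond Proposition~\ref{prop:stab}.

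Next I would treat the two cases separately. When $\cF$ and $\cF'$ are $S$-locally constant, apply Proposition~\ref{prop:stab} to $\iota\colon\cF'\to\cF$: both source and target lie in the category of $S$-locally constant $\pOS$-modules, hence so does $\coker\iota\simeq\cF''$; and if moreover $\cF$ and $\cF'$ are $\pOS$-coherent, the coherence clause of the same proposition yields coherence of $\cF''$. When $\cF$ and $\cF''$ are $S$-locally constant, apply Proposition~\ref{prop:stab} instead to $\pi\colon\cF\to\cF''$, whose kernel $\ker\pi\simeq\cF'$ is then $S$-locally constant (and $\pOS$-coherent in the coherent version, by the same clause). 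This exhausts the assertion, including the ``\resp'' variants.

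There is essentially no obstacle: all the substantive work — the stability of the class of $S$-locally constant sheaves under $\ker$, $\im$ and $\coker$, together with the parallel preservation of $\pOS$-coherence — has already been carried out in Proposition~\ref{prop:stab}, which itself rests on the local description of these sheaves (Propositions~\ref{sor} and~\ref{prop:cG}). The only point worth bearing in mind while writing the proof is that the local identifications $\cF\simeq p^{-1}\cG$ are valid only over small connected open subsets of $\XS$, so one should not try to exhibit a global sheaf $\cG$ on $S$ underlying $\cF''$ or $\cF'$; but since the property to be established is itself local on $\XS$, this is harmless, and Proposition~\ref{prop:cG} is not even invoked. Hence the proof is a one-line invocation of Proposition~\ref{prop:stab} applied to $\iota$, respectively to $\pi$.
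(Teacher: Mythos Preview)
Your proposal is correct: the corollary follows immediately from Proposition~\ref{prop:stab} by identifying $\cF''$ with $\coker\iota$ and $\cF'$ with $\ker\pi$, exactly as you say. The paper does not give a proof of this corollary (the appendix opens with the remark that the results there are stated without proofs, which are straightforward), and your argument is precisely the intended one-line deduction.
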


\begin{proposition}\label{P:const}
Set $I=[0,1]$. Let $\cF$ be an $S$-locally constant sheaf of $\pOS$-modules on $\XS$, with $X=I$ or $X=I\times I$. Then $\cF$ is $S$-constant.
\end{proposition}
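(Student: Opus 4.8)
The plan is to promote the \emph{local} triviality of $\cF$ on $\XS$ furnished by Proposition~\ref{prop:cG} to a \emph{global} one, using that $I$ and $I\times I$ are compact and simply connected. All the arguments involved are purely topological, so we may (and implicitly will) allow $S$ to be an arbitrary topological space; in particular, once the case $X=I$ is proved in this generality, it applies with $S$ replaced by $I\times S$.

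First I would reduce to a statement local on $S$. By the converse part of Proposition~\ref{sor}\eqref{sor2} (applicable since $X=I$ or $I\times I$ is connected), it suffices to show that the natural morphism $p_*\cF\to i_x^{-1}\cF$ is an isomorphism of $\sho_S$-modules for each $x\in X$. Since $p_*$ commutes with restriction to open subsets of $S$ and $i_x^{-1}(\cF|_{X\times V})=(i_x^{-1}\cF)|_V$, this is a local question on $S$; moreover it holds over any open $V\subseteq S$ on which $\cF|_{X\times V}$ happens to be $S$-constant, by the first part of Proposition~\ref{sor}\eqref{sor2} applied with the connected open set $U=X$. Thus everything comes down to proving: for every $s_o\in S$ there is an open neighborhood $V$ of $s_o$ with $\cF|_{X\times V}$ $S$-constant.

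For $X=I$, fix $s_o$. Covering the compact set $I\times\{s_o\}$ by open sets on which $\cF$ is $S$-constant and invoking the tube lemma, I would produce an open $V\ni s_o$, a subdivision $0=a_0<a_1<\dots<a_n=1$ and $\delta>0$ such that, with $U_i:=(a_{i-1}-\delta,a_i+\delta)\cap[0,1]$, each $\cF|_{U_i\times V}$ is $S$-constant while $U_i\cap U_j=\emptyset$ for $|i-j|\geq2$; note each $U_i$ and each $U_i\cap U_{i+1}$ is a connected interval. Setting $\cG:=i_0^{-1}\cF$, one builds by induction on $i$ an isomorphism $\theta_i\colon\cF|_{(U_1\cup\dots\cup U_i)\times V}\isom p^{-1}(\cG|_V)$: for $i=1$, trivialize $\cF|_{U_1\times V}\simeq p^{-1}\cG_1$ and identify $\cG_1\simeq\cG|_V$ by restricting to $\{0\}\times V$ (Proposition~\ref{sor}\eqref{sor2}, $U_1$ connected); for the inductive step, trivialize $\cF|_{U_{i+1}\times V}$, reconcile its coefficient sheaf with $\cG|_V$ by restricting to a point of $U_i\cap U_{i+1}$, and then correct the resulting isomorphism on the connected overlap $(U_i\cap U_{i+1})\times V$ by $p^{-1}\psi$, where $\psi\in\Aut_{\sho_S}(\cG|_V)$ is the unique automorphism with $p^{-1}\psi$ equal to the discrepancy of $\theta_i$ and the new isomorphism there — this uses full faithfulness of $p^{-1}$ over the connected set $U_i\cap U_{i+1}$ (Proposition~\ref{sor}\eqref{sor4}), and $p^{-1}\psi$ extends over $U_{i+1}\times V$ since $\psi$ lives on $V$. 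Because $U_1\cup\dots\cup U_i$ and $U_{i+1}$ meet only along that single connected overlap, $\theta_i$ and the corrected isomorphism glue; for $i=n$ this yields $\cF|_{I\times V}\simeq p^{-1}(\cG|_V)$.

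For $X=I\times I$ the same scheme applies verbatim, replacing the chain of intervals by a grid of (slightly enlarged) closed rectangles $[a_{j-1},a_j]\times[b_{k-1},b_k]$ subdividing $I\times I$: all overlaps that occur are rectangles, hence connected, so Proposition~\ref{sor}\eqref{sor4} can be used for the corrections exactly as before. Alternatively, one deduces this case from the case $X=I$ applied twice — first to the factor $I$ with parameter space $I\times S$ (legitimate by the opening remark, since $S$-local constancy on $I\times I\times S$ implies $(I\times S)$-local constancy), which gives $\cF\simeq\pi^{-1}\shh$ for a sheaf $\shh$ on $I\times S$ that is $S$-locally constant (again by full faithfulness of $\pi^{-1}$ over a connected interval), and then to $\shh$ with parameter space $S$. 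The only point requiring care in either route is the patching of the locally defined trivializations; this is not a genuine obstacle but merely bookkeeping, and it works precisely because every overlap is connected — which is where simple connectedness of $I$ and $I\times I$ enters.
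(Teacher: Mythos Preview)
Your proof is correct and follows essentially the same approach as the paper's: reduce to a local question on $S$, use compactness of $I$ (respectively $I\times I$) to cover $X\times\{s_o\}$ by finitely many product sets on which $\cF$ is $S$-constant, and inductively glue the trivializations by correcting with automorphisms of $\cG|_V$ obtained from full faithfulness of $p^{-1}$ over the connected overlaps. For $X=I\times I$ the paper chooses your second route, applying the $X=I$ case to obtain trivializations on strips $X_k\times I\times V$ and then gluing those.
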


Let $\gamma:I\to X$ be a continuous map, with $\gamma(0)=x_o$, $\gamma(1)=x_1$. If $\cF$ is $S$-locally constant on $\XS$, then so is $\gamma^{-1}\cF$ on $I\times S$, hence it is $S$-constant, and it defines an isomorphism $T_\gamma:i_{x_o}^{-1}\cF\isom i_{x_1}^{-1}\cF$ of $\sho_S$-modules.

\begin{proposition}\label{prop:monodromy}
If $\gamma$ and $\gamma'$ are homotopic with fixed endpoints, then $T_\gamma=T_{\gamma'}$. If $\gamma$ and $\gamma'$ can be composed, we have $T_{\gamma\cdot\gamma'}=T_\gamma\circ T_{\gamma'}$.
\end{proposition}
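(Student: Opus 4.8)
The plan is to reduce both assertions to Proposition~\ref{P:const} — which guarantees that an $S$-locally constant sheaf on $I\times S$ or on $(I\times I)\times S$ is actually $S$-constant — together with the full faithfulness of $p^{-1}$ over a connected base (Proposition~\ref{sor}\eqref{sor4}). The first thing I would record is that $T_\gamma$ is independent of the auxiliary choices. Given a trivialization $\varphi\colon\gamma^{-1}\cF\isom p_I^{-1}\cG$, any other one has the form $p_I^{-1}(\alpha)\circ\varphi$ for an automorphism $\alpha$ of $\cG$, by Proposition~\ref{sor}\eqref{sor4} applied to the connected space $I$; the two boundary restrictions of $\varphi$ at $t=0$ and $t=1$ are then both changed by composition with the \emph{same} $\alpha$, so the composite $\varphi_1^{-1}\circ\varphi_0$ defining $T_\gamma$ is unchanged. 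In particular the sheaf $\cG$ may be taken to be $i_{\gamma(0)}^{-1}\cF$ itself.

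For the composition formula, suppose $\gamma'$ joins $x_o$ to $x_1$ and $\gamma$ joins $x_1$ to $x_2$, and let $\delta=\gamma\cdot\gamma'$ be the composed path on $I$, so $\delta(1/2)=x_1$. Restricting $\delta^{-1}\cF$ to $[0,1/2]$ (\resp $[1/2,1]$) and reparametrizing affinely recovers $\gamma'^{-1}\cF$ (\resp $\gamma^{-1}\cF$) on $I\times S$; each of these is $S$-constant by Proposition~\ref{P:const}. I would choose trivializations with common target $\cG:=i_{x_1}^{-1}\cF$ whose restrictions at the matching endpoint $t=1/2$ are both the identity of $\cG$ — possible after composing with a suitable automorphism — so that they glue to a single trivialization of $\delta^{-1}\cF$ over $I\times S$. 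Unwinding the definition of the transport isomorphism for this glued trivialization then gives $T_\delta=T_\gamma\circ T_{\gamma'}$ directly.

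For homotopy invariance, let $h\colon I\times I\to X$ be a homotopy with $h(\cdot,0)=\gamma$, $h(\cdot,1)=\gamma'$, $h(0,t)\equiv x_o$, $h(1,t)\equiv x_1$. Then $h^{-1}\cF$ is $S$-locally constant on $(I\times I)\times S$, hence $S$-constant by Proposition~\ref{P:const}; fix a trivialization $\varphi\colon h^{-1}\cF\isom p^{-1}\cG$. Its restriction to $I\times\{0\}\times S$ is a trivialization of $\gamma^{-1}\cF$ and, by the first paragraph, computes $T_\gamma$; likewise its restriction to $I\times\{1\}\times S$ computes $T_{\gamma'}$. The decisive point is that the restriction of $\varphi$ to $\{0\}\times I\times S$, which lies over the constant path $t\mapsto x_o$, is — again by Proposition~\ref{sor}\eqref{sor4} over the connected space $I$ — equal to $p_I^{-1}$ of a single isomorphism $i_{x_o}^{-1}\cF\isom\cG$; hence the value of $\varphi$ at $(0,0)$ coincides with its value at $(0,1)$, and symmetrically at $s=1$. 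Substituting these equalities into the expressions $T_\gamma=(\varphi\text{ at }(1,0))^{-1}\circ(\varphi\text{ at }(0,0))$ and $T_{\gamma'}=(\varphi\text{ at }(1,1))^{-1}\circ(\varphi\text{ at }(0,1))$ yields $T_\gamma=T_{\gamma'}$.

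The only mildly delicate point — and the place I would be most careful in a full write-up — is the normalization and gluing of the trivializations and the verification that the various restricted trivializations genuinely compute the transport isomorphisms. This is purely a matter of organizing the full faithfulness of $p^{-1}$ over connected sets (Proposition~\ref{sor}) and the reparametrization invariance of the construction; there is no analytic input beyond Proposition~\ref{P:const}, so I do not expect a genuine obstacle.
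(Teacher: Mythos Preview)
Your proof is correct and follows the same route as the paper: both rely on Proposition~\ref{P:const} to make $H^{-1}\cF$ (or $h^{-1}\cF$) $S$-constant on $(I\times I)\times S$, from which homotopy invariance is read off; the composition law is then treated as a routine gluing. The paper's proof is a two-line sketch (``$H^{-1}\cF$ is $S$-constant and the first statement follows; the second is standard''), whereas you have carefully unpacked the well-definedness of $T_\gamma$ via the full faithfulness of $p^{-1}$ and traced the corner values of the trivialization --- but the underlying idea is identical.
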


Let us now assume that $X$ is connected and locally path-connected, and let us fix a base point $x_o\in X$. We consider the category $\Rep_{\sho_S}(\pi_1(X,x_o))$ whose objects are representations $\rho:\pi_1(X,x_o)\to\Aut_{\sho_S}(\cG)$ for some sheaf $\cG$ of $\sho_S$-modules and the morphisms $\rho\to\rho'$ are $\sho_S$-linear morphisms $\varphi:\cG\to\cG'$ which satisfy $\rho'(\gamma)\circ\varphi=\varphi\circ\rho(\gamma)$ for any $\gamma\in\pi_1(X,x_o)$.

Given an $S$-locally constant sheaf $\cF$ on $\XS$, Proposition \ref{prop:monodromy} shows that $\gamma\mto\nobreak T_\gamma$ defines a representation $\rho:\pi_1(X,x_o)\to\Aut_{\sho_S}(i_{x_o}^{-1}\cF)$, called the monodromy representation attached to $\cF$. A morphism of $S$-locally constant sheaves obviously gives rise to a morphism of their monodromy representation. We thus get a functor from the category of $S$-constant local systems of $\pOS$-modules to $\Rep_{\sho_S}(\pi_1(X,x_o))$.

\begin{proposition}\label{prop:monodromyrepr}
The monodromy representation functor is an equivalence of categories.
\end{proposition}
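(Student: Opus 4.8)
The plan is to exhibit an explicit quasi-inverse functor and to check that the two composites are naturally isomorphic to the identity, together with full faithfulness. Since in all our applications $X$ is (an open subset of) a complex manifold, hence locally contractible, it admits a universal covering $q\colon\wt X\to X$ whose deck transformation group is canonically $\pi_1(X,x_o)$; fix $\wt x_o\in q^{-1}(x_o)$ and write $\wt q:=q\times\id_S\colon\wt X\times S\to\XS$. (For a general connected, locally path-connected $X$ one replaces the universal cover by the path groupoid, with no essential change.)

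Construction of the quasi-inverse $\Phi$. Given $\rho\colon\pi_1(X,x_o)\to\Aut_{\sho_S}(\cG)$, the $S$-constant sheaf $p_{\wt X}^{-1}\cG$ on $\wt X\times S$ (Proposition \ref{sor}) carries the action of $\pi_1(X,x_o)$ combining the deck action on $\wt X$ with $\rho$ on $\cG$, and we set $\Phi(\rho):=(\wt q_*p_{\wt X}^{-1}\cG)^{\pi_1(X,x_o)}$. A morphism of representations induces by functoriality a $\pi_1$-equivariant morphism of the corresponding sheaves $p_{\wt X}^{-1}(\cbbullet)$, hence a morphism of invariant subsheaves, so $\Phi$ is a functor. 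Over a connected evenly covered open set $U\subseteq X$, $q^{-1}(U)$ is a disjoint union of copies of $U$ permuted by $\pi_1(X,x_o)$, and one checks that restriction to any one sheet gives an isomorphism $\Phi(\rho)|_{U\times S}\simeq p_U^{-1}\cG$; thus $\Phi(\rho)$ is $S$-locally constant, and $\pOS$-coherent when $\cG$ is $\sho_S$-coherent (Proposition \ref{prop:cG}). This local trivialization also identifies $i_x^{-1}\Phi(\rho)$ with $\cG$ for $x\in U$, and transporting along paths shows that the monodromy representation of $\Phi(\rho)$ equals $\rho$; this is natural in $\rho$, so $\mathrm{mon}\circ\Phi\simeq\id$.

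Conversely, let $\cF$ be $S$-locally constant with monodromy $\rho_\cF$ on $\cG:=i_{x_o}^{-1}\cF$. I would first show that $q^{-1}\cF$ is $S$-\emph{constant} on $\wt X\times S$: $\wt X$ being simply connected and locally path-connected, the isomorphism $i_{\wt x}^{-1}q^{-1}\cF\simeq\cG$ obtained by transporting along a path from $\wt x_o$ is path-independent (Proposition \ref{prop:monodromy}), and local constancy of $\cF$ glues these into an isomorphism $q^{-1}\cF\simeq p_{\wt X}^{-1}\cG$, which by construction is $\pi_1$-equivariant for the action defined by $\rho_\cF$. Descending along $\wt q$ (using $\cF\simeq(\wt q_*q^{-1}\cF)^{\pi_1(X,x_o)}$, the standard descent for the Galois covering $\wt q$ together with the invariance of $\cF$) produces a natural isomorphism $\Phi(\rho_\cF)\simeq\cF$, i.e. $\Phi\circ\mathrm{mon}\simeq\id$. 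Full faithfulness then follows formally: a morphism $\cF\to\cF'$ is the same datum as a $\pi_1$-equivariant morphism $q^{-1}\cF\to q^{-1}\cF'$, hence — after the trivializations above — a $\pi_1$-equivariant morphism $p_{\wt X}^{-1}\cG\to p_{\wt X}^{-1}\cG'$, which by full faithfulness of $p_{\wt X}^{-1}$ on the connected space $\wt X$ (Proposition \ref{sor}\eqref{sor4}) is exactly an $\sho_S$-linear morphism $\cG\to\cG'$ intertwining $\rho_\cF$ and $\rho_{\cF'}$; the coherent subcategory inherits this.

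The only content beyond the classical non-relative statement is bookkeeping with the $\pOS$-module structure, which is controlled throughout by Proposition \ref{sor} and Proposition \ref{prop:cG}; the one point needing genuine care is verifying that $q^{-1}\cF$ is $S$-constant on the universal cover and that the trivializations are natural in $\cF$, so that $\Phi$ and $\mathrm{mon}$ are quasi-inverse rather than merely inverse on objects. I expect no serious obstacle, in agreement with the authors' remark that the proof is straightforward.
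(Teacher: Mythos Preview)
Your proof is correct and follows essentially the same route as the paper. The paper's (commented-out) proof is a one-line sketch: choose a universal cover $(\wt X,\wt x_o)$, let $\pi_1(X,x_o)$ act on the sheaf space $\wt X\times\wt\cG$ via the deck action and $\rho$, and take the quotient $(\wt X\times\wt\cG)/\pi_1$ as a sheaf space over $\XS$. Your construction $\Phi(\rho)=(\wt q_*p_{\wt X}^{-1}\cG)^{\pi_1(X,x_o)}$ is the same object described in the language of invariant subsheaves of the pushforward rather than quotients of espaces \'etal\'es; you simply supply the verifications (local triviality, the two natural isomorphisms, full faithfulness) that the paper omits under its blanket remark that the appendix results are ``straightforward''.
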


\begin{remark}[Riemann-Hilbert]\label{rem:RH}
By the Riemann-Hilbert correspondence for coherent $S$-local systems proved in \cite[Th.\,2.23 p.\,14]{Deligne70}, the functor $\cF\mto\cO_{\XS}\otimes_{\pOS}\cF$ induces an equivalence between the category of coherent $S$-locally constant sheaves of $\pOS$-modules and the category of coherent $\cO_{\XS}$-modules $F$ equipped with an integrable relative connection $\nabla:F\to\Omega^1_{\XS/S}\otimes_{\cO_{\XS}}F$.
\end{remark}

\begin{proposition}\label{prop:flatLS}
Let $\cF$ be a coherent $S$-locally constant local system on $\XS$. Then the following properties are equivalent:
\begin{enumerate}
\item\label{prop:flatLS1}
there exists an $\sho_S$-locally free sheaf of finite rank $\cG$ such that locally $\cF\simeq p^{-1}\cG$;
\item\label{prop:flatLS2}
any coherent $\sho_S$-module $\cG$ such that locally $\cF\simeq p^{-1}\cG$ is $\sho_S$-locally free of finite rank.
\end{enumerate}

If $S$ is a complex manifold with its reduced structure, \eqref{prop:flatLS1} and \eqref{prop:flatLS2} are also equivalent~to
\begin{enumerate}\setcounter{enumi}{2}
\item\label{prop:flatLS3}
the dual $\bD\cF:=\bR\shhom_{\pOS}(\cF,\pOS)$ is an $S$-locally constant sheaf.
\end{enumerate}

If $X$ is connected and locally path-connected, and if $\pi_1(X,x_o)$ has finite presentation, so that $\Hom(\pi_1(X,x_o),\GL_r(\CC))$ is naturally an affine complex algebraic variety, then \eqref{prop:flatLS1} and \eqref{prop:flatLS2} are also equivalent to
\begin{enumerate}\setcounter{enumi}{3}
\item\label{prop:flatLS4}
for any open subset $V$ of $S$ on which some~$\cG$ as in \eqref{prop:flatLS2} is free or rank $r$, giving $\cF_{|X\times V}$ is equivalent to giving a holomorphic map $V\to\Hom(\pi_1(X,x_o),\GL_r(\CC))$.
\end{enumerate}
\end{proposition}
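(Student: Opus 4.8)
The plan is to split the chain of equivalences into three blocks, proved independently and in this order: \eqref{prop:flatLS1}$\Leftrightarrow$\eqref{prop:flatLS2}, which is a purely local statement on $S$ valid in the general setting of the appendix; then \eqref{prop:flatLS1}$\Leftrightarrow$\eqref{prop:flatLS3}, under the hypothesis that $S$ is a reduced complex manifold; and finally \eqref{prop:flatLS2}$\Leftrightarrow$\eqref{prop:flatLS4}, under the fundamental-group hypotheses. For the first block, I would start from Proposition~\ref{prop:cG}: whenever $\cF\simeq p^{-1}\cG$ holds locally on $\XS$, the sheaf $\cG$ is, locally on $S$, isomorphic as an $\sho_S$-module to $i_x^{-1}\cF$ for any fixed $x\in X$, so any two admissible $\cG$ are locally isomorphic; moreover $\cF$ is $\pOS$-coherent exactly when $\cG$ is $\sho_S$-coherent, so an admissible coherent $\cG$ does exist, namely $i_x^{-1}\cF$. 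Since local freeness of finite rank is a local property, it holds for one admissible $\cG$ iff it holds for all of them; combining this with the full faithfulness of $p^{-1}$ over a connected base (Proposition~\ref{sor}\eqref{sor4}), which transports an isomorphism $p^{-1}\cG\simeq p^{-1}\cG'$ down to an isomorphism $\cG\simeq\cG'$, yields both implications at once.

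Next, assuming $S$ is a reduced complex manifold, I would prove \eqref{prop:flatLS1}$\Leftrightarrow$\eqref{prop:flatLS3} by working locally on $\XS$, so that $\cF\simeq p^{-1}\cG$ with $\cG$ $\sho_S$-coherent and $X$ connected. Choosing a finite free $\sho_S$-resolution of $\cG$ and using the exactness of $p^{-1}$ together with the identity $\shhom_{\pOS}(p^{-1}\sho_S^{\,r},\pOS)\simeq p^{-1}\shhom_{\sho_S}(\sho_S^{\,r},\sho_S)$ (the free case of Proposition~\ref{sor}\eqref{sor3}), one obtains a canonical isomorphism $\bD\cF\simeq p^{-1}\bR\shhom_{\sho_S}(\cG,\sho_S)$. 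If $\cG$ is $\sho_S$-locally free, the target is its $\sho_S$-dual placed in degree $0$, whence $\bD\cF$ is $S$-locally constant (with monodromy the inverse transpose of that of $\cF$); this gives \eqref{prop:flatLS3}. Conversely, if $\bD\cF$ is an $S$-locally constant sheaf, then $p^{-1}\bR\shhom_{\sho_S}(\cG,\sho_S)$, and hence $\bR\shhom_{\sho_S}(\cG,\sho_S)$ itself (since $p^{-1}$ is exact and conservative), is concentrated in degree $0$, i.e.\ $\shh^i\bR\shhom_{\sho_S}(\cG,\sho_S)=0$ for $i>0$; a minimal free resolution argument over the regular local rings of $S$ then forces $\cG$ to have projective dimension zero (if it were $n>0$, the top term of a minimal free resolution would contribute a nonzero $\shh^n$), i.e.\ $\cG$ is locally free, which is \eqref{prop:flatLS1}.

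For the last block, assume $X$ connected and locally path-connected with $\pi_1(X,x_o)$ finitely presented. By the monodromy equivalence (Proposition~\ref{prop:monodromyrepr}), giving $\cF_{|X\times V}$ for a connected open $V\subset S$ amounts to giving a representation $\pi_1(X,x_o)\to\Aut_{\sho_S|_V}(\cG_{|V})$, with $\cG$ admissible as in \eqref{prop:flatLS2}. When $\cG$ is free of rank $r$ on $V$, the sheaf of automorphisms is $\GL_r(\sho_S)$ over $V$, whose sections over $V$ are the holomorphic maps $V\to\GL_r(\CC)$; since $\pi_1(X,x_o)$ is finitely presented, such a representation is the same datum as finitely many holomorphic $\GL_r(\CC)$-valued functions on $V$ satisfying the finitely many defining relations, that is, a holomorphic map $V\to\Hom(\pi_1(X,x_o),\GL_r(\CC))$, the target being the affine algebraic variety cut out in $\GL_r(\CC)^{k}$ by those relations. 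Hence, if \eqref{prop:flatLS2} holds then $\cG$ is locally free and this dictionary realizes \eqref{prop:flatLS4}; conversely \eqref{prop:flatLS4}, applied over an open cover of $S$, exhibits $\cG$ as locally $\sho_S$-free of finite rank, which is \eqref{prop:flatLS2}.

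The only genuinely non-formal step is the implication \eqref{prop:flatLS3}$\Rightarrow$\eqref{prop:flatLS1}, namely the commutative-algebra fact that a coherent module over a regular local ring whose higher self-$\mathrm{Ext}$ groups vanish is free; this is the point where smoothness of $S$ is really used. Everything else is bookkeeping with the functor $p^{-1}$ and with the monodromy dictionary already set up in this appendix.
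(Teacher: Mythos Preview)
Your proof is correct and matches the paper's approach closely. The paper's (commented-out) proof invokes Proposition~\ref{prop:cG} for \eqref{prop:flatLS1}$\Leftrightarrow$\eqref{prop:flatLS2} and the monodromy equivalence for \eqref{prop:flatLS4}, exactly as you do; for \eqref{prop:flatLS1}$\Leftrightarrow$\eqref{prop:flatLS3} the paper simply cites \cite[Prop.\,2.1]{MF-S12}, whereas you spell out a self-contained argument via $\bD\cF\simeq p^{-1}\bR\shhom_{\sho_S}(\cG,\sho_S)$ and the minimal free resolution criterion over a regular local ring, which is the natural way to unpack that citation. One small terminological slip: in your last paragraph you write ``higher self-$\mathrm{Ext}$'', but you mean $\shh^i\bR\shhom_{\sho_S}(\cG,\sho_S)$, i.e.\ the Ext of $\cG$ against $\sho_S$, not against itself; your displayed statement is the correct one.
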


\begin{proposition}\label{prop:contractible}
Let $Y$ be a contractible topological space and let $\cF$ be an $S$-locally constant sheaf on $Y\times S$. Then $R^kp_{Y,*}\cF=0$ for each $k\geq1$ and $\cF$ is constant.
\end{proposition}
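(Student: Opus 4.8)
The statement combines two assertions, and the plan is to derive both from the homotopy invariance of the functor $\bR p_{Y,*}$ on $S$-locally constant sheaves, applied to the point inclusion $j\colon\{y_o\}\hookrightarrow Y$ (for an arbitrary $y_o\in Y$; such $j$ is a homotopy equivalence precisely because $Y$ is contractible, and so is every other point inclusion). Granting this homotopy invariance in the form ``for a homotopy equivalence $f\colon Z\to Y$ and an $S$-locally constant sheaf $\cF$ on $Y\times S$, the adjunction morphism $\bR p_{Y,*}\cF\to\bR p_{Z,*}f^{-1}\cF$ is an isomorphism'' (here $f$ also denotes $f\times\id_S$), one obtains $\bR p_{Y,*}\cF\simeq\bR p_{\{y_o\},*}\,i_{y_o}^{-1}\cF=\cG$, where $\cG:=i_{y_o}^{-1}\cF$ is regarded as a sheaf of $\sho_S$-modules on $S\simeq\{y_o\}\times S$ (the projection $\{y_o\}\times S\to S$ being a homeomorphism). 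This at once yields $R^kp_{Y,*}\cF=0$ for all $k\geq1$ and $p_{Y,*}\cF\simeq\cG$.

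For the $S$-constancy of $\cF$ I would then examine the adjunction morphism $p_Y^{-1}p_{Y,*}\cF\to\cF$. Restricted to a slice $\{y\}\times S$ it is identified, for each $y\in Y$, with the natural morphism $p_{Y,*}\cF\to i_y^{-1}\cF$, which is an isomorphism by the same homotopy-invariance statement applied to the (again homotopy) equivalence $\{y\}\hookrightarrow Y$. A morphism of sheaves on $Y\times S$ that is an isomorphism after restriction to every slice $\{y\}\times S$ is an isomorphism on all stalks, hence an isomorphism; thus $\cF\simeq p_Y^{-1}\cG$ is $S$-constant. (One could instead obtain $S$-constancy directly, without passing through the pushforward, from a contracting homotopy $H\colon Y\times I\to Y$: setting $\cF':=(H\times\id_S)^{-1}\cF$ on $Y\times I\times S$ and letting $q\colon Y\times I\times S\to Y\times S$ be the projection and $\sigma_t(y,s)=(y,t,s)$, one shows $q_*\cF'\isom\sigma_t^{-1}\cF'$ for every $t\in I$, and then $\sigma_0^{-1}\cF'=p_Y^{-1}\cG$ while $\sigma_1^{-1}\cF'=\cF$.)

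The real work therefore lies in the homotopy-invariance input, which I would establish by reducing it to its one-variable form. The crucial point is that a sheaf $\cF'$ on $Y\times I\times S$ whose restriction to each fibre $\{y\}\times I\times S$ of $q$ is $S$-constant — which is the case for $\cF'=(H\times\id_S)^{-1}\cF$, since such a restriction is $S$-locally constant on $I\times S$ and hence $S$-constant by Proposition \ref{P:const} — has vanishing higher direct images under $q$ and satisfies $q^{-1}q_*\cF'\isom\cF'$. This is proved by proper base change along $q$ (the fibre $I$ being compact, indeed contractible): the stalk of $R^kq_*\cF'$ at $(y,s)$ is $H^k\big(I;\cF'|_{\{y\}\times I\times\{s\}}\big)$, which vanishes for $k\geq1$ and equals $\cF'_{(y,t,s)}$ (independently of $t$) for $k=0$. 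Pasting these local statements along a contraction of $Y$ and composing homotopies produces the general homotopy-equivalence version, after which the identifications used above ($\sigma_t^{-1}$ of a pullback, $\bR p_{\{y_o\},*}$ over $S$, and so on) are routine. The main obstacle is exactly this relative ``constant-along-intervals'' vanishing together with the bookkeeping needed to run the classical homotopy argument over the parameter space $S$ rather than over a point.
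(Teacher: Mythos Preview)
Your proposal is correct and follows essentially the same route as the paper: apply homotopy invariance of $\bR p_{Y,*}$ (the paper's Corollary on homotopy equivalences, built on the one-variable lemma $R^kq_*\cF'=0$ and $q^{-1}q_*\cF'\isom\cF'$ via Proposition \ref{P:const} and proper base change) to the inclusion $\{y\}\hookrightarrow Y$, read off the vanishing of $R^kp_{Y,*}\cF$, and use that $p_{Y,*}\cF\to i_y^{-1}\cF$ is an isomorphism for every $y$ to conclude $S$-constancy via the criterion of Proposition \ref{sor}\eqref{sor2} (your stalkwise check of the adjunction $p_Y^{-1}p_{Y,*}\cF\to\cF$ is exactly how that criterion is proved). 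Your parenthetical alternative via the contracting homotopy is likewise the content of the paper's Proposition on homotopic maps.
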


\backmatter
\providecommand{\eprint}[1]{\href{http://arxiv.org/abs/#1}{\texttt{arXiv\string:\allowbreak#1}}}\providecommand{\doi}[1]{\href{http://dx.doi.org/#1}{\texttt{doi\string:\allowbreak#1}}}
\providecommand{\og}{``}
\providecommand{\fg}{''}
\providecommand{\smfandname}{\&}

\end{document}